\theoremstyle{plain}
\newtheorem{thm}{Theorem}[section]
\newtheorem{cor}[thm]{Corollary}
\newtheorem{lem}[thm]{Lemma}
\newtheorem{prop}[thm]{Proposition}
\theoremstyle{definition}
\newtheorem{defi}[thm]{Definition}
\theoremstyle{remark}
\newtheorem{rem}[thm]{Remark}
\numberwithin{equation}{section}
\newcommand{\average}{{\mathchoice {\kern1ex\vcenter{\hrule height.4pt
width 6pt depth0pt} \kern-9.7pt} {\kern1ex\vcenter{\hrule
height.4pt width 4.3pt depth0pt} \kern-7pt} {} {} }}
\newcommand{\ave}{\average\int}
\def\R{\mathbb{R}}
\newcommand{\I}{{\rm I}}
\begin{document}

\title[Boundary regularity for integro-differential equations]
{Boundary regularity for fully nonlinear integro-differential equations}

\author{Xavier Ros-Oton}
\address{The University of Texas at Austin, Department of Mathematics, 2515 Speedway, Austin, TX 78751, USA}
\email{ros.oton@math.utexas.edu}

\thanks{The authors were supported by grants MINECO MTM2011-27739-C04-01 and GENCAT 2009SGR-345}

\author{Joaquim Serra}
\address{Universitat Polit\`ecnica de Catalunya, Departament de Matem\`{a}tica  Aplicada I, Diagonal 647,
08028 Barcelona, Spain}
\email{joaquim@arcvi.io}

\keywords{Fully nonlinear integro-differential equations, boundary regularity.}

\subjclass[2010]{35J60; 45K05.}

\maketitle

\begin{abstract}
We study fine boundary regularity properties of solutions to fully nonlinear elliptic integro-differential equations of order $2s$, with $s\in(0,1)$.

We consider the class of nonlocal operators $\mathcal L_*\subset \mathcal L_0$, which consists of infinitesimal generators of stable L\'evy processes belonging to the class $\mathcal L_0$ of Caffarelli-Silvestre.
For fully nonlinear operators $\I$ elliptic with respect to $\mathcal L_*$, we prove that solutions to $\I u=f$ in $\Omega$, $u=0$ in $\R^n\setminus\Omega$, satisfy $u/d^s\in C^{s+\gamma}(\overline\Omega)$, where $d$ is the distance to $\partial\Omega$ and $f\in C^\gamma$.

We expect the class $\mathcal L_*$ to be the largest scale invariant subclass of $\mathcal L_0$ for which this result is true.
In this direction, we show that the class $\mathcal L_0$ is too large for all solutions to behave like $d^s$.

The constants in all the estimates in this paper remain bounded as the order of the equation approaches 2.
Thus, in the limit $s\uparrow1$ we recover the celebrated boundary regularity result due to Krylov for fully nonlinear elliptic equations.
\end{abstract}


\vspace{4mm}

\section{Introduction and results}

This paper is concerned with boundary regularity for fully nonlinear elliptic integro-differential equations.

Since the foundational paper of Caffarelli and Silvestre \cite{CS}, ellipticity for a nonlinear
integro-differential operator is defined relatively to a given set $\mathcal L$ of linear translation invariant elliptic operators.
This set $\mathcal L$ is called the ellipticity class.

The reference ellipticity class from \cite{CS} is the class $\mathcal L_0= \mathcal L_0(s)$, containing all operators $L$ of the form
\begin{equation}\label{operator-L0}
Lu(x)=\int_{\R^n} \left(\frac{u(x+y)+u(x-y)}{2}-u(x)\right) K(y)\,dy
\end{equation}
with even kernels $K(y)$ bounded between two positive multiples of $(1-s)|y|^{-n-2s}$, which is the kernel of the fractional Laplacian $(-\Delta)^s$.

In the three papers \cite{CS,CS2,CS3}, Caffarelli and Silvestre studied the interior regularity for solutions $u$ to
\begin{equation}\label{primer_dirichlet}\left\{\begin{array}{rcl}
\I u &= & f \quad \mbox{in } \Omega\\
u  &=&g\quad \mbox{in }\R^n\setminus\Omega,
\end{array}\right.\end{equation}
where $\I$ is a translation invariant fully nonlinear integro-differential operator of order~$2s$ (see the definition later on in this Introduction).
They proved existence of viscosity solutions, established $C^{1+\alpha}$ interior regularity of solutions \cite{CS}, $C^{2s+\alpha}$ regularity in case of convex equations \cite{CS3}, and developed a perturbative theory for non translation invariant equations \cite{CS2}.
Thus, the interior regularity for these equations is well understood.

However, very little is known about the boundary regularity for fully nonlinear nonlocal problems.

When $\I$ is the fractional Laplacian $-(-\Delta)^s$, the boundary regularity of solutions $u$ to \eqref{primer_dirichlet} is now quite well understood.
The first result in this direction was obtained by Bogdan, who established the boundary Harnack principle for $s$-harmonic functions \cite{Bogdan} ---i.e., for solutions to $(-\Delta)^su=0$.
More recently, we proved in \cite{RS-Dir} that if $f\in L^\infty$, $g\equiv0$, and $\Omega$ is $C^{1,1}$ then $u\in C^s(\R^n)$ and $u/d^s\in C^\alpha(\overline\Omega)$ for some small $\alpha>0$, where $d$ is the distance to the boundary $\partial\Omega$.
Moreover, the limit of $u(x)/d^s(x)$ as $x\rightarrow\partial\Omega$ is typically nonzero (in fact it is positive if $f<0$), and thus the $C^s$ regularity of $u$ is optimal.
After this, Grubb \cite{Grubb} showed that when $f\in C^\gamma$ with $\gamma>0$ (resp. $f\in L^\infty$), $g\equiv0$, and $\Omega$ is smooth, then $u/d^s\in C^{\gamma+s-\epsilon}(\overline\Omega)$ (resp. $u/d^s\in C^{s-\epsilon}(\overline\Omega)$) for all $\epsilon>0$.
In particular, $f\in C^\infty$ leads to $u/d^s\in C^\infty(\overline\Omega)$.
Thus, the correct notion of boundary regularity for equations of order $2s$ is the H\"older regularity of the quotient $u/d^s$.
In a new work \cite{Grubb2}, Grubb removes the $\epsilon$ in the previous estimates, obtaining that $u/d^s$ is $C^{s+\gamma}$ whenever $f\in C^\gamma$ and neither $\gamma+s$ nor $\gamma+2s$ are integers.

Here, we obtain boundary regularity for \emph{fully nonlinear} integro-differential problems of the form \eqref{primer_dirichlet} which are elliptic with respect to the class $\mathcal L_*\subset \mathcal L_0$ defined as follows.
$\mathcal L_*$ consists of all linear operators of the form
\begin{equation}\label{L*1}
Lu(x) = (1-s)\int_{\R^n} \left(\frac{u(x+y)+u(x-y)}{2}-u(x)\right) \frac{\,\mu(y/|y|)}{|y|^{n+2s}}\,dy,
\end{equation}
with
\begin{equation} \label{L*2}
\mu\in L^\infty(S^{n-1}) \quad \mbox{satisfying} \quad \mu(\theta)=\mu(-\theta)\quad \mbox{and} \quad \lambda\le \mu\le \Lambda,
\end{equation}
where $0<\lambda\leq \Lambda$ are called ellipticity constants.
The class $\mathcal L_*$ consists of all infinitesimal generators of \emph{stable} L\'evy processes belonging to $\mathcal L_0$.
Our main result essentially establishes that when $f\in C^\gamma$, $g\equiv0$, and $\Omega$ is $C^{2,\gamma}$, viscosity solutions $u$ satisfy
\begin{equation}\label{our-results}
u/d^s\in C^{s+\gamma}(\overline\Omega).
\end{equation}
In case of flat boundary we assume $\mu\in C^{\gamma}(S^{n-1})$.
In general $C^{2,\gamma}$ domains $\Omega$, we need to assume $\mu\in C^{1,\gamma}(S^{n-1})$.

We expect the class $\mathcal L_*$ to be the largest scale invariant subclass of $\mathcal L_0$ for which this result is true.

For general elliptic equations with respect to $\mathcal L_0$, no fine boundary regularity results like \eqref{our-results} hold.
In fact, the class $\mathcal L_0$ is too large for all solutions to be comparable to $d^s$ near the boundary.
Indeed, we show in Section 2 that there are powers $0<\beta_1<s<\beta_2$ for which the functions $(x_n)_+^{\beta_1}$ and $(x_n)_+^{\beta_2}$ satisfy
\[ M^+_{\mathcal L_0} (x_n)_+^{\beta_1} =0\quad \textrm{and}\quad M^-_{\mathcal L_0} (x_n)_+^{\beta_2} =0\quad\ \mbox{in}\ \,\{x_n>0\},\]
where $M^+_{\mathcal L_0}$ and $M^-_{\mathcal L_0}$ are the extremal operators for the class $\mathcal L_0$; see their definition in Section~2.
Hence, since $(-\Delta)^s(x_n)_+^s=0$ in $\{x_n>0\}$, we have at least three functions which solve fully nonlinear elliptic equations with respect to $\mathcal L_0$ but which are not even comparable near the boundary $\{x_n=0\}$.
As we show in Section 2, the same happens for the subclasses $\mathcal L_1$ and $\mathcal L_2$ of $\mathcal L_0$, which have more regular kernels and were considered in \cite{CS,CS2,CS3}.

The constants in our estimates remain bounded as $s\uparrow1$.
Thus, in the limit we recover the celebrated boundary regularity estimate of Krylov for second order fully nonlinear elliptic equations \cite{Krylov}.

\addtocontents{toc}{\protect\setcounter{tocdepth}{1}}  

\subsection{The class $\mathcal L_*$}

The class $\mathcal L_*$ consists of all infinitesimal generators of stable L\' evy processes belonging to $\mathcal L_0$.
This type of L\'evy processes are well studied in probability, as explained next.
In that context, the function $\mu\in L^\infty(S^{n-1})$ is called the spectral measure.

Stable processes are for several reasons a natural extension of Gaussian processes.
For instance, the Generalized Central Limit Theorem states that the distribution of a sum of independent identically distributed random variables with heavy tails converges to  a stable distribution;
see \cite{ST}, \cite{Levy}, or \cite{BKS} for a precise statement of this result.
Thus, stable processes are often used to model sums of many random independent perturbations with heavy-tailed  distributions ---i.e., when large outcomes are not unlikely.
In particular, they arise frequently in financial mathematics, internet traffic statistics, or signal processing; see for instance \cite{PS,MR,MR2,Nolan2,Nolan3,NolanPM,AB,KR,Pa,HST} and the books \cite{Nolan, ST}.

Linear equations $Lu=f$ with $L$ in the class $\mathcal L_*$ have already been studied, specially by Sztonyk and Bogdan; see for instance \cite{Sztonyk3,BKK,PT,Sztonyk1,Sztonyk2,Sztonyk4}.
When the spectral measure $\mu$ in \eqref{L*1} belongs to $C^\infty(S^{n-1})$, the regularity up to the boundary of $u/d^s$ in $C^\infty$ domains follows from the recent results of Grubb \cite{Grubb} for linear pseudo-differential operators.

Notice that all second order linear uniformly elliptic operators are recovered as limits of operators in $\mathcal L_*=\mathcal L_*(s)$ as $s\to 1$.
In particular, all second order fully nonlinear equations $F(D^2u)=f(x)$ are recovered as limits of the fully nonlinear integro-differential equations that we consider.
Furthermore, when $s<1$ the class of translation invariant linear operators $\mathcal L_*(s)$ is much richer than the one of second order uniformly elliptic operators.
Indeed, while any operator in the latter class is determined by a positive definite $n\times n$ matrix, a function $\mu:S^{n-1}\rightarrow\R^+$ is needed to determine an operator in $\mathcal L_*(s)$.

A key feature of the class $\mathcal L_*$ for boundary regularity issues is that
\[L(x_n)_+^s=0\quad \textrm{in}\ \{x_n>0\}\quad \textrm{for all}\ L\in \mathcal L_*.\]
This is essential first to construct barriers which are comparable to $d^s$, and later to prove finer boundary regularity.

\subsection{Equations with ``bounded measurable coefficients''}

The first result of in this paper, and on which all the other results rely, is Proposition \ref{thm1} below.

Here, and throughout the article, we use the definition of viscosity solutions and inequalities of \cite{CS}.
Moreover, for $r>0$ we denote
\[B_r^+=B_r\cap \{x_n>0\}\quad \textrm{and}\quad B_r^-=B_r\cap \{x_n<0\},\]
and the constants $\lambda$ and $\Lambda$ in \eqref{L*2} are called ellipticity constants.

The extremal operators associated to the class $\mathcal L_*$ are denoted by $M_{\mathcal L_*}^+$ and $M_{\mathcal L_*}^-$,
\[M_{\mathcal L_*}^+u=\sup_{L\in\mathcal L_*}Lu\quad \textrm{and}\quad M_{\mathcal L_*}^-u=\inf_{L\in\mathcal L_*}Lu.\]
Note that, since $\mathcal L_*\subset\mathcal L_0$, then $M^-_{\mathcal L_0}\leq M_{\mathcal L_*}^-\leq M_{\mathcal L_*}^+\leq M^+_{\mathcal L_0}$.

\begin{prop}\label{thm1}
Let $s_0\in(0,1)$ and  $s\in[s_0,1)$.
Assume that $u\in C(B_1)\cap L^\infty(\R^n)$ is a viscosity solution of
\begin{equation}\label{equaciou}
\left\{
\begin{array}{rcll}
M_{\mathcal L_*}^+u \hspace{-5pt} &\ge & \hspace{-5pt}-C_0 \quad &\mbox{ in } B_1^+\\
M_{\mathcal L_*}^-u \hspace{-5pt} &\le & \hspace{-5pt}C_0 \quad &\mbox{ in } B_1^+\\
u \hspace{-5pt}  &=& \hspace{-5pt} 0            &\mbox{ in }B_1^-,
\end{array}
\right.
\end{equation}
for some nonnegative constant $C_0$.
Then, $u/x_n^s$ is $C^{\bar\alpha}(\overline{B_{1/2}^+})$ for some $\bar\alpha>0$, with the estimate
\begin{equation}\label{first_estimate}
\|u/x_n^s\|_{C^{\bar\alpha}(B_{1/2}^+)} \le C \left(C_0 + \|u\|_{L^\infty(\R^n)}\right).
\end{equation}
The constants ${\bar\alpha}$ and $C$ depend only on $n$, $s_0$, and the ellipticity constants.
\end{prop}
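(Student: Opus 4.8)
The plan is to follow the classical Krylov-type strategy adapted to the nonlocal setting: establish an oscillation decay for $u/x_n^s$ in dyadic balls at the boundary, which then upgrades to $C^{\bar\alpha}$ regularity by a standard iteration. The first step is to construct good barriers. Since $L(x_n)_+^s=0$ in $\{x_n>0\}$ for all $L\in\mathcal L_*$, the function $x_n^s$ itself is a solution of both extremal equations; I would use $x_n^s$ together with small perturbations of the form $(x_n^s \pm \varepsilon Q(x))\eta$, where $Q$ is quadratic and $\eta$ a suitable cutoff, to produce supersolutions and subsolutions that pinch $u$ from above and below. The key quantitative input is that these barriers satisfy $M^+_{\mathcal L_*}(\cdot)\le -c<0$ (resp. $M^-_{\mathcal L_*}(\cdot)\ge c>0$) in a half-ball, with constants uniform as $s\to1$; this uses the ellipticity $\lambda\le\mu\le\Lambda$ and the explicit computation of $L$ acting on $x_n^s$ and on smooth compactly supported perturbations, including control of the nonlocal tails by $\|u\|_{L^\infty(\mathbb R^n)}$.

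Next I would set up the iteration. Define $v_k = x_n^{-s}u$ restricted to $B_{2^{-k}}^+$ and let $\omega(k)$ measure its oscillation there (more precisely, one should work with a quantity like $\sup_{B_{2^{-k}}^+}(u - \beta_k x_n^s)^+ + \sup(\beta_k x_n^s - u)^+$ for a well-chosen sequence of slopes $\beta_k$). The goal of one step is: if $u$ is trapped between $a\,x_n^s$ and $b\,x_n^s$ in $B_r^+$ with $b-a=M$, then in $B_{r/2}^+$ it is trapped between $a'x_n^s$ and $b'x_n^s$ with $b'-a'\le(1-\theta)M + Cr^{?}(C_0+\|u\|_{L^\infty})$ for some $\theta\in(0,1)$. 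This improvement comes from applying the barrier construction to the rescaled, renormalized function $w(x) = \bigl(u(rx)-a r^s x_n^s\bigr)/(M r^s)$, which solves the same extremal inequalities (by scale invariance of $\mathcal L_*$, with the $C_0$ term picking up a favorable power of $r$) and satisfies $0\le w\le x_n^s$ in $B_1^+$; a Harnack-type or measure-to-pointwise estimate then forces $w$ either to detach from $0$ or from $x_n^s$ by a definite amount on $B_{1/2}^+$. After rescaling back and summing the geometric series, one obtains, for every boundary point, $|u(x)-\ell\, x_n^s|\le C(C_0+\|u\|_{L^\infty})\,(x_n)^{s+\bar\alpha}$ with $\bar\alpha>0$ small, which is exactly $C^{\bar\alpha}$ regularity of $u/x_n^s$ up to the flat boundary; then standard covering and interpolation arguments give the interior-to-boundary $C^{\bar\alpha}$ estimate on all of $B_{1/2}^+$ together with \eqref{first_estimate}.

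The main obstacle, I expect, is the measure-to-pointwise (weak Harnack) step that produces the fixed gain $\theta$: one needs, for a function $w$ with $0\le w\le x_n^s$ solving the extremal inequalities, a lower bound of the form $w\ge c\,x_n^s$ on $B_{1/2}^+$ whenever $w$ is "not too small" on a set of positive measure in $B_{3/4}^+$ (and the symmetric statement from above). In the local second-order case this is Krylov's boundary Harnack; in the nonlocal case it requires combining the interior Harnack inequality for $M^\pm_{\mathcal L_*}$ from \cite{CS} with the barrier $x_n^s$ to transfer positivity down to the boundary, while carefully handling the fact that the "coefficients" here are only bounded measurable (so no continuity of $\mu$ is available) and keeping all constants stable as $s\uparrow1$. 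A secondary technical point is the viscosity-solution bookkeeping: verifying that the barriers are admissible test functions and that the rescaled functions $w$ remain viscosity sub/supersolutions of the extremal equations on the relevant domains, including the correct treatment of the exterior data through the truncation of $w$ outside $B_1$.
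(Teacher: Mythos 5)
Your overall strategy is the one the paper actually follows in Section \ref{sec-krylov-method}: trap $u$ between $m_k(x_n)_+^s$ and $\overline m_k(x_n)_+^s$ on dyadic half-balls, improve the gap by combining the interior Harnack inequality on the strip $D_r^*=B_{9r/10}\cap\{x_n>r/10\}$ with a barrier that pushes positivity from that strip down to $\{x_n=0\}$ (Lemmas \ref{lemA} and \ref{lemB}), and then interpolate with the interior $C^\alpha$ estimate of \cite{CS} at scale $x_n$ to get \eqref{first_estimate}. Incidentally, no measure-to-pointwise/weak Harnack statement is needed: the full Harnack on $D_r^*$ plus the comparison with a compactly supported subsolution already give the dichotomy.

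There are, however, two concrete weak points. First, your barrier ansatz $(x_n^s\pm\varepsilon Q(x))\eta$ with $Q$ quadratic and $\eta$ a cutoff would not deliver what the key step needs, namely a subsolution supported in a ball, comparable to $({\rm dist})^s$ inside it, with $M^-\ge 0$ (in fact $\ge c>0$) in the surrounding annulus: truncating outside the ball produces a \emph{negative}, order-one contribution to $M^-$ near the outer boundary (and, once the level sets are curved, an additional error of size $(1-s)|\log({\rm dist})|$ as in Lemma \ref{lem11}), while a smooth bounded perturbation only yields a bounded gain $O(\varepsilon)$, which cannot absorb these errors uniformly up to the boundary. The missing idea is to perturb by an intermediate power $({\rm dist})^{3s/2}$ with $3s/2\in(s,2s)$: by Lemma \ref{otherpowers}, $M^-$ of such a power equals a positive constant times $({\rm dist})^{-s/2}$, which blows up at the boundary and dominates both error sources, with constants uniform as $s\uparrow1$ (Lemmas \ref{lem12}, \ref{supersol}, \ref{subsol}). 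Second, the truncation of $u-m_k(x_n)_+^s$ is not secondary bookkeeping: in the induction the negative part $u_k^-$ lives outside $B_{r_k}$ with growth dictated by all previous scales, and one must show $0\le M^\pm u_k^-\le\varepsilon_0(\alpha)\,r_k^{\alpha-s}$ with $\varepsilon_0(\alpha)\to0$ as $\alpha\to0$; the exponent $\bar\alpha$ is then fixed precisely by closing the inequality $\frac{C-1}{C}+C_1^{-1}+\varepsilon_0(\alpha)\le4^{-\alpha}$, so this tail estimate is where the smallness of $\bar\alpha$ comes from, not a routine viscosity-solution verification.
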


It is important to remark that the constants in our estimate remain bounded as $s\rightarrow1$.
This means that from Proposition \ref{thm1} we can recover the classical boundary Harnack inequality of Krylov \cite{Krylov}.

The estimate of Proposition \ref{thm1} is only a first step towards our results.
It is obtained via a nonlocal version of the method of Caffarelli \cite{Kazdan} for second order equations with bounded measurable coefficients; see also Section 9.2 in \cite{Caff-Cabre}.
This method has been adapted to nonlocal equations by the authors in \cite{RS-Dir}
\footnote{In \cite{RS-Dir}, we incorrectly attributed the method to Krylov. Instead, we were using a method due to Caffarelli, who gave an alternative proof of Krylov's boundary regularity theorem.},
where we proved estimate \eqref{first_estimate} for the fractional Laplacian $(-\Delta)^s$ in $C^{1,1}$ domains.

As explained before, our main result is the $C^{s+\gamma}$ regularity of $u/d^s$ in $C^{2,\gamma}$ domains for solutions $u$ to fully nonlinear integro-differential equations (see the next subsection).
Thus, for solutions to the nonlinear equations we push the small H\"older exponent $\bar\alpha>0$ in \eqref{first_estimate} up to the sharp exponent $s+\gamma$ in \eqref{our-results}.
To achieve this, new ideas are needed, and the procedure that we develop differs substantially from that in second order equations.
We use a new compactness method and the ``boundary'' Liouville-type Theorem \ref{thm-liouv-1+s}, stated later on in the Introduction.
This Liouville theorem relies on Proposition~\ref{thm1}.

\subsection{Main result}

Before stating our main result, let us recall the definition and motivations of fully nonlinear integro-differential operators.

As defined in \cite{CS}, a fully nonlinear operator $\I$ is said to be elliptic with respect to a subclass $\mathcal L\subseteq \mathcal L_0$ when
\[M_{\mathcal L}^-(u-v)(x)\leq \I u(x)-\I v(x)\leq M_{\mathcal L}^+(u-v)(x)\]
for all test functions $u,v$ which are $C^2$ in a neighborhood of $x$ and having finite integral against $\omega_s(x)=(1-s)(1+|x|^{-n-2s})$.
Moreover, if
\[ \I\left(u(x_0+\cdot)\right) (x)=( \I u )(x_0+x),\]
then we say that $\I$ is translation invariant.

Fully nonlinear elliptic integro-differential equations naturally arise in stochastic control and games.
In typical examples, a single player or two players control some parameters
(e.g. the volatilities of the assets in a portfolio) affecting  the joint distribution
of the random increments of $n$ variables $X(t)\in \R^n$.
The game ends when $X(t)$ exits for the first time a certain domain $\Omega$ (as when having automated orders to sell assets when their prices cross certain limits).

The {\em value} or {\em expected payoff} of these games $u(x)$ depends
on the starting point $X(0)=x$ (initial prices of all assets in the portfolio).
A remarkable fact is that the value $u(x)$ solves an equation of the type $\I u =0$, where
\begin{equation}\label{examples_operators}
\I u(x) = \sup_a \bigl( L_a u +c_a\bigr) \quad \mbox{ or }\quad \I u(x) = \inf_b \sup_a \bigl( L_{ab} u+c_{ab}\bigr).
\end{equation}
The first equation, known as the Bellman equation, arises in control problems (a single player), while the second one, known as the Isaacs equations, arises in zero-sum games (two players).
The linear operators $L_a$  and $L_{ab}$ are infinitesimal generators of L\'evy processes, standing for all the possible choices of the distribution of time increments of $X(t)$.
The constants $c_a$ and $c_{ab}$ are costs associated to the choice of the operators $L_a$ and $L_{ab}$.
More involved equations with zeroth order terms and right hand sides have also meanings in this context as interest rates or running costs.
See \cite{C-Survey,wiki,OS,CT,CS}, and references therein for more information on these equations.

When all $L_a$ and $L_{ab}$ belong to $\mathcal L_*$, then \eqref{examples_operators} are fully nonlinear translation invariant operators elliptic with respect to $\mathcal L_*$, as defined above.

The interior regularity for fully nonlinear integro-differential elliptic equations was mainly established by Caffarelli and Silvestre in the
well-known paper \cite{CS}.
More precisely, for some small $\alpha>0$, they obtain $C^{1+\alpha}$ interior regularity for fully nonlinear elliptic equations with respect to the class $\mathcal L_1$ made of kernels in $\mathcal L_0$ which are $C^1$ away from the origin.
For $s>\frac12$, the same result in the class $\mathcal L_0$ has been recently proved by Kriventsov \cite{Kriv}.
These estimates are uniform as the order of the equations approaches two, so they can be viewed as a natural extension of the interior
regularity for fully nonlinear equations of second order.
There were previous interior estimates by Bass and Levin \cite{Bass2} and by Silvestre \cite{S} which are not uniform as the order of the equation approaches~2.
An interesting aspect of \cite{S} is that its proof is short and uses only elementary analysis tools, taking advantage of the nonlocal character of the equations.
This is why the same ideas have been used in other different contexts \cite{Silvestre1,Silvestre2}.

For convex equations elliptic with respect to $\mathcal L_2$ (i.e., with kernels in $\mathcal L_0$ which are $C^2$ away from the origin), Caffarelli and Silvestre obtained $C^{2s+\alpha}$ interior regularity~\cite{CS3}.
This is the nonlocal extension of the Evans-Krylov theorem.
The same result in the class $\mathcal L_0$ has been recently proved by the second author \cite{Se2}.
Other important references concerning interior regularity for nonlocal equations in nondivergence form are \cite{RKS,KM,CD,BCI,GSchwab}.

Our main result reads as follows.

\begin{thm}\label{thm2}
Let $s_0\in (0,1)$ and $s\in [s_0,1)$.
Let $\bar\alpha$ be the exponent given by Proposition \ref{thm1}.

Assume that $\I$ is a fully nonlinear and translation invariant operator of the form \eqref{examples_operators}-\eqref{L*1}-\eqref{L*2}.
Assume in addition that the spectral measures satisfy
\[\|\mu_{ab}\|_{C^\gamma(S^{n-1})}\leq \Lambda.\]

Let $f\in C^\gamma\bigl({B_1^+}\bigr)$, and $u\in L^\infty(\R^n)\cap C\bigl(B_1\bigr)$ be any viscosity solution of
\begin{equation}\label{eq-thm2}
\left\{\begin{array}{rcll}
\I u \hspace{-5pt} &= & \hspace{-5pt}f \quad &\mbox{ in }B_1^+\\
u \hspace{-5pt}  &=& \hspace{-5pt} 0            &\mbox{ in }B_1^-.
\end{array}\right.
\end{equation}
Assume that $\gamma\in (0,1-s+\bar\alpha)$, and that $s+\gamma$ is not an integer.

Then, $u/(x_n)^s$ belongs to $C^{s+\gamma}\bigl(\overline{B_{1/2}^+}\bigr)$ with the estimate
\[ \bigl\|u/(x_n)^s\bigr\|_{C^{s+\gamma}(B_{1/2}^+)} \le C \bigl( \|u\|_{L^\infty(\R^n)} + \|f\|_{C^\gamma(B_1^+)}\bigr),\]
where the constant $C$ depends only on $n$, $s_0$, $\gamma$, $\lambda$, and $\Lambda$.
\end{thm}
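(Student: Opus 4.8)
The plan is to prove Theorem \ref{thm2} by a blow-up and compactness argument, using Proposition \ref{thm1} as the starting regularity and a Liouville-type classification theorem in the half-space as the rigidity input. The overall strategy follows the philosophy of improving a small H\"older exponent $\bar\alpha$ up to the sharp exponent $s+\gamma$ by iterating an ``improvement of flatness'' at the boundary.

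First I would reduce to an estimate on the oscillation of $u/x_n^s$ around its polynomial approximations at a boundary point, say the origin. More precisely, I would show that there is a sequence of ``boundary polynomials'' $p_k$ (polynomials multiplied by $(x_n)_+^s$, of the degree dictated by $s+\gamma$) such that $\|u - p_k\|_{L^\infty(B_{r_k})} \le C r_k^{s+\gamma}$ with $r_k = 2^{-k}$, together with the analogous control on the tails of $u-p_k$ measured against the weight $\omega_s$; this is the content of the quantitative $C^{s+\gamma}$ expansion at the boundary. The key point is the iterative step: given the estimate at scale $r_k$, rescale $u - p_k$ by $r_k^{-(s+\gamma)}$ to a unit-scale function $v_k$, which solves an equation of the same type with right-hand side $f$ replaced by a rescaled $\tilde f$ whose $C^\gamma$-norm is controlled (here one uses $\gamma < 1-s+\bar\alpha$, so that the scaling $r_k^{\gamma - (s+\gamma) + 2s} = r_k^{s-\gamma}$ keeps the forcing term from blowing up, and the hypothesis that $s+\gamma$ is not an integer to avoid logarithmic resonances). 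One then argues by contradiction: if the improvement of flatness fails along some subsequence, a normalization of $v_k$ converges locally uniformly (using Proposition \ref{thm1} for compactness of $v_k/x_n^s$ in $C^{\bar\alpha}$, plus the uniform tail control) to a global solution $v_\infty$ of the extremal inequalities $M^+_{\mathcal L_*} v_\infty \ge 0 \ge M^-_{\mathcal L_*} v_\infty$ in $\{x_n>0\}$ with $v_\infty = 0$ in $\{x_n \le 0\}$ and with subquadratic-at-infinity growth of order $s+\gamma$.

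The rigidity step is to invoke the ``boundary'' Liouville theorem (Theorem \ref{thm-liouv-1+s}): any such global solution, growing at most like $|x|^{s+\gamma}$ at infinity with $s+\gamma$ less than the threshold, must be of the form $(x_n)_+^s$ times a polynomial of the appropriate degree — i.e. it lies in the space of admissible boundary polynomials. Since the contradiction normalization forces $v_\infty$ to be $L^\infty$-orthogonal to that very space (that is how the $p_k$ are chosen, projecting out the polynomial part at each step), we get $v_\infty \equiv 0$, contradicting the normalization $\|v_\infty\|_{L^\infty(B_1)} = 1$. This closes the induction and yields the decay $\|u - p_k\|_{L^\infty(B_{r_k})} \le C r_k^{s+\gamma}$, from which the pointwise $C^{s+\gamma}$ bound for $u/x_n^s$ at the origin — and, by translating the center along $\{x_n = 0\}\cap B_{1/2}$ and a standard interpolation with interior estimates to handle pairs of points at comparable distance to the boundary, the full norm estimate in $B_{1/2}^+$ — follows with the stated dependence of $C$ on $n, s_0, \gamma, \lambda, \Lambda$. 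The uniformity as $s \uparrow 1$ is inherited from the corresponding uniformity in Proposition \ref{thm1} and in the Liouville theorem, and from the fact that all scalings involve powers of $r_k$ with exponents bounded away from their critical values uniformly in $s \in [s_0,1)$.

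The main obstacle is the compactness step together with controlling the nonlocal tails uniformly along the blow-up sequence. Unlike the second-order case, a local $C^{\bar\alpha}$ bound on $v_k/x_n^s$ is not by itself enough to pass to the limit in the equation, because the extremal operators $M^\pm_{\mathcal L_*}$ see all of $\R^n$; one must propagate the decay estimates on $u - p_k$ to uniform bounds on $\int (1+|y|)^{-n-2s}|v_k(y)|\,dy$ and verify that the limiting function indeed satisfies the extremal inequalities in the viscosity sense (stability of viscosity solutions under this kind of convergence). A secondary delicate point is the correct bookkeeping of which ``boundary polynomials'' are admissible — ensuring that at each step the corrector $p_{k+1} - p_k$, after rescaling, stays in a fixed finite-dimensional space on which the Liouville theorem gives rigidity, and that the smallness $\gamma < 1-s+\bar\alpha$ is exactly what prevents the forcing term $\tilde f$ from overwhelming the homogeneity gain. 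Once these are in place, the argument is a clean contradiction-compactness scheme.
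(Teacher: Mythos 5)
Your overall architecture (Proposition \ref{thm1} for compactness, blow-up with best-fit boundary polynomials, Liouville rigidity, iteration up to the exponent $s+\gamma$) is indeed the paper's strategy in outline, but there is a genuine gap at the rigidity step as you describe it. You assert that the blow-up limit $v_\infty$ satisfies $M^+_{\mathcal L_*}v_\infty\ge 0\ge M^-_{\mathcal L_*}v_\infty$ in $\{x_n>0\}$ with growth of order $s+\gamma$ and that Theorem \ref{thm-liouv-1+s} classifies ``any such global solution.'' That is not what Theorem \ref{thm-liouv-1+s} says, and it cannot be: after subtracting the boundary correctors $p_k=(x_n)_+^s(b+p\cdot x)$ and rescaling, the relevant functions grow like $|x|^{s+\alpha+\beta}$, which can be as large as $|x|^{2s+\gamma}$, and the corrector itself grows like $|x|^{1+s}$; both exceed the order $2s$ of the operators, so neither $M^\pm$ of the limit nor $\I(u-p_k)$ is even defined (equivalently, your proposed tail bound $\int(1+|y|)^{-n-2s}|v_k(y)|\,dy$ diverges at these growth rates). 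This is exactly the obstruction the paper flags, and it is why the Liouville theorem is formulated for \emph{incremental quotients}: the hypotheses are $M^\pm\{u(\cdot+h)-u\}\gtrless 0$ for all $h$ with $h_n\ge 0$, together with the growth control $[u/(x_n)_+^s]_{C^\beta(B_R)}\le CR^\alpha$, and correspondingly the compactness step must pass to the limit in the equations for the increments $w_m(\cdot+h)-w_m$ (which do have admissible growth, of order $s+\alpha<2s$), using that increments of $(x_n)_+^s(a\cdot x+b)$ are annihilated by every $L\in\mathcal L_*$ (Lemma \ref{lem-planes}), so that subtracting the correctors does not change the increment equation. Without reformulating both your limit equation and your rigidity input in terms of increments (and then reducing, as in the paper, to tangential translations plus a 1D classification), the contradiction step does not close.

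Two smaller points: the iteration is not a single improvement-of-flatness at a fixed exponent but a finite induction raising the H\"older exponent of $u/(x_n)^s$ by roughly $\bar\alpha$ at each stage (this is where the hypothesis $\gamma<1-s+\bar\alpha$ and the recursive estimate \eqref{recursive_estimate} enter, and where $s+\gamma\notin\mathbb Z$ is used through $\alpha+\beta\neq 1$); and converting the boundary expansion into the full $C^{s+\gamma}$ bound on $u/(x_n)^s$ requires a scaled interior estimate of order $\alpha+\beta$ for the nonlocal equation (Lemma \ref{lem-interior}), which in this setting is itself proved by a separate compactness argument rather than being ``standard interpolation.''
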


Notice that taking $\gamma=1-s+\alpha$ in the previous result (with $\alpha>0$ small), we find that $u/(x_n)^s$ is $C^{1,\alpha}$ up to the boundary.
Thus, it gives an estimate of order $1+s+\alpha$ on the boundary, and not only $2s+\alpha$ (which is the regularity in the interior of the domain for convex equations \cite{CS3,Se2}).

As said above, before our results almost nothing was known about the boundary regularity of solutions to fully nonlinear integro-differential equations.
It was only known that solutions $u$ to these equations are $C^\alpha$ up to the boundary for some small $\alpha>0$ (a result for $u$ but not for the quotient $u/d^s$).

Theorem \ref{thm2} gives a sharp boundary regularity estimate for fully nonlinear equations elliptic with respect to $\mathcal L_*$, recovering in the limit $s\uparrow1$ the celebrated boundary regularity estimate of order $2+\alpha$ of Krylov.

Note also that our result is not only an a priori estimate for classical solutions but also applies to viscosity solutions.
For local equations of second order, the boundary regularity for viscosity solutions to fully nonlinear equations has been recently obtained by Silvestre and Sirakov \cite{SS}.
The methods that we introduce here to prove Theorem \ref{thm2} can be used to give a new proof of the results for second order fully nonlinear equations.

Apart from their intrinsic interest, we believe that the results and proofs in this paper will be useful in order to make progress in the study of other important models with nonlocal diffusion where the boundary behavior of solutions plays a crucial role.
For instance, the precise understanding of the boundary behavior of solutions given by Theorem \ref{thm2} (and Theorems \ref{thm2-curved}  and \ref{thm-liouv-1+s} stated later on in the introduction) opens the door to a detailed study of regularity properties for natural free boundary models such as:
\begin{itemize}
\item The obstacle problem with diffusion operator in $L\in \mathcal L_*$:
\[ \min\{ -Lu, u-\varphi\} =0 \quad \mbox{in }\R^n. \]

\item The (variational) ``one-phase'' problem, concerning local minimizers of
\[ \frac{1}{2}\iint \bigl(u(x)-u(x+y)\bigr)^2 \frac{\mu(y/|y|)}{|y|^{n+2s}}\,dx\,dy + \int \chi_{\{u>0\}} (x)\,dx .\]
\end{itemize}
Currently, these models have only been studied for the fractional Laplacian $L=(-\Delta)^s$; see \cite{CSSil,CRSire} and references therein.

\subsection{Estimates in $C^{2,\gamma}$ domains}

We will also establish the following boundary regularity estimate in $C^{2,\gamma}$ domains.

In this result, we consider operators
\begin{equation}\label{curved-1}
\I(u,x)=\inf_b\sup_a\bigl(L_{ab}u+c_{ab}(x)\bigr),
\end{equation}
with $L_{ab}$ of the form \eqref{L*1}-\eqref{L*2} and satisfying
\begin{equation}\label{curved-2}
\|\mu_{ab}\|_{C^{1,\gamma}(S^{n-1})}\leq \Lambda.
\end{equation}
Moreover, we assume also that
\begin{equation}\label{curved-3}
\|c_{ab}\|_{C^{\gamma}(\overline\Omega)}\leq C_0.
\end{equation}
Under these assumptions, we have the following.

\begin{thm}\label{thm2-curved}
Let $s_0\in (0,1)$ and $s\in [s_0,1)$.
Let $\bar\alpha$ be the exponent given by Proposition \ref{thm1}, and let $\gamma\in(0,1-s+\bar\alpha)$.
Assume in addition that $s+\gamma$ is not an integer, and that $\gamma\leq s$.

Let $\Omega$ be any $C^{2,\gamma}$ domain, and let $\I$ be a fully nonlinear operator of the form \eqref{curved-1} with \eqref{L*1}-\eqref{L*2}-\eqref{curved-2}-\eqref{curved-3}.
Let $d(x)$ be a $C^{2,\gamma}(\overline\Omega)$ function that coincides with ${\rm dist}\,(x,\R^n\setminus\Omega)$ in a neighborhood of $\partial\Omega$.

Let  $u\in C\bigl(\overline{\Omega}\bigr)$ be any viscosity solution of
\[\left\{\begin{array}{rcll}
\I(u,x) \hspace{-5pt} &= & \hspace{-5pt}f \quad &\mbox{ in }\Omega\\
u \hspace{-5pt}  &=& \hspace{-5pt} 0            &\mbox{ in }\R^n\setminus\Omega,
\end{array}\right.\]
with $\|f\|_{C^\gamma(\overline\Omega)}\leq C_0$.

Then, $u/d^s$ belongs to $C^{s+\gamma}\bigl(\overline{\Omega}\bigr)$ with the estimate
\[ \bigl\|u/d^s\bigr\|_{C^{s+\gamma}(\overline\Omega)} \le CC_0 ,\]
where the constant $C$ depends only on $\Omega$, $s_0$, $\gamma$, $\lambda$ and $\Lambda$.
\end{thm}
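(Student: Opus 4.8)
The strategy is to reduce the estimate in a general $C^{2,\gamma}$ domain to the flat-boundary estimate of Theorem \ref{thm2} by a localization and boundary-flattening argument, and then to patch the resulting local estimates together with a covering. First I would fix a boundary point $x_0\in\partial\Omega$ and, using that $\Omega$ is $C^{2,\gamma}$, choose a $C^{2,\gamma}$ diffeomorphism $\Phi$ that straightens $\partial\Omega$ near $x_0$, sending $\Omega\cap B_r(x_0)$ to a set of the form $B_\rho^+$ and $\partial\Omega\cap B_r(x_0)$ to a piece of $\{x_n=0\}$. The key point is to understand what happens to the equation $\I(u,x)=f$ under this change of variables: the transformed function $\tilde u=u\circ\Phi^{-1}$ solves a new equation $\tilde{\I}(\tilde u,x)=\tilde f$ where $\tilde{\I}$ is again of Isaacs type $\inf_b\sup_a(\tilde L_{ab}\tilde u+\tilde c_{ab}(x))$. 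Since $\Phi$ is $C^{2,\gamma}$ and close to the identity on a small ball, the transformed operators $\tilde L_{ab}$ are \emph{not} translation invariant and their kernels are perturbations of kernels of the form \eqref{L*1}-\eqref{L*2}; the error terms (coming from the second-order Taylor expansion of $\Phi$ and the Jacobian) contribute lower-order terms which, because $d$ (equivalently $x_n$ after flattening) vanishes on the boundary, can be absorbed into the right-hand side and into the $c_{ab}$-terms at the cost of a constant depending on $\Omega$.

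Second, once the problem is posed in $B_\rho^+$, I would appeal directly to Theorem \ref{thm2}: the transformed spectral measures still satisfy $\|\tilde\mu_{ab}\|_{C^\gamma(S^{n-1})}\leq C\Lambda$ (here the hypothesis \eqref{curved-2} that $\mu_{ab}\in C^{1,\gamma}$ is used precisely to control the $C^\gamma$-norm of the transformed measures, since flattening a curved boundary differentiates the angular dependence once), and $\tilde f\in C^\gamma(B_\rho^+)$ with the appropriate norm bound. Theorem \ref{thm2} then gives $\tilde u/(x_n)^s\in C^{s+\gamma}(\overline{B_{\rho/2}^+})$ with the quantitative estimate. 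Transforming back, and using that $d$ is $C^{2,\gamma}$ and comparable to $x_n\circ\Phi$ near $\partial\Omega$, one deduces that $u/d^s\in C^{s+\gamma}$ in a neighborhood of $x_0$ in $\overline\Omega$, with constant depending only on $\Omega$, $s_0$, $\gamma$, $\lambda$, $\Lambda$. The hypothesis $\gamma\leq s$ ensures $s+\gamma\le 2s$ so that the quotient $\tilde u/(x_n)^s$ does not require information about $\tilde u$ beyond what the flat estimate provides, and also guarantees that composing $d^s$ (which is only $C^{2,\gamma}$ and degenerates like a power) with the $C^{s+\gamma}$ quotient keeps us in the right regularity class.

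Third, I would cover $\overline\Omega$ by finitely many such boundary neighborhoods together with interior balls; in the interior balls one uses the interior regularity theory (from \cite{CS,CS2,CS3} and their extensions), noting that $d^s$ is smooth and bounded away from zero there, so $u/d^s$ inherits interior regularity from $u$. Gluing the local $C^{s+\gamma}$ bounds via a partition of unity, and controlling cross-terms using that the overlaps are compactly contained either in the interior or in a boundary chart, yields the global estimate $\|u/d^s\|_{C^{s+\gamma}(\overline\Omega)}\le CC_0$.

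\textbf{Main obstacle.} The delicate step is the boundary-flattening of the \emph{nonlocal} operator. Unlike the local case, a nonlocal operator does not localize: after the change of variables $\Phi$, the integral $\int(\tilde u(x+y)+\tilde u(x-y)-2\tilde u(x))K(y)\,dy$ picks up contributions from far away where $\Phi$ has been modified (cut off to the identity), and one must carefully split the integral into a near-field part, where the $C^{2,\gamma}$ regularity of $\Phi$ controls the perturbation of the kernel, and a far-field part, where the modification of $\Phi$ only produces a bounded, $C^\gamma$-in-$x$ zeroth-order contribution that can be folded into $\tilde c_{ab}(x)$ and $\tilde f$. Making this splitting preserve both the Isaacs structure \eqref{curved-1} and the ellipticity class \eqref{L*1}-\eqref{L*2}-\eqref{curved-2}-\eqref{curved-3} with uniform constants (in particular uniform as $s\uparrow 1$) is the technical heart of the argument; the requirement $\mu_{ab}\in C^{1,\gamma}$ rather than merely $C^\gamma$, which is absent in the flat-boundary Theorem \ref{thm2}, is exactly what is needed to close this estimate.
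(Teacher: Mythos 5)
There is a genuine gap at the heart of your plan: after flattening the boundary, the transformed operators $\tilde L_{ab}$ are \emph{not} elliptic with respect to $\mathcal L_*(s)$, so Theorem \ref{thm2} cannot be invoked. The change of variables destroys exactly the structure that the flat-boundary argument relies on: the kernels cease to be translation invariant, homogeneous and even; as computed in Proposition \ref{prop-flatten}, they become $K(x,z)=a_1(x,z/|z|)|z|^{-n-2s}+a_2(x,z/|z|)|z|^{-n-2s+1}+\dots$ with an $x$-dependence and an \emph{odd} first-order correction $a_2$. In particular the key identity $L(x_n)_+^s=0$, which is what makes the class $\mathcal L_*$ work in Theorem \ref{thm2} (it enters the barriers, Proposition \ref{thm1}, and the Liouville theorem), fails for the transformed operator. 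Your suggestion that the extra terms are "lower-order" and can be absorbed into $\tilde c_{ab}$ and $\tilde f$ is not justified: the term with kernel $a_2(x,z/|z|)|z|^{-n-2s+1}$ is an operator of positive order $2s-1$ (for $s>1/2$) acting on a function that is only $C^s$ up to the boundary, so its contribution is not a priori a bounded $C^\gamma$ function. Showing that its action on $d^s$ (times a $C^{2,\gamma}$ factor) is $C^\gamma$ up to the boundary requires a genuine cancellation coming from the oddness of $a_2$ — this is precisely the content of Proposition \ref{Lds-is-Cs} and Lemmas \ref{lem-Lds-1}--\ref{lem-Lds-3}, and it is the technical heart of the curved case, not a routine absorption.

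For this reason the paper explicitly does \emph{not} deduce Theorem \ref{thm2-curved} from Theorem \ref{thm2} by flattening (see the remark after the statement: "we will not do this here"). Instead it reruns the compactness/blow-up scheme directly in the curved domain (Proposition \ref{prop_contr_1+s+alpha8}), comparing $u$ with $d^s$ times polynomials; the blown-up domains converge to a half-space, the Liouville Theorem \ref{thm-liouv-1+s} is applied in the limit, and the only place where the boundary is flattened is in the auxiliary computation that $L(d^s\eta)\in C^\gamma(\overline\Omega)$ for $L$ with $C^{1,\gamma}$ spectral measure (Section \ref{calculets}). Note also that the hypothesis $\gamma\le s$ is needed exactly for that auxiliary estimate (see the remark after Lemma \ref{Lds-is-Cs-C}), not for the reasons you give. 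If you want to salvage a flattening-based proof, you would have to extend the whole flat-boundary machinery (barriers, Proposition \ref{thm1}, the iteration) to the $x$-dependent, non-homogeneous kernels produced by the change of variables, which is essentially equivalent to what the paper does directly in the curved domain.
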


In case of local equations of second order, the Krylov estimate in $C^{2,\gamma}$ domains is usually proved by flattening the boundary; see \cite{Kazdan,SS}.
However, we will not do this here, and thus we do not deduce Theorem \ref{thm2-curved} from Theorem \ref{thm2}.

Notice that as a consequence of Theorem \ref{thm2-curved}, one can immediately obtain estimates for solutions to
\[\left\{\begin{array}{rcl}
\I u &= & f \quad \mbox{in } \Omega\\
u  &=&g\quad \mbox{in }\R^n\setminus\Omega,
\end{array}\right.\]
with $g\in C^{2s+\gamma}(\R^n)$ and $f\in C^\gamma(\overline\Omega)$.
Indeed, one only needs to consider $\tilde u=u-g$ in $\R^n$, and apply Theorem \ref{thm2-curved} to $\tilde u$ to find that $(u-g)/d^s\in C^{s+\gamma}(\overline\Omega)$.

\subsection{Ingredients of the proof}

Let us explain now the main ideas in the proofs of Theorems \ref{thm2} and \ref{thm2-curved}.

Our proof of these results differs substantially from boundary regularity methods in second order equations.
Indeed, recall that for second order equations one first shows that $D^2u$ is bounded on the boundary, and then the estimate for equations with bounded measurable coefficients implies immediately a $C^{2,\alpha}$ estimate on the boundary for solutions to fully nonlinear equations; see \cite{Kazdan, Caff-Cabre}.

This is much more delicate for nonlocal equations, and it is not easy at all to prove Theorem \ref{thm2} using Proposition \ref{thm1}.
A main reason for this is not only the nonlocal character of the estimates, but also that tangential and normal derivatives of the solution behave differently on the boundary; recall that the solution is $C^s$ and not Lipschitz up to the boundary.

In our proof, the main step towards Theorem \ref{thm2} is an iterative result of the form
\begin{equation}\label{austin-iter}
u/(x_n)^s\in C^\beta(B_{3/4}^+)\quad \Longrightarrow\quad u/(x_n)^s\in C^{\alpha+\beta}(B_{1/2}^+),
\end{equation}
where $\alpha\in (0,\bar\alpha)$ is small, and $\beta\in[0,1]$ satisfies $\alpha+\beta\leq \gamma+s$.

Essentially, this is equivalent to an estimate {\em on} the boundary, which reads as follows.
If $u$ satisfies the hypotheses of Theorem \ref{thm2}, and $u/(x_n)^s\in C^\beta(B_{3/4}^+)$, then for all $z\in \{x_n=0\}\cap \overline {B_{1/2}}$ there exist $b_z\in \R$ and $p_z\in \R^n$ for which
\begin{equation}\label{???}
\left| u(x) - b_z(x_n)_+^s - (p_z\cdot x)(x_n)_+^s \right| \le C|x-z|^{\alpha+\beta+s}\quad \mbox{for all }x\in B_1.
\end{equation}
In case that $\alpha+\beta<1$, then the term $(p_z\cdot x)(x_n)_+^s$ does not appear.

The estimate on the boundary \eqref{???} relies heavily on two ingredients, as explained next.

The first ingredient is a Liouville-type theorem for solutions in a half space.

Essentially, we want a Liouville theorem that states that any solution to
\[\left\{\begin{array}{rcll}
\I u\hspace{-4pt} &= &\hspace{-4pt} 0  &\textrm{ in }\ \{x_n>0\}\\
u \hspace{-4pt}&=& \hspace{-4pt} 0      &\textrm{ in }\ \{x_n<0\},
\end{array}\right.\]
satisfying the growth control at infinity
\[\|u\|_{L^\infty(B_R)} \le C R^{s+\alpha+\beta} \quad \mbox{for all } R\ge 1\]
must be of the form
\[u(x)= (x_n)_+^s(p\cdot x+b).\]
However, for functions having such growth at infinity (recall that $s+\alpha+\beta$ could be $2s+\gamma$), the operator $\I u$ is not defined.

The correct form of such Liouville theorem is the following.

\begin{thm}\label{thm-liouv-1+s}
Let $\bar\alpha>0$ be the exponent given by Proposition \ref{thm1}.
Assume that $u\in C(\R^n)$ satisfies in the viscosity sense
\[\left\{
\begin{array}{rcll}
M^+_{\mathcal L_*}\left\{u(\cdot+h)-u\right\}\ge 0\quad \textrm{and}\quad M^-_{\mathcal L_*} \left\{u(\cdot+h)-u\right\}\le0 &\textrm{in}& \{x_n>0\},\\
u=0 &\textrm{in}&\{x_n<0\},
\end{array}\right.
\]
for all $h\in \R^n$ such that $h_n\geq0$.

Assume that for some $\beta\in(0,1)$ and $\alpha\in(0,\bar\alpha)$, $u$ satisfies
\begin{equation}\label{growthcontrol-1+s}
[u/(x_n)_+^s]_{C^\beta(B_R)}\le C R^\alpha \quad \mbox{for all } R\ge 1.
\end{equation}
Then,
\[u(x)= (x_n)_+^s(p\cdot x+b)\]
for some $p\in \R^n$ and $b\in\R$.
\end{thm}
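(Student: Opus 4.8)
The plan is to prove the Liouville theorem by a standard blow-down / monotonicity-in-$R$ argument, using Proposition \ref{thm1} as the regularity engine that produces a fixed H\"older gain $\bar\alpha$ at the boundary. The key quantity to control is
\[
\theta(R) := \frac{1}{R^{\alpha+\beta}} \Bigl[u/(x_n)_+^s - \ell_R\Bigr]_{C^\beta(B_R^+)},
\]
where for each $R$ we subtract off the best linear approximation $\ell_R(x) = p_R\cdot x + b_R$ of $u/(x_n)_+^s$ at the origin (best in the sense of minimizing the left-hand side), so that $\ell_R$ plays the role of the "linear part" we want to peel off. The growth hypothesis \eqref{growthcontrol-1+s} gives $\theta(R)\le C$ uniformly, but the real goal is to show $\theta(R)\to 0$, or better, that $\theta$ is decreasing in $R$ up to a universal factor; combined with a bound on $|p_R|$ and $|b_R|$ this forces $u/(x_n)_+^s$ to be affine, i.e. $u = (x_n)_+^s(p\cdot x + b)$.

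First I would set up the difference quotient machinery. The hypothesis is stated for the incremental functions $v_h := u(\cdot+h)-u$ for all $h$ with $h_n\ge 0$: these satisfy $M^+_{\mathcal L_*}v_h\ge 0$, $M^-_{\mathcal L_*}v_h\le 0$ in $\{x_n>0\}$ and vanish in $\{x_n<0\}$. Hence Proposition \ref{thm1}, after the obvious rescaling to balls $B_R^+$, applies to $v_h$ and gives $\|v_h/x_n^s\|_{C^{\bar\alpha}(B_{R/2}^+)} \le CR^{-s-\bar\alpha}\|v_h\|_{L^\infty(B_R)}$; in fact one needs the version centered at boundary points $z\in\{x_n=0\}$, but Proposition \ref{thm1} is translation invariant along $\{x_n=0\}$ so this is immediate. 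The point is to iterate this to improve the exponent: if $[u/(x_n)_+^s]_{C^\beta(B_R^+)}\le CR^\alpha$, then $v_h/x_n^s$ is controlled in $C^\beta$ with a gain of $|h|^\beta$, and feeding this back through the scaled Proposition \ref{thm1} (choosing $|h|$ comparable to the relevant scale) upgrades the modulus of $u/(x_n)_+^s$ by the factor $\bar\alpha$ at each step, provided $\beta+\bar\alpha$ does not cross an integer — which is why the statement restricts to $\alpha<\bar\alpha$ and $\beta\in(0,1)$. This is the nonlocal analogue of "differentiate the equation and apply Schauder/Krylov."

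Next I would run the contradiction/compactness argument to conclude. Suppose the theorem fails; then there is a sequence of solutions $u_k$ with $\theta(R)\not\to 0$, and after normalizing $[u_k/(x_n)_+^s - \ell_{R_k}]_{C^\beta(B_{R_k}^+)} = R_k^{\alpha+\beta}$ and rescaling $\tilde u_k(x) = R_k^{-s-\alpha-\beta} u_k(R_k x)$, one gets a sequence of normalized solutions on $B_1^+$ with unit $C^\beta$-seminorm of the quotient (after subtracting the affine part) but satisfying, by the iteration step, a $C^{\beta+\bar\alpha}$ bound that contradicts the normalization as $k\to\infty$ — unless the limit is nontrivial, in which case the limit $u_\infty$ is a global solution with $[u_\infty/(x_n)_+^s]_{C^\beta(B_R)}\le CR^{\alpha-\bar\alpha}$ for all $R$, and since $\alpha<\bar\alpha$ this forces the quotient to have zero $C^\beta$-seminorm, i.e. $u_\infty/(x_n)_+^s$ is constant, contradicting unit seminorm. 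To make the compactness work I need (i) a uniform interior $C^{s+\text{something}}$ estimate away from $\{x_n=0\}$ (from the interior Caffarelli–Silvestre theory, since $u$ solves a translation-invariant fully nonlinear equation) to get local uniform convergence, and (ii) stability of viscosity (super/sub)solutions under this convergence, which is standard for the extremal operators $M^\pm_{\mathcal L_*}$. The growth control \eqref{growthcontrol-1+s} ensures tightness of the tails so the nonlocal operators pass to the limit.

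The main obstacle I expect is the iteration step \eqref{austin-iter}-type implication at the level of the incremental quotients: one must show carefully that controlling $v_h/x_n^s$ in $C^\beta$ on dyadic balls, with the right dependence on $|h|$, genuinely upgrades the $C^\beta$ modulus of $u/(x_n)_+^s$ to $C^{\beta+\bar\alpha}$ near the boundary — this requires reconstructing the normal-direction behavior of $u$ (the $(x_n)_+^s$ profile and its corrections) from the tangential increments, and handling the fact that $u$ is only $C^s$, not Lipschitz, up to $\{x_n=0\}$, so the "derivative" in the normal direction is really the coefficient $b$ multiplying $(x_n)_+^s$. Equivalently, the subtle point is that $v_h$ for $h=t e_n$, $t>0$, is not a genuine derivative and one must extract $\partial_n$-type information by a separate argument (a one-dimensional analysis of the $(x_n)_+^s$-scaling, or by using that $(x_n)_+^s$ itself is annihilated by every $L\in\mathcal L_*$). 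Once the exponent-improvement lemma is in place, the rest is the routine blow-down bookkeeping sketched above.
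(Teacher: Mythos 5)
Your proposal has a genuine gap, and it sits exactly where you yourself flag the ``main obstacle'': the normal direction. Two concrete problems. First, Proposition \ref{thm1} does \emph{not} apply to the increments $v_h=u(\cdot+h)-u$ when $h_n>0$: in that case $v_h(x)=u(x+h)$ on the strip $\{-h_n<x_n<0\}$, so $v_h$ does not vanish in $\{x_n<0\}$ and the hypothesis $u=0$ in $B_1^-$ of Proposition \ref{thm1} fails; only purely tangential increments ($h_n=0$) fit that proposition. Second, and more fundamentally, your exponent-improvement step (``feeding back through the scaled Proposition \ref{thm1} upgrades the modulus by $\bar\alpha$ at each step'') is not substantiated: Proposition \ref{thm1} always returns the same fixed small exponent $\bar\alpha$ for solutions of the extremal inequalities, and the bootstrapping mechanism of that type in this paper (Section \ref{sec-regularity-compactness}) is built \emph{on top of} the Liouville theorem you are trying to prove, so invoking it here is circular. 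Because of these two points, your blow-down bookkeeping can at best show that the tangential increments of $u$ are multiples of $(x_n)_+^s$; it cannot rule out, nor capture, the $(x_n)_+^{1+s}$ component in the normal variable, which is precisely the nontrivial content of the theorem.

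For comparison, the paper's proof splits the argument accordingly: it applies the intermediate Liouville result (Proposition \ref{liouville-krylov}, itself a direct rescaling of Proposition \ref{thm1} plus a blow-down, valid for growth below $s+\bar\alpha$) only to tangential increments, concluding $u(x)=(x_n)_+^s(a\cdot x+b)+\psi(x_n)$; then the one-dimensional remainder $\psi$ is classified by a separate 1D Liouville theorem for the incremental equation (Lemma \ref{liouv-1+s-1D}(b)), proved by differentiating, reducing to $(-\Delta)^s u'=0$ with sub-$2s$ growth, and classifying via the Caffarelli--Silvestre extension and an expansion in associated Legendre functions (Lemmas \ref{classification1D}, \ref{classification1D-1+s}, \ref{lemODE}), together with the fact that 1D functions see every $L\in\mathcal L_*$ as a multiple of the 1D fractional Laplacian (Lemma \ref{lem1d}). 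Your sketch gestures at ``a one-dimensional analysis'' but supplies no substitute for this classification, which is the core of the proof; without it the statement $u=(x_n)_+^s(p\cdot x+b)$ does not follow.
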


To prove Theorem \ref{thm-liouv-1+s}, we apply Proposition \ref{thm1} to incremental quotients of $u$ in the first $(n-1)$-variables.
After this, rescaling the obtained estimates and using \eqref{growthcontrol-1+s}, we find that such incremental quotients are zero, and thus that $u$ is a 1D solution.
Then, we use that for 1D functions all operators $L\in \mathcal L_*$ coincide up to a multiplicative constant with the fractional Laplacian $(-\Delta)^s$; see Lemma \ref{lem1d}.
Therefore, we only need to prove a Liouville theorem for solutions to $(-\Delta)^sw=0$ in $\R_+$, $w=0$ in $\R_-$, satisfying a growth control at infinity.
This is done in Lemmas~\ref{classification1D} and \ref{liouv-1+s-1D}.

The second ingredient towards \eqref{???} is a compactness argument.
With $u$ as in Theorem \ref{thm2}, and with $u/(x_n)^s\in C^\beta(B_{3/4}^+)$, we suppose by contradiction that \eqref{???} does not hold, and we blow up the fully nonlinear equation at a boundary point (after subtracting appropriate terms to the solution).
We then show that the blow up sequence converges to an entire solution in $\{x_n>0\}$.
Finally, the contradiction is reached by applying the Liouville-type theorem stated above to the entire solution in $\{x_n>0\}$.
For this, we need to develop a boundary version of a method introduced by the second author in \cite{Se, Se2}.
The method was conceived there to prove interior regularity for integro-differential equations with rough kernels.

These are the main ideas used to prove Theorem \ref{thm2}.

The proof of Theorem \ref{thm2-curved} follows the same ideas as the one of Theorem \ref{thm2}.
However, since we will not flatten the boundary of $\Omega$ (since the equation would change too much), then it requires one additional ingredient.

Indeed, we need to show that $L(d^s)$ is $C^\gamma(\overline\Omega)$ for any linear operator $L$ of the form \eqref{L*1}-\eqref{L*2}-\eqref{curved-2}.
This is given by Proposition \ref{Lds-is-Cs}.
Notice that in case of flat boundary one has that $L(x_n)_+^s=0$ for any such operator, so that there is nothing to prove.
To show this in general $C^{2,\gamma}$ domains, we need to flatten the boundary.

After flattening the boundary, any operator $L$ of the form \eqref{L*1}-\eqref{L*2}-\eqref{curved-2} becomes
\[\tilde L(u,x)=\textrm{PV}\int_{\R^n}\bigl(u(x)-u(x+z)\bigr)K(x,z)dz,\]
where
\[K(x,z)=\frac{a_1(x,z/|z|)}{|z|^{n+2s}}+\frac{a_2(x,z/|z|)}{|z|^{n+2s-1}}\,\chi_{B_1}(z)+J(x,z),\]
where $a_1$ is \emph{even} in the second variable, $a_2$ is \emph{odd} in the second variable, and $J$ has a singularity of order $n+2s-1-\gamma$ near the origin.
Moreover, $a_1$, $a_2$, and $J$ are $C^\gamma$ in the $x$-variable.

To prove that $\tilde L((x_n)_+^s,x)$ is a $C^\gamma$ function we have to take advantage of an important cancelation coming from the fact that $a_2$ is odd in the second variable.

\vspace{2mm}

The paper is organized as follows.
In Section \ref{sec-properties} we give some important results on $\mathcal L_*$ and $\mathcal L_0$.
In Section \ref{sec-barriers} we construct some sub and supersolutions that will be used later.
In Section \ref{sec-krylov-method} we prove Proposition \ref{thm1}.
In Sections \ref{sec-liouville-theorems} and \ref{sec-liouville-1+s} we show the Liouville Theorem \ref{thm-liouv-1+s}.
Then, in Section \ref{sec-regularity-compactness} we prove Theorem \ref{thm2}, and in Section \ref{sec-regularity-curved} we prove Theorem \ref{thm2-curved}.
Finally, in Section \ref{calculets} we prove Proposition \ref{Lds-is-Cs}.

\section{Properties of $\mathcal L_*$ and $\mathcal L_0$}
\label{sec-properties}

This section has two main purposes: to show that the class $\mathcal L_*\subset \mathcal L_0$ is the appropriate one to obtain fine boundary regularity results, and to give some important results on $\mathcal L_*$ and $\mathcal L_0$.

\addtocontents{toc}{\protect\setcounter{tocdepth}{1}}  

\subsection{The class $\mathcal L_*$}

For $s\in (0,1)$, we define the ellipticity class $\mathcal L_*=\mathcal L_*(s)$  as the set of all linear
operators $L$ of the form \eqref{L*1}-\eqref{L*2}.

Throughout the paper, the extremal operators (as defined in \cite{CS}) for the class $\mathcal L_*$ are denoted by $M^+$ and $M^-$, that is,
\begin{equation}\label{M+-}
M^+ u(x)= M^+_{\mathcal L_*} u(x)= \sup_{L\in \mathcal L_*} L u(x)\quad \mbox{ and } \quad M^- u(x)=M^-_{\mathcal L_*} u(x)= \inf_{L\in \mathcal L_*} L u(x).
\end{equation}

The following useful formula writes an operator $L \in \mathcal L_*$ as a weighted integral of one dimensional fractional Laplacians in all directions.
\begin{equation}\label{Lasint}
\begin{split}
Lu &= (1-s)\int_{S^{n-1}}d\theta \,\frac{1}{2}\int_{-\infty}^{\infty}dr  \left(\frac{u(x+r\theta)+u(x-r\theta)}{2}-u(x)\right) \frac{\mu(\theta)}{|r|^{n+2s}} \,r^{n-1}\\
&= -\frac{1-s}{2c_{1,s}}\int_{S^{n-1}}d\theta\, \mu(\theta)\,(-\partial_{\theta\theta})^s u(x),
\end{split}
\end{equation}
where
\[-(-\partial_{\theta\theta})^s u(x) = c_{1,s}\int_{-\infty}^{\infty} \left(\frac{u(x+\theta r)+u(x-\theta r)}{2}- u(x)\right)\,\frac{dr}{|r|^{1+2s}}\]
is the one-dimensional fractional Laplacian in the direction $\theta$, whose Fourier symbol is $-|\theta\cdot\xi|^{2s}$.

The following is an immediate consequence of the formula \eqref{Lasint}.

\begin{lem}\label{lem1d}
Let $u$ be a function depending only on variable $x_n$, i.e. $u(x)= w(x_n)$, where $w:\R\to \R$. Then,
\[ L u(x) =  -\frac{1-s}{2c_{1,s}} \left(\int_{S^{n-1}} |\theta_n|^{2s} \mu(\theta)\,d\theta\right) (-\Delta)^s_{\R} w(x_n),\]
where $(-\Delta)^s_{\R}$ denotes the fractional Laplacian in dimension one.
\end{lem}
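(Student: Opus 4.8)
The plan is to specialize the integral representation \eqref{Lasint} to functions of one variable and collect the resulting angular weight into a single constant. Let $u(x)=w(x_n)$ with $w\colon\R\to\R$. First I would write, using \eqref{Lasint},
\[
Lu(x) = -\frac{1-s}{2c_{1,s}}\int_{S^{n-1}} \mu(\theta)\,\bigl(-\partial_{\theta\theta}\bigr)^s u(x)\,d\theta,
\]
so the task reduces to computing $\bigl(-\partial_{\theta\theta}\bigr)^s u(x)$ for $u(x)=w(x_n)$. By definition,
\[
-\bigl(-\partial_{\theta\theta}\bigr)^s u(x) = c_{1,s}\int_{-\infty}^{\infty}\left(\frac{u(x+\theta r)+u(x-\theta r)}{2}-u(x)\right)\frac{dr}{|r|^{1+2s}},
\]
and since $u(x\pm\theta r) = w(x_n \pm \theta_n r)$, the integrand depends on $\theta$ only through $\theta_n$ and involves the one-variable function $w$ evaluated at $x_n \pm \theta_n r$.

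The key step is the change of variables $t = \theta_n r$ in this one-dimensional integral (assuming $\theta_n\neq 0$; the set $\{\theta_n=0\}$ has measure zero on $S^{n-1}$ and contributes nothing). Under $r = t/\theta_n$ we get $dr = dt/|\theta_n|$ and $|r|^{1+2s} = |t|^{1+2s}/|\theta_n|^{1+2s}$, so
\[
-\bigl(-\partial_{\theta\theta}\bigr)^s u(x) = c_{1,s}\,|\theta_n|^{2s}\int_{-\infty}^{\infty}\left(\frac{w(x_n+t)+w(x_n-t)}{2}-w(x_n)\right)\frac{dt}{|t|^{1+2s}} = -|\theta_n|^{2s}\,(-\Delta)^s_{\R}w(x_n),
\]
where the last equality is just the definition of the one-dimensional fractional Laplacian (the normalizing constant $c_{1,s}$ is precisely the one making $(-\Delta)^s_\R$ have Fourier symbol $|\xi|^{2s}$, which matches the stated symbol $-|\theta\cdot\xi|^{2s}$ of $(-\partial_{\theta\theta})^s$ specialized to $\xi = \xi_n e_n$, i.e. $-|\theta_n\xi_n|^{2s}$).

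Substituting back,
\[
Lu(x) = -\frac{1-s}{2c_{1,s}}\int_{S^{n-1}} \mu(\theta)\,\Bigl(-|\theta_n|^{2s}(-\Delta)^s_\R w(x_n)\Bigr)(-1)\,d\theta = -\frac{1-s}{2c_{1,s}}\left(\int_{S^{n-1}}|\theta_n|^{2s}\mu(\theta)\,d\theta\right)(-\Delta)^s_\R w(x_n),
\]
which is the claimed identity, with the scalar $\int_{S^{n-1}}|\theta_n|^{2s}\mu(\theta)\,d\theta$ finite because $\mu\in L^\infty(S^{n-1})$ and $|\theta_n|\le 1$. I do not expect any genuine obstacle here: the only point requiring a little care is the bookkeeping of signs and of the constant $c_{1,s}$ when matching $(-\partial_{\theta\theta})^s$ restricted to one-variable functions with $(-\Delta)^s_\R$, and the harmlessness of the measure-zero set $\{\theta_n=0\}$; everything else is the direct substitution described above.
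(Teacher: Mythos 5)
Your proof is correct and follows essentially the same route as the paper: both start from the representation \eqref{Lasint} and reduce to the scaling identity $(-\partial_{\theta\theta})^s\bigl(w(x_n)\bigr)=|\theta_n|^{2s}(-\Delta)^s_{\R}w(x_n)$, which the paper invokes as the scaling property of the one-dimensional fractional Laplacian and you verify explicitly via the substitution $t=\theta_n r$.
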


\begin{proof}
Using \eqref{Lasint} we find
\[
\begin{split}
L u(x) &=  \frac{1-s}{2c_{1,s}} \int_{S^{n-1}}  -(-\Delta)^s_{\R} \bigl( w(x_n + \theta_n\,\cdot\,)\bigr) \,\mu(\theta)\,d\theta
\\
&=   \frac{1-s}{2c_{1,s}} \int_{S^{n-1}} -|\theta_n|^{2s} (-\Delta)^s_{\R} \bigl( w(x_n + \,\cdot\,)\bigr) \,\mu(\theta)\,d\theta,
\end{split}
\]
as wanted.
\end{proof}

Another consequence of \eqref{Lasint} is that $M^+$ and $M^-$ admit the following ``closed formulae'':
\[
M^+ u(x) = \frac{1-s}{2c_{1,s}} \int_{S^{n-1}} \left\{ \Lambda\bigl(-(-\partial_{\theta\theta})^s w(x)\bigr)^+ - \lambda\bigl(-(-\partial_{\theta\theta})^s w(x)\bigr)^-\right\}d\theta
\]
and
\[
M^- u(x) = \frac{1-s}{2c_{1,s}} \int_{S^{n-1}}  \left\{ \lambda\bigl(-(-\partial_{\theta\theta})^s w(x)\bigr)^+ - \Lambda\bigl(-(-\partial_{\theta\theta})^s w(x)\bigr)^-\right\}d\theta.
\]

In all the paper, given $\nu\in S^{n-1}$ and $\beta\in(0,2s)$ we denote by $\varphi^\beta:\R\rightarrow \R$ and $\varphi_\nu^\beta:\R^n\rightarrow \R$ the functions
\begin{equation}\label{phinubeta}
\varphi^\beta (x) := (x_+)^\beta\qquad\textrm{and}\qquad  \varphi_\nu^\beta (x) := (x\cdot\nu)_+^\beta.
\end{equation}

A very important property of $\mathcal L_*$ is the following.

\begin{lem}\label{sol1dRn}
For any unit vector $\nu\in S^{n-1}$, the function $\varphi_\nu^s$ satisfies
$M^+ \varphi_\nu^s= M^-\varphi_\nu^s =0$ in $\{x\cdot \nu>0\}$ and $\varphi_\nu^s=0$ in $\{x\cdot \nu<0\}$.
\end{lem}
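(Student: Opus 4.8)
The plan is to reduce everything to the one–dimensional identity $(-\Delta)^s_{\mathbb{R}}(t_+)^s=0$ in $(0,\infty)$ by means of Lemma~\ref{lem1d}. First, by the rotational invariance of the class $\mathcal L_*$ --- the conditions \eqref{L*1}--\eqref{L*2} are preserved when $\mu$ is composed with a rotation, and likewise $M^\pm$ commute with rotations --- it suffices to treat the case $\nu=e_n$, so that $\varphi_\nu^s(x)=(x_n)_+^s=w(x_n)$ with $w(t)=(t_+)^s$. Fix a point $x$ with $x_n>0$. In a neighborhood of $x$ the function $w$ is real analytic, and since $w(t)\le C(1+|t|)^s$ with $s<2s<n+2s$, the function $\varphi_\nu^s$ has finite integral against $\omega_s(\cdot-x)$; hence for every $L\in\mathcal L_*$ the value $L\varphi_\nu^s(x)$ is a well-defined, absolutely convergent integral, and $\varphi_\nu^s$ is an admissible test function at $x$.

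Next I would apply Lemma~\ref{lem1d}: for every $L\in\mathcal L_*$ with spectral measure $\mu$,
\[
L\varphi_\nu^s(x)=-\frac{1-s}{2c_{1,s}}\Bigl(\int_{S^{n-1}}|\theta_n|^{2s}\mu(\theta)\,d\theta\Bigr)\,(-\Delta)^s_{\mathbb{R}}w(x_n).
\]
Now I invoke the classical fact that $(-\Delta)^s_{\mathbb{R}}(t_+)^s=0$ for all $t>0$ (this is the $n=1$ instance of the identity already quoted in the Introduction; it can be obtained from the Fourier symbol $-|\xi|^{2s}$ of $(-\Delta)^s_{\mathbb{R}}$, or by a direct computation of the principal value integral against $|r|^{-1-2s}$ using the homogeneity of $(t_+)^s$). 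It follows that $L\varphi_\nu^s(x)=0$ for every $L\in\mathcal L_*$ and every $x$ with $x_n>0$.

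Finally, taking the supremum and the infimum over $L\in\mathcal L_*$ in \eqref{M+-} gives $M^+\varphi_\nu^s=M^-\varphi_\nu^s=0$ in $\{x_n>0\}$; equivalently, one reads this off directly from the closed formulae for $M^\pm$ displayed above, since the integrand $-(-\partial_{\theta\theta})^s\varphi_\nu^s(x)=-|\theta_n|^{2s}(-\Delta)^s_{\mathbb{R}}w(x_n)$ vanishes there. The condition $\varphi_\nu^s=0$ in $\{x\cdot\nu<0\}$ is immediate from the definition \eqref{phinubeta}. There is no genuine obstacle in this argument; the only input beyond Lemma~\ref{lem1d} is the one–dimensional identity, and the single point worth verifying is that $L\varphi_\nu^s(x)$ is classically well defined at interior points $x_n>0$, which is ensured by the local real-analyticity and the subcritical growth of $(t_+)^s$.
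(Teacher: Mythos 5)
Your proof is correct and follows essentially the same route as the paper: reduce to one dimension via Lemma \ref{lem1d} (the paper does this directly for the direction $\nu$, while you first rotate to $\nu=e_n$, which is equivalent since $\mathcal L_*$ is rotation invariant) and then invoke the classical identity $(-\Delta)^s_{\R}(t_+)^s=0$ in $(0,\infty)$, cited in the paper from \cite[Proposition 3.1]{RS-Dir}. The extra verification that $L\varphi_\nu^s(x)$ is classically well defined at points with $x\cdot\nu>0$ is a harmless addition that the paper leaves implicit.
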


\begin{proof}
We use Lemma \ref{lem1d} and the well-known fact that the function $\varphi^s(x)=(x_+)^s$ satisfies  $(-\Delta)_{\R}^s \varphi^s =0$ in $\{x>0\}$; see for instance \cite[Proposition 3.1]{RS-Dir}.
\end{proof}

Next we give a useful property of $M^+$ and $M^-$.

\begin{lem}\label{otherpowers}
Let $\beta \in(0,2s)$, and let $M^+$ and $M^-$ be defined by \eqref{M+-}.
For any unit vector $\nu\in S^{n-1}$, the function $\varphi_\nu^\beta$ satisfies
$M^+\varphi_\nu^\beta(x)  = \overline c(s,\beta)  (x\cdot \nu)^{\beta-2s}$
and
$M^-\varphi_\nu^\beta(x)  = \underline c(s,\beta)  (x\cdot \nu)^{\beta-2s}$
in $\{x\cdot \nu>0\}$, and $\varphi_\nu^\beta=0$ in $\{x\cdot \nu<0\}$.
Here, $\overline c$ and $\underline c$ are constants depending only on $s$, $\beta$, $n$, and ellipticity constants.

Moreover, $\overline c$ and $\underline c$ satisfy $\overline c\ge\underline c$, and they are continuous as functions of the variables $(s,\beta)$ in $\{0<s \le1,\ 0<\beta<2s\}$.
In addition, we have
\begin{equation}\label{adeu1}
 \overline c(s,\beta) >\underline c(s,\beta) >0\quad \mbox{for all }\beta\in(s,2s).
\end{equation}
and
\begin{equation}\label{adeu2}
\lim_{\beta\nearrow 2s} \underline c(s,\beta) =
\begin{cases}
+\infty \quad &\mbox{for all } s\in (0,1)\\
C>0 &\mbox{for } s=1.
\end{cases}
\end{equation}
\end{lem}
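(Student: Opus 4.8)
The plan is to reduce everything to one dimension via Lemma~\ref{lem1d} and then compute the action of the one-dimensional fractional Laplacian on the power $\varphi^\beta$. First I would fix $\nu=e_n$ without loss of generality, since all operators in $\mathcal L_*$ are rotationally covariant up to relabelling the spectral measure. For a function $u(x)=w(x\cdot\theta)$ depending on a single direction, formula \eqref{Lasint} expresses $Lu(x)$ as a superposition over $\theta\in S^{n-1}$ of one-dimensional fractional Laplacians $-(-\partial_{\theta\theta})^s$. The key classical computation is that $(-\Delta)^s_{\R}(t_+)^\beta = c(s,\beta)\,(t_+)^{\beta-2s}$ for $t>0$ and $\beta\in(0,2s)$, with an explicit constant $c(s,\beta)$ given in terms of Gamma functions (this is standard; it follows e.g. from the Fourier/Beta-integral identity, and it is recorded in the references already cited such as \cite{RS-Dir} for the case $\beta=s$). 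Applying this together with the scaling $-(-\partial_{\theta\theta})^s\varphi^\beta_\nu(x)$ being a multiple of $(x\cdot\nu)^{\beta-2s}$ when $x\cdot\nu>0$ and $x\cdot\theta$ has the appropriate sign, I obtain that for each $\theta$ the contribution is $c(s,\beta)\,|\theta_n|^{2s}\,(x_n)^{\beta-2s}$ times a sign-dependent factor, and the power $(x_n)^{\beta-2s}$ factors out of the whole integral.

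Next I would take the supremum over $L\in\mathcal L_*$ inside the integral. Using the ``closed formulae'' for $M^+$ and $M^-$ displayed just after Lemma~\ref{lem1d}, applied to $w=\varphi^\beta$, the extremal operators act on $\varphi^\beta_\nu$ by integrating against $\Lambda$ the positive part and $\lambda$ the negative part of $-(-\partial_{\theta\theta})^s\varphi^\beta(x\cdot\nu)$ over $\theta\in S^{n-1}$. Since $-(-\partial_{\theta\theta})^s\varphi^\beta(t)$ for $t>0$ equals $c(s,\beta)\,t^{\beta-2s}$ up to the positive geometric factor $|\theta\cdot\nu|^{2s}$ (and up to the sign of $c(s,\beta)$, which is positive for $\beta>s$, zero at $\beta=s$, and negative for $\beta<s$), the whole thing collapses to $\bar c(s,\beta)(x\cdot\nu)^{\beta-2s}$ and $\underline c(s,\beta)(x\cdot\nu)^{\beta-2s}$ respectively, where $\bar c$ and $\underline c$ are explicit finite linear combinations of $\lambda$ and $\Lambda$ with the dimensional constant $\int_{S^{n-1}}|\theta\cdot\nu|^{2s}\,d\theta$ and the Gamma-function constant $c(s,\beta)$. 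The inequality $\bar c\ge\underline c$ is immediate from $M^+\ge M^-$, and continuity in $(s,\beta)$ on $\{0<s\le 1,\ 0<\beta<2s\}$ follows from continuity of the Gamma-function expression for $c(s,\beta)$ and of the factor $(1-s)/c_{1,s}$, which has a finite nonzero limit as $s\uparrow1$ (this is exactly where the uniformity as $s\to1$ is used).

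For the sign assertions, when $\beta\in(s,2s)$ the constant $c(s,\beta)$ in $(-\Delta)^s_\R(t_+)^\beta=c(s,\beta)(t_+)^{\beta-2s}$ is strictly positive — this is the same fact that makes $\varphi^\beta$ a strict supersolution rather than a solution — so $-(-\partial_{\theta\theta})^s\varphi^\beta(t)<0$ for $t>0$, hence in the closed formula only the negative-part term survives and both $\bar c$ and $\underline c$ come out strictly positive, with $\bar c>\underline c$ precisely because $\lambda<\Lambda$ multiply a strictly positive integral (if $\lambda=\Lambda$ the inequality is an equality; I should either assume $\lambda<\Lambda$ here, which is the nondegenerate case of interest, or state $\bar c\ge\underline c$ only — I will follow the paper's convention). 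Finally, for \eqref{adeu2} I would examine the explicit Gamma-function form of $c(s,\beta)$ as $\beta\uparrow 2s$: one of the Gamma factors has argument tending to $0$ (or a pole), forcing $|c(s,\beta)|\to\infty$ for every $s<1$, while at $s=1$ the analogous local computation $(-\Delta)(t_+)^\beta=-\beta(\beta-1)t^{\beta-2}\to -2\cdot1\cdot t^{0}$ stays finite, giving the dichotomy stated.

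I expect the main obstacle to be bookkeeping rather than conceptual: getting the constant $c(s,\beta)$ right, in particular tracking its sign across $\beta=s$ and its blow-up as $\beta\uparrow2s$, and correctly matching the normalization $(1-s)/c_{1,s}$ so that the limit $s\uparrow1$ is finite and reproduces the classical second-order constants. The cleanest route is probably to cite the known one-dimensional formula for $(-\Delta)^s_\R(t_+)^\beta$ (available in the literature on the fractional Laplacian) and then simply feed it through \eqref{Lasint} and the closed formulae, keeping all dimensional and geometric factors lumped into positive constants whose precise value is irrelevant for the applications.
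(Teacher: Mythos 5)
Your strategy is in essence the paper's own: reduce to one dimension via Lemma \ref{lem1d}, factor out the homogeneity $(x\cdot\nu)^{\beta-2s}$, and read off $\overline c,\underline c$ from the closed formulae for $M^\pm$; the only real difference is that you outsource the 1D constant to the explicit Gamma-function formula for $(-\Delta)^s_{\R}(t_+)^\beta$, whereas the paper deliberately avoids it, proving continuity in $(s,\beta)$ from the convergence $\varphi^{\beta'}\to\varphi^\beta$ in $C^2$ near the evaluation point together with convergence in the weighted $L^1$ norm, proving \eqref{adeu1} by a soft comparison (touching) argument, and getting \eqref{adeu2} from a direct inspection of the tail of the defining integral. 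That variant is legitimate in principle, but it puts all the weight on sign bookkeeping, and this is exactly where your argument breaks.

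The gap is in your proof of \eqref{adeu1}. You assert that for $\beta\in(s,2s)$ the constant $c(s,\beta)$ in $(-\Delta)^s_{\R}(t_+)^\beta=c(s,\beta)\,(t_+)^{\beta-2s}$ is strictly positive, i.e.\ that $\varphi^\beta$ is a strict supersolution. This is false: for $\beta\in(s,2s)$ the function $(t_+)^\beta$ is a strict \emph{subsolution}, $(-\Delta)^s_{\R}(t_+)^\beta<0$ on $\{t>0\}$. One can see this from the second-order limit ($t^\beta$ with $\beta\in(1,2)$ is convex, so $-\partial_{tt}t^\beta=-\beta(\beta-1)t^{\beta-2}<0$), or as the paper does, by touching $\varphi^\beta$ from below by $\varphi^s-C$ at some $x_0>0$, which forces $(-\Delta)^s_{\R}\varphi^\beta(x_0)\le(-\Delta)^s_{\R}\varphi^s(x_0)=0$ with strict inequality. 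Moreover, your next step is inconsistent even with your own (incorrect) sign: if $-(-\partial_{\theta\theta})^s\varphi^\beta(t)<0$ for $t>0$, then in the closed formulae only the terms $-\lambda(\cdot)^-$ and $-\Lambda(\cdot)^-$ survive, which would give $\overline c<0$ and $\underline c<0$, contradicting \eqref{adeu1}; you instead conclude both are positive, so two sign errors cancel to land on the stated result. With the correct sign the argument does go through: $-(-\partial_{\theta\theta})^s\varphi^\beta(x)=|\theta\cdot\nu|^{2s}\,|c(s,\beta)|\,(x\cdot\nu)^{\beta-2s}>0$, only the positive-part terms survive, and $\overline c=\frac{1-s}{2c_{1,s}}\Lambda\bigl(\int_{S^{n-1}}|\theta\cdot\nu|^{2s}d\theta\bigr)|c(s,\beta)|>\underline c$ (same expression with $\lambda$) $>0$. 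The same care is needed in your treatment of \eqref{adeu2}: it is not enough that $|c(s,\beta)|\to\infty$ as $\beta\nearrow 2s$; you need $-(-\Delta)^s_{\R}\varphi^\beta(1)\to+\infty$, which indeed holds (the tail $\int^\infty(1+y)^\beta|y|^{-1-2s}dy$ diverges like $(2s-\beta)^{-1}$ and enters with the favorable sign), and this is precisely the paper's ``easy computation from the definition.''
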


\begin{proof}
Given $L\in\mathcal L_*$, by Lemma \ref{lem1d} we have
\[ L\varphi_\nu^\beta(x) =  -\frac{1-s}{2c_{1,s}} \left(\int_{S^{n-1}} |\theta_n|^{2s} \mu(\theta)\,d\theta\right) (-\Delta)^s_{\R} \varphi^\beta(x\cdot\nu). \]
Hence, using the scaling properties of the fractional Laplacian and of the function $\varphi^\beta$ we obtain that, for $x\cdot\nu>0$,
\[M^+ \varphi_\nu^\beta (x) = C \,(x\cdot\nu)^{\beta-2s}\,\max\left\{-\Lambda (-\Delta)^s_{\R} \varphi^\beta(1), -\lambda (-\Delta)^s_{\R} \varphi^\beta(1) \right\} \]
and
\[M^- \varphi_\nu^\beta (x) = C \,(x\cdot\nu)^{\beta-2s}\,\min\left\{-\Lambda (-\Delta)^s_{\R} \varphi^\beta(1), -\lambda (-\Delta)^s_{\R} \varphi^\beta(1) \right\}, \]
where $C = (1-s)/(2c_{1,s})>0$.

Therefore, to prove that the two functions $\overline c$ and $\underline c$ are continuous in the variables $(s,\beta)$ in $\{0<s \le1,\, 0<\beta<2s\}$, and that \eqref{adeu1}-\eqref{adeu2} holds,
it is enough to prove the same for
\[(s,\beta) \longmapsto -(-\Delta)^s_{\R} \varphi^\beta(1).\]

We first prove continuity in $\beta$.
If $\beta$ and $\beta'$ belong to $(0,2s)$, then as $\beta' \to \beta$, we have $\varphi^{\beta'}\to \varphi^{\beta}$ in $C^2([1/2,3/2])$ and \[\int_{\R} \bigl| \varphi^{\beta'} - \varphi^{\beta}\bigr|(x) \,(1+|x|)^{-1-2s}\,dx\to 0.\]
As a consequence,  $(-\Delta)^s_{\R} \varphi^{\beta'}(1)\to (-\Delta)^s_{\R} \varphi^{\beta}(1)$.
It is easy to see that if $s$ and $s'$ belong to $(0,1]$, and $\beta<2s$, then $(-\Delta)^{s'}_{\R} \varphi^{\beta}(1)\to (-\Delta)^s_{\R} \varphi^{\beta}(1)$ as $s'\to s$.

Moreover, note that whenever  $\beta>s$, the function $\varphi^\beta$ is touched by below by the function  $\varphi^s-C$ at some point $x_0>0$ for some constant $C>0$.
Hence, we have $(-\Delta)^s_{\R} \varphi^\beta(x_0) >(-\Delta)^s_{\R}\varphi^s(x_0) = 0$.
This yields \eqref{adeu1}.

Finally, \eqref{adeu2} follows from an easy computation using the definition of $(-\Delta)^s_{\R}$, and thus the proof is finished.
\end{proof}

\subsection{The class $\mathcal L_0$}

As defined in \cite{CS},  for $s\in (0,1)$ the ellipticity class $\mathcal L_0=\mathcal L_0(s)$ consists of all operators $L$ of the form
\[
L u(x) = (1-s)\int_{\R^n} \left(\frac{u(x+y)+u(x-y)}{2}-u(x)\right) \frac{b(y)}{|y|^{n+2s}}\,dy.
\]
where
\[b\in L^\infty(\R^{n}) \quad \mbox{satisfies} \quad b(y)=b(-y)\quad \mbox{and}\quad \lambda\le b\le \Lambda.\]
It is clear that
\[\mathcal L_*\subsetneq \mathcal L_0.\]
The extremal operators for the class $\mathcal L_0$ are denoted here by $M^+_{\mathcal L_0}$ and $M^-_{\mathcal L_0}$. Since $\mathcal L_*\subset \mathcal L_0$, we have
\[M^-_{\mathcal L_0} \le M^- \le M^+\le M^+_{\mathcal L_0}.\]
Hence, all elliptic equations with respect to $\mathcal L_*$ are elliptic with respect to $\mathcal L_0$ and all the definitions and results in \cite{CS} apply to the elliptic equations considered in this paper.

As in \cite{CS,CS2} we consider the weighted $L^1$ spaces $L^1(\R^n,\omega_s)$, where
\begin{equation}\label{omegas}
\omega_s(x) = (1-s)(1+ |x|)^{-n-2s}.
\end{equation}
The utility of this weighted space is that, if $L\in\mathcal L_0(s)$, then $Lu(x)$ can be evaluated classically and is continuous in $B_{\epsilon/2}$ provided $u\in C^2(B_{\epsilon})\cap L^1(\R^n,\omega_s)$.
One can then consider viscosity solutions to elliptic equations with respect to $\mathcal L_0(s)$ which are not bounded but belong to $L^1(\R^n, \omega_s)$.
The weighted norm appears in stability results; see \cite{CS2}.

As said in the Introduction, the definitions we follow of viscosity solutions and viscosity inequalities are the ones in \cite{CS}.

Next we state the interior Harnack inequality and the $C^\alpha$ estimate from \cite{CS}.

\begin{thm}[\cite{CS}]\label{Harnack}
Let $s_0\in(0,1)$ and $s\in [s_0,1]$.
Let $u\ge 0$ in $\R^n$ satisfy in the viscosity sense $M^{-}_{\mathcal L_0} u \le C_0$ and $M^{+}_{\mathcal L_0}u\ge -C_0$ in $B_{R}$.
Then,
\[u(x)\le C\bigl( u(0) + C_0R^{2s}\bigr) \quad \mbox{for every }x\in B_{R/2},\]
for some constant $C$ depending only on $n$, $s_0$, and ellipticity constants.
\end{thm}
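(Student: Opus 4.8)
The plan is to reduce to the extremal operators $M^\pm_{\mathcal L_0}$ (precisely what the hypothesis gives), normalize by scaling, and then run the nonlocal Krylov--Safonov machinery of \cite{CS}, keeping the $1-s$ normalization explicit in every barrier and in every summation so that all the constants stay bounded as $s\uparrow1$; in that limit the scheme degenerates into the classical proof and the classical Harnack inequality is recovered. First I would rescale: if $u$ satisfies the hypotheses in $B_R$, then $\tilde u(x):=u(Rx)/\bigl(u(0)+C_0R^{2s}\bigr)$ satisfies $\tilde u\ge0$ in $\R^n$, $\tilde u(0)\le1$, and $M^-_{\mathcal L_0}\tilde u\le1$, $M^+_{\mathcal L_0}\tilde u\ge-1$ in $B_1$, with the same ellipticity constants. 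Moreover, the weighted tail $\int_{\R^n}\tilde u\,\omega_s$ is controlled by a universal constant: testing the supersolution inequality $M^-_{\mathcal L_0}\tilde u\le1$ from below at an interior minimum point with the constant $\min_{B_{1/2}}\tilde u$, and using $\tilde u\ge0$ on all of $\R^n$, bounds $\int_{\R^n\setminus B_1}\tilde u\,\omega_s$. It then suffices to prove $\sup_{B_{1/2}}\tilde u\le C$ with $C=C(n,s_0,\lambda,\Lambda)$.

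Second, I would split the estimate into two halves, as in the local theory: \textbf{(a)} a local maximum principle for subsolutions --- if $M^+_{\mathcal L_0}w\ge-1$ in $B_1$, $w$ has controlled weighted tail, and $\|w^+\|_{L^\varepsilon(B_{3/4})}\le1$, then $\sup_{B_{1/2}}w\le C$; and \textbf{(b)} a weak Harnack / $L^\varepsilon$ estimate for supersolutions --- if $v\ge0$ in $\R^n$, $v(0)\le1$, and $M^-_{\mathcal L_0}v\le1$ in $B_1$, then $\bigl|\{v>t\}\cap B_{1/2}\bigr|\le Ct^{-\varepsilon}$ for some $\varepsilon>0$, equivalently $\|v\|_{L^\varepsilon(B_{1/2})}\le C$. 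Part (a) follows from a scaled iteration that converts the $L^\varepsilon$ bound into an $L^\infty$ bound by means of the comparison functions furnished by the ABP estimate below. Since $\tilde u$ is simultaneously a sub- and a supersolution, combining (a), applied on a finite cover of $B_{1/2}$ by small balls, with (b) gives $\sup_{B_{1/2}}\tilde u\le C$, which is the asserted inequality.

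The heart of the matter is step (b), whose engine is, as in \cite{CS}, a nonlocal ABP-type estimate together with a special barrier and a point estimate. The ABP estimate bounds $\sup_{B_1}\phi$, for $\phi$ with $M^+_{\mathcal L_0}\phi\ge-g$ in $B_1$ and $\phi\le0$ in $\R^n\setminus B_1$, by a sum over a Whitney covering of the set where $\phi$ is touched from above by a hyperplane, each cube contributing roughly $\sup g$ times a geometric weight --- the nonlocal correction to the classical estimate stemming from the fact that the sign condition on $\phi$ is imposed globally. Applying this to $\phi:=m-\tilde v-\Psi$, where $\tilde v$ is a rescaled supersolution, $\Psi$ is a fixed smooth barrier (bounded, very negative on a small inner subcube $Q$, bounded below by a positive constant on $\R^n\setminus B_1$, with $M^+_{\mathcal L_0}\Psi$ dominated by a constant multiple of a bump supported in $Q$), and $m:=\min_{\R^n}(\tilde v+\Psi)$, together with $\inf_{Q}\tilde v\le1$, produces the measure estimate $\bigl|\{\tilde v\le M\}\cap B_{1/2}\bigr|\ge\mu\,|B_{1/2}|$ for fixed $M$ and $\mu$. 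This definite density is then amplified to the power decay $\bigl|\{v>t\}\bigr|\le Ct^{-\varepsilon}$ by a Calder\'on--Zygmund cube-decomposition iteration applied to the dyadic rescalings of $v$, with the nonlocal tails controlled as in the previous paragraph.

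The hard part will be the nonlocal ABP estimate and its use in the point estimate, together with the bookkeeping that keeps all constants bounded as $s\uparrow1$. Unlike the second-order case one cannot simply differentiate a concave envelope on a ball, since the operator couples the behavior of $\phi$ on the contact set to its behavior on all of $\R^n$; \cite{CS} handle this through a careful covering of the contact set in which the value of $M^+_{\mathcal L_0}\phi$ on each Whitney cube controls $g$ there up to a tail term annihilated by $\phi\le0$ outside $B_1$. Carrying the factor $1-s$ through the barrier constructions and through every summation in the ABP estimate --- so that as $s\uparrow1$ they degenerate precisely into the classical constructions behind the Krylov--Safonov theorem (cf.\ Section~9.2 of \cite{Caff-Cabre}) rather than blowing up or collapsing --- is the delicate point of the verification. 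Once the point estimate is available, the cube iteration in (b) and the maximum principle in (a) are comparatively routine, and letting $s\uparrow1$ the whole scheme reduces to the classical proof of the Harnack inequality.
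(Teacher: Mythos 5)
This theorem is quoted in the paper directly from \cite{CS} and is not proved there, so there is no internal proof to compare against; your sketch is essentially a faithful reconstruction of the Caffarelli--Silvestre argument (rescaling plus tail control via the global nonnegativity, the nonlocal ABP estimate, the special barrier and point estimate, the Calder\'on--Zygmund iteration giving the $L^\varepsilon$ weak Harnack bound, and the local maximum principle for subsolutions), including the correct emphasis on tracking the $(1-s)$ normalization so the constants stay bounded as $s\uparrow1$. At the level of detail given, the outline is correct and is the same approach as the cited source.
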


\begin{thm}[\cite{CS}]\label{Interior-Caff-Silv}
Let $s_0\in(0,1)$ and $s\in [s_0,1]$.
Let $u\in C(\overline {B_1})\cap L^1(\R^n,\omega_s)$ satisfy in the viscosity sense $M^{-}_{\mathcal L_0} u \le C_0$ and $M^{+}_{\mathcal L_0}u\ge -C_0$ in $B_1$.
Then, $u\in C^\alpha\bigl(\overline{B_{1/2}}\bigr)$ with the estimate
\[  \|u\|_{C^\alpha(B_{1/2})} \le C\bigl( C_0 + \|u\|_{L^\infty(B_1)}+\|u\|_{L^1(\R^n,\,\omega_s)}\bigr),\]
where $\alpha$ and $C$ depend only on $n$, $s$, and ellipticity constants.
\end{thm}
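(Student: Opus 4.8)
This is the interior Hölder estimate of Caffarelli and Silvestre \cite{CS}, and the plan is to reproduce its proof, which is the nonlocal analogue of the Krylov--Safonov/Caffarelli method: reduce everything to a \emph{diminish of oscillation} lemma and iterate it dyadically. Since the extremal operators $M^{\pm}_{\mathcal L_0}$ are homogeneous of degree $2s$ under dilations, and dividing $u$ by $C_0+\|u\|_{L^\infty(B_1)}+\|u\|_{L^1(\R^n,\omega_s)}$, one reduces to the case $\|u\|_{L^\infty(B_1)}\le1$, $\|u\|_{L^1(\R^n,\omega_s)}\le1$, and $C_0\le\epsilon_0$ for a small universal $\epsilon_0$. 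The $C^\alpha(\overline{B_{1/2}})$ bound then follows from a pointwise estimate $|u(x)-u(x_0)|\le C|x-x_0|^\alpha$ applied at each $x_0\in\overline{B_{1/2}}$ (with the rescaled ball contained in $B_1$), and this pointwise estimate comes from iterating the diminish of oscillation lemma. Concretely, one constructs rescalings $u_k(x)=\rho^{-k\alpha}\bigl(u(\rho^k x)-a_k\bigr)$ having oscillation $\le1$ in $B_1$ while preserving a global growth bound $|u_k(x)|\le\tfrac12(1+|x|)^\alpha$; this bound is stable under the dyadic rescaling $x\mapsto\rho^k x$ and keeps the far-field contribution to $M^{\pm}_{\mathcal L_0}u_k$ inside $B_1$ under control. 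The exponent $\alpha$ is chosen so that $\rho^\alpha\ge1-\theta$, where $\theta$ is the gain produced at each step.

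\textbf{Diminish of oscillation from a measure estimate.} The key lemma is: there exist universal $\theta\in(0,1)$, $\rho\in(0,1)$, $\epsilon_0>0$ such that if $M^+_{\mathcal L_0}u\ge-\epsilon_0$ and $M^-_{\mathcal L_0}u\le\epsilon_0$ in $B_1$, $-\tfrac12\le u\le\tfrac12$ in $B_1$, and $|u|\le\tfrac12(1+|x|)^\alpha$ in $\R^n$, then the oscillation of $u$ in $B_\rho$ is at most $1-\theta$. To prove it, observe that one of the sets $\{u\ge0\}\cap B_{1/2}$ or $\{u\le0\}\cap B_{1/2}$ has measure at least $\tfrac12|B_{1/2}|$; assume the former. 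Then $v:=u+\tfrac12$ is, up to a far-field error absorbed into $\epsilon_0$ via the weighted norm, a nonnegative supersolution ($M^-_{\mathcal L_0}v\le2\epsilon_0$ in $B_1$) with $v\ge\tfrac12$ on a set of measure $\ge\tfrac12|B_{1/2}|$. A weak-Harnack / $L^\epsilon$ estimate for supersolutions then yields $\inf_{B_{1/4}}v\ge c_1>0$ universal, hence $u\ge-\tfrac12+c_1$ on $B_{1/4}$, so its oscillation there is at most $1-c_1$; take $\rho=1/4$, $\theta=c_1$.

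\textbf{The measure estimate (the heart).} One needs the nonlocal $L^\epsilon$ estimate: if $u\ge0$ in $\R^n$ and $M^-_{\mathcal L_0}u\le\epsilon_0$ in $B_{2\sqrt{n}}$, then $|\{u>t\}\cap Q_1|\le C(u(0)+\epsilon_0)^\epsilon t^{-\epsilon}$ for all $t>0$, with $\epsilon,C$ universal and $Q_1$ the unit cube. This is obtained by iterating, through a Calderón--Zygmund cube decomposition, the basic step: \emph{if $u\ge0$ in $\R^n$, $M^-_{\mathcal L_0}u\le\epsilon_0$ in $B_{2\sqrt{n}}$ and $\inf_{Q_3}u\le1$, then $|\{u\le M\}\cap Q_1|\ge\mu$} for universal $M>1$, $\mu>0$. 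The basic step rests on a special barrier $\Psi\in C^{1,1}(\R^n)$: bounded, with $\Psi\ge0$ outside $B_{1/4}$, $\Psi\le-2$ on $\overline{Q_3}$, $\Psi$ vanishing far away, and $M^+_{\mathcal L_0}\Psi\le\epsilon_0$ outside $B_{1/4}$ (so $M^+_{\mathcal L_0}\Psi\le C_\Psi\chi_{B_{1/4}}$ overall), built from a tail $|x|^{-p}$ with $p=p(n,s_0,\lambda,\Lambda)$ large enough that $\sup_{L\in\mathcal L_0}L|x|^{-p}<0$ for $|x|\ge\tfrac14$. Then $u+\Psi$ is a supersolution outside $B_{1/4}$ that is negative somewhere in $Q_3$ but nonnegative outside $B_{1/4}$, and a covering argument localizes its negativity set and produces the claimed measure bound.

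\textbf{Main obstacle.} I expect the construction and verification of $\Psi$ to be the crux: checking that $\sup_{L\in\mathcal L_0}L|x|^{-p}$ is strictly negative away from the origin \emph{uniformly for $s\in[s_0,1]$} --- this is exactly where keeping the $(1-s)$ factor in the kernels of $\mathcal L_0$ and choosing $p$ in terms of $s_0$ is essential, and it is what keeps all constants bounded as $s\uparrow1$. The second delicate point is the nonlocal Calderón--Zygmund iteration: unlike in the local setting, neighbouring cubes communicate through the tails of the kernels, so these interactions must be absorbed into $\epsilon_0$ and into the weighted $L^1(\R^n,\omega_s)$-norm --- which is precisely why that norm enters the final estimate.
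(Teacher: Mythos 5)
The paper does not prove this result itself; it is quoted directly from Caffarelli--Silvestre \cite{CS}, and your outline reproduces exactly the argument of that reference (scaling reduction, diminish of oscillation via the $L^\epsilon$/point estimate, Calder\'on--Zygmund iteration with the special barrier built from $|x|^{-p}$, and the $(1-s)$ normalization that keeps constants uniform as $s\uparrow1$). So your proposal is correct and follows essentially the same route as the cited proof, including the correct handling of the tails through the weighted $L^1(\R^n,\omega_s)$ norm.
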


%
%

\subsection{No fine boundary regularity for $\mathcal L_0$}

The aim of this subsection is to show that the class $\mathcal L_0$ is too large for all solutions to behave comparably near the boundary.
Moreover, we give necessary conditions on a subclass $\mathcal L\subset\mathcal L_0$ to have comparability of all solutions near the boundary.
These necessary conditions lead us to the class $\mathcal L_*$.

In the next result we show that, for any scale invariant class $\mathcal L\subseteq \mathcal L_0$ that contains the fractional Laplacian $(-\Delta)^s$, and any unit vector $\nu$, there exist powers $0\leq \beta_1\leq s\leq \beta_2$ such that $M^+_{\mathcal L} \varphi_\nu^{\beta_1} = 0$ and $M^-_{\mathcal L}\varphi_\nu^{\beta_2} = 0$ in $\{x\cdot \nu >0\}$.
Before stating this result, we give the following

\begin{defi}\label{defi-scale-invariant}
We say that a class of operators $\mathcal L$ is \emph{scale invariant} of order $2s$ if for each operator $L$ in $\mathcal L$, and for all $R>0$, the rescaled operator $L_R$, defined by
\[ (L_R u)(R\,\cdot\,) = R^{-2s}L \bigl( u(R \,\cdot\,)\bigr),\]
also belongs to $\mathcal L$.
\end{defi}

The proposition reads as follows.

\begin{prop}\label{prop-2.7}
Assume that $\mathcal L\subset \mathcal L_0(s)$ is scale invariant of order $2s$.
Then,
\begin{itemize}
\item[(a)] For every $\nu\in S^{n-1}$ and $\beta\in(0,2s)$ the function $\varphi_\nu^\beta$  defined in \eqref{phinubeta} satisfies
\begin{equation}\label{simiribonanit}\begin{split}
M^+_{\mathcal L} \varphi_\nu^\beta(x)  &= \overline C(\beta,\nu)  (x\cdot \nu)^{\beta-2s}\quad \textrm{ in}\quad\{x\cdot \nu>0\},\\
M^-_{\mathcal L}\varphi_\nu^\beta(x) & = \underline C(\beta,\nu)  (x\cdot \nu)^{\beta-2s}\quad \textrm{ in}\quad\{x\cdot \nu>0\}.
\end{split}\end{equation}
Here, $\overline C$ and $\underline C$ are constants depending only on $s$, $\beta$, $\nu$, $n$, and ellipticity constants.

\item[(b)]  The functions $\overline C$ and $\underline C$ are continuous in $\beta$ and, for each unit vector $\nu$, there are $\beta_1\le \beta_2$ in $(0,2s)$ such that
\begin{equation}\label{beta12}
\overline C(\beta_1,\nu)=0\quad \textrm{and}\quad \underline C(\beta_2,\nu)=0.
\end{equation}
Moreover, for all $\beta \in(0,2s)$,
\begin{equation} \label{samesign1}
 \overline C(\beta,\nu) -\overline C(\beta_1,\nu)  \mbox{ has the same sign as } \beta-\beta_1
 \end{equation}
and
\begin{equation} \label{samesign2}
 \underline C(\beta,\nu) -\underline C(\beta_2,\nu)  \mbox{ has the same sign as } \beta-\beta_2.
\end{equation}

\item[(c)] If in addition the fractional Laplacian $-(-\Delta)^s$ belongs to $\mathcal L$, then we have $\beta_1\le s\le \beta_2$.
\end{itemize}
\end{prop}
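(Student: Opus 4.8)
The plan is to exploit scale invariance to force the homogeneity structure in \eqref{simiribonanit}, then use a continuity/intermediate-value argument to locate the exponents $\beta_1,\beta_2$, and finally compare with the fractional Laplacian to pin down their position relative to $s$.

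\emph{Part (a).} Fix $\nu$ and $\beta\in(0,2s)$. For $L\in\mathcal L$, the function $\varphi_\nu^\beta$ is smooth away from the hyperplane $\{x\cdot\nu=0\}$ and belongs to $L^1(\R^n,\omega_s)$ (since $\beta<2s$), so $L\varphi_\nu^\beta(x)$ is well defined classically for $x\cdot\nu>0$. First I would observe that $\varphi_\nu^\beta(Rx)=R^\beta\varphi_\nu^\beta(x)$, hence for $x\cdot\nu>0$ the scale-invariance of $\mathcal L$ gives, with $L_R$ as in Definition \ref{defi-scale-invariant},
\[
(L\varphi_\nu^\beta)(Rx) = R^\beta (L u(R\cdot))(x)\big|_{u=\varphi_\nu^\beta} = R^{\beta}R^{2s}(L_R\varphi_\nu^\beta)(Rx),
\]
so that, evaluating at a point $e$ with $e\cdot\nu=1$ and using that $L_R$ again ranges over $\mathcal L$ as $L$ does, one gets $M^\pm_{\mathcal L}\varphi_\nu^\beta(Re)=R^{\beta-2s}M^\pm_{\mathcal L}\varphi_\nu^\beta(e)$; this is exactly \eqref{simiribonanit} with $\overline C(\beta,\nu)=M^+_{\mathcal L}\varphi_\nu^\beta(e)$ and $\underline C(\beta,\nu)=M^-_{\mathcal L}\varphi_\nu^\beta(e)$. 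Finiteness of these constants follows from $\lambda\le b\le\Lambda$ and $\varphi_\nu^\beta\in L^1(\R^n,\omega_s)\cap C^2_{\rm loc}(\{x\cdot\nu>0\})$.

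\emph{Part (b).} Continuity of $\beta\mapsto\overline C(\beta,\nu),\underline C(\beta,\nu)$ I would prove exactly as in the proof of Lemma \ref{otherpowers}: as $\beta'\to\beta$ in $(0,2s)$ one has $\varphi_\nu^{\beta'}\to\varphi_\nu^\beta$ in $C^2$ near $e$ and in $L^1(\R^n,\omega_s)$, and the extremal operators $M^\pm_{\mathcal L}$ are Lipschitz for these two norms (uniformly in $L\in\mathcal L$ because of the $\Lambda$-bound), so $\overline C,\underline C$ pass to the limit. For the existence of $\beta_1,\beta_2$: by Lemma \ref{sol1dRn}, or rather its $\mathcal L$-analogue, the key point is the \emph{sign change}. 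As $\beta\uparrow 2s$ one checks (as in \eqref{adeu2}, using only the $\lambda$-lower bound on the kernel) that $\underline C(\beta,\nu)\to+\infty$, while as $\beta\downarrow 0$ the function $\varphi_\nu^\beta\to\chi_{\{x\cdot\nu>0\}}$ is, near $e$, a bounded function with a ``dip'' coming from the region $\{x\cdot\nu<0\}$ where it equals $0<1$, forcing $M^+_{\mathcal L}\varphi_\nu^\beta(e)\to$ a strictly negative value (or $-\infty$); in particular $\overline C(\beta,\nu)<0$ for $\beta$ small. Since $\overline C\le \underline C$ would be false --- rather $\overline C\ge\underline C$ --- one must be careful: the clean monotonicity statements \eqref{samesign1}--\eqref{samesign2} are what I would actually establish, and they automatically yield \eqref{beta12} together with uniqueness of $\beta_1,\beta_2$. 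The monotonicity is the ordering-of-supersolutions trick already used at the end of Lemma \ref{otherpowers}: if $\beta>\beta'$ then near any given point $\varphi_\nu^{\beta}$ is touched from below by $c\,\varphi_\nu^{\beta'}-C$ for suitable $c,C>0$ (because $t^\beta$ vs $t^{\beta'}$ cross once on $(0,\infty)$), and $M^+_{\mathcal L}$ of a function is $\ge M^+_{\mathcal L}$ of what touches it from below, giving $\overline C(\beta,\nu)(x\cdot\nu)^{\beta-2s}\ge c\,\overline C(\beta',\nu)(x\cdot\nu)^{\beta'-2s}$ with equality of the touching values; evaluating at the touching point and chasing the inequality yields that the sign of $\overline C(\beta,\nu)$ is nondecreasing through the zero, i.e. \eqref{samesign1}, and symmetrically \eqref{samesign2} for $M^-$. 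Combined with $\overline C<0$ for small $\beta$ and (from Lemma \ref{otherpowers}, since $\mathcal L\supseteq$ suitable operators or directly by the $\lambda$-bound) $\underline C>0$ for $\beta$ close to $2s$, intermediate value gives $\beta_1,\beta_2\in(0,2s)$, and \eqref{samesign1}--\eqref{samesign2} make them unique with $\beta_1\le\beta_2$.

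\emph{Part (c).} If $-(-\Delta)^s\in\mathcal L$, then since $(-\Delta)^s\varphi_\nu^s=0$ in $\{x\cdot\nu>0\}$ (the one-dimensional computation, e.g. \cite[Proposition 3.1]{RS-Dir}, lifted via Lemma \ref{lem1d}) we have $-(-\Delta)^s\varphi_\nu^s$ contributing the value $0$ to the sup and inf defining $M^\pm_{\mathcal L}$ at the point $e$; hence $\overline C(s,\nu)=M^+_{\mathcal L}\varphi_\nu^s(e)\ge 0$ and $\underline C(s,\nu)=M^-_{\mathcal L}\varphi_\nu^s(e)\le 0$. By the sign-monotonicity \eqref{samesign1}, $\overline C(s,\nu)\ge 0=\overline C(\beta_1,\nu)$ forces $s\ge\beta_1$; by \eqref{samesign2}, $\underline C(s,\nu)\le 0=\underline C(\beta_2,\nu)$ forces $s\le\beta_2$. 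This gives $\beta_1\le s\le\beta_2$ and completes the proof.

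\emph{Main obstacle.} The delicate point is Part (b): making rigorous the behavior of $\overline C(\beta,\nu)$ as $\beta\downarrow0$ (where $\varphi_\nu^\beta$ ceases to be integrable against $\omega_s$ near the hyperplane only in the limit, and one must extract a genuine negative sign rather than merely $-\infty$ or $0$) and, above all, turning the one-sided touching inequalities into the clean sign statements \eqref{samesign1}--\eqref{samesign2} that guarantee \emph{uniqueness} of $\beta_1,\beta_2$ and the correct direction of all inequalities; this is where one must be careful not to confuse $\overline C\ge\underline C$ with the opposite ordering, and where the single-crossing property of the power functions $t\mapsto t^\beta$ does the real work.
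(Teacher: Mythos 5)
Your proposal is correct and follows essentially the same route as the paper's proof: homogeneity of $M^\pm_{\mathcal L}\varphi_\nu^\beta$ deduced from scale invariance for (a), continuity in $\beta$ together with the sign analysis at $\beta\to0$ (indicator limit) and $\beta\to 2s$ (divergence of $M^-_{\mathcal L_0}$) plus the intermediate value theorem for the existence of $\beta_1,\beta_2$, the touching-from-below comparison to get \eqref{samesign1}--\eqref{samesign2}, and the fact that $-(-\Delta)^s\varphi_\nu^s=0$ in $\{x\cdot\nu>0\}$ for (c). The only cosmetic flaw is that the intermediate equalities in your displayed scaling identity in part (a) are garbled, but the conclusion $M^\pm_{\mathcal L}\varphi_\nu^\beta(Re)=R^{\beta-2s}M^\pm_{\mathcal L}\varphi_\nu^\beta(e)$ is the correct one and coincides with what the paper uses.
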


\begin{proof}
The scale invariance of $\mathcal L$ is equivalent to a scaling property of the extremal operators $M^+_{\mathcal L}$ and $M^-_{\mathcal L}$.
Namely, for all $R>0$, we have
\[M^{\pm}_{\mathcal L} \bigl( u(R\,\cdot\, )\bigr) = R^{2s}(M^\pm_{\mathcal L}  u)(R\,\cdot\,).\]

{\rm (a)\ }  By this scaling property it is immediate to prove that given  $\beta\in (0,2s)$ and $\nu\in S^{n-1}$, the function $\varphi_\nu^\beta$ satisfies \eqref{simiribonanit},
where
\[\overline  C(\beta,\nu):= M^+_{\mathcal L}\varphi_\nu^\beta (\nu)\qquad \textrm{and}\qquad \underline  C(\beta,\nu):= M^-_{\mathcal L}\varphi_\nu^\beta (\nu).\]
Of course, $\overline C$ and $\underline C$ depend also on $s$ and the ellipticity constants, but these are fixed constants in this proof.

{\rm (b)\ } Note that, as $\beta' \to \beta\in [0,2s)$, we have $\varphi_\nu^{\beta'}\to \varphi_\nu^{\beta}$ in $C^2(\overline{B_{1/2}(\nu)})$  and in $L^1(\R^n,\omega_s)$.
As a consequence,  $\underline C$ and $\overline C$ are continuous in $\beta$ in the interval $[0,2s)$.  Since $\varphi_\nu^\beta \to \chi_{\{x\cdot \nu>0\}}$ as $\beta \to 0$, we have that
\[\underline C (\nu, 0) \le \overline C(\nu,0)<0.\]
On the other hand, it is easy to see that
\[ M^-_{\mathcal L_0} \varphi_\nu^\beta(\nu) \longrightarrow +\infty\quad \mbox{as }\beta \nearrow 2s. \]
Hence, using that $M^-_{\mathcal L_0}\le M^-_{\mathcal L}$, we obtain
\[0<\underline C (\nu, \beta) \le \overline C(\nu,\beta) \quad \mbox{for }\beta \mbox{ close to }2s.\]

Therefore, by continuity, there are $\beta_1$ and $\beta_2$ in $(0,2s)$ such that
\[ \overline C(\beta_1,\nu)=0 \quad \mbox{ and }\quad \underline C(\beta_2,\nu)=0.\]

To prove \eqref{samesign1}, we observe that if $\beta>\beta_1$ the function $\varphi_\nu^\beta$ is be touched by below by $\varphi_\nu^{\beta_1}-C$ at some $x_0\in \{x\cdot\nu>0\}$ for some $C>0$.
It follows that
\[M^+_{\mathcal L }\varphi_\nu^\beta(x_0) - M^+_{\mathcal L }\varphi_\nu^{\beta_1}(x_0) \ge M^-_{\mathcal L_0} \bigl(\varphi_\nu^\beta-\varphi_\nu^{\beta_1}\bigr)(x_0)>0.\]
Since the sign of $M^+_{\mathcal L }\varphi_\nu^\beta$ is constant in $\{x\cdot \nu>0\}$ it follows that  $\overline C(\nu,\beta)>0$ when $\beta>\beta_1$.
Similarly one proves that  $\overline C(\nu,\beta)<0$ when  $\beta<\beta_1$, and hence \eqref{samesign2}.

{\rm ({c})\ } It is an immediate consequence of the results in parts (a) and (b) and the fact that $-(-\Delta)^s \varphi_\nu^s=0$ in $\{x\cdot\nu>0\}$.
\end{proof}

Clearly, to hope for some good description of the boundary behavior of solutions to all elliptic equations with respect to a scale invariant class $\mathcal L$,
it must be $\beta_1 = \beta_2$ for every direction $\nu$.
Typical classes $\mathcal L$ contain the fractional Laplacian $-(-\Delta)^s$.
Thus, for them, we must have $\beta_1=\beta_2=s$ for all $\nu\in S^{n-1}$.
If this happens, then
\begin{equation}\label{allL}
L \varphi_\nu^s = 0  \quad \mbox{ in } \{x\cdot \nu >0\}\quad \mbox{for all }L\in \mathcal L, \mbox{ and for all }\nu \in S^{n-1},
\end{equation}
since $M^-_{\mathcal L}\leq L\leq M^+_{\mathcal L}$ for all $L\in \mathcal L$.

As a consequence, we find the following.

\begin{cor}\label{cor-condition}
Let $\beta_1$, $\beta_2$ be given by \eqref{beta12} in Proposition \ref{prop-2.7}.
Then, for the classes $\mathcal L_0$, $\mathcal L_1$, and $\mathcal L_2$ we have $\beta_1<s<\beta_2$.
\end{cor}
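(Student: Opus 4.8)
The plan is to apply Proposition \ref{prop-2.7} to each of the three classes and then rule out the possibility $\beta_1 = \beta_2 = s$ by exhibiting, in each class, a linear operator $L$ for which $L\varphi_\nu^s \neq 0$ somewhere in $\{x\cdot\nu > 0\}$. Since $\mathcal L_0 \supset \mathcal L_1 \supset \mathcal L_2$ all contain the fractional Laplacian $-(-\Delta)^s$, part (c) of Proposition \ref{prop-2.7} already gives $\beta_1 \le s \le \beta_2$; combined with part (b), it suffices to produce a single $L$ in the class with $L\varphi_\nu^s$ not identically zero, for then \eqref{allL} fails, hence we cannot have $\beta_1 = \beta_2 = s$, and since $\beta_1 \le s \le \beta_2$ with $\beta_1 \ne \beta_2$ the strict inequalities $\beta_1 < s < \beta_2$ follow from \eqref{samesign1}--\eqref{samesign2} (which force $\overline C$ to change sign exactly at $\beta_1$ and $\underline C$ at $\beta_2$, so $\beta_1 \le s$, $\beta_2 \ge s$, and neither can equal $s$ unless both do).

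First I would reduce to a one-dimensional computation. Fix $\nu = e_n$. By Lemma \ref{lem1d}, any operator in $\mathcal L_*$ acting on $\varphi_{e_n}^s$ gives a multiple of $(-\Delta)^s_{\R}\varphi^s(x_n)$, which vanishes for $x_n>0$; so operators in $\mathcal L_*$ are useless here and I must use an operator in $\mathcal L_0 \setminus \mathcal L_*$ (equivalently, in $\mathcal L_2 \setminus \mathcal L_*$ to cover all three classes at once), i.e. one whose kernel $b(y)/|y|^{n+2s}$ genuinely depends on $|y|$, not just on $y/|y|$. The cleanest choice is a kernel supported on a cone: take $b(y) = \lambda$ for $y$ in a small cone $\mathcal C$ around $e_n$ and its reflection $-\mathcal C$, and $b(y)=\Lambda$ otherwise (or simply $b \equiv \lambda$ plus a bump; any smooth such $b$ lies in $\mathcal L_2$). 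The point is that for this $L$,
\[
L\varphi_{e_n}^s(x) = (1-s)\int_{\R^n}\left(\frac{\varphi^s((x+y)_n)+\varphi^s((x-y)_n)}{2}-\varphi^s(x_n)\right)\frac{b(y)}{|y|^{n+2s}}\,dy,
\]
and because $b$ is no longer a function of $y/|y|$ alone, the slicing identity \eqref{Lasint} no longer collapses the inner radial integral to a one-dimensional fractional Laplacian of $\varphi^s$ in each direction with the \emph{same} weight; the radial integral now sees $b$, and I expect the result to be a nonzero multiple of $x_n^{-s}$ in $\{x_n>0\}$ by scaling (homogeneity degree $s - 2s = -s$, as in Proposition \ref{prop-2.7}(a)). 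To confirm it is nonzero, I would compute $L\varphi_{e_n}^s(e_n)$ directly: writing the integral in slicing coordinates $y = r\theta$ and using that $\varphi^s((e_n \pm r\theta)_n) = (1 \pm r\theta_n)_+^s$, the integrand, after integrating in $r$ against $b(r\theta)r^{-1-2s}\,dr$, differs from the $\mathcal L_*$ case precisely by the $r$-dependence of $b$; since $(-\Delta)^s_\R\varphi^s(1)=0$ is a delicate cancellation between near and far contributions, inserting an $r$-dependent weight destroys it, giving a strictly nonzero value. Alternatively, and more robustly, I would argue by comparison exactly as in the proof of Lemma \ref{otherpowers}: the function $\varphi^{\beta}$ with $\beta$ slightly above $s$ is touched from below at an interior point by $\varphi^s - C$, so $-(-\Delta)^s\varphi^\beta > 0$ there; transplanting this to show $M^-_{\mathcal L_0}\varphi_\nu^\beta > 0$ for $\beta$ near $s$ from above, while $M^-_{\mathcal L_0}\varphi_\nu^s \le 0$ (it cannot be $\equiv 0$ since some $L \in \mathcal L_0$ with cone-supported kernel gives a negative value — here one checks the sign, e.g. a kernel concentrated transversally makes the one-dimensional fractional Laplacians in near-tangential directions dominate and these are positive, or concentrated longitudinally makes it zero, so the inf over the class is $\le 0$ and, by the cone example, $< 0$ at some point unless it is identically $0$, which \eqref{allL} would then require — contradiction with the transversal example).

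The main obstacle is verifying \textbf{strict} nonvanishing of $M^\pm_{\mathcal L_0}\varphi_\nu^s$, i.e. genuinely ruling out an accidental cancellation for the extremal operator even though individual $L \in \mathcal L_0\setminus\mathcal L_*$ may give a nonzero value. I expect the resolution to be the monotonicity/sign argument above: by Proposition \ref{prop-2.7}(b), $\overline C(\cdot,\nu)$ is continuous and negative near $\beta = 0$ and positive near $\beta = 2s$, vanishing exactly once (in the sense of \eqref{samesign1}) at $\beta_1$; so the single fact I really need is $\overline C(s,\nu) \ne 0$, equivalently that \emph{not} every $L \in \mathcal L_0$ kills $\varphi_\nu^s$ — and that is immediate from the existence of one cone-kernel operator in $\mathcal L_2 \subset \mathcal L_1 \subset \mathcal L_0$ with $L\varphi_\nu^s \not\equiv 0$, which a short explicit one-dimensional radial integral (or the touching-from-below comparison) establishes. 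Since the same operator lies in all three classes, the conclusion $\beta_1 < s < \beta_2$ holds simultaneously for $\mathcal L_0$, $\mathcal L_1$, and $\mathcal L_2$.
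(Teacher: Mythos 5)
Your overall strategy coincides with the paper's: reduce to showing that \eqref{allL} fails for $\mathcal L_0,\mathcal L_1,\mathcal L_2$ by exhibiting one explicit operator that does not annihilate $\varphi_{e_n}^s$, and then conclude via Proposition \ref{prop-2.7}. But the explicit operator you propose does not do the job, and this is the heart of the matter. A kernel equal to $\lambda$ on a cone around $e_n$ and its reflection and $\Lambda$ elsewhere depends only on $y/|y|$, so the corresponding $L$ belongs to $\mathcal L_*$ and, by Lemma \ref{sol1dRn} (or Lemma \ref{lem1d}), satisfies $L\varphi_{e_n}^s=0$ in $\{x_n>0\}$ --- exactly the operators you had just declared useless. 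The same objection defeats your later claim that a kernel ``concentrated transversally'' makes the near-tangential one-dimensional fractional Laplacians ``dominate and these are positive'': for the one-dimensional function $\varphi_{e_n}^s$, every directional contribution is a multiple of $(-\Delta)^s_{\R}\varphi^s(x_n)=0$, so angular concentration produces nothing. For a genuinely radially inhomogeneous kernel your only justification of nonvanishing is the heuristic that the $r$-dependent weight ``destroys the cancellation,'' which is not a proof (nor is the touching-from-below comparison, which concerns $\varphi^\beta$ with $\beta>s$, not $\varphi^s$). The missing step is the paper's short sign argument: take $b(y)=\lambda+(\Lambda-\lambda)\chi_{B_{1/2}}(y)$ (smoothed, if one wants $\mathcal L_1,\mathcal L_2$); at $x=(x',1)$ the integral of the second differences $\frac12\bigl(\varphi_{e_n}^s(x+y)+\varphi_{e_n}^s(x-y)\bigr)-\varphi_{e_n}^s(x)$ against the constant kernel vanishes because $(-\Delta)^s\varphi_{e_n}^s=0$ in $\{x_n>0\}$, while the extra piece over $B_{1/2}$ is strictly negative because $\varphi_{e_n}^s$ is concave in $B_{1/2}(x',1)$; hence $L\varphi_{e_n}^s(x',1)<0$.

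There is a second, logical gap: knowing only that \emph{some} $L$ fails to kill $\varphi_\nu^s$ rules out $\beta_1=\beta_2=s$, but together with $\beta_1\le s\le\beta_2$ this gives only that at least one inequality is strict. Your parenthetical ``neither can equal $s$ unless both do'' is asserted, not proved: $\overline C(s,\nu)=0$ merely says $L\varphi_\nu^s\le0$ for all $L$ in the class, which by itself does not force $\underline C(s,\nu)=0$. What you actually need are the two signed statements $\underline C(s,\nu)<0$ (giving $s<\beta_2$ by \eqref{samesign2}) and $\overline C(s,\nu)>0$ (giving $\beta_1<s$ by \eqref{samesign1}). The first is the computation above; the second follows, for instance, from the complementary kernel $b'=\lambda+\Lambda-b$, which again satisfies $\lambda\le b'\le\Lambda$: since the full second-difference integral of $\varphi_{e_n}^s$ against $|y|^{-n-2s}$ vanishes in $\{x_n>0\}$, one has $L_{b'}\varphi_{e_n}^s(x',1)=-L_{b}\varphi_{e_n}^s(x',1)>0$. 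With these two sign computations supplied (and smoothed kernels for $\mathcal L_1,\mathcal L_2$), your argument closes and is essentially the paper's proof.
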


\begin{proof}
Let us show that for $\mathcal L=\mathcal L_0$ the condition \eqref{allL} is not satisfied.
Indeed, we may easily cook up $L\in \mathcal L_0$ so that  $L \varphi_{e_n}^s (x',1)\neq 0$ for $x'\in \R^{n-1}$.
Namely, if we take
\[b(y) = \left(\lambda+ (\Lambda-\lambda)\chi_{B_{1/2}}(y)\right),\]
then at points $x=(x',1)$ we have
\[0>L \varphi_{e_n}^s (x) = (1-s)\int_{\R^n} \left(\frac{u(x+y)+u(x-y)}{2}-u(x)\right) \frac{ b(y)}{|y|^{n+2s}}\,dy,\]
since $\varphi_{e_n}^s$ is concave in $B_{1/2}(x',1)$ and $(-\Delta)^s \varphi_{e_n}^s=0$ in $\{x_n>0\}$.

By taking an smoothed version of $b(y)$, we obtain that both $\mathcal L_1$ and $\mathcal L_2$ fail to satisfy \eqref{allL}.
\end{proof}

By the results in Subsection 2.1, we have that the class $\mathcal L_*$ satisfies the necessary condition \eqref{allL}.
Although we do not have a rigorous mathematical proof, we believe that $\mathcal L_*$ is actually the largest scale invariant subclass of $\mathcal L_0$ satisfying \eqref{allL}.

\section{Barriers}
\label{sec-barriers}

In this section we construct supersolutions and subsolutions that are needed in our analysis.
From now on, all the results are for the class $\mathcal L_*$ (and not for $\mathcal L_0$).

First we give two preliminary lemmas.

\begin{lem}\label{lem11}
Let $s_0\in(0,1)$ and $s\in[s_0,1)$. Let
 \[\varphi^{(1)}(x) = \bigl({\rm dist}(x,B_1)\bigr)^s\quad\mbox{and}\quad \varphi^{(2)}(x) = \bigl({\rm dist}(x,\R^n\setminus B_1)\bigr)^s.\]
 Then,
\begin{equation}\label{eqvarphi1}
0 \le  M^- \varphi^{(1)}(x) \le M^+ \varphi^{(1)}(x) \le  C\left\{1+(1-s)\bigl|\log(|x|-1)\bigr|\right\}  \quad \mbox{in } B_{2}\setminus B_1.
\end{equation}
and
\begin{equation}\label{eqvarphi2}
0\ge M^+ \varphi^{(2)}(x)\ge M^- \varphi^{(2)}(x) \ge   -C\left\{1+(1-s)\bigl|\log(1-|x|)\bigr|\right\} \ \mbox{in }B_{1}\setminus B_{1/2}.
\end{equation}
The constant $C$  depends only on $s_0$, $n$, and ellipticity constants.
\end{lem}

Note that the above bounds are much better than $\bigl||x|-1\bigr|^{-s}$, which would be the expected bound given by homogeneity.
This is since $\varphi^{(1)}$ and $\varphi^{(2)}$ are in some sense close to the 1D solution $(x_+)^s$.

\begin{proof}[Proof of Lemma \ref{lem11}]
Let $L\in \mathcal L_*$.
For points $x\in \R^n$ we use the notation $x=(x',x_n)$ with $x'\in \R^{n-1}$.
To prove \eqref{eqvarphi1} let us estimate $L \varphi^{(1)}(x_\rho) $ where $x_\rho = (0,1+\rho)$ for $\rho\in(0,1)$ and for a generic $L\in \mathcal L_*$.
To do it, we subtract the function $\psi(x)=(x_n-1)_+^s$, which satisfies $L\psi(x_\rho)=0$.
Note that
\[\bigl(\varphi^{(1)}-\psi\bigr)(x_\rho) = 0 \quad\mbox{for all }\rho>0\]
and that, for $|y|<1$,
\[ \bigl| {\rm dist}\,(x_\rho+y, B_1) - (1+\rho+y_n)_+ \bigr| \le C|y'|^2.\]
This is because the level sets of the two previous functions are tangent on $\{y'=0\}$.

Thus,
\[0\le \bigl(\varphi^{(1)}_1-\psi\bigr)(x_\rho+ y) \le
\begin{cases}
C \rho^{s-1} |y'|^2 \quad \mbox{for } y=(y',y_n)\in B_{\rho/2}\\
C |y'|^{2s}         \quad \mbox{for } y=(y',y_n)\in B_1\setminus B_{\rho/2} \\
C |y|^s              \quad \mbox{for } y\in \R^n \setminus B_1.
\end{cases}
\]
The bound in $B_{\rho/2}$ follows from the inequality $a^s-b^s\leq(a-b)b^{s-1}$ for $a>b>0$.

Therefore, we have
\[
\begin{split}
0\le L\varphi^{(1)}(x_\rho) &=  L\bigl(\varphi^{(1)}- \psi\bigr)(x_\rho)
\\
&= (1-s)\int \frac{\bigl(\varphi^{(1)}_1-\psi\bigr)(x_\rho+ y) + \bigl(\varphi^{(1)}_1-\psi\bigr)(x_\rho-y)}{2} \frac{\mu(y/|y|)}{|y|^{n+2s}}\,dy
\\
&\le  C(1-s)\Lambda \left(\int_{B_{\rho/2}}\frac{ \rho^{s-1}|y'|^2 dy}{|y|^{n+2s}}
+\int_{B_1\setminus B_{\rho/2}}\frac{|y'|^{2s} dy}{|y|^{n+2s}}
+\int_{\R^n\setminus B_1}\frac{|y|^s dy}{|y|^{n+2s}}
\right)
\\
&\le   C\bigl(1+(1-s)|\log\rho|\bigr).
\end{split}
\]
This establishes \eqref{eqvarphi1}. The proof of \eqref{eqvarphi2} is similar.
\end{proof}

In the next result, instead, the bounds are those given by the homogeneity.
In addition, the constant in the bounds has the right sign to construct (together with the previous lemma) appropriate barriers.

\begin{lem}\label{lem12}
Let $s_0\in(0,1)$ and $s\in[s_0,1)$.
Let
 \[\varphi^{(3)}(x) = \bigl({\rm dist}(x,B_1)\bigr)^{3s/2}\quad\mbox{and}\quad \varphi^{(4)}(x) = \bigl({\rm dist}(x,\R^n\setminus B_1)\bigr)^{3s/2}.\]
 Then,
\begin{equation}\label{eqvarphi3}
M^- \varphi^{(3)}(x) \ge  c(|x|-1)^{-s/2}  \quad \mbox{for all  }x\in B_{2}\setminus B_1.
\end{equation}
and
\begin{equation}\label{eqvarphi4}
M^- \varphi^{(4)}(x) \ge  c(1-|x|)^{-s/2}- C  \quad \mbox{for all  }x\in B_{1}\setminus B_{1/2}.
\end{equation}
The constants $c>0$ and $C$ depend only on $n$, $s_0$, and ellipticity constants.
\end{lem}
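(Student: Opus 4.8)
The plan is to prove both estimates by comparing $\varphi^{(3)}$ and $\varphi^{(4)}$ with a one‑dimensional profile and invoking the formula of Lemma~\ref{lem1d}. The relevant exponent is $\beta=3s/2$, which lies in $(s,2s)$; hence $\underline c(s,3s/2)>0$ by \eqref{adeu1} of Lemma~\ref{otherpowers}, and since $\underline c$ is continuous in $(s,\beta)$ it is bounded below by a positive constant $c_0=c_0(s_0)$ for $s\in[s_0,1)$. This $c_0$ will be the constant $c$ in the statement.

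For \eqref{eqvarphi3}, fix $x\in B_2\setminus B_1$, set $\nu=x/|x|$, and compare $\varphi^{(3)}$ with $\psi(z):=\varphi_\nu^{3s/2}(z-\nu)=(z\cdot\nu-1)_+^{3s/2}$, the $3s/2$‑power of the signed distance to the hyperplane tangent to $\partial B_1$ at $\nu$. Since $|z|\ge z\cdot\nu$, we have ${\rm dist}(z,B_1)=(|z|-1)_+\ge (z\cdot\nu-1)_+$, so $\varphi^{(3)}\ge\psi$ on $\R^n$ with equality at $x$; moreover $\varphi^{(3)}-\psi$ is $C^2$ near $x$ with a minimum there and belongs to $L^1(\R^n,\omega_s)$. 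Then $L(\varphi^{(3)}-\psi)(x)\ge0$ for every $L\in\mathcal L_*$, so superadditivity of $M^-=\inf_L L$ together with its translation invariance, Lemma~\ref{lem1d} and Lemma~\ref{otherpowers}, gives
\[ M^-\varphi^{(3)}(x)\ \ge\ M^-\psi(x)+M^-(\varphi^{(3)}-\psi)(x)\ \ge\ M^-\psi(x)=\underline c(s,3s/2)\,(|x|-1)^{-s/2}\ \ge\ c_0(|x|-1)^{-s/2}. \]

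For \eqref{eqvarphi4} the same scheme applies, but the geometry is reversed: $B_1$ is concave seen from inside, so now ${\rm dist}(z,\R^n\setminus B_1)=(1-|z|)_+\le(1-z\cdot\nu)_+$, i.e.\ $\varphi^{(4)}\le\psi$ with $\psi(z):=(1-z\cdot\nu)_+^{3s/2}$ and equality at $x$. Writing $w=\varphi^{(4)}-\psi\le0$ with $w(x)=0$, superadditivity gives $M^-\varphi^{(4)}(x)\ge M^-\psi(x)+M^-w(x)=\underline c(s,3s/2)(1-|x|)^{-s/2}+M^-w(x)$, so it remains to show $M^-w(x)\ge -C$ uniformly in $x\in B_1\setminus B_{1/2}$ and $s\in[s_0,1)$. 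Since $w\le0$ and $w(x)=0$, for every $L\in\mathcal L_*$ one has $Lw(x)\ge -C(1-s)\int_{\R^n}|w(x+y)|\,|y|^{-n-2s}\,dy$, and the task reduces to bounding this integral.

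This uniform error bound is the main obstacle, and the key point is that $|w(x+y)|$ vanishes \emph{quadratically} in $y$ in all regimes. Setting $a(y)=(1-|x+y|)_+$ and $b(y)=(1-(x+y)\cdot\nu)_+=(\rho-y\cdot\nu)_+$ with $\rho=1-|x|$, one has $0\le a\le b$ and $|w(x+y)|=b^{3s/2}-a^{3s/2}\le b^{3s/2-1}(b-a)$, and for $|y|\le1/4$ one shows $b-a\le C|y|^2$: when $x+y\in B_1$ this is the second‑order tangency $b-a=|x+y|(1-\omega\cdot\nu)=\tfrac12|x+y|\,|\omega-\nu|^2$ with $\omega=(x+y)/|x+y|$ and $|\omega-\nu|\le 2|y|/|x+y|$; when $x+y\notin B_1$ one has $a=0$ and the constraint $|x+y|>1$ forces $\rho-y\cdot\nu\le C|y|^2$, so that $b$ itself is $O(|y|^2)$ — this second case is the subtle one. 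Combining with the trivial bound $|w|\le b^{3s/2}\le C|y|^{3s/2}$ at large scales, one splits the integral at $|y|=1/4$, bounding $|w|$ near the origin by $C|y|^2$ when $3s/2\ge1$ and by $C|y|^{3s}$ when $3s/2<1$; the resulting integrals $(1-s)\int_0^{1/4}r^{1-2s}\,dr$, $(1-s)\int_0^{1/4}r^{s-1}\,dr$ and $(1-s)\int_{1/4}^\infty r^{-1-s/2}\,dr$ are all bounded uniformly for $s\in[s_0,1)$. This yields $M^-w(x)\ge -C$ and hence \eqref{eqvarphi4}.
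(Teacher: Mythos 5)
Your proof is correct and follows essentially the same route as the paper: at each point you subtract the tangent half-space profile $(\,\cdot\,)_+^{3s/2}$, use Lemma \ref{otherpowers} (with $3s/2\in(s,2s)$, so $\underline c>0$ uniformly for $s\in[s_0,1)$) to produce the main term $c\,d^{-s/2}$, and control the remainder via the second-order tangency between the sphere and its tangent hyperplane. The only differences are cosmetic — you phrase the comparison through superadditivity of $M^-$ and organize the error bounds slightly differently ($C|y|^2$ or $C|y|^{3s}$ near the origin instead of the paper's $\rho$-dependent $|y'|$-bounds) — which does not change the argument.
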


\begin{proof}
Let $L\in \mathcal L_*$. For points $x\in \R^n$ we use the notation $x=(x',x_n)$ with $x'\in \R^{n-1}$. To prove \eqref{eqvarphi4} let us estimate
$L \varphi^{(4)} (x_\rho) $ where $x_\rho = (0,1+\rho)$ for $\rho\in(0,1)$ and for a generic $L\in \mathcal L_*$. To do it we subtract the function $\psi(x)= (1-x_n)_+^{3s/2}$,
which by Lemma \ref{otherpowers} satisfies $L\psi(x_\rho)= c\rho^{-s/2}$ for some $c>0$. We note that
\[\bigl(\varphi^{(4)}-\psi\bigr)(x_\rho) = 0\]
and, similarly as in the proof of Lemma \ref{lem11},
\[0\ge\bigl(\varphi^{(4)}-\psi\bigr)(x_\rho+ y) \ge
\begin{cases}
- C \rho^{3s/2-1} |y'|^2 \quad \mbox{for } y=(y',y_n)\in B_{\rho/2}\\
- C |y'|^{3s}         \quad \mbox{for } y=(y',y_n)\in B_1\setminus B_{\rho/2} \\
- C |y|^{3s/2}              \quad \mbox{for } y\in \R^n \setminus B_1.
\end{cases}
\]
Hence,
\[
\begin{split}
L\varphi^{(4)}&(x_\rho) -c\rho^{-s/2} =  L\bigl(\varphi^{(4)} - \psi\bigr)(x_\rho)
\\
&\ge - C(1-s)\Lambda \left(\int_{B_{\rho/2}}\frac{ \rho^{3s/2-1}|y'|^2 dy}{|y|^{n+2s}}
+\int_{B_1\setminus B_{\rho/2}}\frac{|y'|^{3s} dy}{|y|^{n+2s}}
+\int_{\R^n\setminus B_1}\frac{|y|^{s/2} dy}{|y|^{n+2s}}
\right)
\\
&\ge  - C.
\end{split}
\]
This establishes \eqref{eqvarphi4}. To prove \eqref{eqvarphi3}, we now define $\psi(x)= (x_n-1)_+^{3s/2}$, and we use
Lemma \ref{otherpowers} and the fact that  $\varphi^{(3)}-\psi$ is nonnegative in all of $\R^n$ and vanishes on the positive $x_n$ axis.
\end{proof}

We can now construct the sub and supersolutions that will be used in the next section.

\begin{lem}\label{supersol}
Let $s_0\in(0,1)$ and $s\in[s_0,1)$.
There are positive constants $\epsilon$ and $C$, and a radial, bounded,  continuous function $\varphi_1$ which is $C^{1,1}$ in $B_{1+\epsilon}\setminus\overline B_1$ and
satisfies
\[
\begin{cases}
M^+\varphi_1(x) \le -1   &\mbox{ in } B_{1+\epsilon}\setminus \overline{B_1} \\
\varphi_1(x) = 0 \quad &\mbox{ in } B_1\\
\varphi_1(x) \le C\bigl(|x|-1\bigr)^s  &\mbox{ in } \R^n\setminus B_1\\
\varphi_1(x) \ge 1  &\mbox{ in } \R^n\setminus B_{1+\epsilon}
\end{cases}
\]
The constants $\epsilon$, $c$ and $C$ depend only on $n$, $s_0$, and ellipticity constants.
\end{lem}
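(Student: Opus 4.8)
The plan is to build $\varphi_1$ as a combination of the comparison functions from Lemmas \ref{lem11} and \ref{lem12}. Precisely, I would look for $\varphi_1$ of the form
\[
\varphi_1 = A\varphi^{(1)} - \varphi^{(3)} \quad\text{in } B_{1+\epsilon}\setminus\overline B_1,
\]
suitably truncated and continued outside $B_{1+\epsilon}$, where $A$ is a large constant and $\epsilon$ is small, both to be chosen. Here $\varphi^{(1)}(x)=(\mathrm{dist}(x,B_1))^s$ and $\varphi^{(3)}(x)=(\mathrm{dist}(x,B_1))^{3s/2}$; both vanish in $B_1$, so the condition $\varphi_1=0$ in $B_1$ is automatic for the ``inner'' part. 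By subadditivity of $M^+$ (i.e. $M^+(u+v)\le M^+u+M^+(-v)+\ldots$; more precisely $M^+(u-v)\le M^+ u - M^- v$), we get in $B_{1+\epsilon}\setminus\overline B_1$
\[
M^+\varphi_1 \le A\,M^+\varphi^{(1)} - M^-\varphi^{(3)} \le CA\bigl\{1+(1-s)|\log(|x|-1)|\bigr\} - c(|x|-1)^{-s/2},
\]
using \eqref{eqvarphi1} and \eqref{eqvarphi3}. Since $(|x|-1)^{-s/2}\to+\infty$ much faster than the logarithmic term as $|x|\to 1^+$, and since $s\ge s_0$ keeps $s/2$ bounded below, for each fixed $A$ there is $\epsilon=\epsilon(A,n,s_0,\lambda,\Lambda)>0$ such that the right-hand side is $\le -1$ on $B_{1+\epsilon}\setminus\overline B_1$; note $\epsilon$ may be taken uniform in $s\in[s_0,1)$ because all the constants are.

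Next I would fix the remaining free constants. On $B_{1+\epsilon}\setminus B_1$ we have $\varphi_1\le A\varphi^{(1)}\le CA(|x|-1)^s$, which already gives the third bullet there with $C=CA$; and $\varphi_1\le A(\mathrm{dist}(x,B_1))^s$ everywhere once we define the exterior continuation appropriately. For $|x|\ge 1+\epsilon$, I would simply \emph{set} $\varphi_1(x)$ equal to a fixed smooth bounded radial function that equals $\varphi_1$ on $\partial B_{1+\epsilon}$, is $\ge 1$ outside a slightly larger ball, is monotone in $|x|$, and stays below $C(|x|-1)^s$; since $\varphi^{(1)}$ already grows like $(|x|-1)^s$ and $\varphi^{(3)}$ is lower order, the bound $\varphi_1\le C(|x|-1)^s$ on all of $\R^n\setminus B_1$ is preserved for a possibly larger $C$. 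The last bullet, $\varphi_1\ge 1$ in $\R^n\setminus B_{1+\epsilon}$, is then a normalization: after choosing $\epsilon$, we have $\varphi_1=A\varphi^{(1)}-\varphi^{(3)}\ge A\epsilon^s - (2\epsilon)^{3s/2} \ge \tfrac12 A\epsilon^s$ on $\partial B_{1+\epsilon}$ provided $\epsilon$ is small, so choosing $A$ large enough (depending only on $n$, $s_0$, $\lambda$, $\Lambda$) makes this value $\ge 1$; but there is a circularity to resolve since $\epsilon$ was chosen as a function of $A$. I would resolve it by noting that the required smallness of $\epsilon$ for the supersolution inequality is of the form $\epsilon^{-s/2}\ge C A (1+|\log\epsilon|)$, so $\epsilon$ can be taken to be a fixed power, say $\epsilon = c_0 A^{-4/s_0}$ up to logarithmic corrections; then $A\epsilon^s = c_0^s A^{1-4s/s_0}$, which is \emph{not} large for big $A$. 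So instead I would rescale: replace $\varphi_1$ by $\varphi_1/\min_{\partial B_{1+\epsilon}}\varphi_1$, which multiplies $M^+\varphi_1$ by a positive factor; to keep $M^+\varphi_1\le -1$ I only need that factor $\ge$ some constant, equivalently $\min_{\partial B_{1+\epsilon}}\varphi_1$ bounded above, which holds. After this normalization $\varphi_1\ge 1$ on $\partial B_{1+\epsilon}$ for free, and monotonicity of the exterior continuation gives it on all of $\R^n\setminus B_{1+\epsilon}$.

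The main obstacle I anticipate is precisely this bookkeeping of the order in which $\epsilon$, $A$, and the normalization constant are fixed, so that all of them end up depending only on $n$, $s_0$, and the ellipticity constants and \emph{not} on $s$ itself --- this uniformity as $s\uparrow 1$ is the whole point. The key structural input that makes it work is Lemma \ref{lem12}: $M^-\varphi^{(3)}$ blows up like $(|x|-1)^{-s/2}$ near $\partial B_1$ with a \emph{positive} constant, beating the merely logarithmic (and sign-uncontrolled) bound on $M^+\varphi^{(1)}$ from Lemma \ref{lem11}; with $s\ge s_0$ the exponent $s/2\ge s_0/2>0$ stays bounded away from $0$, so the blow-up is genuine and quantitatively uniform. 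A secondary technical point is checking that the piecewise definition of $\varphi_1$ (inner part $A\varphi^{(1)}-\varphi^{(3)}$, outer part a smooth bounded continuation) still makes $M^+\varphi_1(x)\le -1$ pointwise for $x\in B_{1+\epsilon}\setminus\overline B_1$: here one uses that $M^+$ is evaluated through an integral over all of $\R^n$, and that replacing the true $\varphi_1$ by the \emph{larger} comparison function $A\varphi^{(1)}$ outside $B_{1+\epsilon}$ only increases $M^+\varphi_1$, so the estimate above with $\varphi^{(1)}$ and $\varphi^{(3)}$ on all of $\R^n$ is legitimate as an upper bound --- this is the standard way such barriers are glued, and I would spell it out but not belabor it.
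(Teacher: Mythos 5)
Your core strategy is the paper's: combine $\varphi^{(1)}$ and $\varphi^{(3)}$ from Lemmas \ref{lem11} and \ref{lem12}, use that $-c(|x|-1)^{-s/2}$ beats the logarithmic term to fix $\epsilon$, and then normalize multiplicatively to get $\varphi_1\ge 1$ outside $B_{1+\epsilon}$. The circularity between $A$ and $\epsilon$ that you work to resolve is self-inflicted: the paper fixes the coefficient of $\varphi^{(1)}$ once and for all (equal to $2$), chooses $\epsilon$ from the displayed estimate, and only at the very end multiplies the whole barrier by a constant $C\ge 1$; since $M^+(C\psi)=CM^+\psi\le -C\le -1$, this scaling can only improve the supersolution inequality, which is exactly what your division by $\min_{\partial B_{1+\epsilon}}\varphi_1\le 1$ accomplishes, so the large parameter $A$ is unnecessary.

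The genuine gap is in your gluing step. You modify the function immediately outside $B_{1+\epsilon}$ and justify $M^+\varphi_1\le -1$ in the annulus by replacing the exterior part with the larger function $A\varphi^{(1)}$. But the comparison function this produces is $A\varphi^{(1)}-\varphi^{(3)}\chi_{\overline{B_{1+\epsilon}}}$, not $A\varphi^{(1)}-\varphi^{(3)}$: dropping $-\varphi^{(3)}$ outside $B_{1+\epsilon}$ creates an upward jump of size $\epsilon^{3s/2}$ across $\partial B_{1+\epsilon}$. For $x$ in the annulus at distance $\delta$ from $\partial B_{1+\epsilon}$, every kernel in $\mathcal L_*$ (with $\mu\ge\lambda>0$) picks up from the region just across the sphere a positive contribution of order $(1-s)\,\epsilon^{3s/2}\,\delta^{-2s}$, which blows up as $x\to\partial B_{1+\epsilon}$ and is not controlled by the good term $-c(|x|-1)^{-s/2}\approx -c\,\epsilon^{-s/2}$ there; so the inequality $M^+\varphi_1\le -1$ is not obtained on the whole open annulus $B_{1+\epsilon}\setminus\overline B_1$ by this argument. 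Nor can you instead demand that the continuation stay below $A\varphi^{(1)}-\varphi^{(3)}$ everywhere, since that function is negative for $|x|-1>A^{2/s}$ while the continuation must be $\ge 1$ far away; and matching smoothly at $\partial B_{1+\epsilon}$ brings in the $C^{1,1}$ norm of $\varphi^{(1)}$ at distance $\epsilon$, of order $\epsilon^{s-2}$, which again overwhelms $\epsilon^{-s/2}$. The paper's fix is to truncate far from the annulus: $2\varphi^{(1)}-\varphi^{(3)}$ equals $1$ on $\partial B_2$, so setting $\psi\equiv 1$ outside $B_2$ is a continuous extension, and since the modification lives at distance at least $1-\epsilon$ from $B_{1+\epsilon}\setminus B_1$, its effect on $M^+\psi$ there is a bounded error $+C$ (the extra $+C$ in the paper's displayed estimate), absorbed by taking $\epsilon$ small. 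With the gluing moved out to $\partial B_2$ (or any sphere at a fixed distance from the annulus), the rest of your argument goes through.
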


\begin{proof}
Let
\[
\psi =
\begin{cases}
2\varphi^{(1)} - \varphi^{(3)} \ & \mbox{in }B_2 \\
1 & \mbox{in }\R^n\setminus B_2.
\end{cases}
\]

By Lemmas \ref{lem11} and \ref{lem12},  for $|x|>1$ it is
\[M^+\psi \le  C\left\{1+(1-s)\bigl|\log(|x|-1)\bigr|\right\}- c(|x|-1)^{-s/2}+C.\]
Hence, we may take $\epsilon>0$ small enough so that $M^+\psi \le -1$ in $B_{1+\epsilon}\setminus \overline{ B_1}$.
We then set $\varphi_1= C\psi$ with $C\ge1$ large enough so that $\varphi_1\ge1$ outside $B_{1+\epsilon}$.
\end{proof}

\begin{lem}\label{subsol}
Let $s_0\in(0,1)$ and $s\in[s_0,1)$.
There is $c>0$, and a radial, bounded, continuous function $\varphi_2$ that satisfies
\[
\begin{cases}
M^-\varphi_2(x) \ge c  &\mbox{ in } B_1\setminus B_{1/2}\\
\varphi_2(x) = 0 \quad &\mbox{ in } \R^n\setminus B_1\\
\varphi_2(x) \ge c\bigl(1-|x|\bigr)^s  &\mbox{ in } B_1\\
\varphi_2(x) \le 1  &\mbox{ in } \overline{B_{1/2}}.
\end{cases}
\]
The constants $\epsilon$, $c$ and $C$ depend only on $n$, $s_0$, and ellipticity constants.
\end{lem}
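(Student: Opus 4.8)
The idea is to imitate the construction of the supersolution $\varphi_1$ in Lemma~\ref{supersol}, using now the two \emph{interior} distance powers $\varphi^{(2)}$ and $\varphi^{(4)}$ of Lemmas~\ref{lem11} and~\ref{lem12}. Concretely, I would set
\[ \varphi_2 \;:=\; \kappa\bigl(\varphi^{(2)} + A\,\varphi^{(4)}\bigr) \quad\text{in } B_1, \qquad \varphi_2:=0 \ \text{ in } \R^n\setminus B_1, \]
with $\kappa>0$ small and $A\ge 1$ large, to be fixed depending only on $n$, $s_0$ and the ellipticity constants. (If the power $3s/2$ of $\varphi^{(4)}$ turns out not to be enough in the estimate below, I would instead use $({\rm dist}(x,\R^n\setminus B_1))^\beta$ with some $\beta\in(s,2s)$ closer to $2s$, and invoke Lemma~\ref{otherpowers} to gain a large constant.) Since $\varphi^{(2)}$ and $\varphi^{(4)}$ vanish continuously on $\partial B_1$, the function $\varphi_2$ is radial, bounded and continuous on $\R^n$.

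The three elementary bullets are immediate. On $B_1$ one has ${\rm dist}(x,\R^n\setminus B_1)=1-|x|\le 1$, hence $0\le\varphi^{(4)}\le\varphi^{(2)}\le 1$; therefore $\varphi_2\ge\kappa\varphi^{(2)}=\kappa(1-|x|)^s$ in $B_1$ (third bullet, with $c=\kappa$), $\varphi_2=0$ outside $B_1$ (second bullet), and $\varphi_2\le\kappa(1+A)$ on $\overline{B_{1/2}}$, which is $\le 1$ as soon as $\kappa\le 1/(1+A)$ (fourth bullet).

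The substantive point is the first bullet, $M^-\varphi_2\ge c$ in $B_1\setminus B_{1/2}$. Here I would use superadditivity, $M^-(u+v)\ge M^-u+M^-v$, together with Lemmas~\ref{lem11} and~\ref{lem12}:
\[ M^-\varphi_2 \;\ge\; \kappa\,M^-\varphi^{(2)} + \kappa A\,M^-\varphi^{(4)} \;\ge\; \kappa\Bigl(A\,c_1(1-|x|)^{-s/2} - C_2 A - C_1 - C_1(1-s)|\log(1-|x|)|\Bigr), \]
where $c_1>0$ and $C_1,C_2$ are the constants of those lemmas, all bounded uniformly for $s\in[s_0,1)$. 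As $|x|\to\partial B_1$ the power $(1-|x|)^{-s/2}$ dominates the logarithm, so choosing $A$ large forces the right-hand side to be $\ge\kappa$ on a shell $B_1\setminus B_{1-\epsilon}$. On the remaining compact piece $\overline{B_{1-\epsilon}\setminus B_{1/2}}$ the logarithm is bounded, and one needs $A\,c_1(1-|x|)^{-s/2}$ to beat the bounded negative contribution $C_2A+C_1+C_1(1-s)|\log(1-|x|)|$; this is where, if needed, taking the exponent of $\varphi^{(4)}$ close to $2s$ helps, since then by Lemma~\ref{otherpowers} the positive constant $\underline c(s,\beta)$ multiplying $(1-|x|)^{\beta-2s}\ge 1$ becomes arbitrarily large. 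With $A$ (and the exponent) fixed this way, and then $\kappa\le 1/(1+A)$, one gets $M^-\varphi_2\ge c:=\kappa>0$ throughout $B_1\setminus B_{1/2}$.

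I expect the main obstacle to be precisely this last step: obtaining $M^-\varphi_2\ge c>0$ on the \emph{whole} annulus $B_1\setminus B_{1/2}$, not merely near $\partial B_1$. For the supersolution of Lemma~\ref{supersol} the analogous inequality was only needed on the thin shell $B_{1+\epsilon}\setminus\overline{B_1}$, where the $(|x|-1)^{-s/2}$ blow-up of the ``good'' term does all the work and the shell can be taken as thin as one wishes; away from $\partial B_1$ there is no such blow-up, so one must balance $\kappa$, $A$ (and possibly the power $\beta$) carefully against the explicit constants in Lemmas~\ref{lem11}, \ref{lem12} and~\ref{otherpowers}, keeping track that all of these stay bounded as $s\uparrow 1$ so that $c$ in the statement does too. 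Once $M^-\varphi_2\ge c$ is in hand, the lemma is proved.
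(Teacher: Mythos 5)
Your choice of building blocks ($\varphi^{(2)}$, $\varphi^{(4)}$ and Lemmas \ref{lem11}--\ref{lem12}) is the same as the paper's, and the three easy bullets are fine, but the step you yourself flag as the main obstacle --- getting $M^-\varphi_2\ge c$ on the \emph{whole} annulus $B_1\setminus B_{1/2}$ --- is a genuine gap, and neither of your proposed fixes closes it. Multiplying $\varphi^{(4)}$ by a large constant $A$ gains nothing away from $\partial B_1$: by Lemma \ref{lem12} one has $M^-(A\varphi^{(4)})\ge A\,c\,(1-|x|)^{-s/2}-A\,C$, so the good term and the error term scale together, and near $|x|=1/2$, where $(1-|x|)^{-s/2}\approx 2^{s/2}$ is of order one, the sign of $A(c\,2^{s/2}-C)$ is determined by the unquantified constants $c,C$ of Lemma \ref{lem12}, which may well satisfy $C>2c$. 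Replacing the power $3s/2$ by $\beta$ close to $2s$ does not help either: the error constant in the curved-boundary comparison (coming from the tail integral $\int_{\R^n\setminus B_1}|y|^{\beta-n-2s}\,dy\sim (2s-\beta)^{-1}$) blows up at the same rate as the gain $\underline c(s,\beta)$, so positivity on the full annulus again hinges on uncontrolled constants; and, more decisively, by \eqref{adeu2} the constant $\underline c(s,\beta)$ stays \emph{bounded} as $s\uparrow 1$, so the "arbitrarily large constant" is not available uniformly for $s\in[s_0,1)$, which would ruin the uniformity in $s$ that the lemma (and the whole paper) requires.

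The missing idea in the paper's proof is a dilation-and-maximum argument. One first uses $\psi=\varphi^{(2)}+\varphi^{(4)}$ only on a thin shell $B_1\setminus\overline{B_{1-\epsilon}}$, where the singular term $c(1-|x|)^{-s/2}$ genuinely dominates both error terms, so that $M^-\psi\ge 1$ there. Then one sets $\Psi(x)=\max_{0\le k\le N}C^k\psi(2^{k/N}x)$ with $C>1$ and $N$ large: on the set $A_k$ where the maximum is attained by the $k$-th rescaling, that rescaled function touches $\Psi$ from below, hence $M^-\Psi(x)\ge C^k2^{2sk/N}M^-\psi(2^{k/N}x)>1$; since $A_k=2^{-k/N}A_0$ with $A_0\subset B_1\setminus\overline{B_{1-\epsilon}}$, these sets cover all of $B_1\setminus B_{1/2}$, and the scale invariance of $\mathcal L_*$ makes the constants uniform. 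Without this (or some substitute mechanism to transfer the subsolution property from the thin shell to the full annulus), your construction does not yield the lemma.
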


\begin{proof}
We first construct a subsolution $\psi$ in the annulus $B_{1}\setminus \overline{B_{1-\epsilon}}$, for some small $\epsilon>0$.
Then, using it, we will construct the desired subsolution in $B_1\setminus B_{1/2}$.
Let
\[\psi=\varphi^{(2)} + \varphi^{(4)}.\]

By Lemmas \ref{lem11} and \ref{lem12}, for $1/2<|x|<1$ it is
\[M^-\psi \ge  -C\left\{1+(1-s)\bigl|\log(1-|x|)\bigr|\right\}+ c(1-|x|)^{-s/2}- C.\]
Hence, we can take $\epsilon>0$ small enough so that  $M^-\psi \ge 1$ in $B_{1}\setminus \overline{ B_{1-\epsilon}}$.

Let us now construct a subsolution in $B_1\setminus \overline{B_{1/2}}$ from $\psi$, which is a subsolution only in $B_1\setminus \overline{B_{1-\epsilon}}$.
We consider
\[ \Psi(x) = \max_{0\le k \le N} C^{k} \psi (2^{k/N} x),\]
where $N$ is a large integer and $C>1$.
Notice that, for $C$ large enough, the set $\{ x\in B_1\ : \ \Psi(x)= \psi (x)\}$ is an annulus contained in $B_{1}\setminus \overline{ B_{1-\epsilon}}$.

Consider, for $k\geq0$,
\[A_k = \left\{ x\in B_1\ : \ \Psi(x)= C^{k}\psi (2^{k/N} x)\right\}.\]
Since $A_0\subset B_{1}\setminus \overline{ B_{1-\epsilon}}$, then $\Psi$ satisfies $M^-\Psi\ge1$ in $A_0$.

Observe that $A_k= 2^{-k/N}A_0$, since $C^{-1}\Psi(2^{1/n}x)=\Psi(x)$ in the annulus $\{1/2<|x|<2^{-1/n}\}$.
Hence, for $x\in A_k$ we have $2^{k/N}x\in A_0\subset B_{1}\setminus \overline{ B_{1-\epsilon}}$ and
\[M^-\Psi(x)> M^- \bigl(C^{k}\psi (2^{k/N} \,\cdot\,) \bigr)(x) = C^k 2^{2sk/N} M^- \psi(2^{k/N}x)>1.\]

We then set $\varphi_2= c\Psi$ with $c>0$ small enough so that $\varphi_2(x) \le 1$ in $\overline{B_{1/2}}$.
\end{proof}

\begin{rem} \label{remsub}
Notice that the subsolution $\varphi_2$ constructed above is $C^{1,1}$ by below in $B_1\setminus \overline B_{1/2}$, in the sense that it can be touched by below by paraboloids.
This is important when considering non translation invariant equations for which a comparison principle for viscosity solutions is not available.
\end{rem}

\section{The Caffarelli-Krylov method}
\label{sec-krylov-method}

The goal of this section is to prove Proposition \ref{thm1}.
Its proof combines the interior H\"older regularity results of Caffarelli and Silvestre \cite{CS} and the next key Lemma.

\begin{lem}\label{lem_main}
Let $s_0\in(0,1)$, $s\in[s_0,1)$, and $u\in C\bigl(\overline{B_1^+}\bigr)$ be a  viscosity solution of \eqref{equaciou}.
Then, there exist $\alpha\in(0,1)$ and $C$ depending only on $n$, $s_0$, and ellipticity constants, such that
\begin{equation}\label{eq:lemmain}
\sup_{B_{r}^+} u/x_n^s - \inf_{B_{r}^+} u/x_n^s  \le C {r}^\alpha\left(C_0+\|u\|_{L^\infty(\R^n)}\right)
\end{equation}
for all ${r}\leq 3/4$.
\end{lem}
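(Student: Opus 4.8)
The plan is to prove the oscillation decay \eqref{eq:lemmain} by an iteration on dyadic half-balls, in the spirit of the Caffarelli method for second-order equations with bounded measurable coefficients. After normalizing so that $C_0 + \|u\|_{L^\infty(\R^n)}\le 1$, I want to produce two monotone sequences of constants $(m_k)$ and $(M_k)$ with $m_k \le u/x_n^s \le M_k$ on $B_{r_k}^+$, where $r_k = 4^{-k}$ (say), such that $M_k - m_k \le C\,r_k^{\alpha}$ for all $k$, i.e.\ $M_k - m_k = (1-\theta)^k$ for a fixed $\theta \in (0,1)$ and $\alpha$ determined by $(1-\theta) = 4^{-\alpha}$. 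Equivalently, writing $v = u/x_n^s$, at each step the full oscillation of $v$ over $B_{r_{k+1}}^+$ is a definite fraction smaller than a candidate oscillation $(1-\theta)^k$ that controls $v$ on $B_{r_k}^+$ and, in a weaker weighted sense, on all of $\R^n$ (this weighted control of the tails is essential since the operators are nonlocal).

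The inductive step is where the barriers of Section \ref{sec-barriers} and the interior estimates of Caffarelli--Silvestre enter. Assume $m_k x_n^s \le u \le M_k x_n^s$ on $B_{r_k}^+$ with $M_k - m_k = (1-\theta)^k$. By the usual dichotomy, either $u$ is close to the upper barrier $M_k x_n^s$ or to the lower barrier $m_k x_n^s$ on, say, the smaller half-ball $B_{r_k/2}^+$ — more precisely, measure the ``distance to the two obstacles'' and suppose without loss of generality that the set where $u - m_k x_n^s \ge \tfrac12 (M_k-m_k)\,x_n^s$ occupies a substantial portion of a fixed subregion of $B_{r_k}^+$. On this subregion $w := u - m_k x_n^s$ is a nonnegative function (up to the harmless error from the tails, handled using the weighted control) satisfying $M^+ w \ge -C$ and $M^- w \le C$ in the viscosity sense, with $w \gtrsim (M_k - m_k) x_n^s$ on a set of positive measure at scale $r_k$. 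Rescaling to unit scale (using that $\mathcal L_*$ is scale invariant and that the $x_n^s$ weight is homogeneous of the right degree), I apply the interior Harnack inequality, Theorem \ref{Harnack}, together with the supersolution $\varphi_1$ and subsolution $\varphi_2$ from Lemmas \ref{supersol} and \ref{subsol} as barriers to propagate the lower bound $w \gtrsim (M_k - m_k) x_n^s$ from the good set down to all of $B_{r_{k+1}}^+$. This yields $u \ge (m_k + \theta(M_k - m_k))\,x_n^s$ there, so one may take $m_{k+1} = m_k + \theta(M_k - m_k)$ and $M_{k+1} = M_k$, giving $M_{k+1} - m_{k+1} = (1-\theta)(M_k-m_k)$ as desired. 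In the opposite case of the dichotomy one argues symmetrically, lowering $M_k$ instead.

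Two technical points must be tracked carefully. First, the nonlocal tails: when I translate $w = u - m_k x_n^s$ to unit scale, the contribution of $u$ outside $B_{r_k}$ has to be absorbed — this is why the iteration must carry along a scaled weighted bound on $v$ (of the form $\int |v| \,\omega_s \lesssim (1-\theta)^k$ after rescaling, say), and why I should choose the dyadic ratio $4$ (rather than $2$) and $\alpha$ small enough that $(1-\theta) > 4^{-2s_0}$ or similar, so that the geometric series of tail errors converges and stays small compared to the current oscillation. Second, near the flat boundary $\{x_n = 0\}$ the function $x_n^s$ is not $C^{1,1}$, so barriers must be the genuinely nonlocal ones built from $\varphi^{(1)},\dots,\varphi^{(4)}$ in Section \ref{sec-barriers} rather than paraboloids; the uniform-in-$s$ logarithmic bounds in Lemma \ref{lem11} are exactly what keep all constants bounded as $s\uparrow 1$. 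The main obstacle, and the heart of the argument, is this boundary barrier step: making the Harnack-type propagation of positivity work uniformly up to $\{x_n=0\}$ while simultaneously controlling the nonlocal tail errors. Once the sequences $(m_k),(M_k)$ are constructed, \eqref{eq:lemmain} for general $r \le 3/4$ follows by choosing $k$ with $r_{k+1} \le r \le r_k$ and using $\mathrm{osc}_{B_r^+} v \le M_k - m_k \le C r^\alpha$.
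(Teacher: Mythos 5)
Your proposal is correct and follows essentially the same route as the paper: normalize, run a dyadic iteration on half-balls $B_{4^{-k}}^+$ with oscillation $4^{-\alpha k}$, use the barriers of Section \ref{sec-barriers} (as in Lemma \ref{lemA}) to propagate lower bounds of the form $c\,(M_k-m_k)x_n^s$ up to $\{x_n=0\}$, use the interior Harnack of Theorem \ref{Harnack} on the region $\{x_n\sim r_k\}$ (as in Lemma \ref{lemB}), and absorb the nonlocal tails by the bounds from previous rings together with taking $\alpha$ small so that the tail error $\varepsilon_0(\alpha)r_k^{\alpha}$ is beaten by the current oscillation. The only (harmless) deviations are stylistic: the paper avoids your dichotomy by applying the two lemmas to both $u-m_k x_n^s$ and $\overline m_k x_n^s-u$ and adding the resulting inequalities, and your ``set of positive measure'' phrasing is unnecessary --- a bound at a single interior point is all the pointwise Harnack inequality needs.
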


To prove Lemma \ref{lem_main} we need two preliminary lemmas.

We start with the first, which is a nonlocal version of Lemma 4.31 in \cite{Kazdan}.
Throughout this section we denote
\[D_{{r}}^* := B_{9{r}/10}\cap \{x_n> 1/10\}.\]

\begin{lem}\label{lemA}
Let $s_0\in(0,1)$ and $s\in[s_0,1)$.
Assume that $u$ satisfies $u\ge0$ in all of $\R^n$ and
\[M^- u \le C_0 \quad \mbox {in } B_{r}^+,\]
for some $C_0>0$. Then,
\begin{equation}\label{eq:lemA}
\inf_{D_{{r}}^*} u/x_n^s \le C \left(\,\inf_{B_{{r}/2}^+} u/x_n^s + C_0 {r}^s\right)
\end{equation}
for all $r\leq 1$, where $C$ is a constant depending only on $s_0$, ellipticity constants, and dimension.
\end{lem}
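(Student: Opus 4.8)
The plan is to prove Lemma \ref{lemA} as a Harnack-type comparison with the barrier $\varphi_2$ from Lemma \ref{subsol}, rescaled to the relevant scale. By dividing $u$ by $C_0 r^s + \inf_{B_{r/2}^+}u/x_n^s$ (or adding a small $\varepsilon$ and letting it go to $0$) and rescaling $x\mapsto rx$, we may reduce to the case $r=1$, $C_0 \le 1$, and $\inf_{B_{1/2}^+}u/x_n^s \le 1$; it then suffices to show $u \le C x_n^s$ on $D_1^* = B_{9/10}\cap\{x_n>1/10\}$. Note that on this set $x_n^s$ is comparable to a constant, so the claim is simply $\sup_{D_1^*}u \le C$.

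First I would record what the normalization gives us: there is a point $x_*\in \overline{B_{1/2}^+}$ with $u(x_*) \le C (x_*)_n^s \le C$, and the differential inequality $M^-u \le C_0 \le 1$ holds in $B_1^+$ with $u\ge 0$ everywhere. Since $M^+_{\mathcal L_0}u \ge M^- u$... no — rather, I would use the interior Harnack inequality, Theorem \ref{Harnack}, which applies because $u\ge 0$ in $\R^n$ and satisfies $M^-_{\mathcal L_0}u\le M^- u\le 1$ and (automatically, for any nonnegative function that is a viscosity subsolution for $M^-$) we need $M^+_{\mathcal L_0}u\ge -1$; this last we get from the hypothesis on $u$ in \eqref{equaciou}, or we simply assume $u$ solves the two-sided inequality as in Lemma \ref{lem_main}. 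Chaining Harnack along a Harnack chain of balls inside $\{x_n > 1/20\}\cap B_{19/20}$ connecting $x_*$ to any point of $D_1^*$ (all such balls stay a fixed distance from $\partial B_1$ and from $\{x_n=0\}$, so finitely many suffice with constants depending only on $n,s_0,\lambda,\Lambda$) yields $\sup_{D_1^*}u \le C(u(x_*) + C_0) \le C$. Undoing the rescaling and normalization gives \eqref{eq:lemA}.

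The one subtlety is that the Harnack inequality of Theorem \ref{Harnack} requires $u\ge 0$ in \emph{all} of $\R^n$, which we have by hypothesis, and requires the two-sided bound on $M^\pm_{\mathcal L_0}$ in the relevant balls; the balls in the chain are contained in $B_1^+$, where both inequalities hold, so there is no issue. The point $x_*$ realizing $u(x_*)/(x_*)_n^s \le \inf_{B_{1/2}^+}u/x_n^s + \delta$ can be taken in the interior after a harmless limiting argument; since $u\in C(\overline{B_1^+})$ and $u=0$ on $\{x_n=0\}$ by the convention $u/x_n^s$ only matters for $x_n>0$, the infimum over $B_{1/2}^+$ is approached at interior points, so $(x_*)_n \ge c>0$ can be arranged up to enlarging $C$, or one simply keeps $x_*$ with $(x_*)_n$ possibly small and notes $u(x_*)\le (x_*)_n^s(\inf + \delta) \le \inf + \delta$ since $(x_*)_n\le 1$.

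I do not expect any serious obstacle here: this is the nonlocal transcription of Lemma 4.31 in \cite{Kazdan}, and the only nonlocal input — that the interior Harnack inequality holds for the extremal operators $M^\pm_{\mathcal L_0}$ with constants uniform as $s\uparrow 1$ — is exactly Theorem \ref{Harnack} from \cite{CS}. The main point to be careful about is keeping every ball in the Harnack chain uniformly away from both pieces of the boundary of $B_1^+$, so that the number of chain steps and hence the final constant $C$ depend only on $n$, $s_0$, and the ellipticity constants.
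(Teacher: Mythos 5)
Your reduction and your reading of the geometry are fine, but the core of your argument --- an interior Harnack chain from a near-optimal point $x_*$ for $\inf_{B_{1/2}^+}u/x_n^s$ up to the strip $D_1^*$ --- does not close, and the gap is exactly the point of the lemma. The point $x_*$ may lie arbitrarily close to $\{x_n=0\}$ (the infimum of $u/x_n^s$ can be approached only as $x_n\to0$; your claim that ``$(x_*)_n\ge c>0$ can be arranged up to enlarging $C$'' is unjustified, and arranging it is essentially equivalent to the boundary Harnack statement one is trying to prove). If $(x_*)_n=\epsilon$ is small, every ball of the chain containing $x_*$ must have radius $\lesssim\epsilon$, the chain from $x_*$ to $\{x_n>1/10\}$ has $\sim\log(1/\epsilon)$ steps, and the Harnack constant compounds at each step, giving a bound of order $\epsilon^{-M}u(x_*)$; since all you know is $u(x_*)\le \epsilon^s(\inf+\delta)$, this blows up as $\epsilon\to0$. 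Your fallback ``$u(x_*)\le \inf+\delta$'' does not help, because the number of chain steps is still unbounded. This degeneration of interior Harnack chains near the boundary is precisely why the Caffarelli--Krylov method does not use Harnack here: the paper proves Lemma \ref{lemA} by comparison with the explicit subsolution $\varphi_2$ of Lemma \ref{subsol}, rescaled to balls $B_{2r/10}(x_0)$ with $B_{r/10}(x_0)\subset D_r^*$, so that the lower bound $u\ge m\,x_n^s$ on $D_r^*$ is propagated in one stroke down to $B_{r/2}^+$ with a uniform constant (the $C_0$-term being absorbed by adding a multiple of $2(x_n)_+^s-(x_n)_+^{3s/2}$). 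What you have sketched is instead, in substance, the proof of Lemma \ref{lemB}, where the region $D_r^*$ stays at distance $\sim r$ from the boundary and finitely many comparable balls suffice.

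A secondary issue: Lemma \ref{lemA} assumes only the one-sided inequality $M^-u\le C_0$, and the paper's barrier argument uses nothing more (comparison with a subsolution requires only that $u$ be a supersolution for $M^-$). Your route needs the two-sided inequality $M^+_{\mathcal L_0}u\ge -C_0$ to invoke Theorem \ref{Harnack}, so even if the chain issue were repaired you would be proving a statement with stronger hypotheses than the one asserted.
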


\begin{proof}
{\em Step 1.}
Assume $C_0=0$. Let us call
\[m = \inf_{D_{r}^*} u/x_n^s\ge 0 .\]
We have
\begin{equation}\label{lowcontrolu}
u\ge m x_n^s \ge m ({r}/10)^s\quad\mbox{in }  D_{{r}}^*.
\end{equation}

Let us scale and translate the subsolution $\varphi_2$ in Lemma \ref{subsol} as follows to use it as lower barrier:
\begin{equation}\label{defpsiR}
\psi_{r}(x):= \textstyle ({r}/10)^s\, \varphi_2\bigl(\frac{10 (x-x_0)}{2{r}}\bigr)\,.
\end{equation}

We then have, for some $c>0$,
\[
\begin{cases}
M^- \psi_{r} \ge 0  & \mbox{in }B_{2{r}/10}(x_0)\setminus B_{{r}/10}(x_0) \\
\psi_{r} = 0 \quad &\mbox{in }\R^n\setminus B_{2 {r}/10}(x_0) \\
\psi_{r} \ge c\bigl(\frac{2{r}}{10}-|x|\bigr)^s & \mbox{in } B_{2/10}(x_0) \\
\psi_{r} \le ({r}/10)^s & \mbox{in }B_{{r}/10}(x_0) .
\end{cases}
\]

It is immediate to verify that $B_{{r}/2}^+$  is covered by balls of radius $2{r}/10$ such that the concentric ball of radius ${r}/10$ is contained in $D_{r}^*$, that is,
\[ B_{{r}/2}^+ \subset \bigcup \left\{B_{2{r}/10}(x_0)\,: \,B_{{r}/10}(x_0)\subset D_{r}^*\right\}.\]
Now, if we choose some ball $B_{{r}/10}(x_0)\subset D_{r}^*$ and define $\psi_{r}$ by \eqref{defpsiR}, then by \eqref{lowcontrolu} we have $u\ge m\psi_{r}$ in $B_{{r}/10}(x_0)$.
On the other hand $u\ge m\psi_{r}$ outside  $B_{2{r}/10}(x_0)$, since $\psi_{r}$ vanishes there and $u\ge 0$ in all of $\R^n$ by assumption.
Finally, $M^+ \psi_{r} \leq 0$, and since $C_0=0$, $M^-u \ge0$ in the annulus   $B_{2{r}/10}(x_0)\setminus B_{{r}/10}(x_0)$.

Therefore, it follows from the comparison principle that $u\ge m\psi_{r}$ in $B_{2{r}/10}(x_0)$.
Since these balls of radius $2{r}/10$ cover $B_{{r}/2}^+$ and  $\psi_{r} \ge c\bigl({\textstyle\frac{2{r}}{10}}-|x|\bigr)^s$ in  $B_{2/10}(x_0)$, we obtain
\[  u\ge c m x_n^s \qquad \mbox{in }B_{{r}/2}^+,\]
which yields \eqref{eq:lemA}.

{\em Step 2.}
If $C_0>0$ we argue as follows.
First, let
\[\phi(x)=\min\bigl\{1,\ 2(x_n)_+^s-(x_n)_+^{3s/2}\bigr\}.\]
By Lemma \ref{otherpowers}, we have that $M^+ \phi \le -c$ in $\{0<x_n<\epsilon\}$ for some $\epsilon>0$ and some $c>0$.
By scaling $\phi$ and reducing $c$, we may assume $\epsilon=1$.

We then consider
\[\tilde u(x) = u(x) +  \frac{C_0}{c}{r}^{2s}\phi(x/{r}).\]
The function $\tilde u$ satisfies in $\{0<x_n< r\}$
\[
M^- \tilde u - M^- u \le M^+\biggl( \frac{C_0}{c}{r}^{2s}\phi(x/{r})\biggr) \le -C_0
\]
and hence
\[ M^-\tilde u \le 0.\]
Using that $u(x)\leq \tilde u(x)\leq u(x)+CC_0r^s(x_n)_+^s$ and applying Step 1 to $\tilde u$, we obtain \eqref{eq:lemA}.
\end{proof}

The second lemma towards Proposition \ref{lem_main} is a nonlocal version of Lemma 4.35 in \cite{Kazdan}.
It is an immediate consequence of the Harnack inequality  of Caffarelli and Silvestre \cite{CS}.

\begin{lem}\label{lemB}
 Let $s_0\in(0,1)$, $s\in[s_0,1)$, ${r}\le1$ , and $u$ satisfy $u\ge0$ in all of $\R^n$ and
\[M^+ u \ge -C_0 \quad \textrm{and}\quad M^- u \le C_0 \quad \mbox{in }\ B_{r}^+.\]
Then,
\[\sup_{D_{r}^*} u/x_n^s \le C \left(\,\inf_{D_{r}^*} u/x_n^s + C_0 {r}^s\right),\]
for some constant $C$ depending only on $n$, $s_0$, and ellipticity constants.
\end{lem}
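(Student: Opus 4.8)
The plan is to rescale to the case $r=1$ and then obtain the estimate directly from the interior Harnack inequality of Caffarelli and Silvestre (Theorem~\ref{Harnack}) by a finite Harnack chain.

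First I would rescale: set $u_r(x)=u(rx)$. Since $\mathcal L_*$ is scale invariant of order $2s$, the extremal operators satisfy $M^\pm\bigl(u(r\,\cdot)\bigr)(x)=r^{2s}(M^\pm u)(rx)$, so $u_r\ge0$ in $\R^n$ and $M^+u_r\ge -C_0 r^{2s}$, $M^-u_r\le C_0 r^{2s}$ in $B_1^+$. Under $y=rx$ the set $D_r^*$ corresponds to $D_1^*=B_{9/10}\cap\{x_n>1/10\}$, and $u_r(x)/x_n^s=r^s\,u(y)/y_n^s$. Hence it is enough to prove
\[\sup_{D_1^*}u/x_n^s\le C\Bigl(\inf_{D_1^*}u/x_n^s+C_0\Bigr)\]
whenever $u\ge0$ in $\R^n$ and $M^+u\ge -C_0$, $M^-u\le C_0$ in $B_1^+$; dividing the resulting inequality by $r^s$ then gives the statement with $C_0 r^s$ on the right.

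For the rescaled statement, note that on $D_1^*$ one has $1/10<x_n<1$, so $x_n^s$ is bounded above and below by positive constants depending only on $s_0$; thus $u/x_n^s$ is comparable on $D_1^*$ to $u$, and it suffices to show $\sup_{D_1^*}u\le C\bigl(\inf_{D_1^*}u+C_0\bigr)$. For any $x_0\in D_1^*$ we have $B_{1/10}(x_0)\subset B_1^+$ (since $|x_0|<9/10$ and $(x_0)_n>1/10$); because $M^-_{\mathcal L_0}u\le M^-u\le C_0$ and $M^+_{\mathcal L_0}u\ge M^+u\ge -C_0$ there, Theorem~\ref{Harnack} applied in $B_{1/10}(x_0)$ (using translation invariance of the $\mathcal L_0$-extremal operators) yields $\sup_{B_{1/20}(x_0)}u\le C\bigl(u(x_0)+C_0\bigr)$, and in particular $u(y)\le C\bigl(u(x)+C_0\bigr)$ whenever $x,y\in D_1^*$ with $|x-y|<1/20$. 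Since $D_1^*$ is convex (an intersection of a ball and a half-space), any two of its points are joined by a segment of length $<2$ lying in $D_1^*$; choosing along it a chain $z_0,z_1,\dots,z_N$ with $|z_i-z_{i+1}|<1/20$ and $N$ bounded by a universal constant, and iterating the inequality above, we get $u(z_N)\le C^N u(z_0)+C'C_0\le \tilde C\bigl(u(z_0)+C_0\bigr)$. Taking $z_0,z_N$ to nearly realize the infimum and the supremum of $u$ over $D_1^*$ gives $\sup_{D_1^*}u\le \tilde C\bigl(\inf_{D_1^*}u+C_0\bigr)$, with $\tilde C$ depending only on $n$, $s_0$, and the ellipticity constants. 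Combining the three reductions finishes the proof.

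There is essentially no serious obstacle here: the argument is just interior Harnack plus a chaining argument, exactly as in the local case. The only points needing minor care are the bookkeeping of the scaling---so that the right-hand side carries $C_0 r^s$ rather than $C_0 r^{2s}$, which is precisely the effect of dividing the quotient estimate by $r^s$---and checking that the balls used in the Harnack chain lie inside $B_1^+$, which is immediate from the definition of $D_1^*$.
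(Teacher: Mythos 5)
Your proof is correct and follows essentially the same route as the paper: the paper also deduces the lemma from the interior Harnack inequality of Caffarelli--Silvestre (Theorem \ref{Harnack}) by covering $D_r^*$ with balls of radius comparable to $r$ contained in $B_r^+$ and then using that $x_n^s$ is comparable to $r^s$ on $D_r^*$, which is just your rescale-and-chain argument phrased at scale $r$. The scaling bookkeeping ($C_0 r^{2s}$ for $u$ becoming $C_0 r^s$ for $u/x_n^s$) matches the paper's as well.
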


\begin{proof}
The lemma is a consequence of Theorem \ref{Harnack}.
Indeed, covering the set $D_{r}^*$ with balls contained in $B_{r}^+$ and with radii comparable to ${r}$ ---using the same (scaled) covering for all ${r}$---, Theorem \ref{Harnack} yields
\[ \sup_{D_{r}^*} u\le C\left(\inf_{D_{r}^* } u + C_0 {r}^{2s}\right).\]
Then, the lemma follows by noting that $x_n^s$ is comparable to ${r}^{s}$ in $D_{r}^*$.
\end{proof}

Next we prove Lemma \ref{lem_main}.

\begin{proof}[Proof of Lemma \ref{lem_main}]
First, dividing $u$ by a constant, we may assume that $C_0+\|u\|_{L^\infty(\R^n)}\leq1$.

We will prove that there exist constants $C_1>0$ and $\alpha\in(0,s)$, depending only on $n$, $s_0$, and ellipticity constants, and monotone sequences $(m_k)_{k\ge 1}$ and $(\overline m_k)_{k\ge 1}$ satisfying the following.
For all $k\geq1$,
\begin{equation}\label{eq:prooflem1}
 \overline m_k - m_k =  4^{-\alpha k}\,,\quad -1\le m_k\le m_{k+1}< \overline m_{k+1}\le \overline m_k\le 1\,,
\end{equation}
and
\begin{equation}\label{eq:prooflem2}
 m_k \le C_1^{-1}u/x_n^s \le \overline m_k \quad \mbox{in } B_{{r}_k}^+ \,, \quad \mbox{where } {r}_k =  4^{-k} \,.
\end{equation}
Note that  since $u=0$ in $B_1^-$ then we have that \eqref{eq:prooflem2} is equivalent to the following inequality in $B_{{r}_k}$ instead of $B_{{r}_k}^+$
\begin{equation}\label{eq:prooflem3}
 m_k (x_n)_+^s \le C_1^{-1}u \le \overline m_k (x_n)_+^s \quad \mbox{in } B_{{r}_k}\,, \quad \mbox{where } {r}_k = 4^{-k} \,.
\end{equation}

Clearly, if such sequences exist, then \eqref{eq:lemmain} holds for all ${r}\le 1/4$ with $C=4^\alpha C_1$.
Moreover, for $1/4<r\leq3/4$ the result follows from \eqref{ramon} below.
Hence, we only need to construct $\{m_k\}$ and $\{\overline m_k\}$.

Next we construct these sequences by induction.

Using the supersolution $\varphi_1$ in Lemma \ref{supersol} we find that
\begin{equation}\label{ramon}
-\frac{C_1}{2}(x_n)^s_+\leq u \leq \frac{C_1}{2}(x_n)^s_+\quad \textrm{in}\ B_{3/4}^+
\end{equation}
whenever $C_1$ is large enough.
Thus, we may take $m_1=-1/2$ and $\overline m_1=1/2$.

Assume now that we have sequences up to $m_k$ and $\overline m_k$.
We want to prove that there exist $m_{k+1}$ and $\overline m_{k+1}$ which fulfill the requirements.
Let
\[u_k = C_1^{-1}u - m_k (x_n)_+^s\,.\]

We will consider the positive part $u_{k}^+$ of $u_k$ in order to have a nonnegative function in all of $\R^n$ to which we can apply Lemmas \ref{lemA} and \ref{lemB}.
Let $u_{k}= u_k^+-u_k^-$. Observe that, by induction hypothesis,
\[u_k^+ = u_k  \quad\mbox{and}\quad u_k^-= 0 \quad \mbox{in }B_{{r}_k}\,.\]
Moreover, $C_1^{-1}u \ge m_j(x_n)_+^s$ in $B_{{r}_j}$ for each $j\le k$.
Therefore, we have
\[u_k \ge (m_j-m_k) (x_n)_+^s \ge (m_{j}-\overline m_{j}+\overline m_k-m_k) (x_n)_+^s = (-4^{-\alpha j}  +4^{-\alpha k}) (x_n)_+^s \quad \mbox{in } B_{{r}_j}.\]
But clearly $0\le (x_n)_+^s \le {r}_{j}^s$ in $B_{{r}_j}$, and therefore using ${r}_j= 4^{-j}$
\[u_k \ge -  {r}_j^s({r}_j^\alpha-{r}_k^\alpha) \quad \mbox{in }\ B_{{r}_j}\ \mbox{for each}\ \ j\le k\,.\]

Thus, since for every $x\in B_{1}\setminus B_{{r}_k}$ there is $j<k$ such that
\[ |x|< {r}_j =  4^{-j} \le 4|x|,\]
we find
\begin{equation}\label{eq:pflem3}
u_{k}(x)\ge -  {r}_k^{\alpha+s}  \biggl|\frac{4x}{{r}_k}\biggr|^s \biggl(\biggl|\frac{4x}{{r}_k}\biggr|^\alpha - 1\biggr) \quad \mbox{outside } B_{{r}_k}\,.
\end{equation}

Now let $L\in \mathcal L_*$.
Using \eqref{eq:pflem3} and that $u_k^-\equiv0$ in $B_{r_k}$, then for all $x\in B_{{r}_k/2}$ we have
\[
\begin{split}
0\le L u_{k}^- (x) &= (1-s)\int_{x+y\notin B_{{r}_k}} u_k^-(x+y)\frac{\mu(y/|y|)}{|y|^{n+2s}}\,dy\\
&\le (1-s)\,\int_{|y|\ge {r}_k/2}  {r}_k^{\alpha+s} \biggl|\frac{8y}{{r}_k}\biggr|^s \biggl(\biggl|\frac{8y}{{r}_k}\biggr|^\alpha - 1\biggr) \frac{\Lambda}{ |y|^{n+2s}}\,dy \\
&= (1-s)\Lambda {r}_k^{\alpha-s}\int_{|z|\ge 1/2} \frac{|8z|^{s}(|8z|^\alpha-1)}{|z|^{n+2s}}\,dz\\
&\le \varepsilon_0  {r}_k^{\alpha-s},
\end{split}
\]
where $\varepsilon_0= \varepsilon_0(\alpha)\downarrow 0$ as $\alpha\downarrow 0$ since $|8z|^\alpha\rightarrow 1$.
Since this can be done for all $L\in \mathcal L_*$, $u_k^-$ vanishes in $B_{{r}_k}$ and satisfies pointwise
\[ 0\le M^- u_k^- \le M^+ u_m^- \le \varepsilon_0  {r}_k^{\alpha-s} \quad \mbox{in }B_{{r}_k/2}^+.\]

Therefore, recalling that
\[u_k^+=C_1^{-1}u-m_k(x_n)_+^s+u_k^-,\]
and using that $M^+(x_n)_+^s = M^-(x_n)_+^s =0$ in $\{x_n>0\}$, we obtain
\[\begin{split}
M^- u_{k}^+ &\le C_1^{-1} M^- u + M^+  (u_k^-)\\
&\le  C_1^{-1}  + \varepsilon_0  {r}_k^{\alpha -s} \qquad\textrm{ in }B_{{r}_k/2}^+.
\end{split}\]
Also clearly
\[ M^+ u_k^+ \ge M^+ u_k \ge -C_{1}^{-1}\qquad\textrm{ in }B_{{r}_k/2}^+. \]

Now we can apply Lemmas \ref{lemA} and \ref{lemB} with $u$ in its statements replaced by $u_{k}^+$.
Recalling that
\[\textstyle u_{k}^+ = u_k =  C_1^{-1}u- m_k x_n^s \quad \mbox{in }B_{{r}_k}^+,\]
we obtain
\begin{equation}\label{eq:pflema1}
\begin{split}
\sup_{D_{{r}_k/2}^*} (C_1^{-1}u/x_n^s-m_k) &\le C \biggl(\inf_{D_{{r}_k/2}^*} (C_1^{-1}u/x_n^s-m_k) +  C_1^{-1} {r}_k^s +\varepsilon_0 {r}_k^\alpha \biggr)\\
&\le C \biggl(\inf_{B_{{r}_k/4}^+} (C_1^{-1}u/x_n^s-m_k)+ C_1^{-1}{r}_k^s +\varepsilon_0 {r}_k^\alpha \biggr)\,.
\end{split}
\end{equation}

On the other hand, we can repeat the same reasoning ``upside down'', that is, considering the functions $\overline{u}_k = \overline m_k (x_n)_+^s - u$ instead of $u_k$.
In this way we obtain, instead of \eqref{eq:pflema1}, the following
\begin{equation}\label{eq:pflema2}
\sup_{D_{{r}_k/2}^*} (\overline m_k - C_1^{-1}u/x_n^s) \le C \biggl(\inf_{B_{{r}_k/4}^+} (\overline m^k-C_1^{-1}u/x_n^s)+C_1^{-1}{r}_k^s +\varepsilon_0 {r}_k^\alpha \biggr).
\end{equation}

Adding \eqref{eq:pflema1} and \eqref{eq:pflema2} we obtain
\[\begin{split}
\overline m_k-m_k &\le C \biggl(\inf_{B_{{r}_k/4}^+} (C_1^{-1}u/x_n^s-m_k) + \inf_{B_{{r}_k/4}^+} (\overline m_k-C_1^{-1}u/x_n^s) + C_1^{-1}{r}_k^s +\varepsilon_0 {r}_k^\alpha \biggr)\\
&= C\biggl(\inf_{B_{{r}_{k+1}}^+} C_1^{-1}u/x_n^s - \sup_{B_{{r}_{k+1}}^+} C_1^{-1}u/x_n^s +\overline m_k-m_k+ C_1^{-1}{r}_k^s +\varepsilon_0 {r}_k^\alpha\biggr).
\end{split}\]
Thus, using that $\overline m_k-m_k = 4^{-\alpha k}$,  $\alpha<s$, and ${r}_k = 4^{-k}\le 1$, we obtain
\[\sup_{B_{{r}_{k+1}}^+} C_1^{-1}u/x_n^s - \inf_{B_{{r}_{k+1}}^+}  C_1^{-1}u/x_n^s \le \bigl(\textstyle \frac{C-1}{C} +C_1^{-1} +\varepsilon_0\bigr) 4^{-\alpha k}\,.\]

Now we choose $\alpha$ small and $C_1$ large enough so that
\[\frac{C-1}{C} +C_1^{-1}  +\varepsilon_0(\alpha) \le 4^{-\alpha}.\]
This is possible since $\varepsilon_0(\alpha)\downarrow 0$ as $\alpha\downarrow 0$ and the constant $C$ depends only on $n$, $s_0$, and ellipticity constants.
Then, we find
\[\sup_{B_{{r}_{k+1}}^+} C_1^{-1}u/x_n^s - \inf_{B_{{r}_{k+1}}^+}  C_1^{-1}u/x_n^s \le  4^{-\alpha (k+1)},\]
and thus we are able to choose $m_{k+1}$ and $\overline m_{k+1}$ satisfying \eqref{eq:prooflem1} and \eqref{eq:prooflem2}.
\end{proof}

To end this section, we give the

\begin{proof}[Proof of Proposition \ref{thm1}]
Let $x\in B_{1/2}^+$ and let $x_0$ be its nearest point on $\{x_n=0\}$.
Let
\[d={\rm dist\,}(x, x_0)= x_n = {\rm dist\,}(x, B_1^-).\]
By Theorem \ref{Interior-Caff-Silv} (rescaled), we have
\[ \|u\|_{C^\alpha\left(B_{d/2}(x)\right)} \le  C d^{-\alpha} \left(\|u\|_{L^\infty(\R^n)} + C_0\right).\]
Hence, since $\|(x_n)^{-s}\|_{C^\alpha\left(B_{d/2}(x)\right)}\leq Cd^{-s}$, then for ${r}\le d/2$
\begin{equation} \label{hola1}
 {\rm osc}_{B_{{r}}(x)} u/x_n^s  \le C {r}^\alpha d^{-s-\alpha} \left(\|u\|_{L^\infty(\R^n)} + C_0\right).
\end{equation}

On the other hand, by Lemma \ref{lem_main}, for all $r\geq d/2$ we have
\begin{equation}\label{hola2}
{\rm osc}_{B_{r}(x)\cap B_{3/4}^+} u/x_n^s \le   C r^\alpha \left(\|u\|_{L^\infty(\R^n)} + C_0\right).
\end{equation}
In both previous estimates $\alpha\in(0,1)$ depends only on $n$, $s_0$, and ellipticity constants.
Let us call
\[M =\left(\|u\|_{L^\infty(\R^n)} + C_0\right). \]
Then, given $\theta>1$ we have the following alternatives
\begin{enumerate}
\item[(i)] If ${r}\le d^\theta/2$ then, by \eqref{hola1},
\[{\rm osc}_{B_{{r}}(x)} u/x_n^s  \le C{r}^\alpha d^{-s-\alpha} M \le C{r}^{\alpha -(s+\alpha)/\theta} M.\]

\item[(ii)] If $ d^\theta/2 <{r} \leq d/2 $ then, by \eqref{hola2},
\[{\rm osc}_{B_{{r}}(x)} u/x_n^s  \le {\rm osc}_{B_{d/2}(x)} u/x_n^s  \leq Cd^\alpha M \le C{r}^{\alpha/\theta} M.\]

\item[(iii)] If $d/2<r$, then by \eqref{hola2}
\[{\rm osc}_{B_{{r}}(x)\cap B_{3/4}^+} u/x_n^s  \leq Cr^\alpha M.\]
\end{enumerate}
Choosing $\theta>\frac{s+\alpha}{\alpha}$ (so that the exponent in (i) is positive), we obtain
\begin{equation}\label{mmouse}
 {\rm osc}_{B_{{r}}(x)\cap B_{3/4}^+} u/x_n^s  \le C {r}^{{\alpha'}} M \quad \mbox{whenever }x\in B_{1/2}^+\quad \mbox{and}\quad {r}>0,
\end{equation}
for some ${\alpha'}\in (0,\alpha)$.
This means that $\|u/x_n^s\|_{C^{\alpha'}(B_{1/2}^+)}\leq CM$, as desired.
\end{proof}

\section{Liouville theorems in $\R^n_+$}
\label{sec-liouville-theorems}

The goal of this section is to prove Theorem \ref{thm-liouv-1+s}.

First, as a consequence of Proposition \ref{thm1} we show the following Liouville-type result involving the extremal operators.
Note that the growth condition $CR^\beta$ in this lemma holds for $\beta<s+\bar\alpha$ (with $\bar\alpha$ small), in contrast with the Liouville Theorem~\ref{thm-liouv-1+s}.

\begin{prop}\label{liouville-krylov}
Let $s_0\in (0,1)$ and $s\in [s_0,1)$.
Let $\bar\alpha>0$ be the exponent given by Proposition \ref{thm1}.
Assume that $u\in C(\R^n)$ is a viscosity solution of
\begin{eqnarray*}
M^+ u\ge 0\quad \mbox{and}\quad M^- u\le0 &\mbox{in}& \{x_n>0\},\\
 u=0 &\mbox{in}&\{x_n<0\}.
 \end{eqnarray*}
Assume that, for some positive $\beta<s+\bar\alpha$, $u$ satisfies the growth control at infinity
\begin{equation}\label{growthcontrol2}
\|u\|_{L^\infty(B_R)} \le CR^\beta \quad \mbox{for all } R\ge 1.
\end{equation}
Then,
\[u(x)= K (x_n)_+^s\]
for some constant $K\in \R$.
\end{prop}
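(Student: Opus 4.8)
The plan is to argue by a scaling and compactness argument combined with the $C^{\bar\alpha}$ estimate for the quotient $u/x_n^s$ provided by Proposition \ref{thm1}. First I would apply Proposition \ref{thm1} on the rescaled functions. Fix $R\ge 1$ and set $u_R(x)= R^{-\beta}u(Rx)$. Then $u_R$ solves the same two-sided extremal inequalities with $C_0=0$ in $B_1^+$ (by scale invariance of $\mathcal L_*$, exactly as in Definition \ref{defi-scale-invariant} and the scaling of $M^\pm$), and $\|u_R\|_{L^\infty(B_1)}\le C$ by the growth control \eqref{growthcontrol2}. Note however that $u_R$ need not be bounded in all of $\R^n$, so one has to be a little careful: either one truncates and uses the stability of viscosity solutions, or one observes that the estimate \eqref{first_estimate} really only needs $u\in L^\infty(\R^n)$ to control the tails, and here $\|u\|_{L^\infty(B_\rho)}\le C\rho^\beta$ with $\beta<s+\bar\alpha<2s$, so the relevant tail integrals against $\omega_s$ converge. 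In either case Proposition \ref{thm1} (applied after a further harmless rescaling from $B_1$ to a fixed ball) gives
\[
\|u_R/x_n^s\|_{C^{\bar\alpha}(B_{1/2}^+)}\le C,
\]
with $C$ independent of $R$. Undoing the scaling, this reads as a decay estimate: there is a constant $K=K_R$ (the value of the $C^{\bar\alpha}$-extension of $u_R/x_n^s$ at the origin, which is the natural ``boundary value'') such that
\[
\bigl|u(x) - K_R\, (x_n)_+^s\bigr| \le C R^{\beta-s-\bar\alpha}\,|x|^{s+\bar\alpha}\quad\text{for } x\in B_{R/2}.
\]

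Next I would show that $K_R$ stabilizes and the error goes to zero. Since $\beta<s+\bar\alpha$, the exponent $\beta-s-\bar\alpha$ is negative, so letting $R\to\infty$ with $x$ fixed forces the difference $|u(x)-K_R(x_n)_+^s|\to 0$; in particular $K_R$ converges to a finite limit $K$ as $R\to\infty$, uniformly enough that
\[
u(x) = K\,(x_n)_+^s \quad\text{for all } x\in\R^n.
\]
(One checks $u\equiv 0$ on $\{x_n<0\}$ is built in, and $(x_n)_+^s$ is indeed the limiting profile by construction.) A clean way to organize this is a standard dyadic iteration: prove that $\mathrm{osc}_{B_\rho}\,u/x_n^s \le C\rho^{\bar\alpha}\rho_0^{-\bar\alpha}$ type bounds improve the homogeneous blow-down, forcing $u/x_n^s$ to be constant, exactly mirroring how one deduces Liouville theorems from interior Hölder estimates.

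The main obstacle I anticipate is the non-boundedness of $u$ in $\R^n$, which is required by the hypothesis of Proposition \ref{thm1}: one must justify that the proof of that proposition (really, the interior estimate of Theorem \ref{Interior-Caff-Silv} together with Lemma \ref{lem_main}) survives when $\|u\|_{L^\infty(\R^n)}$ is replaced by the weighted quantity $\|u\|_{L^1(\R^n,\omega_s)}$, which is finite precisely because $\beta<2s$. This is routine but needs to be stated carefully, e.g.\ by replacing $\|u\|_{L^\infty(\R^n)}$ in \eqref{first_estimate} with $\|u\|_{L^\infty(B_1)}+\|u\|_{L^1(\R^n,\omega_s)}$ and tracking it through Lemmas \ref{lemA}, \ref{lemB}, and \ref{lem_main}. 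Everything else — the scaling of the extremal operators, the convergence of $K_R$, and the identification of the profile $(x_n)_+^s$ via Lemma \ref{sol1dRn} — is then a direct computation.
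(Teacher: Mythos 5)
Your proposal is correct and follows essentially the same route as the paper: rescale $u_R=R^{-\beta}u(R\,\cdot\,)$, handle the unboundedness by truncation (the paper applies Proposition \ref{thm1} to $v_\rho\chi_{B_1}$, whose tail contributes only a bounded right-hand side $C_0$ controlled by the weighted $L^1$ norm, so no reworking of Proposition \ref{thm1} with weighted norms is needed), and scale back to get $[u/x_n^s]_{C^{\bar\alpha}(B_{\rho/4}^+)}\le C\rho^{\beta-s-\bar\alpha}\to 0$. The paper concludes directly that this seminorm vanishes, rather than tracking the boundary values $K_R$ as you do (where, incidentally, your normalization drops a factor $R^{\beta-s}$), but this is only a cosmetic difference.
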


\begin{proof}
Given $\rho\ge1$, let $v_\rho(x)= \rho^{-\beta}u(\rho x)$.
Note that for all $\rho\ge 1$ the function $v_\rho$ satisfies the same growth control \eqref{growthcontrol2} as $u$.
Indeed,
\[ \|v_\rho\|_{L^\infty(B_R)}= \rho^{-\beta}\|u\|_{L^\infty(B_{\rho R})}\le \rho^{-\beta} C(\rho R)^\beta=CR^\beta.\]
In particular $\|v_\rho\|_{L^\infty(B_1)}\le C$ and $\|v_\rho\|_{L^1(\R^n, \omega_s)}\le C$, with $C$ independent of $\rho$.
Hence, the function $\tilde v_\rho = v_\rho \chi_{B_1}$ satisfies $M^+ \tilde v_\rho\ge -C$ and $M^- \tilde v_\rho\le C$ in $B_{1/2}\cap \{x_n>0\}$, and $\tilde v_\rho=0$ in $\{x_n<0\}$.
Also, $\|\tilde v_\rho\|_{L^\infty(B_{1/2})}\le C$.
Therefore, by Proposition~\ref{thm1} we obtain that
\[  \bigl\| v_\rho/x_n^s\bigr\|_{C^{\alpha}(B_{1/4}^+)}= \bigl\| \tilde v_\rho/x_n^s\bigr\|_{C^{\alpha}(B_{1/4}^+)} \le C .\]
Scaling this estimate back to $u$ we obtain
\[ \bigl[ u/x_n^s\bigr]_{C^{\alpha}(B_{\rho/4}^+)}= \rho^{-\alpha} \bigl[ u(\rho x)/(\rho x_n)^s\bigr]_{C^{\alpha}(B_{1/4}^+)} = \rho^{\beta-s-\alpha } \bigl[ v_\rho/(x_n)^s\bigr]_{C^{\alpha}(B_{1/4}^+)}  \le C\rho^{\beta-s-\alpha}.\]
Using that $\beta<s+\alpha$ and letting $\rho\to \infty$ we obtain
\[ \bigl[ u/x_n^s\bigr]_{C^{\alpha}(\R^n\cap\{x_n>0\})}= 0,\]
which means $u=K\bigl(x_n)_+^s$.
\end{proof}

The previous Proposition will be applied to tangential derivatives of a solution to $u$ in the situation of Theorem \ref{thm-liouv-1+s}.
It will give that $u$ is in fact a function of $x_n$ alone.
To proceed, we will need the following crucial lemmas.
These are Liouville-type results for the fractional Laplacian in dimension 1, and they will be proved in the next section.

In the first one, the growth of the solution $u$ still allows to compute $(-\Delta)^s u$.

\begin{lem}\label{classification1D}
Let $u\in C(\R)$ be a function satisfying $(-\Delta)^su=0$ in $\R_+$, $u\equiv0$ in $\R_-$, and $|u(x)|\leq C(1+|x|^\beta)$ for some $\beta<2s$.
Then, $u(x)=K(x_+)^s$.
\end{lem}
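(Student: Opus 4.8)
The plan is to classify one-dimensional solutions of $(-\Delta)^s u=0$ on $\R_+$ vanishing on $\R_-$ by a combination of scaling and a Liouville-type argument for incremental quotients. First I would note that we already know one solution, $\varphi^s(x)=(x_+)^s$, which satisfies $(-\Delta)^s\varphi^s=0$ in $\{x>0\}$ (cited earlier, \cite[Proposition 3.1]{RS-Dir}). The strategy is to subtract off a suitable multiple of $\varphi^s$ and show the remainder vanishes identically.

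\textbf{Key steps.} The cleanest route is via finite differences. For $h>0$ consider $u_h(x)=u(x+h)-u(x)$; since $(-\Delta)^s$ is linear and translation invariant, $u_h$ again satisfies $(-\Delta)^s u_h=0$ in the region $\{x>0\}\cap\{x+h>0\}=\{x>0\}$, and $u_h\equiv 0$ on $\{x<-h\}$. Moreover $u_h$ inherits the growth bound $|u_h(x)|\le C(1+|x|^\beta)$ with $\beta<2s$, so $(-\Delta)^s u_h$ is classically defined. Now I would apply Lemma \ref{otherpowers} (or rather its one-dimensional content: the scaling $(-\Delta)^s(x_+)^\sigma = c(s,\sigma)x^{\sigma-2s}$) together with the sign information to run a barrier/sliding argument. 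Concretely, fix a large radius $R$: on $\{-h<x<0\}$ the function $u_h=u(x+h)$ is controlled, and on $\{x>0\}$ one compares $u_h$ with multiples of the barrier $(x_+)^{s}$ near the boundary and with multiples of $(x_+)^{\beta'}$ for $s<\beta'<2s$ (which is a strict supersolution since $c(s,\beta')>0$ by \eqref{adeu1}) at large scales; rescaling and sending $R\to\infty$, the growth exponent $\beta<2s$ forces $u_h\equiv 0$ for every $h>0$. Hence $u$ is constant on $\R_+$; but a constant is not $s$-harmonic on $\R_+$ unless it is zero (again by the scaling of $(-\Delta)^s$ applied to $x_+^0=\chi_{\R_+}$, whose value is negative), so in fact I should instead argue: $u_h\equiv 0$ means $u(x+h)=u(x)$ for all $x$ and all $h>0$, forcing $u$ constant on $\R$, hence $u\equiv 0$ (since $u=0$ on $\R_-$) — but that contradicts the existence of the nonzero solution $(x_+)^s$. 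The resolution is that the finite-difference trick as stated is too strong; the right object is the \emph{homogeneity} defect. So the better plan is: use scaling directly. Set $u_\rho(x)=\rho^{-s}u(\rho x)$; each $u_\rho$ solves the same problem with the same structural bounds, and by an $\bar\alpha$-type interior/boundary estimate (or by the uniqueness of the $s$-harmonic measure / Hopf-type normalization on $\R_+$) the family $\{u_\rho\}$ is precompact and every limit is $s$-homogeneous, hence a multiple of $(x_+)^s$. Subtracting $K(x_+)^s$ with $K$ the limiting coefficient, the remainder $v=u-K(x_+)^s$ solves the same equation, vanishes on $\R_-$, and has \emph{strictly sublinear-in-$s$} growth: $\|v\|_{L^\infty(B_R)}=o(R^s)$. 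Then the finite-difference argument \emph{does} close: $v_h$ has growth $o(R^s)$, hence by comparison with the barriers $(x_+)^{s}$ (from above and below, with vanishing coefficients) $v_h\equiv 0$, so $v$ is constant on $\R_+$, vanishes at the boundary, hence $v\equiv 0$.

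\textbf{Main obstacle.} The delicate point is extracting the homogeneity of blow-downs / blow-ups and the exact normalization constant $K$, i.e. showing that the only admissible homogeneity for a nontrivial solution vanishing on $\R_-$ is exactly $s$ and that the coefficient stabilizes. This is where Lemma \ref{otherpowers} does the real work: it tells us $(-\Delta)^s(x_+)^\beta$ has a definite sign — negative for $\beta<s$ and positive for $\beta>s$ — on $\{x>0\}$, which rules out any homogeneity $\ne s$ via the comparison principle (a $\beta$-homogeneous solution with $\beta\neq s$ would be a strict sub- or supersolution, impossible). Combining this with the growth bound $\beta<2s$ (which prevents the second homogeneity $2s-s\cdots$ — more precisely prevents picking up the "dual" solution that grows faster) pins down $u=K(x_+)^s$. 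The remaining steps — precompactness of rescalings via the uniform $C^{\bar\alpha}$ bound on $u/x_+^s$ from Proposition \ref{thm1} applied in one dimension, passage to the limit, and the final finite-difference kill — are routine.
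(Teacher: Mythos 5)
Your plan hinges on the blow-down step: you rescale $u_\rho(x)=\rho^{-s}u(\rho x)$, assert precompactness ``via the uniform $C^{\bar\alpha}$ bound from Proposition \ref{thm1}'', and then assert that every limit is $s$-homogeneous, which lets you extract the coefficient $K$ and reduce to a remainder with $o(R^s)$ growth. This is exactly where the argument has a genuine gap, for two reasons. First, the compactness claim fails in the stated generality: the lemma allows any growth exponent $\beta<2s$, and for $\beta\in(s,2s)$ the rescalings $\rho^{-s}u(\rho\,\cdot\,)$ are not even locally bounded as $\rho\to\infty$ ($\|u_\rho\|_{L^\infty(B_R)}\sim \rho^{\beta-s}R^\beta$); moreover Proposition \ref{thm1} requires a bounded solution, and the truncation-and-rescale trick (as in Proposition \ref{liouville-krylov}) only closes when $\beta<s+\bar\alpha$, a much smaller range than $\beta<2s$ since $\bar\alpha$ is small. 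Second, and more fundamentally, even granting compactness you give no reason why blow-down limits should be homogeneous: homogeneity of blow-ups/blow-downs is not automatic and ordinarily requires a monotonicity formula or the very classification you are trying to prove. Your appeal to Lemma \ref{otherpowers} only rules out \emph{homogeneous} solutions of degree $\neq s$; the whole content of the lemma is to exclude non-homogeneous behavior (a priori $u$ could contain ``modes'' growing anywhere up to $|x|^{2s}$), and your proposal never addresses this. The final comparison step (a continuous solution vanishing on $\R_-$ with $o(x^s)$ growth must vanish) is fine, but it is reached only after the unproven reduction.

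For contrast, the paper proves the lemma by lifting $u$ to the Caffarelli--Silvestre extension $v=u\ast P_s(\cdot,y)$ in the half-plane, expanding $v$ in the complete orthogonal system of homogeneous solutions $r^{s+\nu}\Theta_\nu(\theta)$, $\nu\in\mathbb N\cup\{0\}$, of the mixed problem \eqref{extension-pb} (Lemma \ref{lemODE}, associated Legendre functions), and then using Parseval on $\partial^+B_R$ together with the growth bound: since the next admissible homogeneity after $s$ is $s+1$ and $2\beta<4s<2s+2$, all coefficients $a_\nu$ with $\nu\ge1$ must vanish, leaving $u=K(x_+)^s$. If you want to salvage a purely nonlocal proof along your lines, you would need a substitute for this spectral rigidity, e.g.\ a monotonicity/frequency argument giving homogeneity of blow-downs, which is not supplied by Proposition \ref{thm1} or Lemma \ref{otherpowers}.
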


The second one is for functions that grow too much at infinity, so that one cannot compute $(-\Delta)^s u$ ---as in Theorem \ref{thm-liouv-1+s}.

\begin{lem}\label{liouv-1+s-1D}
Let $\phi:\R\longrightarrow\R$ be defined by
\[\phi_{a,b}(x)=a(x_+)^s+b(x_+)^{1+s}.\]
Then,
\begin{itemize}
\item[(a)] For any $a$ and $b$, the function $\phi_{a,b}$ satisfies
\[(-\Delta)^s\left\{\phi_{a,b}(\cdot+h)-\phi_{a,b}\right\}=0\quad\textrm{in}\ (0,\infty).\]

\item[(b)] Let $u\in C(\R)$ be any function such that $u\equiv0$ in $\R_-$ and satisfying
\[(-\Delta)^s\left\{u(\cdot+h)-u\right\}=0\quad\textrm{in}\ \R_+\]
for any $h>0$.
Assume in addition that, for some $\gamma\in (0,1)$ and $\beta\in (0,2s)$,
\[[u/(x_+)^s]_{C^\gamma([0,R])}\leq CR^\beta\qquad \textrm{for all}\ R\geq1.\]
Then,
\[u(x)=\phi_{a,b}(x)\]
for some $a$ and $b$.
\end{itemize}
\end{lem}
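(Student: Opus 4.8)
Part~(a) is a direct computation. The plan is to use the well-known scaling identity $(-\Delta)^s (x_+)^s = 0$ in $(0,\infty)$ together with $(-\Delta)^s(x_+)^{1+s} = c_s\,(x_+)^{1-s}$ in $(0,\infty)$ for an explicit constant $c_s$ (this is the one-dimensional case of Lemma~\ref{otherpowers} with $\beta = 1+s$, which forces $1+s < 2s$, i.e.\ $s > 1$; so in fact for $s<1$ the function $(x_+)^{1+s}$ is \emph{not} $s$-harmonic, and one must instead argue directly). The key observation is the translation: for fixed $h>0$, the difference $\phi_{a,b}(\cdot+h) - \phi_{a,b}$ is a function that, restricted to $x>0$, equals $a\big((x+h)^s - x^s\big) + b\big((x+h)^{1+s} - x^{1+s}\big)$, and restricted to $x \in (-h,0)$ equals $a(x+h)^s + b(x+h)^{1+s}$, and vanishes for $x<-h$. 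I would verify that $(-\Delta)^s$ of this difference vanishes on $(0,\infty)$ by writing the difference as $\psi_h := \phi_{a,b}(\cdot+h) - \phi_{a,b}$ and noting that $\partial_h \psi_h = \phi_{a,b}'(\cdot + h)$, which, after integration in $h$, reduces the claim to $(-\Delta)^s \phi_{a,b}' = 0$ on $(0,\infty)$; and $\phi_{a,b}' = a s (x_+)^{s-1} + b(1+s)(x_+)^s$, where the second term is $s$-harmonic on $(0,\infty)$ by the standard identity, and the first term $(x_+)^{s-1}$ is $s$-harmonic on $(0,\infty)$ as well (it is the other homogeneous $s$-harmonic solution — this is exactly the content of the $1$D computation behind Lemma~\ref{classification1D}, where $(x_+)^{s-1}$ is discarded only because it fails the growth bound). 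Thus $(-\Delta)^s\psi_h = 0$ on $(0,\infty)$ after integrating back in $h$, giving (a). One must check integrability of the difference against the $|y|^{-1-2s}$ kernel: $\psi_h$ grows like $|x|^{s}$ at $+\infty$ since $s < 2s$, which is fine.

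For part~(b), the plan is to reduce to part~(a) by a repeated-increments argument. Set $v_h(x) := u(x+h) - u(x)$ for $h > 0$. By hypothesis $(-\Delta)^s v_h = 0$ in $\R_+$, and $v_h \equiv 0$ in $(-\infty, -h)$. Moreover $v_h$ inherits a growth bound: from $[u/(x_+)^s]_{C^\gamma([0,R])} \le CR^\beta$ one gets $|u(x)| \le C(1+|x|^{s+\gamma+\beta})$, hence $|v_h(x)| \le C(1 + |x|^{s+\gamma+\beta})$ for fixed $h$. The idea is now to take a \emph{second} difference. Consider $w_{h,k}(x) := v_h(x+k) - v_h(x)$, which satisfies $(-\Delta)^s w_{h,k} = 0$ in $\R_+$, vanishes in $(-\infty, -h-k)$, and — this is the crucial gain — has \emph{strictly slower} growth than $v_h$. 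Indeed, I would show that the $C^\gamma$ seminorm hypothesis on $u/(x_+)^s$ controls second differences of $u$ so that $|w_{h,k}(x)| \le C(1 + |x|^{s+\gamma+\beta-1})$ or better; iterating enough times (since $\beta < 2s$ and each difference drops the exponent), one reaches a function with growth $\le C(1+|x|^{s'})$ for some $s' < 2s$, to which Lemma~\ref{classification1D} applies directly, yielding that it is a multiple of $(x_+)^s$. Unwinding the iteration — each stage says a difference of the previous function is $c\cdot(x_+)^s$ — and using that $(x_+)^s$ is itself $s$-harmonic, one concludes by elementary (one-variable) integration that $u$ must be a polynomial-type combination $a(x_+)^s + b(x_+)^{1+s}$ on $(0,\infty)$; the growth bound $\beta < 2s$ rules out any $(x_+)^{2s+j}$ terms with $j \ge 0$, and the condition $u \equiv 0$ on $\R_-$ together with continuity fixes the form globally.

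The main obstacle I anticipate is the bookkeeping in the iteration of part~(b): one must carefully track how the $C^\gamma$ regularity of $u/(x_+)^s$ translates into decay of higher-order increments of $u$ near the boundary $\{x=0\}$ (where $(x_+)^s$ is only $C^s$, so incremental quotients behave subtly) and simultaneously at infinity (where the polynomial growth controls things). A clean way to organize this is via the blow-up/rescaling trick used in Proposition~\ref{liouville-krylov}: for $v_h$ define $v_{h,\rho}(x) = \rho^{-(s+\gamma+\beta)} v_h(\rho x)$, apply the regularity estimate uniformly in $\rho$, and let $\rho \to \infty$ to kill the top-order behavior; but here the extra care is that $v_h$ solves only the \emph{homogeneous} equation $(-\Delta)^s v_h = 0$, not merely the extremal inequalities, so one may also invoke Lemma~\ref{classification1D} after controlling the tails. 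I expect that after the first one or two increments the growth exponent drops below $2s$ and the argument closes quickly; the technical heart is simply establishing the first such drop rigorously.
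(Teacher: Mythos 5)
Your part (a) is correct and is a legitimate alternative to the paper's one-line argument (the paper simply invokes the Caffarelli--Silvestre extension): writing $\phi_{a,b}(x+h)-\phi_{a,b}(x)=\int_0^h \phi_{a,b}'(x+t)\,dt$ and using that both $(x_+)^s$ and $(x_+)^{s-1}$ are $s$-harmonic on $(0,\infty)$ does give (a), modulo a routine Fubini justification.

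Part (b), however, has a genuine gap, and it is not only the ``technical heart'' you flag. First, the asserted gain for second increments, $|w_{h,k}(x)|\le C(1+|x|^{s+\gamma+\beta-1})$, does not follow from the $C^\gamma$ bound on $u/(x_+)^s$: for a general H\"older function, second differences are no smaller than first differences. The gain has to come from interior regularity of the $s$-harmonic function $v_h$ (this is exactly what the paper uses: $[v_h]_{C^{0,1}([R,2R])}\le C R^{-1}\|v_h\|_{L^\infty([R/2,3R])}\le C|h|^\gamma R^{\beta+s-1}$), and you leave precisely this step open. Second, and more seriously, even granted the decay, Lemma~\ref{classification1D} cannot be applied to $w_{h,k}$: the iterated increments vanish only on $(-\infty,-h-k)$, not on all of $\R_-$, and they solve the equation only in $(0,\infty)$, so the hypotheses of the classification lemmas fail; moreover the conclusion you want from them is false --- for $u=(x_+)^{1+s}$ the second increment behaves like $c\,hk\,x^{s-1}$ as $x\to+\infty$ and is not a multiple of $(x_+)^s$ --- so the unwinding step (``each stage says a difference of the previous function is $c\,(x_+)^s$'') cannot close. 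The paper circumvents both problems by passing to the derivative $u'$: dyadic Lipschitz estimates on $v_h$ give $|u'(x)|\le C|x|^{\delta-1}$ on $(0,1)$ with $\delta=\min\{\gamma,s\}$ and $|u'(x)|\le C|x|^{s+\beta+\gamma-1}$ for $x>1$; the function $u'$ genuinely vanishes on all of $\R_-$; and one then applies the extended classification Lemma~\ref{classification1D-1+s} --- which allows the singular mode $(x_+)^{s-1}$ and a blow-up at the origin, and which plain Lemma~\ref{classification1D} would not cover --- to get $u'=a(x_+)^s+b(x_+)^{s-1}$, and integrates using $u(0)=0$. If you insist on an increment-based argument, you would need a version of the classification lemma admitting nonzero (small) data on $(-h-k,0)$, which is essentially new work.
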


We will also need the following observation.

\begin{lem}\label{lem-1d-functions}
Assume that $u$ is a function in $\R^n$ depending only of one variable, i.e., $u(x)=\zeta(x_n)$.
Then, we have
\[M^+u(x)=-c_1(-\Delta)^s_{\R}\zeta(x_n)\]
and
\[M^-u(x)=-c_2(-\Delta)^s_{\R}\zeta(x_n)\]
in the viscosity sense, where $c_1$ and $c_2$ are positive constants.
\end{lem}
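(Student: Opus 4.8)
The plan is to reduce everything to the one–dimensional formula \eqref{Lasint} and the closed formulae for $M^+$ and $M^-$ stated just after Lemma \ref{lem1d}. First I would recall that, by Lemma \ref{lem1d}, for every $L\in\mathcal L_*$ with spectral measure $\mu$ and any function $u(x)=\zeta(x_n)$ one has
\[ Lu(x) = -\frac{1-s}{2c_{1,s}}\left(\int_{S^{n-1}}|\theta_n|^{2s}\mu(\theta)\,d\theta\right)(-\Delta)^s_{\R}\zeta(x_n).\]
Since the quantity $-(-\Delta)^s_{\R}\zeta(x_n)$ is a \emph{single real number} (not depending on $\mu$), the supremum of $Lu(x)$ over all admissible $\mu$ with $\lambda\le\mu\le\Lambda$ is attained by choosing $\mu\equiv\Lambda$ when $-(-\Delta)^s_{\R}\zeta(x_n)\ge0$ and $\mu\equiv\lambda$ when $-(-\Delta)^s_{\R}\zeta(x_n)<0$; in either case the result is $-c_1(-\Delta)^s_{\R}\zeta(x_n)$ where
\[ c_1 = c_1(s,\zeta(x_n)) = \frac{1-s}{2c_{1,s}}\,\omega\cdot\begin{cases}\Lambda & \text{if }(-\Delta)^s_{\R}\zeta(x_n)\le 0\\ \lambda & \text{if }(-\Delta)^s_{\R}\zeta(x_n)> 0,\end{cases}\qquad \omega:=\int_{S^{n-1}}|\theta_n|^{2s}\,d\theta>0.\]
In both cases $c_1>0$, with $c_1$ bounded between $\frac{1-s}{2c_{1,s}}\lambda\omega$ and $\frac{1-s}{2c_{1,s}}\Lambda\omega$, which is exactly the asserted form $M^+u(x)=-c_1(-\Delta)^s_{\R}\zeta(x_n)$; the argument for $M^-$ is identical with the roles of $\lambda$ and $\Lambda$ interchanged.

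The one subtlety is that the identity above is, as stated, the \emph{pointwise} (classical) identity, valid when $\zeta$ is regular enough near $x_n$ and $\zeta$ has finite integral against $\omega_s$. For the lemma we want it in the viscosity sense, which is genuinely slightly stronger: a priori a test function $\psi$ touching $u$ from above at $x$ need not depend only on $x_n$. Here I would invoke the standard fact (from \cite{CS}) that for the extremal operators $M^\pm$ of a translation‑invariant class, the viscosity inequality at $x$ for $u$ can be tested using the function itself on a ball where it is touched and the full solution outside; more precisely, if $\psi$ is $C^2$ near $x$ with $\psi\ge u$, $\psi(x)=u(x)$, then $M^+u(x)$ evaluated on $\psi\chi_{B_r(x)}+u\chi_{B_r(x)^c}$ is $\ge$ (resp. $\le$ for $M^-$) the required bound, and one lets $r\to0$. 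Since $u$ is a $1$D function, the outer part $u\chi_{B_r(x)^c}$ is $1$D, and the computation reduces to a $1$D computation plus the elementary observation that replacing a $C^2$ paraboloid touching from above by the parabola in the $e_n$ direction only increases $M^+$; this is exactly the same reduction used to prove that $(-\Delta)^s_{\R}\zeta$ can be evaluated in the viscosity sense for $1$D problems. Thus the pointwise identity upgrades to the viscosity identity.

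I expect the main (minor) obstacle to be precisely this reduction from general $n$‑dimensional test functions to $1$D ones in the viscosity formulation — i.e., making rigorous that the viscosity inequalities for $M^\pm u$ in $\R^n$ are equivalent to the corresponding viscosity inequalities for $-c_{1,2}(-\Delta)^s_{\R}\zeta$ in $\R$. Everything else is a direct consequence of Lemma \ref{lem1d} and the already‑displayed closed formulae for $M^+$ and $M^-$, together with the sign bookkeeping showing $c_1,c_2>0$. I would also remark that one need not track the exact value of $c_1,c_2$ (they depend on $n$, $s$, $\lambda$, $\Lambda$ and, through the sign of $(-\Delta)^s_{\R}\zeta$, possibly on the point), only that they are bounded above and below by positive constants, which is all that is used later.
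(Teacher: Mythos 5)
Your proposal is correct and takes essentially the same route as the paper: the paper's proof of Lemma \ref{lem-1d-functions} is the single line ``It follows immediately from Lemma \ref{lem1d}'', i.e.\ exactly the reduction you make, with the supremum and infimum over $\mu\in[\lambda,\Lambda]$ giving the two constants. Your extra remarks --- that $c_1,c_2$ may depend on the point through the sign of $(-\Delta)^s_{\R}\zeta$ but only their positivity is used later, and that the pointwise identity upgrades to the viscosity formulation by the standard test-function argument of \cite{CS} --- are sound elaborations of details the paper leaves implicit.
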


\begin{proof}
It follows immediately from Lemma \ref{lem1d}.
\end{proof}

Furthermore, we will use also the following.

\begin{lem} \label{lem-planes}
Let $a\in \R^n$ and $b\in \R$, and define
\[\phi(x)=(x_n)^s_+(a\cdot x+b).\]
Then, for all $h\in\R^n$ with $h_n\geq0$, we have
\[M^+\left\{\phi(\cdot+h)-\phi\right\}=M^-\left\{\phi(\cdot+h)-\phi\right\}=0\quad\textrm{in}\ \{x_n>0\}.\]
\end{lem}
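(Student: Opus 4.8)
The plan is to reduce the statement to the one-dimensional identity already recorded in Lemma \ref{liouv-1+s-1D}(a). First I would exploit the fact that the operators $M^\pm$ applied to the increment $\phi(\cdot+h)-\phi$ can be decomposed direction by direction using the representation formula \eqref{Lasint}; alternatively, and more cleanly, I would split $\phi$ itself. Write $\phi(x)=(x_n)_+^s\bigl(a'\cdot x'+a_n x_n+b\bigr)$ with $x=(x',x_n)$, $a=(a',a_n)$. Notice that $(x_n)_+^s x_j$ for $j<n$ is affine in the tangential variables, so it is the kind of function to which Lemma \ref{lem-planes} should apply trivially once we observe that every $L\in\mathcal L_*$ annihilates increments of $\varphi_{e_n}^s$ times a tangential linear function — but to make this precise it is cleaner to go through the one-dimensional slicing.

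The key steps, in order, are as follows. (1) Fix $h\in\R^n$ with $h_n\ge 0$ and set $w:=\phi(\cdot+h)-\phi$; note $w\equiv 0$ in $\{x_n<0\}\cap\{x_n+h_n<0\}$ and we only need the conclusion in $\{x_n>0\}$. (2) Fix a point $x$ with $x_n>0$ and a direction $\theta\in S^{n-1}$, and consider the one-dimensional restriction $r\mapsto \phi(x+r\theta)$. If $\theta_n=0$ this restriction is, up to the positive constant $(x_n)^s_+$, an affine function of $r$, hence its $2s$-order one-dimensional fractional Laplacian vanishes (the increment $w$ restricted to this line is likewise affine); if $\theta_n\neq 0$, after the change of variables $t=x_n+r\theta_n$ the restriction equals, up to constants, a linear combination of $(t_+)^s$ and $(t_+)^{1+s}$ — precisely because $(x_n)_+^s(a\cdot x+b)$ restricted to the line is a polynomial in $t$ of degree one times $(t_+)^s$ — and Lemma \ref{liouv-1+s-1D}(a) gives that the one-dimensional $(-\Delta)^s$ of the corresponding increment vanishes on the relevant half-line. (3) Feed this into the closed formulae for $M^+$ and $M^-$ displayed just after Lemma \ref{lem1d}: since $-(-\partial_{\theta\theta})^s w(x)=0$ for every $\theta$, both the positive and the negative parts inside those integrals vanish, so $M^+w(x)=M^-w(x)=0$. (4) Finally, observe that $w$ has at most the growth of $\phi$, i.e.\ $|w(x)|\le C(1+|x|^{1+s})$, which is integrable against $\omega_s$ only after one subtraction; one should therefore check that $M^\pm w(x)$ is well-defined in the viscosity sense at interior points $x_n>0$, where the function $w$ is smooth and the tail estimate at infinity combined with $1+s<2s$ fails — so the pointwise evaluation is not automatic. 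This is the one delicate point.

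The main obstacle I expect is exactly this last issue of \emph{integrability of the tails}: since $\phi$ grows like $|x|^{1+s}$ and $1+s$ may exceed $2s$, the integral defining $L w(x)$ need not converge absolutely, so the identity $M^\pm w = 0$ must be interpreted in the viscosity sense rather than classically. I would handle this in the same spirit as Lemma \ref{liouv-1+s-1D}(a): for a test function touching $w$ from above (or below) at an interior point $x_0$, one evaluates the operator on the test function near $x_0$ and on $w$ itself away from $x_0$, and the cancellation from the exact one-dimensional identities of Lemma \ref{liouv-1+s-1D}(a), integrated over $\theta\in S^{n-1}$ against the spectral measure via \eqref{Lasint}, forces the viscosity inequalities to be equalities. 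Since $w$ is actually $C^\infty$ in $\{x_n>0\}$ and in $\{x_n<0\}$ separately, and the only genuine non-smoothness sits on $\{x_n=0\}\cup\{x_n=-h_n\}$, away from which the increment decays no worse than a multiple of $\operatorname{dist}(\cdot,\{x_n=0\})^{s-1}$, the off-diagonal contribution is controlled and the argument closes. Once the viscosity interpretation is fixed, everything else is the routine slicing computation in steps (2)–(3).
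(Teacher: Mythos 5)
Your argument is correct and is essentially the paper's own (two-line) proof, which likewise reduces the statement to the one-dimensional identity of Lemma \ref{liouv-1+s-1D}(a) through the directional decomposition \eqref{Lasint}/Lemma \ref{lem-1d-functions}; your steps (2)--(3) simply make that reduction explicit, the only detail to add being that for $\theta_n\neq 0$ the restrictions of $\phi$ to the two parallel lines through $x$ and $x+h$ have the same slope $\beta=(a\cdot\theta)/\theta_n$ but different constant coefficients, a mismatch absorbed by an extra multiple of $(t_+)^s$, which is $s$-harmonic on $\{t>0\}$. The integrability concern in step (4) is superfluous: the increment $\phi(\cdot+h)-\phi$ grows only linearly, and its tangential part carries a decay of order $(x_n)_+^{s-1}$ away from the slab $\{-h_n\le x_n\le 0\}$, so $L\bigl\{\phi(\cdot+h)-\phi\bigr\}(x)$ converges absolutely for every $L\in\mathcal L_*$ and every $x$ with $x_n>0$, and the identities hold pointwise (which is how the paper later uses them), the viscosity formulation being a fallback rather than a necessity.
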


\begin{proof}
It follows immediately from Lemmas \ref{lem-1d-functions} and \ref{liouv-1+s-1D} (a).
\end{proof}

We can now give the:

\begin{proof}[Proof of Theorem \ref{thm-liouv-1+s}]
Take first $h\in S^{n-1}$ such that $h_n=0$, and define
\[v(x)=u(x+h)-u(x).\]
Then, $v$ satisfies
\[\left\{
\begin{array}{rcll}
M^+v\ge 0\quad \textrm{and}\quad M^-v\le0 &\textrm{in}& \{x_n>0\},\\
v=0 &\textrm{in}&\{x_n<0\}.
\end{array}\right.\]
Moreover, by \eqref{growthcontrol-1+s} it also satisfies the growth control
\[\|v/(x_n)_+^s\|_{L^\infty(B_R)}\le C R^\alpha \quad \mbox{for all } R\ge 1,\]
and hence
\[\|v\|_{L^\infty(B_R)}\le C R^{\alpha+s} \quad \mbox{for all } R\ge 1.\]
Thus, it follows from Proposition \ref{liouville-krylov} that
\[v(x)=K(x_n)_+^s.\]
Therefore, we have
\[u(x+h)-u(x)=K(x_n)_+^s\]
whenever $h_n=0$, and this implies that
\[u(x)=(x_n)_+^s(a\cdot x+b)+\psi(x_n)\]
for some 1D function $\psi:\R\longrightarrow\R$.

Now, by Lemmas \ref{lem-planes} and \ref{lem-1d-functions}, we have that for all $h\in\R^n_+$ and all $x\in\R^n_+$
\[M^+\left\{u(\cdot+h)-u\right\}(x)=-c_1\,(-\Delta)^s\left\{\psi(\cdot+h_n)-\psi\right\}(x_n),\]
and the same with $M^-$.
Thus, for any $h>0$ this 1D function $\psi$ satisfies
\[\left\{\begin{array}{rcll}
(-\Delta)^s\left\{\psi(\cdot+h)-\psi\right\}= 0 &\textrm{in}& (0,+\infty),\\
\psi=0 &\textrm{in}&(-\infty,0).
\end{array}\right.\]
Moreover, notice that, by the assumptions of the Theorem, the function $\psi$ satisfies
\[[\psi/(x_+)^s]_{C^\beta([0,R])}\leq CR^\alpha\qquad \textrm{for all}\ R\geq1.\]
Hence, by Lemma \ref{liouv-1+s-1D}, we find that $\psi(x_n)=K_1(x_n)_+^{1+s}+K_2(x_n)_+^s$, and
\[u(x)=(x_n)_+^s(\tilde a\cdot x+\tilde b),\]
as desired.
\end{proof}

\section{Liouville theorems in dimension 1}
\label{sec-liouville-1+s}

The aim of this section is to prove Lemmas \ref{liouv-1+s-1D} and \ref{classification1D}.

To prove them, we need the following result.
It classifies all homogeneous solutions (with no growth condition) that vanish in a half line of the extension problem of Caffarelli and Silvestre \cite{CSext} in dimension $1+1$.

\begin{lem}\label{lemODE}
Let $s\in(0,1)$.
Let $(x,y)$ denote a point in $\R^2$, and $r>0$, $\theta\in (-\pi,\pi)$ be polar coordinates defined by the relations $x=r\cos \theta$, $y= r\sin \theta$.
Assume that $\nu>-s$, and $q_\nu = r^{s+\nu} \Theta_\nu(\theta)$ is even with respect $y$ (or equivalently with respect to $\theta$) and solves
\begin{equation}\label{extension-pb}
\begin{cases}
{\rm div}\,(|y|^{1-2s}\nabla q_\nu)=0 &\mbox{in }  \{y\neq 0\}\\
\lim_{y\to 0} |y|^{1-2s} \partial_y  q_\nu =0 \quad &\mbox{on } \{y=0\}\cap\{x>0\} \\
q_\nu=0 &\mbox{on } \{y=0\}\cap\{x<0\}.
\end{cases}\end{equation}
Then,
\begin{enumerate}
\item[(a)] $\nu$ belongs to $\mathbb N\cup\{0,-1\}$ and
\[
\Theta_\nu(\theta) =
K\,|\sin \theta|^s \, P^{s}_{\nu}\bigl(\cos \theta\bigr),
\]
where $P^\mu_\nu$ is the associated Legendre function of first kind.
Equivalently,
\[
\Theta_\nu(\theta) =
C\,\left|\cos \left(\frac{\theta}{2}\right)\right|^{2s} \phantom{\,}_2F_1\left(-\nu, \nu+1; 1-s; \frac{1-\cos\theta}{2}\right),
\]
where $\,_2F_1$ is the hypergeometric function.

\item[(b)] The functions $\bigl\{\Theta_\nu\bigr\}_{\nu\in\mathbb N\cup\{0\}}$ are a complete orthogonal system in the subspace of even functions of the weighted space $L^2\bigl((-\pi,\pi), |\sin\theta|^{1-2s}\bigr)$.
\end{enumerate}
\end{lem}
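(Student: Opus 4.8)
The plan is to separate variables: writing \eqref{extension-pb} in polar coordinates reduces it to a singular Sturm--Liouville problem on the arc $(0,\pi)$, whose eigenfunctions turn out to be (multiples of) Legendre functions, and the quantization of $\nu$ comes from matching the admissible local behaviors at the two endpoints $\theta=0$ and $\theta=\pi$. Concretely, since $|y|^{1-2s}=r^{1-2s}|\sin\theta|^{1-2s}$, inserting $q_\nu=r^{s+\nu}\Theta_\nu(\theta)$ into $\mathrm{div}(|y|^{1-2s}\nabla q_\nu)=0$ and dividing out the common power of $r$ gives
\[
\bigl(|\sin\theta|^{1-2s}\Theta_\nu'\bigr)'+\lambda_\nu\,|\sin\theta|^{1-2s}\Theta_\nu=0 \quad\text{on }(0,\pi),\qquad \lambda_\nu=(s+\nu)(1-s+\nu).
\]
Evenness in $y$ together with the transmission condition on $\{y=0\}\cap\{x>0\}$ becomes $|\sin\theta|^{1-2s}\Theta_\nu'(\theta)\to0$ as $\theta\to0^+$, and the Dirichlet condition on $\{y=0\}\cap\{x<0\}$ becomes $\Theta_\nu(\pi)=0$. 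The equation has regular singular points at $\theta=0,\pi$; a Frobenius analysis near each (using $1\mp\cos\theta\sim\tfrac12\theta^2$, resp. $\tfrac12(\pi-\theta)^2$) shows that the two local exponents are $0$ and $s$ at both ends, and the Neumann condition at $\theta=0$ selects the exponent-$0$ branch there, since for the exponent-$s$ branch $|\sin\theta|^{1-2s}\Theta_\nu'$ tends to a nonzero constant.

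Next I would substitute $t=\cos\theta$ and $\Theta_\nu=(1-t^2)^{s/2}\Phi_\nu(t)$, which turns the ODE into the associated Legendre equation of order $\mu=s$ and degree $\nu$; the branch picked out by the Neumann condition at $t=1$ is $\Phi_\nu=c\,P^s_\nu$, i.e.
\[
\Theta_\nu(\theta)=K\,|\sin\theta|^s\,P^{s}_{\nu}(\cos\theta)=C\,\Bigl|\cos\tfrac{\theta}{2}\Bigr|^{2s}\ {}_2F_1\!\Bigl(-\nu,\nu+1;1-s;\tfrac{1-\cos\theta}{2}\Bigr),
\]
the two forms being the standard hypergeometric representation of $P^\mu_\nu$. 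It remains to impose $\Theta_\nu(\pi)=0$: writing $\Theta_\nu=(1-z)^s\,{}_2F_1(-\nu,\nu+1;1-s;z)$ with $z=\sin^2(\theta/2)$, the hypergeometric parameters satisfy $c-a-b=-s<0$, so the classical endpoint asymptotics at $z=1$ give $\Theta_\nu(\pi)=\mathrm{const}\cdot\Gamma(1-s)\Gamma(s)/\bigl(\Gamma(-\nu)\Gamma(\nu+1)\bigr)$, which is nonzero unless the series terminates (or $\nu+1=0$, so $b=0$ and ${}_2F_1\equiv1$). Within the admissible range of $\nu$ this leaves exactly $\nu\in\mathbb N\cup\{0,-1\}$, proving part (a).

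For part (b), orthogonality in $L^2\bigl((-\pi,\pi),|\sin\theta|^{1-2s}\bigr)$ follows from the usual Sturm--Liouville identity together with the fact that $\lambda_\nu=(s+\nu)(1-s+\nu)$ is strictly increasing in $\nu\ge0$ (so the eigenvalues are pairwise distinct): integrating by parts on $(0,\pi)$, the boundary terms vanish because $|\sin\theta|^{1-2s}\Theta_\nu'\to0$ at $\theta=0$, $\Theta_\nu(\pi)=0$, and $\Theta_\nu\sim c(\pi-\theta)^{2s}$ near $\theta=\pi$ so that $|\sin\theta|^{1-2s}\Theta_\nu'$ stays bounded there; one then reflects evenly. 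For completeness I would change variables to $z=\sin^2(\theta/2)\in(0,1)$ and write $\Theta_\nu=\cos^{2s}(\theta/2)\,w_\nu(z)$, where $w_\nu$ is a multiple of the Jacobi polynomial $P^{(-s,s)}_\nu(1-2z)$, and check that
\[
\int_0^\pi \Theta_\nu\Theta_{\nu'}\,|\sin\theta|^{1-2s}\,d\theta=c_s\int_0^1 w_\nu w_{\nu'}\,z^{-s}(1-z)^{s}\,dz .
\]
Hence $\{\Theta_\nu\}_{\nu\ge0}$ is complete on $(0,\pi)$ if and only if $\{P^{(-s,s)}_\nu\}$ is complete in $L^2\bigl((0,1),z^{-s}(1-z)^s\bigr)$, which is classical since both Jacobi exponents $-s$ and $s$ lie in $(-1,\infty)$ (here $0<s<1$ is used); reflecting evenly gives the stated completeness.

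I expect the main obstacle to be the careful bookkeeping at the two singular endpoints: one must verify precisely that the Neumann condition forces the exponent-$0$ Frobenius solution at $\theta=0$, that the exponent-$0$ solution at $\theta=\pi$ violates the Dirichlet condition, and therefore that the eigenvalue condition is exactly the truncation of the hypergeometric series — being especially careful with the borderline values of $\nu$ and with the precise solution class in which \eqref{extension-pb} is posed. A secondary technical point is the weight-matching in the $z$-variable needed to import the classical completeness of the Jacobi polynomials.
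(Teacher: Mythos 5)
Your proposal is correct and, for part (a), follows essentially the same route as the paper: separation of variables in polar coordinates, the substitution $\Theta_\nu=(\sin\theta)^s h(\cos\theta)$ reducing the angular ODE to the associated Legendre equation, selection of the $P^s_\nu$-branch by the condition at $\theta=0$, and quantization of $\nu$ from $\Theta_\nu(\pi)=0$ via the value of ${}_2F_1$ at $1$ (the paper arrives at the same ratio $\Gamma(1-s)\Gamma(s)/\bigl(\Gamma(-\nu)\Gamma(1+\nu)\bigr)$ through Euler's transformation plus Gauss's formula, you through the $c-a-b<0$ endpoint asymptotics; it is the same computation). For part (b) you take a modestly different route: the paper simply quotes that $\{P^s_k\}_{k\ge0}$ is a complete orthogonal system and transfers it by a change of variables, whereas you prove orthogonality directly from the Sturm--Liouville identity and reduce completeness, via the weight computation $\int_0^\pi\Theta_\nu\Theta_{\nu'}|\sin\theta|^{1-2s}\,d\theta=c_s\int_0^1 w_\nu w_{\nu'}\,z^{-s}(1-z)^s\,dz$, to the classical completeness of the Jacobi polynomials $P^{(-s,s)}_k$ (valid since $-s,s>-1$); this is slightly more self-contained and the weight matching is exactly right. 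Two small corrections that do not affect the argument: the Frobenius exponents at $\theta=0$ and $\theta=\pi$ are $0$ and $2s$, not $0$ and $s$ (your key computation, that $|\sin\theta|^{1-2s}\Theta_\nu'$ tends to a nonzero constant on the non-admissible branch, is precisely the exponent-$2s$ statement, and you correctly use $\Theta_\nu\sim c(\pi-\theta)^{2s}$ later); and since $\nu>-s>-1$, the value $\nu=-1$ is excluded, so the quantization gives $\nu\in\mathbb N\cup\{0\}$, which is also what the paper's own proof concludes.
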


\begin{proof}
We differ the proof to the Appendix.
\end{proof}

Using the previous computation, we can now show Lemma \ref{classification1D}.

\begin{proof}[Proof of Lemma \ref{classification1D}]
Let
\[P_s(x,y)  = \frac{p_{1,s}}{y} \frac{1}{\bigl(1+ (x/y)^2\bigr)^{\frac{1+2s}{2}}}\]
be the Poisson kernel for the extension problem of Caffarelli and Silvestre; see \cite{CSext,CabreSire}.

Given the growth control $|u(x)|\le C |x|^\beta$ at infinity with $\beta<2s$, and $|u(x)|\leq C|x|^{\delta-1}$ near the origin with $\delta>0$, the convolution
\[ v(\,\cdot\, ,y) =  u\ast P_s(\,\cdot\,, y)\]
is well defined and is a solution of the extension problem
\[
\begin{cases}
{\rm div} (y^{1-2s} \nabla v) = 0\quad &\mbox{in } \{y>0\}\\
v(x,0)=u(x) & \mbox{for } x\in \R.
\end{cases}
\]
Since $(-\Delta)^s u = 0$ in $\{x>0\}$ and $u=0$ in $\{x<0\}$, the function $v$ satisfies
\[\lim_{y\searrow 0} y^{1-2s} \partial_ y v(x,y) = 0 \quad  \mbox{for } x>0  \quad \mbox{ and }\quad  v(x,0)= 0 \quad  \mbox{for } x<0.\]
Hence, $v$ solves \eqref{extension-pb}.

Let $\Theta_\nu$, $\nu \in \mathbb N \cup \{0\}$, be as in Lemma \ref{lemODE}.
Recall that $r^{s+\nu}\Theta_\nu(\theta)$ also solve \eqref{extension-pb}.
By standard separation of variables, in every ball $B_R^+(0)$ of $\R^2$ the function $v$ can be written as a series
\begin{equation}\label{series-v}
 v(x,y) = v(r\cos\theta, r\sin\theta) = \sum_{\nu=0}^\infty a_\nu r^{s+\nu }\Theta_\nu(\theta).
\end{equation}
To obtain this expansion we use that, by Lemma \ref{lemODE} (b),  the functions
$\bigl\{\Theta_\nu\bigr\}_{\nu\in\mathbb N\cup\{0,-1\}}$ are a complete orthogonal system in the subspace of even functions
in the weighted space $L^2\bigl((-\pi,\pi), |\sin\theta|^{1-2s}\bigr)$, and hence are complete in $L^2\bigl((0,\pi), (\sin\theta)^{1-2s}\bigr)$.

Moreover, by uniqueness, the coefficients $a_\nu$ are independent of $R$ and hence the series \eqref{series-v} provides a representation
formula for $v(x,y)$ in the whole $\{y>0\}$.

Now, we claim that the growth control  $\|u\|_{L^\infty(-R,R)} \le CR^{\beta}$ with $\beta\in (0,2s)$ is transferred to
$v$ (perhaps with a bigger constant $C$), that is,
\[ \|v\|_{L^\infty(B_R^+)}\le CR^{\beta}.\]
To see this, consider the rescaled function $u_R(x)=R^{-\beta}u(Rx)$, which satisfy the same growth control of $u$. Then,
\[v_R = R^{-\beta}v(R\,\cdot\,) = u_R \ast P_s.\]
Since the growth control for $u_R$ is independent of $R$ we find a bound for $\|v_R\|_{L^\infty(B_1^+)}$ that is independent of $R$, and this means that $v$ is controlled by $CR^{\beta}$ in $B_R^+$, as claimed.

Next, since we may assume that $\int_{0}^\pi |\Theta_\nu(\theta)|^2 |\sin \theta|^a \,d\theta =1$ for all $\nu\geq0$, Parseval's identity yields
\begin{equation}\label{Parseval1}
 \int_{\partial^+ B_R} \bigl|v(x,y)\bigr|^2 y^{a} \,d\sigma = \sum_{\nu=0}^\infty |a_\nu|^2 R^{2s+2\nu+1 +a},
\end{equation}
where $\partial^+ B_R=\partial B_R\cap \{y>0\}$.
But by the growth control, we have
\begin{equation}\label{Parseval2}
\int_{\partial^+ B_R} \bigl|v(x,y)\bigr|^2 y^{a} \,d\sigma  \le CR^{2\beta} \int_{\partial^+ B_R} y^{a} \,d\sigma = CR^{2\beta+1+a}.
\end{equation}
Finally, since $2\beta<4s<2s+2$, this implies $a_\nu=0$ for all $\nu\ge1$, and hence
$u(x)= K(x_+)^s$, as desired.
\end{proof}

To establish Lemma \ref{liouv-1+s-1D}, we will need the following extension of Lemma \ref{classification1D}.

\begin{lem}\label{classification1D-1+s}
Let $u$ satisfy $(-\Delta)^s u=0$ in $\R_+$ and $u=0$ in $\R_-$.
Assume that, for some $\delta>0$ and $\beta\in (0,2s)$, $u$ satisfies the growth conditions
\begin{itemize}
\item $|u(x)|\leq C|x|^{\delta-1}$ for all $x\in(0,1)$.
\item $|u(x)|\leq C|x|^\beta$ for all $x\geq1$.
\end{itemize}
Then $u(x) = a(x_+)^s+b(x_+)^{s-1}$.
\end{lem}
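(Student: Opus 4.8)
The plan is to adapt the proof of Lemma \ref{classification1D}: extend $u$ to the upper half-plane by the Caffarelli--Silvestre extension, use separation of variables, and then read off the admissible modes --- the new feature being that we must \emph{keep} the singular radial solutions near the origin and control how fast the extension may blow up there. Since $\delta>0$ we have $u\in L^1_{\rm loc}(\R)$, and since $\beta<2s<1+2s$ the function $u$ has finite integral against $(1+|x|)^{-1-2s}$; hence
\[ v(\,\cdot\,,y)=u\ast P_s(\,\cdot\,,y) \]
is well defined, with $P_s$ the Poisson kernel of \cite{CSext,CabreSire}, it solves ${\rm div}(y^{1-2s}\nabla v)=0$ in $\{y>0\}$ with $v(\,\cdot\,,0)=u$, and --- exactly as in the proof of Lemma \ref{classification1D}, using $(-\Delta)^su=0$ in $\R_+$ and $u\equiv0$ in $\R_-$ --- it satisfies the Neumann condition $\lim_{y\searrow0}y^{1-2s}\partial_yv=0$ on $\{y=0,\ x>0\}$ and $v=0$ on $\{y=0,\ x<0\}$. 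Thus $v$ solves \eqref{extension-pb} in $\{y>0\}$, the boundary conditions holding on $\{y=0\}\setminus\{0\}$. It is harmless to assume $\delta\le1$, since on $(0,1)$ the bound $|u(x)|\le C|x|^{\delta-1}$ with a larger $\delta$ is stronger than the same bound with $\delta$ replaced by $1$.

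Next I would expand $v$ in the angular eigenfunctions. On any half-annulus $\{0<\rho<r<R\}\cap\{y>0\}$ --- where $v$ is bounded, being continuous up to the relevant part of $\{y=0\}$ and smooth in the interior --- Lemma \ref{lemODE}(b) gives $v(r\cos\theta,r\sin\theta)=\sum_{\nu\ge0}c_\nu(r)\Theta_\nu(\theta)$; inserting this in the equation and using the eigenvalue equation for $\Theta_\nu$, one finds that $c_\nu$ solves an Euler equation with exponents $s+\nu$ and $s-1-\nu$, so $c_\nu(r)=a_\nu r^{s+\nu}+b_\nu r^{s-1-\nu}$. By uniqueness of the $\Theta$-expansion the coefficients do not depend on $\rho,R$, whence
\[ v(r\cos\theta,r\sin\theta)=\sum_{\nu\ge0}\bigl(a_\nu r^{s+\nu}+b_\nu r^{s-1-\nu}\bigr)\,\Theta_\nu(\theta)\qquad\text{for all }r>0. \]
(The term $b_0r^{s-1}\Theta_0$ is exactly the ``$\nu=-1$'' homogeneous solution appearing in Lemma \ref{lemODE}, since $\Theta_{-1}\equiv\Theta_0$ up to a constant.) Normalizing $\int_0^\pi|\Theta_\nu|^2(\sin\theta)^{1-2s}\,d\theta=1$, Parseval's identity on the half-circle $\partial^+B_R=\partial B_R\cap\{y>0\}$ reads $\int_{\partial^+B_R}|v|^2y^{1-2s}\,d\sigma=R^{2-2s}\sum_{\nu\ge0}\bigl|a_\nu R^{s+\nu}+b_\nu R^{s-1-\nu}\bigr|^2$.

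Then I would run two scaling arguments. For $R\ge1$, the hypothesis $|u(x)|\le C|x|^\beta$ at infinity transfers, via $u_R(x)=R^{-\beta}u(Rx)$ and $v_R=u_R\ast P_s$ (which obeys a bound independent of $R$ on $\partial^+B_1$), to $\|v\|_{L^\infty(\partial^+B_R)}\le CR^\beta$; the identity above then yields $\sum_{\nu\ge0}|a_\nu R^{s+\nu}+b_\nu R^{s-1-\nu}|^2\le CR^{2\beta}$, and since $s+\nu\ge s+1>2s>\beta$ for $\nu\ge1$, letting $R\to\infty$ forces $a_\nu=0$ for all $\nu\ge1$. For $R\le1$ I would instead rescale by $u_R(x)=R^{1-\delta}u(Rx)$: using $\delta\le1$, the bounds $|u_R(x)|\le C|x|^{\delta-1}$ on $(0,1)$ and $|u_R(x)|\le C|x|^\beta$ on $(1,\infty)$ hold uniformly in $R\le1$, hence $\|u_R\ast P_s\|_{L^\infty(\partial^+B_1)}\le C$ uniformly, and undoing the scaling gives $\int_{\partial^+B_R}|v|^2y^{1-2s}\,d\sigma\le CR^{2\delta-2s}$ for $R\le1$. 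Combined with $a_\nu=0$ ($\nu\ge1$), the Parseval identity gives $|b_\nu|^2R^{-2\nu}\le CR^{2\delta-2s}$, i.e.\ $|b_\nu|^2\le CR^{2(\nu+\delta-s)}$, and since $\nu+\delta-s\ge1+\delta-s>0$, letting $R\to0$ forces $b_\nu=0$ for all $\nu\ge1$. Therefore $v=(a_0r^s+b_0r^{s-1})\Theta_0(\theta)$, and taking the trace on $\{y=0,\ x>0\}$, where $\Theta_0(0)$ is a nonzero constant, gives $u(x)=a(x_+)^s+b(x_+)^{s-1}$ (with $u\equiv0$ on $\R_-$), as claimed.

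The main difficulty, compared with Lemma \ref{classification1D}, is that $v$ need not be bounded --- nor even locally finite in $L^2(y^{1-2s})$ --- near the origin, so one cannot simply invoke removability to discard the singular radial modes $r^{s-1-\nu}$; instead one keeps them in the expansion and quantifies the admissible blow-up of $v$ at $0$ through the scaling $u_R(x)=R^{1-\delta}u(Rx)$. The decay rate $R^{2\delta-2s}$ produced this way is exactly what is needed to kill $b_\nu$ for $\nu\ge1$ via Parseval, while leaving room for a (possibly nonzero) $b_0$ precisely when $\delta\le s$.
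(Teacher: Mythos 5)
Your proposal is correct (at the same level of rigor as the paper's own ``standard separation of variables'' steps), but it takes a genuinely different route. The paper does not expand the extension $v$ of $u$ at all: it passes to the $x$-antiderivative $\tilde v(x,y)=\int_{-\infty}^x v(t,y)\,dt$, which is bounded near the origin because $\delta>0$ makes $u$ integrable there, grows like $R^{\beta+1}$ with $\beta+1<1+2s$, and is claimed to solve \eqref{extension-pb} again; then the expansion \eqref{series-v} (regular modes $r^{s+\nu}$, $\nu\ge 0$, only) plus \eqref{Parseval1}--\eqref{Parseval2} kills $a_\nu$ for $\nu\ge 2$, giving $\tilde v(x,0)=(x_+)^s(ax+b)$, and differentiation yields the lemma. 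You instead keep $v$ itself, work on half-annuli so as to retain both Frobenius exponents $s+\nu$ and $s-1-\nu$ of the radial Euler equation (your indicial computation is right), and run the scaling/Parseval argument twice: at infinity, exactly as in Lemma \ref{classification1D}, which forces $a_\nu=0$ for $\nu\ge1$ since $s+\nu\ge s+1>2s>\beta$; and at the origin, with the rescaling $R^{1-\delta}u(R\,\cdot\,)$, which uses the hypothesis $|u|\le C|x|^{\delta-1}$ quantitatively and forces $b_\nu=0$ for $\nu\ge1$ since $\nu+\delta-s\ge 1+\delta-s>0$; the exponent bookkeeping ($R^{2\beta}$ at infinity, $R^{2\delta-2s}$ at the origin) checks out. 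What each buys: the paper's route is shorter once Lemma \ref{classification1D} is in place, but it leaves implicit that the antiderivative $\tilde v$ really satisfies the mixed boundary conditions of \eqref{extension-pb} (its Neumann data on $\{x>0\}$ involves an integral over $\{x<0\}$ of the Neumann data of $v$, which is not identically zero), whereas your route never integrates and instead confronts the possible singularity of $v$ at the origin directly, at the price of justifying the annulus expansion with singular modes and a second rescaling -- both standard and written with the same amount of detail as the paper's own argument.
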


\begin{proof}
It is a slight modification of the proof of Lemma \ref{classification1D}.

Indeed, we may consider the extension $v(x,y)$ of $u(x)$, which solves \eqref{extension-pb}.

Now, we consider $\tilde v(x,y)=\int_{-\infty}^x v(x,y)dx$, which also satisfies \eqref{extension-pb}, and satisfies the growth condition
\[ \|v\|_{L^\infty(B_R^+)}\le CR^{\beta+1},\]
with $\beta+1<1+2s$.

Finally, writing $\tilde v$ as in \eqref{series-v}, and using \eqref{Parseval1}-\eqref{Parseval2} and that $2(\beta+1)<2+4s$, we find that $a_\nu=0$ for all $\nu\geq2$.
This yields $\tilde v(x,0)=(x_+)^s(ax+b)$, and hence $u(x)=a(x_+)^s+b(x_+)^{s-1}$.
\end{proof}

We finally give the:

\begin{proof}[Proof of Lemma \ref{liouv-1+s-1D}]
(a) It follows easily by using the extension of Caffarelli-Silvestre \cite{CSext}.

(b) First, notice that $u\in C^\delta([0,1])$, with $\delta=\min\{\gamma,s\}$.
Hence, for each $h>0$, the function $v(x)=u(x+h)-u(x)$ satisfies $v\equiv0$ in $(-\infty,-h)$, $|v|\leq C|h|^\delta$ in $[-h,0]$, and $(-\Delta)^sv=0$ in $(0,\infty)$, and $|v(x)|\leq C |h|^\gamma (1+|x|^{\beta+s})$ in $(0,\infty)$.

Thus, by standard interior regularity (see for example \cite{RS-Dir}), we have that $[v]_{C^{0,1}([h,2h])}\leq C|h|^{\delta-1}$.
In particular,
\[|u'(x+h)-u'(x)|=|v'(x)|\leq C|h|^{\delta-1}\qquad\textrm{for all}\ x\in[h,2h],\ h\in(0,1).\]
And this implies (summing a geometric series) that
\[|u'(x)|\leq C|x|^{\delta-1}\qquad \textrm{for}\ x\in(0,1).\]

On the other hand, since $|v(x)|\leq C|h|^\gamma|x|^{\beta+s}$ for $x>1$, then
\[|v'(R)|\leq [v]_{C^{0,1}([R,2R])}\leq \frac{C}{R}\|v\|_{L^\infty([R/2,3R])}\leq  C|h|^\gamma R^{\beta+s-1}\quad \textrm{for}\ R\geq1.\]
Therefore, it follows that for all $x>1$
\[\begin{split}
|u'(x)|&\leq |u'(1)-u'(2)|+\cdots+|u'(x-1)-u'(x)|\\
& \leq C\left(1^{\beta+s-1}+2^{\beta+s-1}+\cdots+x^{\beta+s-1}\right)\\
&\leq C|x|^{\beta+s}.\end{split}\]

Thus, the function $u'$ satisfies
\begin{itemize}
\item $(-\Delta)^s(u')=0$ in $(0,\infty)$
\item $|u'(x)|\leq C|x|^{\delta-1}$ for $x\in(0,1)$
\item $|u'(x)|\leq C|x|^{s+\beta+\gamma-1}$ for $x>1$
\end{itemize}
and then it follows from Lemma \ref{lemODE} that
\[u'(x)=a(x_+)^s+b(x_+)^{s-1}.\]
Hence, since $u(0)=0$, we find
\[u(x)=a(x_+)^{s+1}+b(x_+)^s,\]
as desired.
\end{proof}

\section{Boundary regularity: flat boundary}
\label{sec-regularity-compactness}

In this section we prove Theorem \ref{thm2}.
The main step towards this result will be Proposition \ref{prop_contr_1+s+alpha} below.

Notice that throughout this Section the operator $\I$ will not be translation invariant but of the form \eqref{op7}, with $L_{ab}$ translation invariant.
Hence, $\I(u,x)$ belongs to a restricted class of non translation invariant operators.
Within this class we can truncate solutions.
Thanks to this, we may assume for example that in $u/(x_n)^s$ is $C^\beta$ in all of $\R^n_+$ and not only in $B_{3/4}^+$ (recall that we want to show \eqref{austin-iter}).

In the following, given $\beta\in[0,1]$ and  $A\subset\R^n$ we denote by
\[[u]_{\beta, A} := \sup_{x,y\in A, x\neq y} \frac{|u(x)-u(y)|}{|x-y|^\beta}.\]
That is, for $\beta=0$ this gives the oscillation and for $\beta\in (0,1)$ this gives the $C^\beta$ seminorm.
We also denote
\[\|u\|_{\beta, A} := \|u\|_{L^\infty(A)} + [u]_{\beta;A}.\]
This notation is appropriate in order to treat at the same time the case $\beta=0$ and $\beta\in (0,1)$.

\begin{prop}\label{prop_contr_1+s+alpha}
Let $s_0\in(0,1)$, and let $\bar \alpha>0$ be the constant given by Proposition \ref{thm1}.

Let $s\in(s_0,1)$, $\alpha\in(0,\bar\alpha)$, $\gamma\in[0,1)$, and $\beta\in[0,1]$ such that $\alpha+\beta\leq \gamma+s$.
Assume in addition that $\alpha+\beta\neq 1$.

Let $u$ be any solution of $\I(u,x)=0$ in $B_1^+$ and $u=0$ in $\R^n_-$, where $\I$ is any fully nonlinear operator elliptic with respect to $\mathcal L_*(s)$ of the form
\begin{equation}\label{op7}
\I(u,x) = \inf_{b\in \mathcal B}\sup_{a\in\mathcal A} \bigl( L_{ab} u(x) + c_{ab}(x)\bigr) \quad \mbox{with }\|c_{ab}\|_{\gamma;B_1^+} \le 1
\end{equation}
for all $a\in \mathcal A$ and $b\in \mathcal B$.

If $\beta>0$ assume in addition that the following estimate holds for some $\beta'\in(\beta,\beta+\alpha)$ and for all $u$ and $\I$ as above:
\begin{equation}\label{recursive_estimate}
 [u/(x_n)^s]_{\beta';B_{1/4}^+}\le C\bigl(\|u/(x_n)^s\|_{0;B_{1}^+}  +\sup_{R\ge 1} R^{-\alpha}[u/(x_n)^s]_{\beta;B_{R}^+}\bigr),
\end{equation}
where $C$  depends only on $n$, $s_0$, ellipticity constants, $\alpha$,  $\beta$, and $\beta'$.

Then, for all $r>0$
\[ r^{-\alpha}[ u/(x_n)^{s} -P_{r}(\cdot)]_{\beta; B_r^+} \le C\bigl(\|u/(x_n)^s\|_{0;B_{1}^+}  +\sup_{R\ge 1} R^{-\alpha}[u/(x_n)^s]_{\beta;B_{R}^+}\bigr),\]
for some constant $C$ that depends only on $n$, $s_0$, ellipticity constants, $\alpha$,  $\beta$, and $\beta'$,
where $P_r(x)$ is defined as the polynomial of degree at most $\lfloor \alpha+\beta\rfloor$ (zero or one) which best fits the function $u/(x_n)^{s}$ in $B_r^+$.
That is,
\[P_r := {\rm arg \, min}_{ P \in \mathcal P}  \int_{B_r^+} \bigl( u(x)/(x_n)^s - P(x)\bigr)^2 \,dx,\]
where $\mathcal P$ is the space of polynomials with real coefficients and degree at most $\lfloor \alpha+\beta\rfloor$.
\end{prop}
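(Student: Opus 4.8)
\emph{Strategy and set-up.} The plan is to argue by contradiction via a blow-up, classifying the blow-up limit with the Liouville Theorem~\ref{thm-liouv-1+s}. First I would normalize: dividing $u$ by the right-hand side, assume $\|u/(x_n)^s\|_{0;B_1^+}+\sup_{R\ge1}R^{-\alpha}[u/(x_n)^s]_{\beta;B_R^+}\le1$; and, since within the restricted class \eqref{op7} solutions may be truncated without changing the equation in $B_{1/2}^+$, assume moreover that $u/(x_n)^s$ is globally defined and $C^\beta$ on $\overline{\R^n_+}$ with the same bounds. If the conclusion fails for every constant, there are solutions $u_k$ of equations $\I_k(u_k,x)=0$ of the form \eqref{op7}, radii $r_k$, and best-fit polynomials $P_{r_k,k}$ with $r_k^{-\alpha}[u_k/(x_n)^s-P_{r_k,k}]_{\beta;B_{r_k}^+}\to\infty$; the elementary bound $|\nabla P_{\rho,k}|\le C\rho^{\beta-1}$ (for $\rho\le1$) forces $r_k\to0$. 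Following the monotonicity (``minimal scale'') device of \cite{Se,Se2}, I would introduce the non-increasing, everywhere-finite quantity
\[\theta(r):=\sup_k\ \sup_{\rho\ge r}\ \rho^{-\alpha}\,[u_k/(x_n)^s-P_{\rho,k}]_{\beta;B_\rho^+}\,,\qquad \theta(r)\to+\infty\ \text{as }r\to0,\]
and by a selection argument produce indices $k_m$ and scales $\rho_m\to0$ together with renormalized blow-ups
\[v_m:=\frac{u_{k_m}(\rho_m\,\cdot\,)/(\rho_m x_n)^s-P_{\rho_m,k_m}(\rho_m\,\cdot\,)}{\rho_m^{\beta}\,[u_{k_m}/(x_n)^s-P_{\rho_m,k_m}]_{\beta;B_{\rho_m}^+}}\]
enjoying $[v_m]_{\beta;B_1^+}=1$, vanishing best-fit polynomial of degree $\le\lfloor\alpha+\beta\rfloor$ over $B_1^+$, and the growth control $[v_m]_{\beta;B_R^+}\le CR^\alpha$ for $1\le R\le1/\rho_m$.

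\emph{The limiting equation.} Set $U_m:=(x_n)_+^s v_m$ in $\{x_n>0\}$ and $U_m:=0$ in $\{x_n<0\}$, so that $u_{k_m}(\rho_m\,\cdot\,)=\delta_m U_m+\rho_m^s(x_n)_+^s P_{\rho_m,k_m}(\rho_m\,\cdot\,)$ for the appropriate $\delta_m>0$. Splitting $P_{\rho_m,k_m}(\rho_m\,\cdot\,)$ into its tangential-linear, normal-linear and constant parts, I would use that $L\bigl((x_n)_+^s(p'\cdot x'+b)\bigr)=0$ in $\{x_n>0\}$ for every $L\in\mathcal L_*$ and every tangential $p'$, together with Lemmas~\ref{lem-planes}, \ref{liouv-1+s-1D}(a) and \ref{lem-1d-functions} (the \emph{increments} of $(x_n)_+^{1+s}$ and of $(x_n)_+^s(a\cdot x)$ are $M^\pm$-harmonic, even though $(x_n)_+^{1+s}$ grows too fast for $L$ to be evaluated on it), and the translation invariance of the $L_{ab}$, to deduce that for every $h$ with $h_n\ge0$ the increment $U_m(\cdot+h)-U_m$ satisfies
\[M^+\bigl(U_m(\cdot+h)-U_m\bigr)\ge-\epsilon_m|h|^\gamma\,,\qquad M^-\bigl(U_m(\cdot+h)-U_m\bigr)\le\epsilon_m|h|^\gamma\quad\text{in }B_{1/(2\rho_m)}^+,\]
and $U_m(\cdot+h)-U_m=0$ in $\{x_n<0\}$, with $\epsilon_m=\delta_m^{-1}\rho_m^{2s+\gamma}\to0$ precisely because $\alpha+\beta\le\gamma+s$.

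\emph{Compactness and classification.} Interior $C^\alpha$ estimates of \cite{CS} applied to the increment equation bound $\{U_m\}$ away from $\{x_n=0\}$; Proposition~\ref{thm1} applied to the \emph{tangential} increments controls $v_m(\cdot+h')-v_m$ in $C^{\bar\alpha}(\overline{B_{1/4}^+})$ by $C|h'|^\beta$, which upgrades (via the finite-difference characterization of H\"older spaces) to tangential $C^{\beta'}$ bounds, and the remaining normal regularity up to $\{x_n=0\}$ is supplied by the hypothesis \eqref{recursive_estimate}, applied at scale $\rho_m$ to $u_{k_m}$ after subtracting a suitable multiple of $(x_n)_+^s$ times a polynomial and truncating. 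Along a subsequence $v_m\to v_\infty$ in $C^\beta_{\rm loc}(\overline{\R^n_+})$, with $[v_\infty]_{\beta;B_1^+}=1$, vanishing best-fit polynomial of degree $\le\lfloor\alpha+\beta\rfloor$ over $B_1^+$, and $[v_\infty]_{\beta;B_R^+}\le CR^\alpha$ for all $R\ge1$; by stability of viscosity sub/supersolutions, $U_\infty=(x_n)_+^s v_\infty$ meets the hypotheses of Theorem~\ref{thm-liouv-1+s}, hence $U_\infty(x)=(x_n)_+^s(p\cdot x+b)$, i.e. $v_\infty$ is affine. If $\alpha+\beta<1$ the growth control forces $p=0$; in all cases $v_\infty$ is a polynomial of degree $\le\lfloor\alpha+\beta\rfloor$, so it coincides with its own best-fit polynomial over $B_1^+$, which is $0$. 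Thus $v_\infty\equiv0$, contradicting $[v_\infty]_{\beta;B_1^+}=1$, and the Proposition follows.

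\emph{Main obstacle.} I expect the compactness step to be the crux. The polynomial correction $(x_n)_+^s P_r$ contains a $(x_n)_+^{1+s}$-type term of super-$2s$ growth, so $u-(x_n)_+^s P_r$ is \emph{not} a solution of an equation of the form \eqref{op7} --- one cannot even apply $L\in\mathcal L_*$ to it; one is forced to work with increments, where the cancellations of Lemmas~\ref{lem-planes} and \ref{liouv-1+s-1D}(a) are available, and to import the normal regularity of $u/(x_n)^s$ up to the boundary through the iterative hypothesis \eqref{recursive_estimate}, carefully tracking the polynomial correction and exploiting the truncation permitted by the restricted class \eqref{op7}.
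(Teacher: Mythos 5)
Your proposal is correct and follows essentially the same route as the paper's own proof: the same contradiction scheme built on the monotone quantity $\theta(r)$, blow-ups with the best-fit polynomial subtracted (orthogonality, nondegeneracy $[v_m]_{\beta;B_1^+}\gtrsim 1$, and the growth control $[v_m]_{\beta;B_R^+}\le CR^\alpha$), passage to the limiting equation through increments so that Lemma \ref{lem-planes} absorbs the $(x_n)_+^s$ times affine correction of super-$2s$ growth with error of size $\rho_m^{s+\gamma-\alpha-\beta}/\theta(\rho_m)\to0$, compactness via the rescaled inductive estimate \eqref{recursive_estimate}, and finally the Liouville Theorem \ref{thm-liouv-1+s} combined with the orthogonality condition to contradict nondegeneracy. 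The remaining points are details the paper settles with short auxiliary facts (equicontinuity up to $\{x_n=0\}$ in the case $\beta=0$, via the $(x_n)^s$-decay and Lemma \ref{lem-Calpha-bdry}, and letting $s_k\in[s_0,1)$ vary along the contradiction sequence, with Lemma \ref{lemweak2} to pass the extremal operators to the limit, so that the constant is uniform in $s$).
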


We will need the following preliminary results.

\begin{lem}\label{lemweak2}
Let $s_0 \in (0,1)$, $s_m\in [s_0,1]$ be a converging sequence with $s_m\to s$, and $M^+_{s_m}$ and $M^-_{s_m}$ denote the extremal Pucci type operators for the class $\mathcal L_*(s_m)$ of order $2s_m$.
Then,  $M^+_{s_m} \rightarrow M^+_{s}$ and $M^-_{s_m} \rightarrow M^-_{s}$ weakly with respect to the weight $\omega_{s_0}(y) = (1+|y|)^{-n-2s_0}$.
\end{lem}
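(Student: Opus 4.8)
The plan is to verify directly the notion of weak convergence of nonlocal operators from \cite{CS2}, using the ``closed formulae'' for the extremal operators of $\mathcal L_*$ that follow from \eqref{Lasint}. Recall that $M^+_{s_m}\to M^+_{s}$ weakly with respect to $\omega_{s_0}$ means the following: for every $x_0\in\R^n$, every $r>0$, every $\varphi\in C^2\bigl(\overline{B_r(x_0)}\bigr)$, and all sequences $x_m\to x_0$ and $w_m\to w$ both locally uniformly and in $L^1(\R^n,\omega_{s_0})$, if one sets $v_m:=\varphi$ in $B_r(x_0)$ and $v_m:=w_m$ in $\R^n\setminus B_r(x_0)$ (and $v$ analogously with $\varphi$ and $w$), then $M^+_{s_m}v_m(x_m)\to M^+_{s}v(x_0)$; and the same for $M^-$. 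First I would note that $M^\pm_{s_m}v_m(x_m)$ is classically well defined: $v_m$ is $C^2$ near $x_m$, and since $s_m\ge s_0$ one has $\omega_{s_m}\le(1-s_0)^{-1}\omega_{s_0}$ away from the origin, so $v_m\in L^1(\R^n,\omega_{s_m})$. Then, writing
\[
M^+_{s_m}v_m(x_m)=\frac{1-s_m}{2c_{1,s_m}}\int_{S^{n-1}}\Bigl\{\Lambda\,\bigl(T_m(\theta)\bigr)^+-\lambda\,\bigl(T_m(\theta)\bigr)^-\Bigr\}\,d\theta ,\qquad T_m(\theta):=-(-\partial_{\theta\theta})^{s_m}v_m(x_m),
\]
and likewise for $M^-_{s_m}$, and using that $t\mapsto\Lambda t^+-\lambda t^-$ is Lipschitz while $\tfrac{1-s_m}{c_{1,s_m}}$ depends continuously on $s\in(0,1]$ (with a finite positive limit as $s\uparrow1$), I would reduce the whole statement, via dominated convergence in $\theta$, to the following \emph{one-dimensional} convergence: in an $L^1$-dominated sense over $S^{n-1}$, the one-dimensional fractional Laplacians satisfy
\[
(-\partial_{\theta\theta})^{s_m}v_m(x_m)\ \longrightarrow\ (-\partial_{\theta\theta})^{s}v(x_0),
\]
where for $s=1$ the right-hand side is interpreted as $-\partial_{\theta\theta}v(x_0)=-\partial_{\theta\theta}\varphi(x_0)$.

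To prove this one-dimensional convergence I would split, for a small parameter $\rho>0$, the integral defining $(-\partial_{\theta\theta})^{s_m}v_m(x_m)$ into its part over $\{|t|<\rho\}$ and its part over $\{|t|\ge\rho\}$. On the inner region, for $m$ large the points $x_m\pm t\theta$ lie inside $B_r(x_0)$, where $v_m=\varphi$, so the second difference is $O\bigl(\|D^2\varphi\|_{L^\infty(B_r(x_0))}\,t^2\bigr)$ and, after integrating against $c_{1,s_m}|t|^{-1-2s_m}$, the inner contribution is comparable to $\tfrac{c_{1,s_m}}{1-s_m}\rho^{2-2s_m}$ times a weighted average of $\partial_{\theta\theta}\varphi$ over a $\rho$-neighbourhood of $x_0$, uniformly in $\theta$. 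When $s<1$ this is $O(\rho^{2-2s})$ and tends to $0$ as $\rho\to0$ uniformly in large $m$ (and the same bound holds for the limit operator at $x_0$). When $s=1$ this term is the main one: since $\tfrac{c_{1,s_m}}{1-s_m}$ has a finite limit and the normalized kernel $t^{1-2s_m}\chi_{\{|t|<\rho\}}$ concentrates as $s_m\uparrow1$, it converges to $-\partial_{\theta\theta}\varphi(x_0)$, while the outer part vanishes because $c_{1,s_m}\to0$. On the outer region $\{|t|\ge\rho\}$ (relevant when $s<1$), the kernel $c_{1,s_m}|t|^{-1-2s_m}$ is bounded, uniformly in $m$, by a multiple of $(1+|t|)^{-1-2s_0}$ and converges pointwise to $c_{1,s}|t|^{-1-2s}$; combined with $w_m\to w$ in $L^1(\R^n,\omega_{s_0})$ (which, after integrating in $\theta$, dominates the tails of these one-dimensional integrals) and $w_m\to w$ locally uniformly, a dominated convergence argument shows that the outer part converges to its analogue for $v$ at $x_0$. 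Passing first $m\to\infty$ and then $\rho\to0$ yields the claimed one-dimensional convergence, and hence the lemma.

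The only genuinely delicate point is the limiting regime $s_m\uparrow1$, in which the operators of order $2s_m$ degenerate to a second-order one. There the normalization by $(1-s_m)$ built into $\mathcal L_*$ (and by $(1-s_0)$ into $\omega_{s_0}$) must be used precisely, to see that the inner second-difference quotient against $c_{1,s_m}|t|^{-1-2s_m}$ concentrates at $x_0$ and reproduces $\partial_{\theta\theta}\varphi(x_0)$ with a bounded constant, while the outer contribution vanishes. This is exactly the mechanism behind the assertion, repeated throughout this paper, that all constants remain bounded as $s\uparrow1$. A secondary point is that the test functions $v_m$ in the definition of weak convergence are allowed to vary; this is handled precisely by the $L^1(\R^n,\omega_{s_0})$ control, and the whole argument is an adaptation to $\mathcal L_*$ of the analogous stability statement for $\mathcal L_0$ in \cite{CS2}.
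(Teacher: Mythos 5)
Your argument is correct and is essentially the proof the paper has in mind: the paper's own proof is a one-line appeal to the definition of weak convergence in \cite{CS2}, and you simply carry out that verification using the spherical decomposition \eqref{Lasint}, with the correct treatment of the $(1-s_m)$ normalization in the degenerate regime $s_m\uparrow 1$. The only point stated loosely is the ``dominated convergence in $\theta$'': rather than looking for a pointwise dominating function independent of $m$, it is cleaner to bound $\int_{S^{n-1}}\bigl|T_m(\theta)-T(\theta)\bigr|\,d\theta$ directly by the locally uniform convergence of $v_m$ near $x_0$ (inner part) plus $C_\rho\,\|w_m-w\|_{L^1(\R^n,\omega_{s_0})}$ (outer part), which is exactly what your inner/outer splitting already provides.
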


\begin{proof}
It follows straightforward from the definition of weak convergence of nonlocal elliptic operators in \cite{CS2}.
\end{proof}

The second one reads as follows.

\begin{lem}\label{lem-Calpha-bdry}
Let $s_0\in(0,1)$ and $s\in(s_0,1)$.
There exists $\delta>0$ such that the following statement holds.

If $M^+w\geq-C_0$ and $M^-w\leq C_0$ in $B_1^+$, $w=0$ in $B_1^-$, and $\|w\|_{L^1(\R^n,\,\omega_s)}\leq C_0$, then
\[\|w\|_{C^\delta(B_{3/4}^+)}\leq CC_0,\]
where $C$ and $\delta$ depend only on $n$, $s_0$, and ellipticity constants.
\end{lem}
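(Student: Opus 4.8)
The plan is to combine the interior $C^\alpha$ estimate of Caffarelli--Silvestre (Theorem \ref{Interior-Caff-Silv}), applied at interior scales, with the behavior near the flat boundary $\{x_n=0\}$ coming from the fact that $w$ vanishes there and grows at most like $(x_n)^s$. In fact this should follow essentially by a rescaling argument from Proposition \ref{thm1} together with the interior estimate, in much the same spirit as the final part of the proof of Proposition \ref{thm1} itself. First I would normalize $C_0=1$ by dividing $w$ by a constant. Then I would split points $x\in B_{3/4}^+$ according to their distance $d=d(x)=x_n$ to the boundary: the question is to control $\mathrm{osc}_{B_r(x)\cap B_{3/4}^+} w$ by $Cr^\delta$ for all $r>0$.

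The two regimes are as follows. For scales $r\le d/2$ (the interior regime), I would apply Theorem \ref{Interior-Caff-Silv} rescaled to the ball $B_{d/2}(x)$: since $M^+w\ge -1$ and $M^-w\le 1$ in $B_1^+\supset B_{d/2}(x)$ and $\|w\|_{L^1(\R^n,\omega_s)}\le 1$, one gets $[w]_{C^\alpha(B_{d/4}(x))}\le C d^{-\alpha}$, hence $\mathrm{osc}_{B_r(x)}w\le C r^\alpha d^{-\alpha}$ for $r\le d/4$ (the range $d/4<r\le d/2$ is absorbed trivially). For scales $r\ge d/2$ (the boundary regime), I would use Proposition \ref{thm1}: it gives $\|w/x_n^s\|_{C^{\bar\alpha}(B_{1/2}^+)}\le C$, and since $0\le x_n\le Cr$ on $B_r(x)\cap B_{3/4}^+$ when $r\ge d/2$, writing $w=(w/x_n^s)\cdot x_n^s$ and using that $x_n^s$ is bounded by $Cr^s$ there together with the Hölder control on the quotient yields $\mathrm{osc}_{B_r(x)\cap B_{1/2}^+}w\le C r^{s_0}$ (up to combining with $\|w\|_{L^\infty}\le C$ for the transition to $B_{3/4}^+$; one should shrink the outer ball slightly or invoke Proposition \ref{thm1} on $B_{3/4}^+$ after an initial covering). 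Combining the two regimes with $\theta>1$ chosen as in the proof of Proposition \ref{thm1} — treating separately $r\le d^\theta/2$, $d^\theta/2<r\le d/2$, and $r>d/2$ — one obtains a single bound $\mathrm{osc}_{B_r(x)\cap B_{3/4}^+}w\le Cr^\delta$ with $\delta>0$ depending only on $n,s_0$ and the ellipticity constants, which is the claimed $C^\delta$ estimate.

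The main technical point — and the place where Proposition \ref{thm1} is doing real work — is the boundary regime: a naive application of only the interior estimate would blow up like $d^{-\alpha}$ as $x\to\partial\Omega$, and it is precisely the uniform control of $w/x_n^s$ up to the boundary (which encodes that $w$ vanishes at the boundary at the correct rate $x_n^s$) that tames this. I expect the bookkeeping of the three scale regimes and the careful choice of $\theta$ and $\delta\in(0,\min\{\alpha,s_0\})$ to be routine, following verbatim the endgame of the proof of Proposition \ref{thm1}; the only mild subtlety is matching the ball $B_{1/2}^+$ from Proposition \ref{thm1} with the $B_{3/4}^+$ in the statement, which is handled by an elementary covering of $B_{3/4}^+$ by balls on which a rescaled version of Proposition \ref{thm1} applies, using $\|w\|_{L^1(\R^n,\omega_s)}\le C_0$ to control the tail contributions.
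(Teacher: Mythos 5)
Your overall strategy (interior Caffarelli--Silvestre estimate at scale comparable to $x_n$, plus a boundary ingredient forcing decay of $w$ near $\{x_n=0\}$, glued by an oscillation argument in scales) is sound and would give the lemma, but it is heavier than, and different from, what the paper does. The paper's proof has two short steps: first, comparison with the supersolution of Lemma \ref{supersol} gives the pointwise bound $|w(x)|\le CC_0(x_n)_+^s$ in $B_{7/8}^+$; second, the interior estimates of \cite{CS} applied in balls $B_{x_n/2}(x)$ then yield, after rescaling, $[w]_{C^\alpha(B_{x_n/2}(x))}\le CC_0\,x_n^{s-\alpha}$, and the standard two-regime argument ($r\le x_n/2$ versus $r\ge x_n/2$, where in the second regime one simply uses $|w|\le CC_0 x_n^s\le CC_0 r^s$) gives the $C^\delta$ bound directly. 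Because the barrier supplies the $(x_n)^s$ decay \emph{before} the interior estimate is invoked, there is no $d^{-\alpha}$ blow-up to tame, hence no need for Proposition \ref{thm1} nor for the three-regime $\theta$-interpolation from the endgame of Proposition \ref{thm1}; your route trades an elementary barrier for the full boundary Harnack-type result, which is legitimate (no circularity, since Proposition \ref{thm1} is proved earlier and independently) but buys nothing here.

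The one genuine loose end in your write-up is the $L^\infty$ control. Proposition \ref{thm1} requires $\|w\|_{L^\infty(\R^n)}$ on the right-hand side, and Theorem \ref{Interior-Caff-Silv} requires $\|w\|_{L^\infty(B_1)}$; the hypotheses of the lemma only give $\|w\|_{L^1(\R^n,\omega_s)}\le C_0$. Your proposed fix (truncate $w$ to $w\chi_{B_1}$ and absorb the tail using the weighted $L^1$ bound) handles the contribution from outside $B_1$, but the truncated function is bounded only if one already knows $\|w\|_{L^\infty(B_{7/8})}\le CC_0$, and your interior-regime estimate ``$[w]_{C^\alpha(B_{d/4}(x))}\le Cd^{-\alpha}$'' also quietly uses such a local sup bound. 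This local bound does not come for free from the stated hypotheses; it is exactly what the paper's barrier step provides (together with the stronger $(x_n)_+^s$ decay). So you should insert a comparison with Lemma \ref{supersol} (or an equivalent local maximum principle) as a preliminary step; once you have $|w|\le CC_0(x_n)_+^s$ locally, your argument closes --- and in fact at that point it collapses to the paper's simpler proof.
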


\begin{proof}
First, using the barrier given by Lemma \ref{supersol}, we find that $|w(x)|\leq CC_0(x_n)^s_+$ in $B_{7/8}^+$.
Then, the result follows by using the interior estimates in \cite{CS}; see also \cite[Section 3]{CS2}.
\end{proof}

We next give the

\begin{proof}[Proof of Proposition \ref{prop_contr_1+s+alpha}]
The proof is by contradiction.
If the conclusion of the proposition is false, then there are sequences $u_k$, $\I_k$, $s_k$, and $\gamma_k$ satisfying
\begin{itemize}
\item $\I_k (u_k,x) = 0$ in $B_1^+$ and $u_k=0$ in $B_1^-$;
\item $\I_k(u_k,x) = \inf_{b\in \mathcal B_k}\sup_{a\in \mathcal A_k} \bigl( L_{ab} u_k(x) + c_{ab}(x)\bigr)$ with $\|c_{ab}\|_{ \gamma_k ;B_1^+} \le 1$ for all $a\in \mathcal A_k$  and $b\in \mathcal B_k$;
\item $\{L_{ab} \,: \, a\in \mathcal A_k, \ b\in\mathcal B_k \}\subset \mathcal L(s_k)$ with $s_k \in [s_0,1]$;
\item $\|u/(x_n)^s\|_{0;B_{1}^+}  +\sup_{R\ge 1} R^{-\alpha}[u/(x_n)^s]_{\beta;B_{R}^+} \le 1$;
\item $\gamma_k\geq\min\{0,\alpha+\beta-s_k\}$
\end{itemize}
for which
 \begin{equation}\label{supsup}
\sup_k \sup_{r>0} r^{-\alpha}\ [ u_k/(x_n)^{s_k} -P_{k,r}]_{\beta;B_r^+} = +\infty ,
\end{equation}
where
\[ P_{k,r} := {\rm arg \, min}_{P\in \mathcal P} \int_{B_r^+} \bigl( u_k(x)/(x_n)^{s_k} -P\bigr) \,dx\]
(recall that $\mathcal P$ denotes the real polynomials of degree at most $\lfloor \alpha+\beta\rfloor$).

To prove that this is impossible, let us start defining
\[
 \theta(r) := \sup_k  \sup_{r'>r}  (r')^{-\alpha}\,\bigl[u_k/(x_n)^{s_k} -P_{k,r}\bigr]_{\beta;B_{r'}^+} \, ,
\]
The function $\theta$ is monotone nonincreasing and we have $\theta(r)<+\infty$ for $r>0$ since we are assuming that
\begin{equation}\label{le1}
\sup_{R\ge 1} R^{-\alpha}[u/(x_n)^s]_{\beta;B_{R}^+} \le 1
\end{equation}
In addition, by \eqref{supsup} we have  $\theta(r)\nearrow +\infty$ as $r\searrow0$.
For every positive integer $m$, by definition of $\theta(1/m)$ there are $r'_m\ge 1/m$ and $k_m$ for which
\begin{equation}\label{nondeg2}
(r'_m)^{-\alpha}  \bigl[u_k/(x_n)^s -P_{k_m,r'_m}\bigr]_{\beta; B_{r'_m}} \ge \frac{1}{2} \theta(1/m) \ge \frac{1}{2}  \theta(r'_m).
\end{equation}
Here we have used that $\theta$ is non-increasing.
Note that we will have $r'_m\searrow0$ since $\theta(1/m)\nearrow +\infty$ and \eqref{le1} holds.

From now on in this proof we will use the notations
\[u_m= u_{k_m}, \ P_m = P_{k_m, r'_m},\ s_m= s_{k_m},\  {\rm and }\  \gamma_m= \gamma_{k_m}.\]

Let us consider the blow up sequence
\begin{equation}\label{eqvm}
 v_m(x) = \frac{u_{m}(r'_m x)/(r'_m x_n)^s-P_{m} (r'_m x)}{(r'_m)^{\alpha+\beta}\theta(r'_m)}.
 \end{equation}
For all $m\ge 1$ we have
\begin{equation}\label{2}
 \int_{B_1^+} v_m(x)  P(x) \,dx =0\quad \mbox{for all } P\in \mathcal P,
\end{equation}
since this is the optimality condition in the least squares minimization.

Note also that \eqref{nondeg2} implies the following inequality for all $m\geq1$:
\begin{equation}\label{nondeg35}
[v_m]_{\beta;B_1^+}\ge 1/2,
\end{equation}

Let us show now that
\begin{equation}\label{growthc1}
[v_{m}]_{\beta;B_R^+} \le  CR^{\alpha}
\end{equation}
for all $R\ge 1$.

We will do the proof of \eqref{growthc1} in the most difficult case $\alpha+\beta>1$, the case $\alpha+\beta\in (0,1)$ is very similar.
To prove \eqref{growthc1} we need to estimate the difference in the coefficients of $P_{k_m, Rr'_m} -P_{k_m, r'_m}$.
Let us denote
\[P_{k,r}(x) = p_{k,r}\cdot x + b_{k,r} \quad \mbox{where }p_{k,r}\in \R^n  \mbox{ and }b_{k,r} \in  \R.\]
Note that since we are doing the case $\alpha+\beta>1$ we will have  $\lfloor \alpha+\beta\rfloor=1$ and hence $\mathcal P$ contains  all affine functions.
Let $R=2^k$ and let us show that
\begin{equation} \label{esta's}
\bigl|p_{k_m, Rr'_m} -p_m\bigr| = \bigl|p_{k_m, 2^kr'_m} -p_{k_m, r'_m}\bigr|\le C\theta(r'_m) (Rr'_m)^{\alpha+\beta-1}.
\end{equation}

Indeed, we use by definition of $\theta(2r)$ and $\theta(r)$ we have
\[
\begin{split}
\frac{|p_{k,2r}-p_{k,r}| r^{1-\beta}}{r^\alpha \theta(r)} &\le
\frac{ \bigl[ (p_{k,2r} -p_{k,r})\cdot x\bigr]_{\beta; B_r^+}}{r^\alpha \theta(r)} = \frac{ \bigl[ P_{k,2r} -P_{k,r}\bigr]_{\beta; B_r^+}}{r^\alpha \theta(r)}
 \\
&\le   \frac{2^\alpha\theta(2r)}{\theta(r)} \frac{ \bigl[ u_k/(x_n)^{s_k} -P_{k,2r}\bigr]_{\beta; B_{2r}^+} }{(2r)^\alpha\theta(2r)} + \frac{ \bigl[ u_k/(x_n)^{s_k } -P_{k,r}\bigr]_{\beta; B_r^+} }{r^\alpha\theta(r) }
 \\
 &\le 2^\alpha+1\le 3,
 \end{split}
 \]
where we have used the definition of $\theta$ and its monotonicity.

Thus,
\[
\bigl|p_{k_m, 2^kr'_m} -p_{k_m, r'_m}\bigr| \le 3 \sum_{j=0}^{k-1} \theta(2^jr'_m) (2^{j}r'_m)^{\alpha+\beta-1} \le C \theta(r'_m) (2^k r'_m)^{\alpha+\beta-1}.
\]
Note that it is here where we use that $\alpha+\beta-1>0$.

Therefore, we can estimate as follows
\[
\begin{split}
[v_{m}]_{\beta;B_R^+} &= \frac{1}{\theta({r'_m})(r'_m)^{\alpha}} \bigl[u_{m}/(x_n)^{s_m} - P_m  \bigr]_{\beta;B_{Rr'_m}^+}
\\
&= \frac{R^{\alpha}}{\theta({r'_m}) (Rr'_m )^\alpha } \biggl( \bigl[u_{k_m}/(x_n)^{s_m} - P_{k_m, Rr'_m } \bigr]_{\beta;B_{Rr'_m}^+}  +  \bigl[ P_{k_m, Rr'_m }  - P_{m } \bigr]_{\beta;B_{Rr'_m}^+}  \biggr)
\\
&
\le R^\alpha  \frac{\theta(Rr'_m)}{\theta(r'_m)} + 2|p_{k_m, Rr'_m} -p_m| (Rr'_m)^{1-\beta}
\\
&\le  R^{\alpha} + CR^\alpha,
\end{split}
\]
where we have used \eqref{esta's}.
This proves \eqref{growthc1} in the case $\alpha+\beta>1$. As said above, the proof of \eqref{growthc1} in the case $\alpha+\beta\in(0,1)$ its easier since in this case $\bigl[ P_{k_m, Rr'_m }  - P_{m } \bigr]_{\beta;B_{Rr'_m}^+}  =0$
and we do not need to estimate the difference of the coefficients.

When $R=1$, \eqref{growthc1} implies that $\|v_m- b \|_{L^\infty(B_1)}\le C$, for some $b\in \R$.
Thus, \eqref{2} implies that
\begin{equation}\label{boundedinB1}
\|v_m\|_{L^\infty(B_1^+)}\le C.
\end{equation}
Then, using \eqref{growthc1}  and \eqref{boundedinB1} we easily obtain that
\begin{equation} \label{growthc0}
\|v_{m}\|_{L^\infty(B_R^+)} \le CR^{\alpha+\beta}.
\end{equation}
Note that in the case $\beta=0$ the difference between \eqref{growthc1} and \eqref{growthc0} is that we pass from a oscillation bound to an $L^\infty$ bound.

Next we prove the following

\vspace{5pt}
\noindent {\em Claim. Let $w_m(x) = v_m(x) (x_n)_+^s $. Then, up to subsequences we have $s_m \to s\in [s_0,1]$ and $w_m\to w$  in $C^\beta_{\rm loc}\bigl(\R^n\bigr)$, where $w\in C^{\beta'}\bigl(\R^n\bigr)$.
Moreover, the limiting function $w$  satisfies the assumptions of the Liouville-type Theorem \ref{thm-liouv-1+s}.}
\vspace{5pt}

In the case $\beta>0$, it follows from \eqref{growthc1} and \eqref{recursive_estimate} (rescaled) that 
\[ [v_m]_{\beta'; B_R} \le C R^{\alpha+\beta-\beta'}.\]
Thus, recalling that  $\beta'>0$ and using  \eqref{growthc0}, by Arzel\`a-Ascoli Theorem $v_m$ converges (up to a subsequence) in $C^{\beta}_{\rm loc}\bigl(\{x_n\ge 0\}\bigr)$ to some $v\in  C^{\beta'}\bigl(\{x_n\ge 0\}\bigr)$.
The convergence of $w_m$ to $w=v(x_n)_+^s$ is then immediate.

In the case $\beta=0$, the functions $w_m$ satisfy $M^+w_m\leq C(K)$ and $M^-w_m\geq-C(K)$ in every half-ball $B_K^+$, and satisfy $\|w_m\|_{L^\infty(B_R)}\leq CR^{s_m+\alpha}$ for every $R\geq1$.
Hence, we have $\|w_m\|_{C^\delta(B_K)}\leq C(K)$ by Lemma \ref{lem-Calpha-bdry}.
Thus, the functions $w_m$ converge to some $w$ uniformly in compact sets.

Moreover, by passing to the limit \eqref{growthc1} we find that the assumption \eqref{growthcontrol-1+s} of Theorem \ref{thm-liouv-1+s} is satisfied by $w$.

Now, each $u_k$ satisfies
\[ \I_k(u_k,x) := \inf_{b\in \mathcal B_k}\sup_{a\in \mathcal A_k} \bigl( L_{ab} u_k(x) + c_{ab}(x)\bigr)=0\quad  {\rm in}\  B_1^+.\]
Thus,   for  every $\bar h \in B_{1/2}^+$   we have
\[  \inf_{b\in  \mathcal B_{k_m}}\sup_{a\in \mathcal A_{k_m}}  \bigl( L_{ab} u_{m}(\bar x +\bar h) + c_{ab}(\bar x +\bar h)\bigr)=0 \quad \mbox{for }\bar x \in B_{1/2}^+\]
in the viscosity sense.

Using that $[c_{ab}]_{C^{\gamma_m}}\le 1$ (for all $a\in \mathcal A_{k_m}$ and $b\in \mathcal B_{k_m}$), it follows that
\[  \inf_{b\in  \mathcal B_{k_m}}\sup_{a\in \mathcal A_{k_m}}  \bigl( L_{ab} u_{m}(\bar x +\bar h) + c_{ab}(0)\bigr) \ge -|\bar x+\bar h|^{\gamma_m}  \quad \mbox{for }\bar x \in B_{1/2}^+ \]
and
\[  \inf_{b\in  \mathcal B_{k_m}}\sup_{a\in \mathcal A_{k_m}}  \bigl( L_{ab} u_{m}(\bar x) + c_{ab}(0)\bigr) \le  |\bar x|^{\gamma_m}  \quad \mbox{for }\bar x \in B_{1/2}^+ \]
in the viscosity sense.

Therefore, using Lemma 5.8 in \cite{CS} we obtain that
\begin{equation}\label{uuuu}
 M^+_{s_m} \bigl( u_{m}(\cdot +\bar h)- u_m\bigr) \ge - |\bar x|^{\gamma_m} -|\bar x+ \bar h|^{\gamma_m} \quad \mbox{in }B_{1/2}^+
 \end{equation}
in the viscosity sense.

Next, by Lemma \ref{lem-planes}, for every affine or constant function $P \in \mathcal P$, the function $\varphi(x) = P (x_n)_+^s$ satisfies
\[  M^+ \bigl( \phi(\cdot + \bar h)- \phi\bigr)  = M^-   \bigl( \phi(\cdot + \bar h)- \phi\bigr)  = 0 \quad \mbox{in }\R^n_+ \]
pointwise an in the classical sense for every $\bar h$ with $\bar h_n \ge 0$.
Using this property, the value of the operator does not change when adding  to test functions multiples of  $ \phi(\cdot + \bar h)- \phi$. Hence,  recalling that
\[  w_m(x)= v_m(x)(x_n)_+^{s_m} = \frac{v_m(x)(r'_mx_n)_+^{s_m}}{(r'_m)^{s_m}}  = \frac{u_m(r'_mx) -P_{m}(r'_mx_n)_+^{s_m}}{ (r'_m)^{\alpha+\beta+s_m}\theta(r'_m)} \]
and the definition of $v_m$ from \eqref{eqvm}, we can translate \eqref{uuuu} from $u_m$ to $w_m$.
Indeed, setting $\bar h = r'_m h$ and $\bar x =r'_m x$, we obtain
\[
-(r'_m)^{\gamma_m} 3K^{\gamma_m} \le  \frac{\theta (r'_m) (r'_m )^{\alpha+\beta+s_m}}{(r'_m)^{2s_m}}  M^+   \bigl( w_m ( \cdot + h) -  w_m  \bigr) \quad \mbox{in }B_K^+.
\]
whenever $h_n\ge0$, $|h|<K$, and $r'_m<\frac{1}{2K}$.

Therefore
\begin{equation}\label{wwwwww}
-3K^{\gamma_m}\frac{ (r'_m)^{s_m+\gamma_m}}{\theta (r'_m) (r'_m )^{\alpha+\beta}}  \le M^+   \bigl( w_m ( \cdot + h) -  w_m \bigr) \quad \mbox{ in }  B_K^+
\end{equation}
in the viscosity sense for all $h\in B_K$ whenever $r'_m<\frac{1}{2K}$.

Since $w_m \to w$ locally uniformly in $\{x_n\ge 0\}$ (up to subsequences), then we have
\begin{equation}\label{locallyuniformly}
\bigl(w_m(\cdot+h)-w_m\bigr) \rightarrow \bigr(w(\cdot+h)-w\bigr)  \quad \mbox{locally uniformly in  }  \R^n.
\end{equation}

Let us check that, for some $\epsilon>0$ small enough, we have that for every $h\in \R^n$ with $h_n\ge0$
\begin{equation}\label{L1weightedconv}
\bigl(w_m(\cdot+h)-w_m\bigr) \rightarrow \bigr(w(\cdot+h)-w\bigr)  \quad \mbox{in } L^1\bigr(\R^n, \omega_{s-\epsilon} \big).
\end{equation}
Recall that in all the paper  we denote $\omega_{s}(y) = (1-s)(1+|y|)^{-n-2{s}}$.

To show \eqref{L1weightedconv}, observe that
\[ w_m(x+h)-w_m(x) = \bigl(v_m(x+h) - v_m(x)\bigr) (x_n+h_n)_+^{s_m} +  v_m(x)\bigl((x_n+h_n)_+^{s_m} - (x_n)_+^{s_m}\bigr)\]
and hence, using \eqref{growthc1} and \eqref{growthc0},
\[\begin{split}
&\bigl| w_m(x+h)-w_m(x)\bigr|
\,\le
\\
&\hspace{5mm}\le
\begin{cases}
C |h|^\beta(1+|x|)^{\alpha} (x_n+h_n)^{s_m} + C(1+|x|)^{\alpha+\beta}h_n (x_n)^{s_m-1} \quad& \mbox{if }x_n> 0,
\\
C ( 1+|x|)^{\alpha+\beta} (h_n)^{s_m}  & \mbox{if }-h_n < x_n< 0,
\\
0 & \mbox{if }x_n <-h_n.
\end{cases}
\end{split}
\]
Therefore, we have
\[\begin{split}
\bigl| w_m(x+h)-&w_m(x)\bigr| \le g(x) \\
&:=  C\bigl(1 +  (|x|^\alpha(x_n)^{s_m} +|x|^\alpha(x_n)^{s_m-1}) \chi_{(0,+\infty)} (x_n)+ |x|^{\alpha+\beta}\chi_{(-C,0)}(x_n) \bigr)
\end{split}
\]
where $C$ (and $g$) depend on $h$.
Since $s_m\to s$ we will have $s_m\ge s-\epsilon$ for $m$ large enough, and using that $\beta\le1$ and $\bar\alpha<s_0$ we readily show that $g\in  L^1\bigr(\R^n, \omega_{s-\epsilon} \big)$.
Therefore, \eqref{L1weightedconv} follows from \eqref{locallyuniformly} using the dominated convergence theorem.

Finally,  using  \eqref{L1weightedconv} and \eqref{locallyuniformly} it follows from Lemma \ref{lemweak2} and Lemma 5 in \cite{CS2} we can pass to the limit in \eqref{wwwwww} in every ball $B_K^+$ to obtain that
\[0\le M^+\bigl\{ w(\cdot + h)- w\bigr\} \quad \mbox{in } B_K^+ .\]
Thus, since this can be done for any $K>0$, we have
\[0\le M^+\bigl\{ w(\cdot + h)- w\bigr\} \quad \mbox{in } \R^n_+ .\]

Analogously, we will have that
\[0\ge M^-\bigl\{ w(\cdot + h)- w\bigr\} \quad \mbox{in } \ \R^n_+,\]
and this finishes the proof the Claim.

\vspace{5pt}

We have thus proved that $w$ satisfies all the assumptions of Theorem \ref{thm-liouv-1+s} and hence we conclude that $v = w/(x_n)^s$ is an affine function.
On the other hand, passing \eqref{2} to the limit we obtain that $v$ is orthogonal to every affine function and hence it must be $v\equiv 0$.
But then passing \eqref{nondeg35} to the limit we obtain that $v$ cannot be constantly zero in $B_1$; a contradiction.
\end{proof}

To prove Theorem \ref{thm2} we will need the following Lemma, that matches Proposition \ref{prop_contr_1+s+alpha}.

\begin{lem}\label{lemcontrarec}
Let $\alpha\in(0,1]$ and $\beta\in[0,1]$ with $\alpha+\beta \neq 1$ and let and $v$ satisfy, for all $r>0$
\[
\sup_{r>0} r^{-\alpha}\ [v-P_r]_{\beta;B_r^+} \le C_0,
\]
where $P_r$  some polynomial of degree at most $\lfloor \alpha+\beta\rfloor$ (zero or one) depending on $r$.
In the case $\alpha+\beta>1$, assume in addition that $P_1(x) = p_1\cdot x + b_1$, with $|p_1|\le C_0$.
Then, the limit $P=\lim_{r\searrow 0} P_r$ exist and for all $r>0$ we have
\[ \|v-P\|_{L^\infty(B_r^+)} \le C C_0 r^{\beta+\alpha}, \quad \mbox{and}\quad |p| \le CC_0,\]
where $P(x)=p\cdot x+b$, and $p=0$ if $\alpha+\beta<1$.
The constant $C$ depends only on $\alpha$ and $\beta$.
\end{lem}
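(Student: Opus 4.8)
The plan is to run a dyadic telescoping argument, but to treat the linear part of $P_r$ and its constant term separately: only the linear part is genuinely controlled by the hypothesis, since the additive constant cancels in the seminorm $[v-P_r]_{\beta;B_r^+}$.

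First I would compare consecutive polynomials on the smaller half-ball. For $x,y\in B_{r/2}^+\subset B_r^+$, the triangle inequality for the seminorm and the hypothesis give
\[[P_r-P_{r/2}]_{\beta;B_{r/2}^+}\le[v-P_r]_{\beta;B_{r/2}^+}+[v-P_{r/2}]_{\beta;B_{r/2}^+}\le C_0 r^\alpha+C_0(r/2)^\alpha\le 2C_0 r^\alpha.\]
In the case $\alpha+\beta>1$ (so $\lfloor\alpha+\beta\rfloor=1$, and since $\alpha\le1$ necessarily $\beta>0$) write $P_r(x)=p_r\cdot x+b_r$. Since the ball $B_{r/8}\big((r/4)e_n\big)$ is contained in $B_{r/2}^+$ and has diameter $r/4$, any affine function $q\cdot x+c$ satisfies $[q\cdot x+c]_{\beta;B_{r/2}^+}\ge c_0|q|\,r^{1-\beta}$, whence $|p_r-p_{r/2}|\le CC_0 r^{\alpha+\beta-1}$. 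Because $\alpha+\beta-1>0$, the series $\sum_{k\ge0}|p_{2^{-k}}-p_{2^{-k-1}}|$ is summable, so $p_{2^{-k}}$ converges to some $p\in\R^n$; a routine interleaving argument then gives $|p_r-p|\le CC_0 r^{\alpha+\beta-1}$ for all $r>0$, and in particular $|p|\le|p_1|+|p-p_1|\le(1+C)C_0$ using the extra hypothesis $|p_1|\le C_0$. When $\alpha+\beta<1$ all the $P_r$ are constants and I simply set $p:=0$; the borderline case $\alpha+\beta=1$ is excluded.

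Next I would reduce to the constant case by setting $\tilde v(x):=v(x)-p\cdot x$ (so $\tilde v=v$ when $\alpha+\beta<1$). Using $[P_r-p\cdot x]_{\beta;B_r^+}=|p_r-p|\,[x]_{\beta;B_r^+}\le|p_r-p|\,(2r)^{1-\beta}$ together with the bound on $|p_r-p|$, one obtains
\[[\tilde v]_{\beta;B_r^+}\le[v-P_r]_{\beta;B_r^+}+[P_r-p\cdot x]_{\beta;B_r^+}\le CC_0 r^\alpha\qquad\textrm{for all}\ r>0,\]
whence ${\rm osc}_{B_r^+}\tilde v\le[\tilde v]_{\beta;B_r^+}(2r)^\beta\le CC_0 r^{\alpha+\beta}$ (when $\beta=0$ the oscillation is $[\tilde v]_{0;B_r^+}$ itself and the bound is immediate). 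Since these oscillations tend to $0$ as $r\searrow0$ and the half-balls $B_r^+$ are nested with $0\in\overline{B_r^+}$, the limit $b:=\lim_{x\to0,\,x\in\R^n_+}\tilde v(x)$ exists; letting the second point tend to $0$ in the oscillation estimate yields $|\tilde v(x)-b|\le CC_0 r^{\alpha+\beta}$ for $x\in B_r^+$. Then $P(x):=p\cdot x+b$ satisfies $\|v-P\|_{L^\infty(B_r^+)}=\|\tilde v-b\|_{L^\infty(B_r^+)}\le CC_0 r^{\alpha+\beta}$, together with $|p|\le CC_0$ and $p=0$ when $\alpha+\beta<1$, which is the full conclusion of the lemma.

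It remains to observe that $P_r\to P$. The linear parts converge by construction, so only $b_r\to b$ is at issue; this needs a normalized choice of the $P_r$ (the additive constant being invisible to the hypothesis), and for the least-squares best fit used in Proposition \ref{prop_contr_1+s+alpha}, Pythagoras in $L^2(B_r^+)$ gives $\|P_r-P\|_{L^2(B_r^+)}\le\|v-P\|_{L^2(B_r^+)}\le CC_0|B_r^+|^{1/2}r^{\alpha+\beta}$, and comparing the coefficients of the degree $\le1$ polynomial $P_r-P$ with its $L^2(B_r^+)$ norm (by scaling) yields $|b_r-b|\le CC_0 r^{\alpha+\beta}\to0$. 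I expect the only mildly delicate points to be the half-ball geometry needed to bound $|p_r-p_{r/2}|$ from the seminorm, namely that one can fit a segment of length comparable to $r$ in every direction inside $B_{r/2}^+$, and the bookkeeping that first builds $p$ and only afterwards $b$; there is no substantial obstacle here, the argument being the nonlocal boundary analogue of the classical polynomial-approximation lemma used in Campanato-type proofs of Schauder estimates.
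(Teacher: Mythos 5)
Your proof is correct and follows essentially the same route as the paper's: the dyadic comparison of the linear coefficients $|p_r-p_{r/2}|\le CC_0r^{\alpha+\beta-1}$ via the $C^\beta$ seminorm on the half-ball, summation of the geometric series to get $p$ and $|p|\le CC_0$, and then the oscillation bound ${\rm osc}_{B_r^+}(v-p\cdot x)\le CC_0r^{\alpha+\beta}$. The only difference is that you additionally make explicit the half-ball geometry behind the coefficient estimate and the convergence $b_r\to b$ under the least-squares normalization of $P_r$ (the paper ends with the oscillation bound and leaves this last point implicit), which is a welcome clarification rather than a deviation.
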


\begin{proof}
We will do the most difficult case $\alpha+\beta>1$.

Let $P_r(x)=p_r\cdot x+b_r$.
We have, for all $r>0$,
\[  [ v-p_{r}\cdot x]_{\beta;B_r^+} \le C_0 r^\alpha.\]
Thus,
\[
\begin{split}
|p_r-p_{r/2}| (r/2)^{1-\beta} &\le \bigl[ (p_{r} -p_{r/2})\cdot x\bigr]_{\beta;B_{r/2}^+}
 \\
 &\le [ v-p_{r}\cdot x]_{\beta;B_{r/2}^+} + [ v-p_{r}\cdot x]_{C^\beta;B_{r/2}^+}
 \\
 &\le C_0 r^{\alpha} + C_0 (r/2)^\alpha
\end{split}
\]
and hence
\[  |p_r-p_{r/2}| \le C C_0 r^{\alpha+\beta-1}.\]

It follows (developing the expressions as  telescopic sums and summing the geometric series) that $p= \lim_{r\searrow 0} p_r$ exists and
\[  |p_r-p|\le C C_0 r^{\alpha+\beta-1}.\]
In particular
\[ |p|\le |p_1| +|p_1-p| \le C_0 + C C_0.\]
Then we  obtain that
\[ \begin{split}
{\rm osc}_{B_r^+}[ v -p\cdot x] &\le  {\rm osc}_{B_r^+}[ v-p_r\cdot x] +  {\rm osc}_{B_r^+}[(p_r-p) \cdot x]   \\
  &\le   [ v-p_r\cdot x]_{\beta;B_r^+} r^{\beta} +  |p_r-p| r
\\
  &\le \ C C_0 r^{\beta+\alpha}
\end{split}\]
and the lemma now follows.
\end{proof}

We give now a second step towards Theorem \ref{thm2}.
This result follows from the interior estimates.

\begin{lem}\label{lem-interior}
Let $s_0\in(0,1)$, and let $\bar \alpha>0$ be the constant given by Proposition \ref{thm1}.

Let $s\in(s_0,1)$, $\alpha\in(0,\bar\alpha)$, $\gamma\in(0,1)$, and $\beta\in[0,1)$ such that $\alpha+\beta\leq \gamma+s$.
Assume in addition that $\alpha+\beta\neq 1$.

Let $e_n = (0,\dots,0,1)$ and $w$ be a solution of $\I(w,x)=0$ in $B_1(e_n)$, where $\I$ is any fully nonlinear operator elliptic with respect to $\mathcal L_*(s)$ of the form
\[\I(w,x) = \inf_{b\in \mathcal B}\sup_{a\in\mathcal A} \bigl( L_{ab} w(x) + c_{ab}(x)\bigr),\]
with $L_{ab}$ given by \eqref{L*1}-\eqref{L*2},
\[\|c_{ab}\|_{\gamma;B_1(e_n)}\leq 1\quad\textrm{and}\quad \|\mu_{ab}\|_{C^\gamma(S^{n-1})}\leq \Lambda.\]

Assume that
\[ \|w\|_{L^\infty(\R^n)} <\infty, \quad \|w\|_{L^{\infty}(B_r(0))} \le C_0 r^{\alpha+\beta+s},\quad \mbox{and}\quad \|w\|_{\beta; B_r(2re_n)} \le C_0 r^{\alpha+s}\]
for all $r>0$.

Then,
\[ [w]_{C^{\alpha+\beta} (B_{r/2} (2re_n))} \le C_0 r^s, \]
for all $r\in(0,1)$, for some constant $C$ that depends only on $n$, $s_0$, ellipticity constants, $\alpha$, $\beta$.
\end{lem}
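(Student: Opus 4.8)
The plan is to reduce the estimate to the interior regularity theory by a rescaling at scale $r$ about the point $2re_n$. I would fix $r\in(0,1)$ (we may assume $r$ small, so that $B_r(2re_n)\subset B_1(e_n)$; for the remaining, bounded range of $r$ the estimate follows directly from the interior estimates together with the bound $\|w\|_{L^\infty(B_3(0))}\le CC_0$ supplied by the second hypothesis) and set
\[
v(x):=\frac{w(2re_n+rx)}{C_0\,r^{\alpha+\beta+s}},\qquad x\in B_1 .
\]
Since the class $\mathcal L_*(s)$ is scale invariant of order $2s$ and the $C^\gamma$ control of the spectral measures is preserved under scaling, $v$ solves in $B_1$ an equation $\inf_b\sup_a\bigl(L_{ab}v+c'_{ab}\bigr)=0$ with the same kernels $L_{ab}$ and with $c'_{ab}(x)=r^{\,s-\alpha-\beta}c_{ab}(2re_n+rx)/C_0$; because $\alpha+\beta\le s+\gamma$, the $C^\gamma$ seminorm of $c'_{ab}$ remains $\le 1/C_0$.

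Next I would convert the three hypotheses into bounds on $v$ that are uniform in $r$: $\|w\|_{L^\infty(B_\rho(0))}\le C_0\rho^{\alpha+\beta+s}$ gives $\|v\|_{L^\infty(B_1)}\le C$ and $\|v\|_{L^\infty(B_R)}\le CR^{\alpha+\beta+s}$ for $R\ge1$, which the bound $\|w\|_{\beta;B_\rho(2\rho e_n)}\le C_0\rho^{\alpha+s}$ --- used on the cone $\bigcup_{\rho>0}B_\rho(2\rho e_n)$ containing the axis through $2re_n$ --- sharpens on large balls to $\|v\|_{L^\infty(B_R)}\le CR^{\alpha+s}$, while the same bound at scale $r$ gives $[v]_{C^\beta(B_1)}\le C$. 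Then I would invoke the interior regularity for fully nonlinear equations elliptic with respect to $\mathcal L_*$ with $C^\gamma$ kernels and $C^\gamma$ right-hand side --- the estimates of Caffarelli--Silvestre \cite{CS} and their rough-kernel refinements \cite{Se,Se2,Kriv} --- in the form of the improvement from $C^\beta$ to $C^{\beta+\alpha}$, which is available since $\alpha<\bar\alpha$, $\alpha+\beta<2s+\gamma$, and $\alpha+\beta$ is not an integer. This yields $[v]_{C^{\alpha+\beta}(B_{1/2})}\le C$, and the scaling identity
\[
[w]_{C^{\alpha+\beta}(B_{r/2}(2re_n))}=C_0\,r^{s}\,[v]_{C^{\alpha+\beta}(B_{1/2})}
\]
then gives $[w]_{C^{\alpha+\beta}(B_{r/2}(2re_n))}\le CC_0\,r^{s}$, as claimed; the constant does not degenerate as $s\uparrow1$ because the interior estimates do not.

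The main difficulty is the nonlocal tail. Since $\alpha+\beta+s$ may exceed $2s$, the rescaled function $v$ need not lie in $L^1(\R^n,\omega_s)$, the integral defining $L_{ab}v$ need not converge absolutely, and $\|c'_{ab}\|_{L^\infty(B_1)}$ may blow up as $r\downarrow0$, so the interior estimates cannot be applied to $v$ directly; this is precisely the obstruction already encountered in the Liouville Theorem~\ref{thm-liouv-1+s}. I would circumvent it in the same way, arguing with incremental quotients $v(\,\cdot\,+h)-v$ for small $h$ rather than with $v$: after subtracting the $(x_n)_+^s$-component by means of Lemma~\ref{lem-planes}, these increments inherit from the growth bounds above a growth of order strictly below $2s$ --- this is where the smallness of $\alpha$ is used --- so they lie in $L^1(\R^n,\omega_s)$ with norm controlled by $C_0$, they satisfy $M^\pm$-inequalities in $B_{1/2}$ with right-hand side only $O(|h|^\gamma)$ (no $\|c'_{ab}\|_{L^\infty}$ enters, since increments of the equation see only the $C^\gamma$ seminorm of the costs), and the interior estimates do apply to them; integrating these bounds back over $h$ produces the $C^{\alpha+\beta}$ estimate for $v$, hence for $w$.
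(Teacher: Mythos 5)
Your reduction by scaling, the bookkeeping for the rescaled costs, and the identification of the real obstruction (the rescaled function grows like $R^{\alpha+\beta+s}$, possibly faster than $R^{2s}$, so it need not belong to $L^1(\R^n,\omega_s)$ uniformly in $r$) are all correct, but the mechanism you propose to get around that obstruction does not work, and it is exactly here that the paper's proof does something else. The hypotheses control the $C^\beta$ norm of $w$ only on the Whitney balls $B_\rho(2\rho e_n)$; in the rescaled variables these are the balls $B_{\rho/r}\bigl((2\rho/r-2)e_n\bigr)$, whose union is a cone drifting off in the $e_n$ direction and which does not cover $B_R$ for $R\ge 2$. Hence neither your sharpened bound $\|v\|_{L^\infty(B_R)}\le CR^{\alpha+s}$ nor any bound of the type $[v]_{C^\beta(B_R)}\le CR^{\alpha}$ follows; away from that cone the only information is $|v|\le CR^{\alpha+\beta+s}$ (the hypothesis $\|w\|_{L^\infty(\R^n)}<\infty$ is purely qualitative and gives nothing uniform in $r$), so the increments $v(\cdot+h)-v$ still grow like $R^{\alpha+\beta+s}$, which exceeds order $2s$ whenever $\alpha+\beta>s$ --- a case allowed by the statement. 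Moreover there is no ``$(x_n)_+^s$-component'' to subtract: this is an interior statement about a general $w$ (in its application $w=u-Q_z$, where that component has already been removed), Lemma \ref{lem-planes} says nothing about such a $w$, and subtracting a function $(x_n)_+^s(a\cdot x+b)$ could not cancel growth of order $\alpha+\beta+s$ anyway. So your increments need not lie in $L^1(\R^n,\omega_s)$ with bounds uniform in $r$, and the interior estimates cannot be applied to them; note also that your argument never genuinely uses the hypothesis $\|\mu_{ab}\|_{C^\gamma(S^{n-1})}\le\Lambda$, which is precisely what this lemma is designed to exploit.

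The paper's proof handles the tail by truncation rather than by increments: after rescaling, one replaces $\bar w$ by $\bar w\eta$ with $\eta\in C^\infty_c$ equal to $1$ near the point of interest, so that $\bar w\eta$ is bounded and globally $C^\beta$, and the far field is moved into the zeroth-order terms, $\tilde c_{ab}=r^{s-\alpha-\beta}c_{ab}(r\,\cdot\,)+L_{ab}\bigl((1-\eta)\bar w\bigr)$. The crucial point is that the tail term is a function of $x$ alone whose $C^\gamma$ seminorm is bounded by $CC_0$, because the kernels are $C^\gamma$ away from the origin (so $\bigl[\mu_{ab}(y/|y|)|y|^{-n-2s}\bigr]_{C^\gamma(B_{2R}\setminus B_R)}\le C\Lambda R^{-n-2s-\gamma}$) and the growth $R^{\alpha+\beta+s}$ with $\alpha+\beta\le s+\gamma$, together with the global boundedness of $w$, keeps the tail integral under control; this is where the kernel regularity enters. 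One then applies the unit-scale improvement $\|v\|_{C^{\beta+\alpha}(B_{1/2})}\le C\|v\|_{C^\beta(\R^n)}$ for globally $C^\beta$ solutions with $C^\gamma$ costs (Proposition \ref{prop-interior}); this improvement is not a quotable result from \cite{CS,Kriv,Se2}, and the paper proves it by a compactness/blow-up argument against a Liouville lemma for increments, which also supplies the step you describe only vaguely as ``integrating these bounds back over $h$''. The missing idea in your outline is this truncation step; once it is in place, the rest of your plan essentially coincides with the paper's argument.
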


\begin{proof}
We differ the proof to the Appendix.
\end{proof}

We next show the following result.
It follows from Proposition \ref{prop_contr_1+s+alpha} and Lemma \ref{lemcontrarec} by truncating the solution $u$.
We will also use the interior estimates from the previous Lemma \ref{lem-interior}.

\begin{prop}\label{utterable}
Let $s_0\in(0,1)$, and let $\bar \alpha>0$ be the constant given by Proposition \ref{thm1}.

Let $s\in(s_0,1)$, $\alpha\in(0,\bar\alpha)$, $\gamma\in(0,1)$, and $\beta\in[0,1]$ such that $\alpha+\beta\leq \gamma+s$.
Assume in addition that $\alpha+\beta\neq 1$.

Let $u$ be any solution of $\I(u,x)=0$ in $B_1^+$ and $u=0$ in $\R^n_-$, where $\I$ is any fully nonlinear operator elliptic with respect to $\mathcal L_*(s)$ of the form
\begin{equation}\label{op7}
\I(u,x) = \inf_{b\in \mathcal B}\sup_{a\in\mathcal A} \bigl( L_{ab} u(x) + c_{ab}(x)\bigr)
\end{equation}
where $\|\mu_{ab}\|_{C^\gamma(S^{n-1})}\leq 1$ and $\|c_{ab}\|_{\gamma;B_1^+} \le 1$ for all $a\in \mathcal A$ and $b\in \mathcal B$.

If $\beta>0$ assume in addition that the following estimate holds for some $\beta'\in(\beta,\beta+\alpha)$ and for all $u$ and $\I$ as above:
\begin{equation}
\label{recursive_estimate2}
 [u/(x_n)^s]_{\beta';B_{1/4}^+}\le C \bigl(\|u/(x_n)^s\|_{0;B_{1}^+}  +\sup_{R\ge 1} R^{-\alpha}[u/(x_n)^s]_{\beta;B_{R}^+}\bigr),
\end{equation}
where $C$  depends only on $n$, $s_0$, ellipticity constants, $\alpha$,  $\beta$, and $\beta'$.

Then \eqref{recursive_estimate2} hold also for $\beta' = \beta+\alpha$ and moreover we have the following local estimate
\[ \|u/(x_n)^{s} \|_{C^{\beta+\alpha}(B_{1/4}^+)} \le C \bigl(\|u/(x_n)^s\|_{\beta;B_{3/4}^+}+\|u\|_{L^\infty(\R^n)} \bigr) ,\]
for some constant $C$ that depends only on $n$, $s_0$, ellipticity constants, $\alpha$, $\beta$.
\end{prop}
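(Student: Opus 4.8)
The plan is to combine Proposition \ref{prop_contr_1+s+alpha}, which controls the best-fitting polynomial $P_r$ of $u/(x_n)^s$ on $B_r^+$, with Lemma \ref{lemcontrarec}, which upgrades such control into a pointwise expansion around a fixed limiting polynomial; this produces an estimate \emph{on the boundary} at the scale-invariant rate $\alpha+\beta$, which is then glued to the interior estimate of Lemma \ref{lem-interior}.

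First I would normalize (dividing $u$ by $\|u/(x_n)^s\|_{\beta;B_{3/4}^+}+\|u\|_{L^\infty(\R^n)}$, which we may assume equals $1$) and truncate, replacing $u$ by $\tilde u=u\,\eta$ with $\eta$ a radial cutoff equal to $1$ on $B_{3/4}$ and supported in $B_1$. Using the barrier bound $|u|\le C(x_n)_+^s$ of Lemma \ref{supersol}, $\tilde u$ vanishes in $\R^n_-$, agrees with $u$ on $B_{3/4}^+$, has $\|\tilde u\|_{L^\infty(\R^n)}\le C\|u/(x_n)^s\|_{0;B_{3/4}^+}$, and $\tilde u/(x_n)^s$ is compactly supported with $\sup_{R\ge1}R^{-\alpha}[\tilde u/(x_n)^s]_{\beta;B_R^+}\le C$. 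Since $\tilde u-u$ is supported in $B_1\setminus B_{3/4}$, each $L_{ab}(\tilde u-u)$ is a bounded $C^\gamma$ function in $B_{1/2}^+$ of norm $\le C\|u\|_{L^\infty}$, so $\tilde u$ solves in $B_{1/2}^+$ an equation of the form \eqref{op7} with coefficients $c_{ab}+L_{ab}(\tilde u-u)$ still bounded in $C^\gamma$; after rescaling $B_{1/2}^+$ to $B_1^+$ this is exactly the setting of Proposition \ref{prop_contr_1+s+alpha}, whose hypothesis \eqref{recursive_estimate} is the postulated \eqref{recursive_estimate2}. Applying it yields $r^{-\alpha}[\tilde u/(x_n)^s-P_r]_{\beta;B_r^+}\le C$ for all $r>0$, and then Lemma \ref{lemcontrarec}, applied to $v=\tilde u/(x_n)^s$ (the bound $|p_1|\le C$ needed when $\alpha+\beta>1$ following from the $L^\infty$ control of $v$ on $B_1^+$ and the least-squares optimality of $P_1$), gives a limiting polynomial $P_0$ of degree $\le\lfloor\alpha+\beta\rfloor$ with bounded coefficients and
\[\|\tilde u/(x_n)^s-P_0\|_{L^\infty(B_r^+)}\le Cr^{\alpha+\beta},\qquad [\tilde u/(x_n)^s-P_0]_{\beta;B_r^+}\le Cr^\alpha\quad\text{for all }r>0.\]

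Next, the class \eqref{op7} being invariant under translations parallel to $\{x_n=0\}$ (such a translation merely moves the $c_{ab}$ and preserves their $C^\gamma$ bound), the previous step applied to $\tilde u(\cdot+z)$ gives, for every $z\in\{x_n=0\}\cap\overline{B_{1/4}}$, a polynomial $P_z$ of degree $\le\lfloor\alpha+\beta\rfloor$ with coefficients bounded by $C$ with $\|\tilde u/(x_n)^s-P_z\|_{L^\infty(B_r^+(z))}\le Cr^{\alpha+\beta}$ and $[\tilde u/(x_n)^s-P_z]_{\beta;B_r^+(z)}\le Cr^\alpha$ for all $r>0$. To turn this boundary information into a full $C^{\alpha+\beta}$ bound I would fill in the interior using Lemma \ref{lem-interior}: fixing $x_0\in B_{1/4}^+$, setting $d=(x_0)_n$ and letting $z$ be the nearest boundary point, one applies that lemma (after rescaling $B_{cd}(x_0)$ to $B_1(e_n)$) to $w=\tilde u-\phi_z$, where $\phi_z(x)=P_z(x)(x_n)_+^s$ is further truncated far from $x_0$ to make it bounded. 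Using the special structure of the barrier --- in particular the cancellation $L(x_n)_+^s=0$ for $L\in\mathcal L_*$ (cf.\ Lemma \ref{lem-planes}) together with Lemma \ref{otherpowers}, and the $C^\gamma(S^{n-1})$ bound on the $\mu_{ab}$ --- one checks that $w$ solves an equation of the form \eqref{op7} in $B_{cd}(x_0)$ whose coefficients, after rescaling to unit scale, are bounded in $C^\gamma$; and the boundary bounds above translate, via $w=(x_n)^s(\tilde u/(x_n)^s-P_z)$ and $(x_n)^s\le Cr^s$ on $B_r^+(z)$, precisely into the hypotheses $\|w\|_{L^\infty(B_r(z))}\le Cr^{\alpha+\beta+s}$ and $\|w\|_{\beta;B_r(z+2re_n)}\le Cr^{\alpha+s}$ of Lemma \ref{lem-interior}. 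That lemma then gives $[w]_{C^{\alpha+\beta}(B_{d/2}(x_0))}\le Cd^s$, hence $[\tilde u/(x_n)^s]_{C^{\alpha+\beta}(B_{d/2}(x_0))}\le C$ since $(x_n)^s$ and $P_z$ are smooth with the matching scaling there. A standard combination of these interior bounds with the boundary bounds (comparing $P_z$ at nearby boundary points) yields $\|\tilde u/(x_n)^s\|_{C^{\alpha+\beta}(B_{1/4}^+)}\le C$; undoing the normalization and rescaling gives the stated local estimate, while the scale-invariant assertion that \eqref{recursive_estimate2} holds with $\beta'=\alpha+\beta$ follows from it, since the truncation gives $\|u/(x_n)^s\|_{\beta;B_{3/4}^+}+\|\tilde u\|_{L^\infty(\R^n)}\le C\big(\|u/(x_n)^s\|_{0;B_1^+}+\sup_{R\ge1}R^{-\alpha}[u/(x_n)^s]_{\beta;B_R^+}\big)$. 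In the case $\beta=0$ the hypothesis \eqref{recursive_estimate2} is vacuous and Proposition \ref{prop_contr_1+s+alpha} is applied directly.

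The step I expect to be the main obstacle is this interior argument: making rigorous that $w=\tilde u-\phi_z$, with $\phi_z$ correctly truncated, genuinely solves an equation of the form \eqref{op7}, i.e.\ that applying $L_{ab}$ to the subtracted barrier $P_z(x)(x_n)_+^s$ produces a right-hand side that is $C^\gamma$ and of the right size after rescaling --- which rests on the defining cancellation $L(x_n)_+^s=0$ of $\mathcal L_*$, on Lemma \ref{otherpowers}, and on the $C^\gamma$ control of the spectral measures --- together with the bookkeeping of the nonlocal truncation tails and of the precise transfer of the growth hypotheses into the exact form demanded by Lemma \ref{lem-interior}.
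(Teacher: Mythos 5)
Your proposal is correct and follows essentially the same route as the paper: truncate $u$ by a cutoff (absorbing the tail into $C^\gamma$ coefficients $\bar c_{ab}$), apply Proposition \ref{prop_contr_1+s+alpha} together with Lemma \ref{lemcontrarec} at each boundary point $z$ to get the expansion $u/(x_n)^s\approx p(z)\cdot x+b(z)$ at rate $r^{\alpha+\beta}$, subtract $Q_z=(p(z)\cdot x+b(z))(x_n)_+^s\chi_{B_2}$ and invoke Lemma \ref{lem-interior} in balls $B_R(x_0)$ with $2R=\mathrm{dist}(x_0,\{x_n=0\})$, then divide by $(x_n)^s$ and cover. The step you flag as the main obstacle (that $\tilde u-\phi_z$ solves an admissible equation, resting on $L(x_n)_+^s=0$, the truncation of the tail and the $C^\gamma$ spectral measures) is exactly the point the paper treats implicitly when applying Lemma \ref{lem-interior}, and your minor slips (the support of $\tilde u-u$ is in $\R^n\setminus B_{3/4}$, and the cutoff/barrier bookkeeping) do not affect the argument.
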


\begin{proof}
To prove the proposition we assume that either
\[\|u/(x_n)^s\|_{0;B_{1}^+}  +\sup_{R\ge 1} R^{-\alpha}[u/(x_n)^s]_{\beta;B_{R}^+} \le 1\]
or
\[\|u/(x_n)^s\|_{\beta;B_{3/4}^+}+\|u\|_{L^\infty(\R^n)}  \le 1\]
and we will show that
\[ \|u/(x_n)^{s} \|_{C^{\beta+\alpha}(B_{1/4}^+)} \le C.\]

Let us consider $\bar u=u\eta$, where $\eta\in C^\infty_c(B_{3/4})$ satisfies $\eta\equiv1$ in $B_{5/8}$.
Then, using that the kernels of the operators are $C^\gamma$ outside the origin, we find that $L_{ab}\bar u=L_{ab}u+\bar c_{ab}(x)$ in $B_{1/2}^+$, where $\bar c_{ab}$ satisfy
\[\|\bar c_{ab}\|_{\gamma;B_{1/2}^+}\leq C_0.\]
Hence, we have that
\[\bar \I(\bar u,x)= \inf_{b\in \mathcal B}\sup_{a\in\mathcal A} \bigl( L_{ab} \bar u(x) - \bar c_{ab}(x)+c_{ab}(x)\bigr)=0\quad \textrm{in}\ B_{1/2}^+.\]
Moreover, we have
\[\|\bar u/(x_n)^s\|_{\beta;\{x_n\ge 0\}}\le C_0.\]
Hence, by Proposition \ref{prop_contr_1+s+alpha} and Lemma \ref{lemcontrarec}, we find the following.
For each $z\in B_{1/4}\cap\{x_n=0\}$ there exist $p(z)\in \R^n$ and $b(z)\in\R$ such that
\begin{equation}\label{bound-alpha+beta}
\|u/(x_n)^s-p(z)\cdot x-b(z)\|_{L^\infty(B_r^+)} \le C C_0 r^{\beta+\alpha}, \quad \mbox{for all}\quad r<1/4,
\end{equation}
and
\begin{equation}\label{bound-alpha}
\|u/(x_n)^s-p(z)\cdot x-b(z)\|_{\beta;\,B_r^+} \le C C_0 r^{\alpha}, \quad \mbox{for all}\quad r<1/4,
\end{equation}
with
\[|p(z)| \le CC_0,\quad |b(z)|\leq CC_0.\]
We have used that $\bar u=u$ in $B_{1/2}$.

Let us see next that \eqref{bound-alpha+beta}-\eqref{bound-alpha} imply
\[\|u/(x_n)^s\|_{C^{\alpha+\beta}(B_{1/4}^+)}\leq CC_0.\]
For it, we define
\[Q_z(x):=(p(z)\cdot x+b(z))(x_n)^s_+\chi_{B_2}(x),\]
and observe that \eqref{bound-alpha+beta} and \eqref{bound-alpha} give
\[\|u-Q_z\|_{L^\infty(B_R(0))}\leq CR^{s+\alpha+\beta}\]
and
\[\|u-Q_z\|_{\beta; B_R(x_0)}\leq CR^{s+\alpha}\]
for every ball $B_R(x_0)$ such that $2R=\textrm{dist}(x_0,\{x_n=0\})$ and $B_{2R}(x_0)\subset B_{1/4}^+$, where $z$ is the projection of $x_0$ on $\{x_n=0\}$.

Using the previous two inequalities, Lemma \ref{lem-interior} yields
\begin{equation}\label{austin2}
\|u-Q_z\|_{C^{\beta+\alpha}(B_R(x_0))}\leq CR^s
\end{equation}
in every such ball $B_R(x_0)$.

Finally, using $\|(x_n)^{-s}\|_{L^\infty(B_R(x_0))}\leq CR^{-s}$ and $\|(x_n)^{-s}\|_{C^{\alpha+\beta}(B_R(x_0))}\leq CR^{-s-\alpha-\beta}$, we find
\[\|u/(x_n)^s\|_{C^{\alpha+\beta}(B_R(x_0))}\leq C.\]
Since this can be done for all balls $B_R(x_0)$ with $2R=\textrm{dist}(x_0,\{x_n=0\})$ and $B_{2R}(x_0)\subset B_{1/4}^+$, we have $\|u/(x_n)^s\|_{C^{\alpha+\beta}(B_{1/4}^+)}\leq C$, and thus the proposition is proved.
\end{proof}

Finally, we give the

\begin{proof}[Proof of Theorem \ref{thm2}]
We will show the result by applying inductively Proposition~\ref{utterable}.

First, using the supersolution in Lemma \ref{supersol}, we find that
\[\|u/(x_n)^s\|_{L^\infty(B_{3/4}^+)}\leq CC_0.\]
Hence, using Proposition \ref{utterable} with $\beta=0$, we find that
\[\|u/(x_n)^s\|_{C^\alpha(B_{1/4}^+)}\leq CC_0.\]
We also have that the estimate \label{recursive_estimate2} holds for with $\beta'$  replace by $\alpha$ whenever $\I$ and $u$ satisfy the assumptions of Proposition \eqref{utterable}.

Since this is for any solution $u$, then by a standard covering argument, we will have the estimate
\[\|u/(x_n)^s\|_{C^\alpha(B_{3/4}^+)}\leq CC_0.\]
Using this and  Proposition \ref{utterable} with $\beta = \alpha-\epsilon$, for some $\epsilon>0$, and with $\beta' = \alpha$, we find that
\[\|u/(x_n)^s\|_{C^{2\cdot\alpha-\epsilon}(B_{1/4}^+)}\leq CC_0,\]
and that the estimate \label{recursive_estimate2} holds with  $\beta'$ replaced by $2\alpha-\epsilon$
Iterating this procedure, we find that $u/(x_n)^s\in C^{k(\alpha-\epsilon)}(B_{1/4})$ whenever $k\cdot\alpha\leq s+\gamma$.

More precisely, after a finite number of steps we find that, for any solution $u$, we have the estimate
\[\|u/(x_n)^s\|_{C^{s+\gamma}(B_{1/4}^+)}\leq CC_0.\]
Thus, the Theorem is proved.
\end{proof}

\section{Boundary regularity: curved boundary}
\label{sec-regularity-curved}

In this section we prove Theorem \ref{thm2-curved}.
For it, we will follow the same steps as in the previous Section.

The main ingredient towards Theorem \ref{thm2-curved} will be Proposition \ref{prop_contr_1+s+alpha} below.
Before stating it, we need the following.

\begin{defi}\label{defiGamma}
We say that $\Gamma$ is a global $C^{2,\gamma}$ surface given by a graph of $C^{2,\gamma}$ norm smaller than one, splitting $\R^n$ into $\Omega^+$ and $\Omega^-$, if the following happens.
\begin{itemize}
\item The surface $\Gamma\subset\R^n$ is the graph of a global $C^{2,\gamma}$ and bounded function, whose $C^{2,\gamma}$ norm is smaller than one.
\item The two disjoint domains $\Omega^+$ and $\Omega^-$ partition $\R^n$, i.e., $\R^n=\overline {\Omega^+}\cup\overline {\Omega^-}$.
\item We have $\Gamma=\partial \Omega^+=\partial \Omega^-$, and $0\in \Gamma$.
\item The origin $0$ belongs to $\Gamma$ and the normal vector to $\Gamma$ at $0$ is $\nu(0)=e_n$.
\end{itemize}
Moreover, we let $d(x)$ be any $C^{2,\gamma}(\overline{\Omega^+})$ function that coincides with $\textrm{dist}(x,\Omega^-)$ in a neighborhood of $\Gamma\cap B_4$ and $d\equiv0$ outside $B_5$.
\end{defi}

The Proposition reads as follows.

\begin{prop}\label{prop_contr_1+s+alpha8}
Let $s_0\in(0,1)$, and let $\bar \alpha>0$ be the constant given by Proposition \ref{thm1}.

Let $s\in(s_0,1)$, $\alpha\in(0,\bar\alpha)$, $\gamma\in(0,s]$, and $\beta\in[0,1)$ such that $\alpha+\beta\leq \gamma+s$.
Assume in addition that $\alpha+\beta\neq 1$.

Assume that $\Gamma$ is a global $C^{2,\gamma}$ surface given by a graph of $C^{2,\gamma}$ norm smaller than one, splitting $\R^n$ into $\Omega^+$ and $\Omega^-$; see Definition \ref{defiGamma}.

Let $u\in C_c(B_2)$ be any solution of $\I(u,x)=0$ in $B_1\cap\Omega^+$ and $u=0$ in all of $\Omega^-$, where $\I$ is any fully nonlinear operator elliptic with respect to $\mathcal L_*(s)$ of the form
\[\I(u,x) = \inf_{b\in \mathcal B}\sup_{a\in\mathcal A} \bigl( L_{ab} u(x) + c_{ab}(x)\bigr),\]
where $\|\mu_{ab}\|_{C^\gamma(S^{n-1})}\leq \Lambda$ and $\|c_{ab}\|_{\gamma;B_1\cap \Omega^+} \le 1 \quad \mbox{for all }a\in \mathcal A \mbox{ and }b\in \mathcal B$.

If $\beta>0$ assume in addition that the following estimate holds for some $\beta'\in(\beta,\beta+\alpha)$ and for all $Gamma$, $u$, and $\I$ as above:
\begin{equation}\label{recursive_estimate8}
 [u/(x_n)^s]_{\beta';B_{1/4}\cap \Omega^+}\le C\bigl(\|u/(x_n)^s\|_{0;B_{1}\cap \Omega^+}  +\sup_{R\ge 1} R^{-\alpha}[u/(x_n)^s]_{\beta;B_{R}\cap \Omega^+}\bigr),
\end{equation}
where $C$  depends only on $n$, $s_0$, ellipticity constants, $\alpha$,  $\beta$, and $\beta'$.

Then,
\[ r^{-\alpha}[ u/d^{s} -P_{r}(\cdot)]_{\beta; B_r\cap \Omega^+} \le C\bigl(\|u/(x_n)^s\|_{0;B_{1}\cap \Omega^+}  +\sup_{R\ge 1} R^{-\alpha}[u/(x_n)^s]_{\beta;B_{R}\cap \Omega^+}\bigr),\]
for some constant $C$ that depends only on $n$, $s_0$, ellipticity constants, $\alpha$, $\beta$, and $\beta'$
where $P_r(x)$ is defined as the polynomial of degree at most $\lfloor \alpha+\beta\rfloor$ which best fits the function $u/(x_n)^{s}$ in $B_r\cap\Omega^+$.
That is,
\[P_r := {\rm arg \, min}_{ P \in \mathcal P}  \int_{B_r\cap \Omega^+} \bigl( u(x)/d^s - P(x)\bigr)^2 \,dx,\]
where $\mathcal P$ is the space of polynomials with real coefficients and degree at most $\lfloor \alpha+\beta\rfloor$.
\end{prop}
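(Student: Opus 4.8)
The plan is to run the contradiction--compactness scheme used in the proof of Proposition \ref{prop_contr_1+s+alpha}, with the two modifications forced by the curved setting: one must show that the rescaled domains \emph{flatten} under blow-up, and the term $P_r\,d^s$ subtracted from the solution is no longer annihilated by the operators of $\mathcal L_*$, so the error it produces in the equation has to be absorbed by means of Proposition \ref{Lds-is-Cs}. Suppose the statement fails. Then there are sequences $\Gamma_m$, $\Omega^\pm_m$, $d_m$ as in Definition \ref{defiGamma}, operators $\I_m=\inf_b\sup_a(L_{ab}\cdot+c_{ab})$ of orders $s_m\to s\in[s_0,1]$ with $C^\gamma$ spectral measures, exponents $\gamma_m\ge\min\{0,\alpha+\beta-s_m\}$, and normalized solutions $u_m\in C_c(B_2)$, for which $\sup_m\sup_{r>0}r^{-\alpha}[u_m/d_m^{s_m}-P_{m,r}]_{\beta;B_r\cap\Omega^+_m}=+\infty$ (recall that, since $d\sim x_n$ near the origin, the normalizing quantity is insensitive to whether it is written with $u/d^s$ or $u/(x_n)^s$ once we blow up). Exactly as in the flat case, set $\theta(r)=\sup_m\sup_{r'>r}(r')^{-\alpha}[u_m/d_m^{s_m}-P_{m,r'}]_{\beta;B_{r'}\cap\Omega^+_m}$, which is nonincreasing, finite for $r>0$ and tends to $+\infty$ as $r\searrow0$; select $r'_m\searrow0$ and, after relabeling, solutions $u_m$, polynomials $P_m=P_{m,r'_m}$, surfaces $\Gamma_m$, etc. realizing $\theta(1/m)$ up to a factor $\tfrac12$; and define the blow-up
\[ v_m(x)=\frac{u_m(r'_m x)/d_m(r'_m x)^{s_m}-P_m(r'_m x)}{(r'_m)^{\alpha+\beta}\theta(r'_m)},\qquad w_m(x)=v_m(x)\Big(\frac{d_m(r'_m x)}{r'_m}\Big)^{s_m}, \]
extended by $w_m\equiv0$ on $\Omega^-_m/r'_m$. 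The least-squares optimality gives $v_m\perp\mathcal P$ on $B_1\cap(\Omega^+_m/r'_m)$, the selection gives the nondegeneracy $[v_m]_{\beta;B_1\cap(\Omega^+_m/r'_m)}\ge\tfrac12$, and the same telescoping estimate on the drift of the best-fit polynomials as in Proposition \ref{prop_contr_1+s+alpha} yields $[v_m]_{\beta;B_R\cap(\Omega^+_m/r'_m)}\le CR^\alpha$ and, via the orthogonality, $\|v_m\|_{L^\infty(B_R\cap(\Omega^+_m/r'_m))}\le CR^{\alpha+\beta}$ for all $R\ge1$.

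The next step is compactness together with the flattening of the domains. Since $\Gamma_m$ is a graph of $C^{2,\gamma}$ norm at most one passing through $0$ with inner normal $e_n$, the sets $\Omega^+_m/r'_m$ converge locally (in the Hausdorff sense) to $\{x_n>0\}$, and the rescaled regularized distances $d_m(r'_m\cdot)/r'_m$ converge to $(x_n)_+$ in $C^2_{\rm loc}(\overline{\{x_n\ge0\}})$; this is where the $C^{2,\gamma}$ hypothesis (rather than merely $C^{1,\gamma}$) is used. If $\beta>0$, one inserts the growth bound into the rescaled recursive estimate \eqref{recursive_estimate8} to get $[v_m]_{\beta';B_R}\le CR^{\alpha+\beta-\beta'}$, and Arzel\`a--Ascoli produces a subsequence with $v_m\to v$ in $C^\beta_{\rm loc}$, $v\in C^{\beta'}$. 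If $\beta=0$, one instead places the barriers of Section \ref{sec-barriers} at interior and exterior touching balls of $\Omega^+_m$ to obtain $|w_m|\le C(x_n)_+^{s_m}$ near $\Gamma_m$, whence the interior estimates of \cite{CS} give a uniform $C^\delta$ bound near the boundary (as in Lemma \ref{lem-Calpha-bdry}) and $w_m\to w$ locally uniformly. In either case $w=v\,(x_n)_+^s$, $w\equiv0$ in $\{x_n<0\}$, $w$ inherits the growth control \eqref{growthcontrol-1+s}, and the limiting $v$ satisfies $v\perp\mathcal P$ on $B_1^+$ and $[v]_{\beta;B_1^+}\ge\tfrac12$.

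The heart of the proof --- and the main obstacle --- is to show that $w$ satisfies the hypotheses of the Liouville Theorem \ref{thm-liouv-1+s}. Write $u_m=\Psi_m+P_m\,\widetilde d_m^{\,s_m}$ on $B_2$, where $\widetilde d_m$ is the truncated distance of Definition \ref{defiGamma} and $\Psi_m:=u_m-P_m\widetilde d_m^{\,s_m}$, so that $w_m(x)=\Psi_m(r'_m x)/\big((r'_m)^{\alpha+\beta+s_m}\theta(r'_m)\big)$. Because each $L_{ab}$ has a $C^\gamma$ spectral measure and $\gamma\le s$, Proposition \ref{Lds-is-Cs} --- together with its extension to functions of the form $(a\cdot x+b)\,d^s$, proved by the same flattening argument and relying on the cancelation coming from the oddness of the $a_2$-part of the flattened kernel --- shows that $L_{ab}(P_m\widetilde d_m^{\,s_m})$ is a $C^\gamma$ function on $B_1\cap\Omega^+_m$ of norm $\le C(1+|p_m|+|b_m|)\le C$, the bound on the coefficients of $P_m$ being furnished by the growth control and Lemma \ref{lemcontrarec}. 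Hence $\Psi_m$ solves $\inf_b\sup_a\big(L_{ab}\Psi_m+\widehat c_{ab}\big)=0$ in $B_1\cap\Omega^+_m$ with $\|\widehat c_{ab}\|_{C^{\gamma_m}(B_1\cap\Omega^+_m)}\le C$. Freezing $\widehat c_{ab}$ at a point, using Lemma~5.8 of \cite{CS}, then rescaling and dividing by $(r'_m)^{\alpha+\beta+s_m}\theta(r'_m)$, one gets, for every $h$ with $h_n\ge0$,
\[ M^+_{s_m}\{w_m(\cdot+h)-w_m\}\ge-\varepsilon_m,\qquad M^-_{s_m}\{w_m(\cdot+h)-w_m\}\le\varepsilon_m\quad\text{in}\ B_K\cap\big((\Omega^+_m/r'_m)\cap(\Omega^+_m/r'_m-h)\big), \]
with $\varepsilon_m=CK^{\gamma_m}(r'_m)^{s_m+\gamma_m}/\big(\theta(r'_m)(r'_m)^{\alpha+\beta}\big)\to0$, exactly as in the flat case. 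Passing to the limit --- $w_m\to w$ locally uniformly and, by the growth bounds and dominated convergence, in $L^1(\R^n,\omega_{s-\epsilon})$ for a suitable $\epsilon>0$, together with the stability Lemma \ref{lemweak2} and Lemma~5 of \cite{CS2} --- and using that the flattening gives $(\Omega^+_m/r'_m)\cap(\Omega^+_m/r'_m-h)\to\{x_n>0\}$ whenever $h_n\ge0$, we conclude $M^+_{\mathcal L_*}\{w(\cdot+h)-w\}\ge0$ and $M^-_{\mathcal L_*}\{w(\cdot+h)-w\}\le0$ in $\{x_n>0\}$ for every $h$ with $h_n\ge0$.

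With the Liouville hypotheses in place, Theorem \ref{thm-liouv-1+s} gives $w(x)=(x_n)_+^s(p\cdot x+b)$, so $v=w/(x_n)_+^s$ is affine; but passing $v_m\perp\mathcal P$ to the limit forces $v\equiv0$, contradicting $[v]_{\beta;B_1^+}\ge\tfrac12$. Apart from the flattening of the rescaled domains, the only genuinely new point compared with Proposition \ref{prop_contr_1+s+alpha} is the $C^\gamma$ control of $L_{ab}(P_m\,d_m^s)$ via Proposition \ref{Lds-is-Cs} and its variant for $(a\cdot x+b)d^s$; once this error is known to vanish after blow-up, the remaining arguments transcribe those of the flat-boundary case.
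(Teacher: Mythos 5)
Your overall scheme is the paper's: blow up along a contradiction sequence, flatten the rescaled domains, control the extra error coming from the fact that $L_{ab}\bigl(P_m\,d_m^{s_m}\bigr)\neq0$ via Proposition \ref{Lds-is-Cs} (more precisely its variant \ref{Lds-is-Cs-B}/Lemma \ref{Lds-is-Cs-C}), and conclude with Theorem \ref{thm-liouv-1+s}. However, there is a genuine gap at the one step where the curved case really differs from the flat one: your claim that $|p_m|+|b_m|\le C$ uniformly, ``furnished by the growth control and Lemma \ref{lemcontrarec}''. Lemma \ref{lemcontrarec} presupposes precisely the bound $\sup_{r>0}r^{-\alpha}[\,u/d^{s}-P_r]_{\beta;B_r\cap\Omega^+}\le C_0$ that is assumed to fail along the contradiction sequence, so invoking it here is circular; and the normalization $\|u/(x_n)^s\|_{0;B_1}+\sup_R R^{-\alpha}[u/(x_n)^s]_{\beta;B_R}\le1$ only gives $|b_{k,r}|\le C$, not a bound on the slopes: when $\alpha+\beta>1$ the best-fit slope of a merely $C^\beta$ function on $B_r$ is only $O(r^{\beta-1})$, and the telescoping estimate $|p_{k,2r}-p_{k,r}|\le C\theta(r)r^{\alpha+\beta-1}$ shows $p_{k,r}$ may diverge as $r\to0$ (it is only $o(\theta(r))$). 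Since the error term you must kill is of size $[L_{ab}(P_m d_m^{s_m})]_{C^{\gamma_m}}\,(r'_m)^{s_m+\gamma_m-\alpha-\beta}/\theta(r'_m)$, and the exponent $s_m+\gamma_m-\alpha-\beta$ is allowed to vanish (the statement permits $\alpha+\beta=\gamma+s$), a bound $|p_m|+|b_m|\lesssim\theta(r'_m)$ is not enough: you need $(|p_m|+|b_m|)/\theta(r'_m)\to0$, otherwise the limit $w$ only satisfies the incremental-quotient inequalities up to a fixed nonzero constant and Theorem \ref{thm-liouv-1+s} does not apply.

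This is exactly what the paper proves and what is missing from your argument: from $\|P_{k,2r}-P_{k,r}\|_{L^\infty(B_r\cap\Omega^+_k)}\le C\theta(r)r^{\alpha+\beta}$, $|p_{k,1}|\le C$ and $|b_{k,r}|\le C$ one gets $(|p_{k,r}|+|b_{k,r}|)/\theta(r)\le\psi(r)$ with $\psi(r):=C\sum_{j\le i}\frac{\theta(2^{-j})}{\theta(r)}2^{-j(\alpha+\beta-1)}$ for $r\sim 2^{-i}$, and $\psi(r)\to0$ as $r\searrow0$ because $\theta(2^{-j})/\theta(r)\to0$ for each fixed $j$ (dominated convergence in $j$). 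Plugging this into Lemma \ref{Lds-is-Cs-C} gives the vanishing error in \eqref{wwwwww8}; your proof should be amended with this computation (in the regime $\alpha+\beta<1$, where $\mathcal P$ consists of constants, your argument is fine since $|b_{k,r}|\le C$ and $\theta\to\infty$). A secondary point: Lemma \ref{Lds-is-Cs-C} and Proposition \ref{Lds-is-Cs} require $\mu_{ab}\in C^{1,\gamma}(S^{n-1})$, not merely $C^\gamma$ as you (and the statement as written) assume; the paper's proof works with the restricted class $\mathcal L_*^{\gamma}$ of such kernels and uses $M^{\pm}_{\mathcal L_*^{\gamma}}$ before comparing with $M^{\pm}$ at the end, which is the assumption under which Theorem \ref{thm2-curved} is stated.
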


An important ingredient of this proof is the following.

\begin{lem}\label{Lds-is-Cs-C}
Let $s_0\in(0,1)$ and $s\in (s_0,1)$, and $\gamma\in(0,s]$.
Let $\Gamma$ and $d$ be as in Definition \ref{defiGamma}.
Let $L$ be any operator of the form \eqref{L*1} with $\|\mu_{ab}\|_{C^{1,\gamma}(S^{n-1})}\leq 1$.

Then, for any function $\eta\in C^{2,\gamma}(\R^n)$, we have
\[\|L(d^s\eta)\|_{C^\gamma(B_1\cap \Omega^+)}\leq C\|\eta\|_{C^{2,\gamma}},\]
where $C$ is a constant that depends only on $n$ and $s_0$.
\end{lem}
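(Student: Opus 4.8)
The plan is to flatten $\Gamma$, reduce the claim to a statement about a transformed nonlocal operator acting on $(y_n)_+^s$ times a $C^{2,\gamma}$ factor, and then split the transformed kernel into three pieces of definite parity and treat each separately; the \emph{odd}, order--$(2s-1)$ piece is the only delicate one. First I would flatten the boundary: since $\Gamma$ is the graph of a $C^{2,\gamma}$ function and $d$ is a $C^{2,\gamma}$ function that agrees with ${\rm dist}(\cdot,\Omega^-)$ near $\Gamma\cap B_4$ (so $|\nabla d|\equiv1$ there), I construct a global $C^{2,\gamma}$ diffeomorphism $\Psi$ of $\R^n$, a small perturbation of the identity on $B_4$ and the identity outside a large ball, normalized so that $\Psi(0)=0$, $D\Psi(0)=\mathrm{Id}$, $\Psi(\Omega^+\cap B_3)\subset\{y_n>0\}$, and $d(\Psi^{-1}(y))=y_n$ near $\Psi(\Gamma)$. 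Then $d^s\eta$ coincides near $\Gamma$ with $v\circ\Psi$, where $v(y)=(y_n)_+^s\,\tilde\eta(y)$ with $\tilde\eta\in C^{2,\gamma}$ compactly supported and $\|\tilde\eta\|_{C^{2,\gamma}}\le C\|\eta\|_{C^{2,\gamma}}$; away from $\Gamma$ the function $d^s\eta$ is $C^{2,\gamma}$, and since composition with a $C^{2,\gamma}$ diffeomorphism preserves $C^\gamma$, it suffices to show that the pushed--forward operator $\widetilde L$, defined by $\widetilde L(v,\Psi(x))=L(v\circ\Psi)(x)$, satisfies $\widetilde L(v,\cdot)\in C^\gamma$ on $\Psi(B_1\cap\Omega^+)\subset\{y_n>0\}$.

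Next I would split $L=L_{\rm near}+L_{\rm far}$, where $L_{\rm far}$ integrates over $\{|z|>\delta_0\}$: since $d^s\eta$ is compactly supported and globally $C^s\subseteq C^\gamma$ (using $\gamma\le s$) and the far kernel is integrable away from the origin, $L_{\rm far}(d^s\eta)\in C^\gamma$ by a convolution--type estimate. For $L_{\rm near}$ I change variables $z\mapsto w=\Psi(x+z)-\Psi(x)$, expand $\Psi$ to second order (this is where $\Psi\in C^{2,\gamma}$ enters), and use $|\nabla d|^2\equiv1$ near $\Gamma$, which forces the Hessian of the last component of $\Psi$ to annihilate $e_n$; this yields a kernel of the form
\[K(y,w)=\frac{a_1(y,w/|w|)}{|w|^{n+2s}}+\chi_{B_1}(w)\frac{a_2(y,w/|w|)}{|w|^{n+2s-1}}+J(y,w),\]
with $a_1(y,\cdot)$ \emph{even}, bounded between positive constants and $C^{1,\gamma}$ in $y$; $a_2(y,\cdot)$ \emph{odd} and $C^\gamma$ in $y$; and $|J(y,w)|\le C|w|^{-(n+2s-1-\gamma)}$ with $J$ compactly supported in $w$ and $C^\gamma$ in $y$. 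Writing $\widetilde L(v,y)=I_1+I_2+I_3$ according to these three terms, $I_1$ is handled by freezing $y$: the symmetrization of $a_1(y,w/|w|)|w|^{-n-2s}$ is (up to absolutely convergent truncation tails, of size $\lesssim y_n^s$ and hence $C^\gamma$) the kernel of an operator in $\mathcal L_*(s)$, which by Lemma \ref{sol1dRn} kills $(\cdot_n)_+^s$ on $\{y_n>0\}$; expanding $\tilde\eta(y+w)=\tilde\eta(y)+\nabla\tilde\eta(y)\cdot w+O(|w|^2)$ and using the evenness of $a_1$, the constant term drops and the remaining integrals converge with $C^\gamma$ dependence on $y$, using the $C^{1,\gamma}$--in--$y$ bound on $a_1$ and the $C^{2,\gamma}$ bound on $\tilde\eta$. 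For $I_3$ I split $J$ into its even and odd $w$--parts and estimate the corresponding second-- and first--difference integrals; the weaker singularity $n+2s-1-\gamma$ makes both absolutely convergent and $C^\gamma$ in $y$.

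The hard part is $I_2$. Using the oddness of $a_2$ to replace the first difference $v(y)-v(y+w)$ by $\tfrac12\bigl(v(y-w)-v(y+w)\bigr)$, and expanding $\tilde\eta$ as before, the leading term is, for frozen $y$,
\[F(y)=-\tfrac12\int_{B_1}\bigl((y_n+w_n)_+^s-(y_n-w_n)_+^s\bigr)\,\frac{a_2(y,w/|w|)}{|w|^{n+2s-1}}\,dw.\]
A crude size bound only gives $F\in L^\infty$, and the rescaling $w=y_n\hat w$ turns $F(y)$ into $y_n^{1-s}$ times an integral over $B_{1/y_n}$ whose divergent part is an explicit multiple of $y_n^{-(1-s)}$ carrying the spherical average $\int_{S^{n-1}}\mathrm{sgn}(\theta_n)|\theta_n|^s\,a_2(y,\theta)\,d\theta$; this is exactly the place where, without oddness of $a_2$, one would be left with a boundary behaviour $\sim y_n^{1-s}$ that is not $C^\gamma$ when $s>\tfrac12$. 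The resolution, and the technical heart of the proof, is to exploit the oddness of $a_2$ (together with the structure forced by $|\nabla d|\equiv1$) to extract enough cancellation that, after subtracting the homogeneous--at--infinity part of the integrand, the remaining integral converges as the radius tends to infinity with a sufficiently fast remainder, so that $F$ (and hence $I_2$) is $C^\gamma$ up to $\{y_n=0\}$; one must carry this out keeping track of the $C^\gamma$ dependence on $y$ of the spherical integrals of $a_2$ and of the transition region $|w|\approx1$ and the $\tilde\eta$--correction terms. Finally, undoing $\Psi$ and adding back the $C^\gamma$ contribution of $L_{\rm far}$ and the $C^{2,\gamma}$ contribution from the region where $d^s\eta\ne v\circ\Psi$ gives $\|L(d^s\eta)\|_{C^\gamma(B_1\cap\Omega^+)}\le C\|\eta\|_{C^{2,\gamma}}$. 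I expect the main obstacle to be precisely this $I_2$ estimate: making the oddness cancellation quantitative, uniform in $y$ up to the boundary, and at the level of $C^\gamma$ regularity rather than mere boundedness.
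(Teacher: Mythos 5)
Your setup reproduces the paper's strategy faithfully: flatten $\Gamma$ with a $C^{2,\gamma}$ diffeomorphism, obtain the kernel decomposition into an even part of order $n+2s$, an odd part of order $n+2s-1$, and a remainder of order $n+2s-1-\gamma$ (this is exactly Proposition \ref{prop-flatten}), and treat the even part via $L(x_n)_+^s=0$ and the remainder by absolute convergence (Lemmas \ref{lem-Lds-1} and \ref{lem-Lds-3}). However, for the odd term $I_2$ you stop at identifying the difficulty and state that one must "extract enough cancellation \dots after subtracting the homogeneous--at--infinity part," explicitly calling this the main obstacle. That is a genuine gap, not a routine verification: the $C^\gamma$ bound for $I_2$ is precisely the technical heart of Section \ref{calculets}, and without it the lemma is not proved. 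In the paper (Lemma \ref{lem-Lds-2}) the resolution is an \emph{exact} cancellation, not an asymptotic one: tangential increments are trivial because the kernel is $C^\gamma$ in $x$, and for the normal increment one reduces, after dividing by $|h|$, to the quantity in \eqref{we-want2}, i.e.\ to $\textrm{PV}\int(x_n+y_n)_+^{s-1}a_2(x,y/|y|)|y|^{-(n+2s-1)}dy$ over $B_2$; by oddness of $a_2$ the $(x_n)_+^{s-1}$ term drops, and the corresponding principal value over \emph{all} of $\R^n$ vanishes identically because, in polar coordinates, it factors through the one--dimensional identity $\textrm{PV}\int_{-\infty}^{+\infty}(1+r)_+^{s-1}\,r\,|r|^{-1-2s}dr=0$ of \eqref{calcul-integral-1D}. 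Hence the increment equals a tail integral over $\R^n\setminus B_2$, bounded by $C/(1-s)$, giving a Lipschitz bound in $x_n$ and therefore the $C^\gamma$ estimate. Your sketch does not contain this identity (or any substitute for it), and your worry about a residual $y_n^{1-s}$ boundary behaviour indicates the cancellation you have in mind is not yet the right one.

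Two smaller remarks. First, you invoke extra structure from $|\nabla d|\equiv 1$ (the Hessian of the last component of $\Psi$ annihilating $e_n$) as part of the $I_2$ cancellation; this is not needed, since Lemma \ref{lem-Lds-2} only uses that $a_2$ is bounded, odd in $\theta$, and $C^\gamma$ in $x$. Second, in your treatment of $I_1$ the statement that the gradient term of the $\tilde\eta$--expansion "converges with $C^\gamma$ dependence" also hides a boundary issue of the same nature (the paper handles it either through the same principal--value identity, as in Case 2 of Lemma \ref{lem-Lds-1}, or through the symmetrized second--difference bound \eqref{jronyaquejronya} in Case 3); this piece should be argued explicitly as well, though it is less serious than the $I_2$ gap.
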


\begin{proof}
It follows easily from Proposition \ref{Lds-is-Cs-B}.
\end{proof}

\begin{rem}
The hypothesis $\gamma\leq s$ of Proposition \ref{prop_contr_1+s+alpha8} and of Theorem \ref{thm2-curved} is only needed to show Lemma \ref{Lds-is-Cs-C}.
In particular, if one can show this result for all $\gamma\in (0,1)$, then the hypothesis $\gamma\leq s$ in Theorem \ref{thm2-curved} can be removed.
\end{rem}

Let us now proceed with the proof of Proposition \ref{prop_contr_1+s+alpha8}.

We will skip some details of this proof, since it is very similar to the one of Proposition \ref{prop_contr_1+s+alpha}.

\begin{proof}[Proof of Proposition \ref{prop_contr_1+s+alpha8}]
We argue by contradiction.
If the conclusion of the proposition is false, then there are sequences $u_k$, $\I_k$, $\Gamma_k$, $\gamma_k$ and $s_k$ satisfying
\begin{itemize}
\item $\Gamma_k$ is a global $C^{2,\gamma_k}$ surface given by a graph of $C^{2,\gamma_k}$ norm smaller than one, splitting $\R^n$ into $\Omega^+_k$ and $\Omega^-_k$; see Definition \ref{defiGamma}.
\item $\I_k (u_k,x) = 0$ in $B_1\cap \Omega^+_k$ and $u_k=0$ in $\Omega^-_k$;
\item $\I_k(u_k,x) = \inf_{b\in \mathcal B_k}\sup_{a\in \mathcal A_k} \bigl( L_{ab} u_k(x) + c_{ab}(x)\bigr)$
\item $\{L_{ab} \,: \, a\in \mathcal A_k, \ b\in\mathcal B_k \}\subset \mathcal L(s_k)$ with $s_k \in [s_0,1]$;
\item $\|\mu_{ab}\|_{C^\gamma(S^{n-1})}\leq \Lambda$ and $\|c_{ab}\|_{\gamma;B_1\cap \Omega^+} \le 1$, for all $a\in \mathcal A_k$  and $b\in \mathcal B_k$;
\item $ \|u/(x_n)^s\|_{0;B_{1}\cap \Omega^+_k}  +\sup_{R\ge 1} R^{-\alpha}[u/(x_n)^s]_{\beta;B_{R}\cap \Omega^+_k} \le 1$
\item $\gamma_k+s_k \ge \alpha+\beta$;
\end{itemize}
for which
\begin{equation}\label{supsup8}
\sup_k \sup_{r>0} r^{-\alpha}\ [ u_k/d_k^{s_k} -P_{k,r}]_{\beta;B_r\cap \Omega^+_k} = +\infty ,
\end{equation}
where
\[ P_{k,r} := {\rm arg \, min}_{P\in \mathcal P} \int_{B_r\cap \Omega^+_k} \bigl( u_k(x)/d_k^{s_k} -P\bigr) \,dx.\]

To prove that this is impossible we proceed as in the Proof of Propostion \ref{prop_contr_1+s+alpha}. We define
\[
 \theta(r) := \sup_k  \sup_{r'>r}  (r')^{-\alpha}\,\bigl[u_k/d_k^{s_k} -P_{k,r}\bigr]_{\beta;B_{r'}\cap \Omega^+_k} \, ,
\]
The function $\theta$ is monotone nonincreasing, and $\theta(r)<+\infty$ for $r>0$ since
\begin{equation}\label{le18}
\sup_{R\ge 1} R^{-\alpha}[u/(x_n)^s]_{\beta;B_{R}\cap \Omega^+_k} \le 1
\end{equation}
In addition, by \eqref{supsup8} we have  $\theta(r)\nearrow +\infty$ as $r\searrow0$ and there are sequences $r'_m \searrow 0$ and $k_m$ for which
\begin{equation}\label{nondeg2}
(r'_m)^{-\alpha}  \bigl[u_k/d_k^{s_k} -P_{k_m,r'_m}\bigr]_{\beta; B_{r'_m}\cap \Omega_{k_m}^+} \ge \frac{1}{2}  \theta(r'_m).
\end{equation}

From now on in this proof we will use the notations
\[u_m= u_{k_m}, \ P_m = P_{k_m, r'_m},\ s_m= s_{k_m},\  {\rm and }\  \gamma_m= \gamma_{k_m},\]
and
\[\bar\Gamma_m=\frac{1}{r_m'}\Gamma_{k_m},\quad \bar \Omega^+_m=\frac{1}{r_m'}  \Omega^+_{k_m},\quad  \bar d_m(x)=\textrm{dist}(x, \R^n\setminus \bar\Omega^+_m) = \frac{d_{k_m}(r'_m \,\cdot\,)}{r'_m}.\]
Notice that $\bar\Gamma_m$ is a \emph{rescaled} version of $\Gamma_{k_m}$ (so that, $d_{k_m}$ does \emph{not} coincide with $\bar d_m$).

Since $r_m\rightarrow0$, then $\bar\Gamma_m$ will converge to $\{x_n=0\}$ as $m\rightarrow\infty$.
Also, $\bar\Omega^+_m$ will converge to $\R^n_+$ as $m\rightarrow\infty$.

We consider the blow up sequence
\begin{equation}\label{eqvm8}
\begin{split}
 v_m(x) &= \frac{u_{m}(r'_m x)/[d^{s_m}_{k_m}(r_m'x)]-P_{m} (r'_m x)}{(r'_m)^{\alpha+\beta}\theta(r'_m)}\\
 &=\frac{u_{m}(r'_m x)/[(r_m')^{s_m}\bar d^{s_m}_m(x)]-P_{m} (r'_m x)}{(r'_m)^{\alpha+\beta}\theta(r'_m)}.
 \end{split}
 \end{equation}
As in the proof of Proposition \ref{prop_contr_1+s+alpha}, for all $m\ge 1$ we have
\begin{equation}\label{28}
 \int_{B_1\cap \bar\Omega_m^+} v_m(x)  P(x) \,dx =0\quad \mbox{for all } P\in \mathcal P,
\end{equation}
\begin{equation}\label{nondeg358}
[v_m]_{\beta;B_1\cap \bar\Omega_m^+}\ge 1/2.
\end{equation}
and
\begin{equation}\label{growthc18}
[v_{m}]_{\beta;B_R\cap \bar\Omega_m^+} \le  CR^{\alpha}
\end{equation}
for all $R\ge 1$.

Furthermore, we also have
\begin{equation} \label{growthc08}
\|v_{m}\|_{L^\infty(B_R\cap \bar\Omega_m^+)} \le CR^{\alpha+\beta}.
\end{equation}

Next we prove the following

\vspace{5pt}
\noindent {\em Claim. Let $w_m(x) = v_m(x)\bar d_m^s $.
Then, up to subsequences we have $s_m \to s\in [s_0,1]$ and $w_m\to w$  in $C^\beta_{\rm loc}\bigl(\R^n\bigr)$, where $w\in C^{\beta'}\bigl(\R^n\bigr)$.
Moreover, the limiting function $w$ satisfies the assumptions of the Liouville-type Theorem \ref{thm-liouv-1+s}.}
\vspace{5pt}

First recall that, by definition of $\bar\Gamma_m$ and $\bar d_m$, we have that $\bar d_m^s$ converges locally uniformly to $(x_n)_+^s$.

In the case $\beta>0$, it follows from \eqref{growthc18} and \eqref{recursive_estimate8} (rescaled) that 
\[ [v_m]_{\beta'; B_R\cap \bar\Omega_m^+}  \le C R^{\alpha+\beta-\beta'}.\]
Thus, recaling $\beta'>\beta$ and \eqref{growthc08}, by Arzel\`a-Ascoli Theorem $v_m$ converges (up to a subsequence) in $C^{\beta}_{\rm loc}(\R^n_+)$ to some $v\in  C^{\beta'}
(\{x_n\ge 0\})$.
The convergence of $w_m$ to $w=v(x_n)_+^s$ is then immediate.

In the case $\beta=0$, the functions $w_m$ satisfy $M^+w_m\leq C(K)$ and $M^-w_m\geq-C(K)$ in $B_K\cap \bar\Omega_m^+$ for any $K>0$, and satisfy $w_m=0$ in $\bar\Omega_m^-$ and $\|w_m\|_{L^\infty(B_R)}\leq CR^{s_m+\alpha}$ for every $R\geq1$.
Hence, we will have $\|w_m\|_{C^\delta(B_K)}\leq C(K)$ for some $\delta>0$.
Thus, the functions $w_m$ converge to some $w$ uniformly in compact sets.

Passing to the limit \eqref{growthc18} we find that the assumption (i) of Theorem \ref{thm-liouv-1+s} is satisfied by $w$.

Let $\mathcal L_*^{\gamma_m}(s_m)$ be the class consisting of all the operators in $\mathcal L_*$ whose spectral measures have $C^{1,\gamma_m}(S^{n-1})$ norm smaller or equal than $\Lambda$, and let $M^+_{\mathcal L_*^{\gamma_m}(s_m)} $  and  $M^-_{\mathcal L_*^{\gamma_m}(s_m)} $ denote the extremal Pucci operators for this class.
Note that $M_{\mathcal L_*^{\gamma_m}(s_m)}\leq M^+$.

Let us prove that, similarly as in Proposition \ref{prop_contr_1+s+alpha}, there is a function $\delta(r)$ with $\lim_{r\searrow 0 }\delta(r)=0$ such that, for all $h\in B_K$ and $r'_m <\frac{1}{2K}$ we have
\begin{equation}\label{wwwwww8}
-C(K)\delta(r'_m) \le M^+_{\mathcal L_*^{\gamma_m}(s_m)}   \bigl( w_m ( \cdot + h) -  w_m \bigr)\quad \mbox{in }\bar\Omega_m^+\cap (\bar\Omega_m^+-h)\cap  B_K
\end{equation}

To prove \eqref{wwwwww8} we use that, by definition of $\theta$,
\[ [u_k/d_k^{s_k} - P_{k,r}]_{\beta,B_r\cap \Omega_k}\le \theta(r)r^\alpha\quad \mbox{for all }\ k\  \mbox{and }\  r>0.\]
Thus, using that $P_{k,r}$ is the best fitting polynomial in $\mathcal P$ for $u_k$ in $B_r$ we obtain that
\[ \| u_k/d_k^{s_k} -P_{k,r}\|_{L^\infty(B_r\cap \Omega_k)} \le C r^{\alpha+\beta} \theta(r),\]
where $C$ depends only on the dimension.
Hence, for all $r>0$ and $k$ we have
\begin{equation}\label{difPs}
\| P_{k,2r} -P_{k,r}\|_{L^\infty(B_r\cap \Omega_k)} \le  \| P_{k,2r} -u_k\|_{L^\infty(B_{2r}\cap \Omega_k)} + \| u_k- P_{k,r} \|_{L^\infty(B_r\cap \Omega_k)} \le C\theta(r)r^{\alpha+\beta}
\end{equation}

Let us now denote
\[  P_{k,r}(x) = p_{k,r} \cdot x + b_{k,r},\]
where $p_{k,r} \in \R^n$ is non-zero only if $\alpha+\beta>1$ and where $b_{k,r} \in \R$ .
Using \eqref{le18} and observing that $b_{k,r}= \ave_{B_r\cap \Omega^+_k} u_k \,dr $ we obtain
\begin{equation} \label{bbb}
|b_{k,r}| \le 1 \quad \mbox{for all }k \mbox{ and }r>0.
\end{equation}

On the other hand, when $\alpha+\beta>1$ using \eqref{difPs} we obtain
\[ |p_{k,2r} - p_{k_r}| \le C\theta(r) r^{\alpha+\beta-1}.\]
Therefore, for $r= 2^{-i}$ we have
\begin{equation}\label{difp}
 \frac{|p_{k, r} -p_{k,1}|}{\theta (r)}  \le  C\sum_{j=0}^{i}    \frac{\theta(2^{-j})}{\theta(r)} (1/2)^{j(\alpha+\beta-1)} .
\end{equation}
But using again \eqref{le18} we obtain that $|p_{k,1}| \le C$ and thus from \eqref{difp} and \eqref{bbb} we have that for $r \in [2^{-i} , 2^{-i+1}]$
\begin{equation} \label{eqpb}
\frac{|p_{k,r}|  + |b_{k,r}|}{ \theta(r)} \le C\sum_{j=0}^{i}    \frac{\theta(2^{-j})}{\theta(r)} (1/2)^{j(\alpha+\beta-1)}=:\psi(r)
\end{equation}
Note that $\psi(r) \le C$ for all $r\le1$ and that moreover $\psi(r) \rightarrow 0$ as $r\searrow 0$ since  $\frac{\theta(2^{-j})}{\theta(r)}  \rightarrow 0$ for every fixed $j$.

Hence, using Lemma \ref{Lds-is-Cs-C} and the assumption that $\Gamma_k$ is a global $C^{2,\gamma_k}$ surface given by a graph of $C^{2,\gamma_k}$ norm smaller than one, we obtain
\begin{equation}\label{Cgammabound}
 \left[ L \left( \frac{d_{k_m}^{s_m} P_m}{\theta(r'_m)}\right) \right]_{C^{\gamma_m}(B_1\cap \Omega_{k_m}^+)} \le C\psi(r_m') \quad \mbox{for all }L\in\mathcal L_*^{\gamma_m}(s_m),
\end{equation}
which rescaling is
\[(r'_m)^{-\gamma_m}\left[ (r'_m)^{-2s_m} L \left( \frac{d_{k_m}^{s_m}(r_m'\,\cdot\,) P_m (r_m'\,\cdot\,) }{\theta(r'_m)}\right) \right]_{C^{\gamma_m}\left( (r'_m)^{-1} (B_1\cap\Omega_{k_m}^+)\right)} \le C\psi(r_m').\]
Equivalently, since $\bar\Omega_m=\frac{1}{r'_m}\Omega_{k_m}^+$ and $\bar d_m^{s_m}=(r_m')^{-s_m}d_{k_m}^{s_m}(r_m'\,\cdot\,)$, we obtain
\begin{equation}\label{Cgammaboudrescaled2}
\left[ L \left( \frac{\bar d_m^{s_m} P_m (r_m'\,\cdot\,) }{\theta(r'_m)(r'_m)^{\alpha+\beta}}\right) \right]_{C^{\gamma_m}(B_{1/r'_m}\cap\bar\Omega_{m}^+)} \le \frac{C\psi(r_m') (r'_m)^{s_m+\gamma_m}}{(r'_m)^{\alpha+\beta}} \quad \mbox{for all }L\in\mathcal L_*^{\gamma_m}(s_m).
\end{equation}

Now, recall that $\gamma_m+s_m \ge \alpha+\beta$ (for all $m$) and that
\[w_m(x)=v_m(x)\bar d^{s_m}_m(x)=
\frac{u_{m}(r'_m x) }{(r'_m)^{\alpha+\beta+s_m}\theta(r'_m)}-\frac{P_{m}(r'_m x)\cdot \bar d^{s_m}_m(x)}{(r'_m)^{\alpha+\beta}\theta(r'_m)}.\]
Therefore, using \eqref{Cgammaboudrescaled2} and the same argument as in the proof of Proposition \ref{prop_contr_1+s+alpha}, we find
\[M^+_{\mathcal L_*^{\gamma_m}(s_m)} \bigl(w_m(\cdot+h)-w_m\bigr)\geq -\frac{3K^\gamma_m}{\theta(r_m')}-CK^{\gamma_m}\psi(r_m')\quad \textrm{in}\ \bar\Omega_m^+\cap (\bar\Omega_m^+-h)\cap  B_K\]
for all $h\in B_K$.
Since $\theta(r_m')\rightarrow\infty$ and $\psi(r_m')\rightarrow0$ as $r_m'\rightarrow0$, \eqref{wwwwww8} follows.

On the other hand, since $w_m \to w$ locally uniformly in $\R^n$ (up to subsequences), then we have
\begin{equation}\label{locallyuniformly8}
\bigl(w_m(\cdot+h)-w_m\bigr) \rightarrow \bigr(w(\cdot+h)-w\bigr)  \quad \mbox{locally uniformly in  }  \R^n.
\end{equation}
Also, similarly as in Proposition \ref{prop_contr_1+s+alpha}, we have that for every $h\in \R^n$ with $h_n\ge0$
\begin{equation}\label{L1weightedconv8}
\bigl(w_m(\cdot+h)-w_m\bigr) \rightarrow \bigr(w(\cdot+h)-w\bigr)  \quad \mbox{in } L^1\bigr(\R^n, \omega_{s-\epsilon} \big)
\end{equation}
for some $\epsilon>0$.

Thus, using \eqref{L1weightedconv8} and \eqref{locallyuniformly8} we can pass to the limit in \eqref{wwwwww8} to obtain that, for every $K\geq1$ and for every $h\in B_K^+$,
\[0\le M^+_{\mathcal L_*^{\gamma}(s)}\bigl\{ w(\cdot + h)- w\bigr\} \quad \mbox{in } B_K^+ .\]
This yields
\[0\le M^+_{\mathcal L_*^{\gamma}(s)}\bigl\{ w(\cdot + h)- w\bigr\} \quad \mbox{in } \R^n_+\]
whenever $h_n\geq0$.

Analogously, we will have that
\[0\ge M^-_{\mathcal L_*^{\gamma}(s)}\bigl\{ w(\cdot + h)- w\bigr\} \quad \mbox{in } \ \R^n_+.\]
Since $M^+_{\mathcal L_*^{\gamma}(s)}\leq M^+$ and $M^-_{\mathcal L_*^{\gamma}(s)}\geq M^-$, this finishes the proof the Claim.

\vspace{5pt}

Hence, $w$ satisfies all the assumptions of Theorem \ref{thm-liouv-1+s}, and thus $v = w/(x_n)^s$ is an affine function.
On the other hand, passing \eqref{28} to the limit we obtain that $v$ is orthogonal to every affine function and hence it must be $v\equiv 0$.
But then passing \eqref{nondeg358} to the limit we obtain that $v$ cannot be constantly zero in $B_1$; a contradiction.
\end{proof}

\begin{proof}[Proof of Theorem \ref{thm2-curved}]
Using Proposition \ref{prop_contr_1+s+alpha8} instead of Proposition \ref{prop_contr_1+s+alpha}, the proof follows exactly the same steps as the one of Theorem \ref{thm2}.
\end{proof}

\section{Flattening the boundary: proof of Proposition \ref{Lds-is-Cs}}
\label{calculets}

The aim of this section is to prove Propositions \ref{Lds-is-Cs} and \ref{Lds-is-Cs-B}.

Throughout this section, $d(x)$ is any $C^{2,\gamma}(\overline\Omega)$ function that coincides with $\textrm{dist}(x,\R^n\setminus\Omega)$ in a neighborhood of $\partial\Omega$.

\begin{prop}\label{Lds-is-Cs}
Let $s_0\in(0,1)$ and $s\in (s_0,1)$, and $\gamma\in(0,s]$.
Let $\Omega$ be any $C^{2,\gamma}$ bounded domain, and $L$ be any operator of the form \eqref{L*1}, with $\mu\in C^{1,\gamma}(S^{n-1})$.

Then, the function $d^s$ satisfies
\[\|L(d^s)\|_{C^\gamma(\overline\Omega)}\leq C,\]
where $C$ is a constant that depends only on $n$, $s_0$, $\Omega$, and $\|\mu\|_{C^{1,\gamma}(S^{n-1})}$.
\end{prop}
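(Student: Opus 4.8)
The plan is to localize near $\partial\Omega$ and flatten the boundary, reducing everything to an essentially one–dimensional computation in a half–space where the cancellations built into the class $\mathcal L_*$ become visible. First I would dispose of the interior. Extend $d^s$ by zero outside $\Omega$; since $d\in C^{2,\gamma}(\overline\Omega)$ is nonnegative and Lipschitz, this extension lies in $C^{0,\gamma}(\R^n)$ and is compactly supported. On any set $\{d\ge\delta\}$ the function $d^s$ is $C^{2,\gamma}$, so for $x$ in that set one splits the integral \eqref{L*1} into the part over $|y|\le\delta/2$ --- a classical $C^\gamma$ function of $x$ by $C^{2,\gamma}$ regularity of $d^s$ and $\lambda\le\mu\le\Lambda$ --- and the part over $|y|>\delta/2$, which equals $\int_{|y|>\delta/2}d^s(x+y)\mu(y/|y|)|y|^{-n-2s}\,dy-d^s(x)\int_{|y|>\delta/2}\mu(y/|y|)|y|^{-n-2s}\,dy$, manifestly $C^\gamma$ since $d^s\in C^{0,\gamma}(\R^n)\cap L^1$. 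Thus it suffices to prove the estimate in a fixed small neighborhood of each boundary point, and a finite cover of $\partial\Omega$ concludes.

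Next, fix $x_0\in\partial\Omega$ and a $C^{2,\gamma}$ diffeomorphism $\Phi$ of $B_1$ onto a neighborhood $U\ni x_0$ with $\Phi(0)=x_0$, $\Phi(\{y_n=0\})=\partial\Omega\cap U$, $\Phi(\{y_n>0\})=\Omega\cap U$, and $D\Phi(0)$ a rotation. Then $v(y):=(d^s\circ\Phi)(y)=(y_n)_+^s\,h(y)$ with $h\in C^{1,\gamma}$ and $h>0$ near $0$, because $d\circ\Phi$ is $C^{2,\gamma}$, vanishes on $\{y_n=0\}$ and has $\partial_{y_n}(d\circ\Phi)>0$ there. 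For $x=\Phi(y)\in\Phi(B_{1/2})$ I would split $L(d^s)(x)$ into the integral over $\R^n\setminus\Phi(B_1)$ --- which, exactly as above, is $C^\gamma$ in $x$ --- and the integral over $\Phi(B_1)$, which after the change of variables and a second–order Taylor expansion of $z\mapsto\Phi^{-1}(\Phi(y)+\,\cdot\,)$ becomes $\textrm{PV}\int\bigl(v(y)-v(y+z)\bigr)K(y,z)\,dz$ with
\[ K(y,z)=\frac{a_1(y,z/|z|)}{|z|^{n+2s}}+\frac{a_2(y,z/|z|)}{|z|^{n+2s-1}}\,\chi_{B_1}(z)+J(y,z), \]
where $a_1$ is \emph{even} and $a_2$ is \emph{odd} in the second variable, $J$ has a singularity of order at most $n+2s-1-\gamma$ at $z=0$, and $a_1,a_2,J$ are $C^\gamma$ in $y$. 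Moreover the frozen operator $u\mapsto\textrm{PV}\int(u(y)-u(y+z))a_1(y_0,z/|z|)|z|^{-n-2s}\,dz$ is a positive multiple of an operator of the form \eqref{L*1}, hence by Lemma \ref{sol1dRn} it annihilates $(y_n)_+^s$ in $\{y_n>0\}$. (This reduction is the content of Proposition \ref{Lds-is-Cs-B}.)

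It then remains to show that $I_1(y)+I_2(y)+I_J(y)$ is $C^\gamma$ up to $\{y_n=0\}$, where $I_1,I_2,I_J$ are the contributions of $a_1,a_2,J$. For $I_J$: using $|v(y)-v(y+z)|\le C\min(|z|^s,\,y_n^{s-2}|z|^2)$ near the diagonal and the order $n+2s-1-\gamma$ of the singularity, a direct estimate gives $|I_J(y)|\le C\,y_n^{1-s+\gamma}+C$, which is bounded since $s<1$, and the Hölder bound in $y$ follows by differencing, using in addition the $C^\gamma$ dependence of $J$ on $y$ and interior $C^{1,1}$ regularity of $v$ at scale $\sim y_n$. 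For $I_2$ one exploits the parity: writing $v(y)-v(y+z)=\tfrac12\bigl(2v(y)-v(y+z)-v(y-z)\bigr)+\tfrac12\bigl(v(y-z)-v(y+z)\bigr)$, the even–in–$z$ summand paired with the odd kernel $a_2(y,z/|z|)|z|^{-n-2s+1}$ yields an odd integrand whose principal value vanishes, so only the second summand contributes; it is first order in $z$ near the diagonal, so the effective singularity has order $n+2s-2$, which is integrable, and one concludes as for $I_J$. Finally, for $I_1$, at each $y_0$ write $v(y)=(y_n)_+^s h(y_0)+(y_n)_+^s\bigl(h(y)-h(y_0)\bigr)$: the first term is killed by $L_{a_1(y_0,\cdot)}$, so its contribution is $\bigl(L_{a_1(y,\cdot)}-L_{a_1(y_0,\cdot)}\bigr)\bigl[(y_n)_+^s h(y_0)\bigr]$, bounded by $\|a_1(y,\cdot)-a_1(y_0,\cdot)\|_{L^\infty(S^{n-1})}\lesssim|y-y_0|^\gamma$ times a quantity controlled uniformly away from $\{y_n=0\}$ with only a logarithmic/power blow–up near it (as in Lemmas \ref{lem11}--\ref{lem12}); the second term $(y_n)_+^s(h(y)-h(y_0))$ is $C^{1,\gamma}$ and vanishes to first order at $y_0$, so $L_{a_1(y,\cdot)}$ of it is $C^\gamma$ by standard estimates for $\mathcal L_*$–operators acting on $C^{1,\gamma}$ functions, using Lemma \ref{lem-planes} to absorb the linear–times–$(y_n)_+^s$ piece, which is borderline at infinity.

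The main obstacle is twofold: (i) the bookkeeping in the flattening step --- extracting from the second–order Taylor expansion of the merely $C^{2,\gamma}$ map $\Phi^{-1}$ the precise even/odd split $a_1,a_2$ and the order $n+2s-1-\gamma$ of the remainder $J$; and (ii) pushing all the resulting integral estimates through \emph{uniformly up to} $\{y_n=0\}$, where $v$ is only $C^s$. It is precisely the odd parity of $a_2$ that rescues the otherwise non–integrable factor $|z|^{-n-2s+1}$ against a merely $C^s$ profile, and it is the need to keep the boundary exponents $1-s+\gamma$, $\gamma-s$, etc.\ in the admissible range that forces the hypothesis $\gamma\le s$. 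Everything else --- the far–field terms and the interior region --- is routine.
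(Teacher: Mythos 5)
Your overall route is the same as the paper's: flatten the boundary, split the transformed kernel into an even leading part $a_1|z|^{-n-2s}$, an odd first--order correction $a_2|z|^{-n-2s+1}$, and a remainder of order $n+2s-1-\gamma$, and then exploit that even $\mathcal L_*$--type kernels annihilate $(x_n)_+^s$ together with the odd parity of $a_2$. However, the two steps you treat as routine ($I_1$ and $I_2$) are exactly where the real work lies, and your arguments for them do not go through. For $I_1$: the function $(y_n)_+^s\bigl(h(y)-h(y_0)\bigr)$ is \emph{not} $C^{1,\gamma}$ --- its normal derivative contains $s(y_n)_+^{s-1}\bigl(h(y)-h(y_0)\bigr)$, which blows up as $y_n\to0$ at points a fixed horizontal distance from $y_0$ --- so ``standard estimates for $\mathcal L_*$--operators acting on $C^{1,\gamma}$ functions'' cannot be invoked. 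Moreover, with your generic flattening the profile $h$ is only $C^{1,\gamma}$, while the paper's Lemma \ref{lem-Lds-1} needs the factor multiplying $(x_n)_+^s$ to be $C^{2,\gamma}$: it uses $|2\eta(x)-\eta(x+y)-\eta(x-y)|\le C|y|^2$ (and the $C^\gamma$ norm of these second differences), and even then the linear part of $\eta$ is handled only through the exact one--dimensional principal--value identity \eqref{calcul-integral-1D}, not through soft estimates. The clean fix is the paper's choice of diffeomorphism with $d\circ\phi=(x_n)_+$ exactly, so that $d^s$ pulls back to $(x_n)_+^s$ and only $C^{2,\gamma}$ factors (cutoffs) ever multiply it; with your set-up this reduction is missing.

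For $I_2$, parity is needed for much more than making the principal value well defined. Your plan to ``conclude as for $I_J$'' --- differencing plus interior $C^{1,1}$ bounds at scale $y_n$ --- only yields a modulus of continuity of order $y_n^{1-s}$ for the odd--kernel contribution near $\{y_n=0\}$ (the region $|z|\sim y_n$ produces a normal derivative of size $y_n^{-s}$), which is weaker than $C^\gamma$ whenever $\gamma>1-s$; since the proposition allows any $\gamma\le s$, this fails for instance when $s$ is close to $1$ and $\gamma>1-s$. The paper's Lemma \ref{lem-Lds-2} obtains the Lipschitz--type bound \eqref{we-want} precisely through the exact cancellation $\textrm{PV}\int_{\R^n}(x_n+y_n)_+^{s-1}a_2(x,y/|y|)\,|y|^{-(n+2s-1)}\,dy=0$, a consequence of the oddness of $a_2$ and the computation \eqref{calcul-integral-1D}; this cancellation (used again inside $I_1$ when the factor is non--constant) is absent from your sketch. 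Also, $v(y-z)-v(y+z)$ is only $O(|z|^s)$ near the boundary, not $O(|z|)$, though integrability survives. The far--field and interior reductions, the kernel decomposition itself, and your treatment of $I_J$ are fine and coincide with the paper's Proposition \ref{prop-flatten} and Lemma \ref{lem-Lds-3}.
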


In fact, we will need also the following:

\begin{prop}\label{Lds-is-Cs-B}
Let $s_0\in(0,1)$ and $s\in (s_0,1)$, and $\gamma\in(0,s]$.
Let $\Omega$ be any $C^{2,\gamma}$ bounded domain, and $L$ be any operator of the form \eqref{L*1}, with $\mu\in C^{1,\gamma}(S^{n-1})$.

Then, for any function $\eta\in C^{2,\gamma}(\R^n)$, we have
\[\|L(d^s\eta)\|_{C^\gamma(\overline\Omega)}\leq C\|\eta\|_{C^{2,\gamma}},\]
where $C$ is a constant that depends only on $n$, $s_0$, $\Omega$, and $\|\mu\|_{C^{1,\gamma}(S^{n-1})}$.
\end{prop}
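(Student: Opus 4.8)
The plan is to localize the estimate near $\partial\Omega$, flatten the boundary by a $C^{2,\gamma}$ change of variables, and then reduce everything to the cancellation $L\varphi_\nu^s=0$ of Lemma \ref{sol1dRn} together with an odd-symmetry cancellation for the first order correction produced by the flattening. Proposition \ref{Lds-is-Cs} is the special case $\eta\equiv1$ on $\overline\Omega$, so it suffices to treat Proposition \ref{Lds-is-Cs-B}. Since $L$ is translation invariant, the only genuine difficulty is the degenerate behaviour of $d^s$ along $\partial\Omega$: the function $L(d^s\eta)$ is classically defined in $\Omega$, but a crude estimate of $L(d^s\eta)(x)$ is of size $d(x)^{-2s}$ as $x\to\partial\Omega$, and the whole point of the argument is that the leading singular part of $d^s\eta$ near a boundary point is governed by the one-dimensional profile $\varphi_{e_n}^s=(x_n)_+^s$, which every operator of $\mathcal L_*(s)$ annihilates; one then proves the $C^\gamma$ bound on $\Omega$, which yields the desired continuous extension to $\overline\Omega$.

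First I would extend $d$ by zero outside $\Omega$, so that $d^s\eta$ is bounded and compactly supported with $\|d^s\eta\|_{C^s(\R^n)}\le C\|\eta\|_{C^{2,\gamma}}$, and cover $\overline\Omega$ by finitely many balls. On a ball contained in $\{d\ge c>0\}$ the function $d^s\eta$ is $C^{2,\gamma}$, so, since $\mu\in C^\gamma(S^{n-1})$ and $d^s\eta\in L^1(\R^n,\omega_s)$, standard interior estimates for translation invariant operators with $C^\gamma$ kernel (see \cite{CS,CS2}) give $L(d^s\eta)\in C^\gamma$ there with the right bound. On a ball $B_\rho(x_0)$ centred at $x_0\in\partial\Omega$, writing $d^s\eta=\zeta\,d^s\eta+(1-\zeta)d^s\eta$ with $\zeta$ a cutoff equal to $1$ on $B_{\rho/2}(x_0)$ and supported in $B_\rho(x_0)$, the term $L\bigl((1-\zeta)d^s\eta\bigr)$ is $C^\infty$ on $B_{\rho/4}(x_0)$, while the contribution to $L(\zeta\,d^s\eta)(x)$ coming from $\{|w|\ge\rho/4\}$ is a smooth multiple of $d^s\eta(x)$ plus a $C^1$ term, hence is $C^s\subset C^\gamma$ (here $\gamma\le s$ is used). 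Thus, modulo such terms, everything reduces to controlling $\int_{|w|<\rho/4}\bigl(d^s\eta(x)-d^s\eta(x+w)\bigr)K(w)\,dw$ for $x\in B_{\rho/4}(x_0)$, which only involves $d^s\eta$ on $B_{\rho/2}(x_0)$.

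Next I would flatten: fix a $C^{2,\gamma}$ diffeomorphism $\Phi$ of $B_\rho(x_0)$ onto a neighbourhood $U$ of $0$ with $\Phi(x_0)=0$, $\Phi(\Omega\cap B_\rho)=U\cap\{x_n>0\}$ and $\Phi(\partial\Omega\cap B_\rho)=U\cap\{x_n=0\}$. After $\xi=\Phi(x)$, $L$ becomes a nonlocal operator $\tilde L(v,\xi)=\int\bigl(v(\xi)-v(\xi+w)\bigr)\tilde K(\xi,w)\,dw$, and Taylor expanding $\Phi^{-1}(\xi+w)-\Phi^{-1}(\xi)$ and $|\det D\Phi^{-1}|$ in $w$ yields
\[\tilde K(\xi,w)=\frac{a_1(\xi,w/|w|)}{|w|^{n+2s}}+\frac{a_2(\xi,w/|w|)}{|w|^{n+2s-1}}\chi_{B_1}(w)+J(\xi,w),\]
where $a_1(\xi,\cdot)$ is even on $S^{n-1}$ and, because $\mu$ is even and $\lambda\le\mu\le\Lambda$, equals $(1-s)$ times the spectral measure of some operator $L_\xi\in\mathcal L_*(s)$ (with possibly different ellipticity constants); $a_2(\xi,\cdot)$ is odd (it arises by pairing the odd $\nabla K$ with the even quadratic corrections, plus $K$ times the first Jacobian correction) and is $C^\gamma$ in $\xi$ precisely because $\mu\in C^{1,\gamma}$; and $J(\xi,\cdot)$ is $C^\gamma$ in $\xi$ with a singularity of order at most $n+2s-1-\gamma$ at $w=0$; all three carry a factor $(1-s)$. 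Since $d\in C^{2,\gamma}$ vanishes exactly on $\partial\Omega$ with nonvanishing gradient, $d\circ\Phi^{-1}(\xi)=\xi_n\,g(\xi)$ with $g\in C^{1,\gamma}$, $g>0$, so $\zeta\,d^s\eta$ transforms into $\phi(\xi):=\varphi_{e_n}^s(\xi)\tilde\eta(\xi)$ with $\tilde\eta$ compactly supported in $U$ and $\|\tilde\eta\|_{C^{1,\gamma}}\le C\|\eta\|_{C^{2,\gamma}}$. It remains to bound $\tilde L(\phi,\cdot)$ in $C^\gamma\bigl(U\cap\{\xi_n\ge0\}\bigr)$ by $C\|\tilde\eta\|_{C^{1,\gamma}}$, which I would do by treating the three pieces of $\tilde K$ separately.

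For the $a_1$-piece, freeze $\xi$: since $L_\xi\in\mathcal L_*(s)$, Lemma \ref{sol1dRn} gives $L_\xi\varphi_{e_n}^s(\xi)=0$ in $\{\xi_n>0\}$, so subtracting $\tilde\eta(\xi)L_\xi\varphi_{e_n}^s(\xi)=0$ from $L_\xi\bigl(\varphi_{e_n}^s\tilde\eta\bigr)(\xi)$ turns the integrand into a multiple of $\varphi_{e_n}^s(\xi\pm w)\bigl(\tilde\eta(\xi)-\tilde\eta(\xi\pm w)\bigr)$, which — thanks to $\tilde\eta\in C^{1,\gamma}$ and the $(1-s)$ factor — is integrable against $|w|^{-n-2s}$ with a bound uniform in $\xi$ (including as $\xi_n\downarrow0$) and as $s\uparrow1$; this yields the $L^\infty$ bound and the continuous extension to $\{\xi_n=0\}$, and the Hölder seminorm in $\xi$ splits into a coefficient change (controlled by $[a_1]_{C^\gamma}$, after again subtracting the vanishing terms so no non-integrable remainder survives) and a base-point change (controlled by $\tilde\eta\in C^{1,\gamma}$ and $|(\xi_{1,n})_+^s-(\xi_{2,n})_+^s|\le|\xi_1-\xi_2|^s\le C|\xi_1-\xi_2|^\gamma$, using $\gamma\le s$ again). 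For the $a_2$-piece the crucial cancellation is that $a_2(\xi,\cdot)$ is odd, so only the odd part $\frac{1}{2}\bigl(\phi(\xi-w)-\phi(\xi+w)\bigr)$ of the increment contributes; together with $|D\phi|\lesssim(\xi_n)_+^{s-1}$ away from $\{\xi_n=0\}$ and the $(1-s)$ factor this is bounded uniformly, and its $\xi$-Hölder bound is obtained as for $a_1$. For the $J$-piece, $|\phi(\xi)-\phi(\xi+w)|\le C|w|^s$ and the singularity of order $n+2s-1-\gamma$ give $\int_{|w|<\rho}|w|^s|w|^{-(n+2s-1-\gamma)}\,dw<\infty$ (as $1-s+\gamma>0$), hence the $L^\infty$ bound, while the $\xi$-Hölder bound again uses the product structure $\phi=\varphi_{e_n}^s\tilde\eta$ and $\gamma\le s$. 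The hard part is exactly this bookkeeping of cancellations carried out uniformly up to $\{\xi_n=0\}$ and up to $s=1$: the two genuinely non-flat features of the flattened operator are $a_2$ and $J$, which force respectively the odd-symmetry cancellation and the hypothesis $\gamma\le s$, while all the size is tamed by repeatedly isolating the one-dimensional profile $\tilde\eta(\xi)\varphi_{e_n}^s$, which Lemma \ref{sol1dRn} annihilates.
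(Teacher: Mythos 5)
Your overall strategy is the same as the paper's: flatten the boundary, decompose the transformed kernel into an even leading part $a_1$, an odd first-order correction $a_2$ and a lower-order remainder $J$ (this is exactly Proposition \ref{prop-flatten}), and then estimate the three pieces acting on a function of the form $(x_n)_+^s\eta$ using the cancellation $L(x_n)_+^s=0$ for the even part, odd symmetry for $a_2$, and plain integrability for $J$ (the paper's Lemmas \ref{lem-Lds-1}, \ref{lem-Lds-2}, \ref{lem-Lds-3}). However, there is a genuine gap in your flattening step. You take a generic $C^{2,\gamma}$ diffeomorphism and factor $d\circ\Phi^{-1}(\xi)=\xi_n\,g(\xi)$; since $g$ is obtained by dividing a $C^{2,\gamma}$ function vanishing on $\{\xi_n=0\}$ by $\xi_n$, it is in general only $C^{1,\gamma}$, so your multiplier $\tilde\eta=g^s\,(\eta\circ\Phi^{-1})$ is only controlled in $C^{1,\gamma}$. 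That is not enough for the estimates you then invoke: for the most singular piece, after pairing $w$ and $-w$ (which is needed anyway, since the unsymmetrized increment is only $O(|w|)$ and is never integrable against $|w|^{-n-2s}$ for $s\geq 1/2$), a $C^{1,\gamma}$ multiplier only gives a symmetric second difference of size $|w|^{1+\gamma}$, and $\int_{B_1}|w|^{1+\gamma-n-2s}\,dw$ diverges whenever $\gamma\leq 2s-1$, a range allowed under the sole hypothesis $\gamma\leq s$. Indeed, an operator of order $2s>1+\gamma$ applied to $(x_n)_+^s$ times a generic $C^{1,\gamma}$ function need not even be bounded, so the claimed uniform bound for the $a_1$-piece is false at this level of generality. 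The paper avoids this loss by choosing the diffeomorphism \emph{adapted} to $d$, namely so that $(\phi_n)_+^s=d^s$: then the pullback of $d^s\eta$ is exactly $(x_n)_+^s$ times the $C^{2,\gamma}$ function $\eta\circ\phi$, and the bound $|2\eta(x)-\eta(x+y)-\eta(x-y)|\leq C|y|^2$ is what makes the integrals converge uniformly. Either make this adapted choice, or keep track of the special structure $g^s$ with $\xi_n g\in C^{2,\gamma}$; your argument as written discards it.

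A secondary point: for the odd piece you assert that the H\"older-in-$\xi$ estimate "is obtained as for $a_1$", but the delicate case is an increment in the normal direction, where one is led to bound a principal value integral involving $(x_n+y_n)_+^{s-1}$ against the kernel $|y|^{-(n+2s-1)}$; this requires the exact one-dimensional cancellation
\begin{equation*}
\mathrm{PV}\int_{-\infty}^{+\infty}(1+r)_+^{s-1}\,\frac{r\,dr}{|r|^{1+2s}}=0,
\end{equation*}
which is \eqref{calcul-integral-1D} in the paper, and not merely the oddness of $a_2$ plus the gradient bound $|D\phi|\lesssim(\xi_n)_+^{s-1}$. Your sketch gets the $L^\infty$ bound for this piece right, but the H\"older bound needs this extra identity (or an equivalent argument), which your proposal does not identify.
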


\begin{rem}
The dependence on $\Omega$ of the constant $C$ in Propositions \ref{Lds-is-Cs} and \ref{Lds-is-Cs-B} is through the $C^{2,\gamma}$ norm of the diffeomorphism that flattens the boundary $\partial\Omega$.
In particular, this constant $C$ is uniform among all domains with a uniform bound on this $C^{2,\gamma}$ norm.
\end{rem}

To prove these two propositions, we will need to flatten the boundary of $\Omega$.
In the following result we show how these operators change when we flatten the boundary.

\begin{prop}\label{prop-flatten}
Let $\bar L$ be any operator of the form \eqref{L*1}, with $\mu\in C^{1,\gamma}(S^{n-1})$.
Let $\Omega$ be any bounded $C^{2,\gamma}$ domain, and let $\bar u$ be any function satisfying
\[\bar L\bar u=\bar f\quad in\ \Omega,\qquad \bar u=0\quad in\ \R^n\setminus\Omega.\]
Let $\phi:\R^n\rightarrow\R^n$ be a $C^{2,\gamma}$ diffeomorphism that flattens the boundary $\partial\Omega$ and such that $(\phi_n)_+^s=d^s$.
In particular, $\phi(B_1^+)=\Omega\cap \{d<1\}$.

Then, the function $u=\bar u\circ \phi$ satisfies the equation
\[L(u,x)=f(x)\quad in\ B_1^+, \qquad u=0\quad in\ B_1^-,\]
where $f=\bar f\circ\phi$ and
\[L(u,x):=\bar L(u\circ \phi^{-1})(\phi(x)).\]
Moreover, $L(u,x)$ can be written as
\[L(u,x)=\int_{\R^n}\bigl(u(x)-u(x+z)\bigr)K(x,z)\frac{dz}{|z|^{n+2s}},\]
and
\[K(x,z)=a_1\left(x,\,\frac{z}{|z|}\right)+|z|\,a_2\left(x,\,\frac{z}{|z|}\right)+|z|^{1+\gamma}J(x,z)\qquad \textrm{for}\ |z|\leq2.\]
The functions $a_1$ and $a_2$ belong to $C^{1,\gamma}(S^{n-1})$ and $C^\gamma(S^{n-1})$ respectively, and $J$ is $C^\gamma$ with respect to $x$.

Furthermore,
\[a_1(x,-\theta)=a_1(x,\theta)\quad\textrm{for all}\ \theta\in S^{n-1},\]
and
\[a_2(x,-\theta)=-a_2(x,\theta)\quad\textrm{for all}\ \theta\in S^{n-1}.\]
The $C^\gamma$ norms of $a_1$, $a_2$, and $J$, depend only on $n$, $s$, the $\|\phi\|_{C^{2,\gamma}}$, and $\|\mu\|_{C^{1,\gamma}(S^{n-1})}$.
\end{prop}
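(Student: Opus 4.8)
The plan is to reduce everything to a change of variables in the integral defining $\bar L$, followed by a first–order Taylor expansion of the resulting kernel, and then to read off the regularity and parity of the coefficients. \textbf{Step 1 (change of variables).} Since $\mu$ is even, writing $\bar v:=u\circ\phi^{-1}$ one has
\[\bar L\bar v(\bar x)=-(1-s)\,\mathrm{PV}\!\int_{\R^n}\bigl(\bar v(\bar x)-\bar v(\bar x+\bar y)\bigr)\frac{\mu(\bar y/|\bar y|)}{|\bar y|^{n+2s}}\,d\bar y .\]
Because $\phi$ is a $C^{2,\gamma}$ diffeomorphism, for each fixed $x$ the map $\Psi_x(z):=\phi(x+z)-\phi(x)$ is a $C^{2,\gamma}$ diffeomorphism of $\R^n$ fixing $0$, with $\det D\Psi_x(z)=\det D\phi(x+z)\neq0$. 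Substituting $\bar y=\Psi_x(z)$ in the integral at $\bar x=\phi(x)$ (so that $\bar v(\bar x+\bar y)=u(x+z)$, and the principal value is preserved since $D\phi(x)$ is invertible) gives
\[L(u,x)=\int_{\R^n}\bigl(u(x)-u(x+z)\bigr)\,K(x,z)\,\frac{dz}{|z|^{n+2s}},\qquad
K(x,z)=-(1-s)\,\mu\!\Bigl(\tfrac{\Psi_x(z)}{|\Psi_x(z)|}\Bigr)\,\frac{|z|^{n+2s}}{|\Psi_x(z)|^{n+2s}}\,\bigl|\det D\phi(x+z)\bigr| ,\]
while $u=0$ on $B_1^-$ and $f=\bar f\circ\phi$ are immediate from $\bar v=0$ outside $\Omega$ and $\phi(B_1^+)=\Omega\cap\{d<1\}$.

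\textbf{Step 2 (expansion of $K$).} Write $z=|z|\theta$ with $\theta\in S^{n-1}$, and set $A(x,\theta):=D\phi(x)\theta$, $B(x,\theta):=\tfrac12 D^2\phi(x)[\theta,\theta]$. Using $\phi\in C^{2,\gamma}$ one has $\Psi_x(z)=|z|A+|z|^2B+O(|z|^{2+\gamma})$ and $\det D\phi(x+z)=\det D\phi(x)+|z|\,\nabla(\det D\phi)(x)\!\cdot\!\theta+O(|z|^{1+\gamma})$; hence $|\Psi_x(z)|/|z|=|A|+|z|\tfrac{A\cdot B}{|A|}+O(|z|^{1+\gamma})$ and $\Psi_x(z)/|\Psi_x(z)|=\tfrac{A}{|A|}+|z|\bigl(\tfrac{B}{|A|}-\tfrac{(A\cdot B)A}{|A|^3}\bigr)+O(|z|^{1+\gamma})$. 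Since $|A|\ge c>0$ (as $D\phi(x)$ is invertible) and $\mu\in C^{1,\gamma}(S^{n-1})$, plugging these into the three factors of $K$ and multiplying out yields, for $|z|\le2$,
\[K(x,z)=a_1(x,\theta)+|z|\,a_2(x,\theta)+|z|^{1+\gamma}J(x,z),\qquad
a_1(x,\theta)=-(1-s)\,\mu\!\bigl(\tfrac{A}{|A|}\bigr)\,|A|^{-(n+2s)}\,\bigl|\det D\phi(x)\bigr|,\]
with $a_2$ the corresponding first–order coefficient (a finite sum of products of $\mu$, $D\mu$, negative powers of $|A|$, $A\cdot B$, and $\nabla(\det D\phi)(x)\!\cdot\!\theta$), and $J$ the full remainder.

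\textbf{Step 3 (regularity and parity).} For fixed $x$, $\theta\mapsto A/|A|$ and $\theta\mapsto|A|$ are smooth and $\theta\mapsto B$ is a quadratic polynomial, so $a_1(x,\cdot)$ inherits the $C^{1,\gamma}(S^{n-1})$ regularity of $\mu$, whereas $a_2(x,\cdot)$ — which contains $D\mu$ composed with smooth functions of $\theta$ — is only $C^\gamma(S^{n-1})$; in $x$, the quantities $A$, $|A|$, $A/|A|$, $\det D\phi$ are $C^{1,\gamma}$ and $B$, $\nabla(\det D\phi)$ are $C^\gamma$, so $a_1,a_2$ are $C^\gamma$ in $x$, and $J$ is bounded and $C^\gamma$ in $x$ with norms controlled by $\|\phi\|_{C^{2,\gamma}}$ and $\|\mu\|_{C^{1,\gamma}(S^{n-1})}$, using that Taylor remainders of the $C^{2,\gamma}$ map $\phi$ divided by the appropriate power of $|z|$ are $\gamma$–Hölder in the base point. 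For the parities, observe that $A(x,\cdot)$ is odd and $B(x,\cdot)$ is even; consequently $|A|$, $\mu(A/|A|)$ (here evenness of $\mu$ is used), and $\det D\phi(x)$ are even, so $a_1(x,-\theta)=a_1(x,\theta)$; each first–order correction, on the other hand, carries exactly one odd factor — $A\cdot B$, or $\nabla(\det D\phi)(x)\!\cdot\!\theta$, or $D\mu(A/|A|)$ (odd, since $\mu$ is even) applied to the even tangential correction — so that $a_2$, being a sum of terms (odd)$\times$(even)$\times$(even), satisfies $a_2(x,-\theta)=-a_2(x,\theta)$.

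\textbf{Main obstacle.} The delicate points are the last two: tracking the even/odd structure through the product (this is exactly the cancellation exploited afterwards in Proposition \ref{Lds-is-Cs}), and, more technically, organizing all the Taylor remainders of the $C^{2,\gamma}$ change of variables so that the leftover term really has the form $|z|^{1+\gamma}J(x,z)$ with $J$ Hölder in $x$ and norm uniform down to $z=0$; this bookkeeping is elementary but is the only non-routine part, and it is precisely where the hypotheses $\phi\in C^{2,\gamma}$ and $\mu\in C^{1,\gamma}(S^{n-1})$ are used in full strength.
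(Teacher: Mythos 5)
Your proposal is correct and follows essentially the same route as the paper's proof: the change of variables $y=\phi(x+z)-\phi(x)$, the Taylor expansion of $\Psi_x(z)$, $|\Psi_x(z)|$ and $\Psi_x(z)/|\Psi_x(z)|$ in powers of $|z|$, the resulting explicit formulas for $a_1$ and $a_2$, and the parity argument based on $A(x,\cdot)$ being odd, $B(x,\cdot)$ even, and $\mu$ even. If anything, you are slightly more complete than the paper's written argument, since you also track the Jacobian factor $|\det D\phi(x+z)|$ and the factor $(|z|/|\Psi_x(z)|)^{n+2s}$ explicitly in $K$ and check their parities, while glossing over the Hölder-in-$x$ control of the Taylor remainder at exactly the same level of informality as the paper does.
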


\begin{proof}
By definition,
\[L(u,x)=\int_{B_2} \bigl\{ u(x)-u(\phi^{-1}(\phi(x)+y)) \bigr\} \frac{\mu(y/|y|)}{|y|^{n+2s}}\,dy.\]
Thus, making the change of variables
\[y=\phi(x+z)-\phi(x),\]
i.e., $z=\phi^{-1}(\phi(x)+y))-x$, we will have
\[y=A(x)z+z^tB(x)z+|z|^{2+\gamma}\psi(x,z),\]
where
\[A(x)=D\phi(x),\qquad B(x)=D^2\phi(x),\]
and $\psi(x,y)$ is bounded and $C^\gamma$ in the $x$-variable.

We have used that $\phi$ is $C^{2,\gamma}$.
Moreover, notice also that $A(x)$ is $C^{1+\gamma}$ and $B(x)$ is $C^\gamma$.

Writing now $z=r\theta$, with $r=|z|$ and $\theta\in S^{n-1}$, one finds
\[y=rA(x)\theta+r^2\theta^tB(x)\theta+r^{2+\gamma}\psi(x,r,\theta).\]
Therefore, this yields
\[|y|=r|A(x)\theta|+r^2\left[\frac{A(x)\theta}{|A(x)\theta|}\cdot(\theta^tB(x)\theta)\right]+r^{2+\gamma}\psi_1(x,r,\theta)\]
and also
\[\frac{1}{|y|}=
\frac{1}{r|A(x)\theta|}\left\{1-\frac{r}{|A(x)\theta|^2}\left[(A(x)\theta)\cdot(\theta^tB(x)\theta)\right]+r^{1+\gamma}\psi_2(x,r,\theta) \right\}.\]
Thus,
\begin{equation}\label{y/|y|}
\frac{y}{|y|}=\frac{A(x)\theta}{|A(x)\theta|}
+r\left\{\frac{\theta^tB(x)\theta}{|A(x)\theta|}-\frac{A(x)\theta}{|A(x)\theta|^3}\left[(A(x)\theta)\cdot(\theta^tB(x)\theta)\right]\right\}
+r^{1+\gamma}\psi_3(x,r,\theta).
\end{equation}
Moreover, the functions $\psi_1$, $\psi_2$, and $\psi_3$ are bounded and $C^\gamma$ in the $x$-variable.

Now, using that $\mu\in C^{1,\gamma}(S^{n-1})$ and \eqref{y/|y|}, one finds
\[\mu(y/|y|)=a_1(x,\theta)+r\,a_2(x,\theta)+r^{1+\gamma}\psi_4(x,r,\theta),\]
where
\[a_1(x,\theta)=a\left(\frac{A(x)\theta}{|A(x)\theta|}\right),\]
and
\[a_2(x,\theta)=\nabla_{S^{n-1}}a\left(\frac{A(x)\theta}{|A(x)\theta|}\right)
\cdot\left\{\frac{\theta^tB(x)\theta}{|A(x)\theta|}-\frac{A(x)\theta}{|A(x)\theta|^3}\left[(A(x)\theta)\cdot(\theta^tB(x)\theta)\right]\right\}.\]
Moreover, the function $\psi_4$ is bounded and it is $C^\gamma$ in the $x$-variable.

Finally notice that, since $\mu(y/|y|)=\mu(-y/|y|)$, it immediately follows from the expressions of $a_1$ and $a_2$ that $a_1(x,\theta)=a_1(x,-\theta)$ and that
$a_2(x,-\theta)=-a_2(x,\theta)$.
\end{proof}

We will also need the following Lemmas.

\begin{lem}\label{lem-Lds-1}
Let $s_0\in (0,1)$, $s\in (s_0,1)$, and $\gamma\in (0,s]$.
Let $a_1(x,\theta)$ be a function in $L^\infty(\R^n\times S^{n-1})$ which is $C^\gamma$ in $x$ and which satisfies
\[a_1(x,-\theta)=a_1(x,\theta)\quad\textrm{for all}\ \theta\in S^{n-1}.\]
Define
\[I_1(x):=\int_{B_2}((x_n)_+^s\eta(x)-(x_n+y_n)_+^s\eta(x+y))\frac{a_1(x,y/|y|)}{|y|^{n+2s}}\,dy.\]
Then, we have $I_1\in C^{\gamma}(\overline{B_1^+})$, and
\[\|I_1\|_{C^{\gamma}(B_1^+)}\leq \frac{C}{1-s},\]
where $C$ depends only on $n$, $s_0$, the $C^{2,\gamma}$ norm of $\eta$, and the $C^{\gamma}$ norm of $a_1$.
\end{lem}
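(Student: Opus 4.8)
The plan is to exploit the evenness of $a_1$ in the angular variable to symmetrize the integrand, then to subtract off the terms that are (nearly) annihilated by the associated one-dimensional operator, reducing $I_1$ to pieces that are either nonsingular or of sub-critical order. Since $B_2$ is symmetric and $a_1(x,-\theta)=a_1(x,\theta)$, write $g(x):=(x_n)_+^s\eta(x)$ and
\[I_1(x)=\int_{B_2}\Bigl(g(x)-\tfrac{g(x+y)+g(x-y)}{2}\Bigr)\,\frac{a_1(x,y/|y|)}{|y|^{n+2s}}\,dy.\]
With $w(x)=(x_n)_+^s$ and the Taylor expansion $\eta(x\pm y)-\eta(x)=\pm\nabla\eta(x)\!\cdot\!y+\tfrac12\,y^{t}D^2\eta(x)\,y+E_\pm(x,y)$, where $|E_\pm(x,y)|\le C\|\eta\|_{C^{2,\gamma}}|y|^{2+\gamma}$ and $|\nabla_xE_\pm(x,y)|\le C|y|^{1+\gamma}$, one obtains
\[g(x)-\tfrac{g(x+y)+g(x-y)}{2}=\eta(x)\,\delta_w(x,y)-\tfrac12\nabla\eta(x)\!\cdot\!y\,\bigl(w(x+y)-w(x-y)\bigr)-\tfrac14\,y^{t}D^2\eta(x)\,y\,\bigl(w(x+y)+w(x-y)\bigr)+\mathcal E(x,y),\]
with $\delta_w(x,y)=w(x)-\tfrac{w(x+y)+w(x-y)}{2}$ and $\mathcal E(x,y)=-\tfrac12\bigl(E_+(x,y)w(x+y)+E_-(x,y)w(x-y)\bigr)$. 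Integrating against $a_1(x,y/|y|)|y|^{-n-2s}$ over $B_2$ splits $I_1$ into a pure-$w$ piece, a gradient piece $-\tfrac12\nabla\eta(x)\cdot V(x)$ with $V(x)=\int_{B_2}y\,(w(x+y)-w(x-y))\,a_1(x,y/|y|)|y|^{-n-2s}dy$, a Hessian piece $-\tfrac14\sum_{ij}\partial_{ij}\eta(x)\,W_{ij}(x)$, and an $\mathcal E$-remainder piece.

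For the pure-$w$ piece I would use Lemma~\ref{lem1d} (whose one-dimensional reduction is valid for the even weight $a_1(x,\cdot)\in L^\infty(S^{n-1})$) together with $(-\Delta)^s_{\R}(t_+)^s=0$ on $(0,\infty)$, to conclude that $\int_{\R^n}\delta_w(x,y)\,a_1(x,y/|y|)|y|^{-n-2s}dy=0$ in $\{x_n>0\}$; hence the $B_2$-integral equals minus the tail integral over $\R^n\setminus B_2$, which is nonsingular ($|\delta_w(x,y)|\le C|y|^s$ there) and manifestly $C^\gamma(\overline{B_1^+})$ — here $\gamma\le s$ is used, so that $x\mapsto(x_n)_+^s$, and uniformly in $y$ the maps $x\mapsto(x_n\pm y_n)_+^s$, lie in $C^\gamma$, and $a_1,\eta\in C^\gamma_x$. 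The $\mathcal E$-piece is absolutely convergent on $B_2$ (the integrand is $O(|y|^{2+\gamma-n-2s})$ and $2+\gamma-2s>0$ since $s<1$), and its $C^\gamma$-in-$x$ bound follows from the standard split of $B_2$ into $B_{2|x-x'|}$ (where one bounds the two terms directly by $C|x-x'|^{2+\gamma-2s}\le C|x-x'|^\gamma$) and $B_2\setminus B_{2|x-x'|}$ (where one integrates the Hölder/Lipschitz-in-$x$ bounds for $E_\pm$, for $(x_n\pm y_n)_+^s$ and for $a_1$, again using $s<1$ and $\gamma\le s$); none of these estimates degenerates as $x_n\to0^+$.

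The gradient and Hessian pieces are the technical heart. Using the evenness of $a_1$ once more, $y_j\,a_1(x,y/|y|)|y|^{-n-2s}$ has odd angular part and $y_iy_j\,a_1(x,y/|y|)|y|^{-n-2s}$ has even angular part, so $V_j(x)=2\int_{B_2}\tfrac{b_j(x,y/|y|)}{|y|^{n+2s-1}}(x_n+y_n)_+^s\,dy$ and $W_{ij}(x)=2\int_{B_2}\tfrac{c_{ij}(x,y/|y|)}{|y|^{n+2s-2}}(x_n+y_n)_+^s\,dy$, with $b_j(x,\theta)=\theta_j a_1(x,\theta)$ odd and $c_{ij}(x,\theta)=\theta_i\theta_j a_1(x,\theta)$ even. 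These are truncated integro-differential operators of order $2s-1<1$, resp.\ $2s-2<0$, acting on $(x_n)_+^s$, so their behavior near $\{x_n=0\}$ is governed by the one-dimensional action of such operators on $(t_+)^s$; using $\int_{S^{n-1}}b_j(x,\cdot)=0$ and, via Lemma~\ref{lem1d} and the exact solvability $(-\Delta)^s_{\R}(t_+)^s=0$, one shows that the would-be critical term $(x_n)^{1-s}$ in $V$ cancels, so that $x_n\mapsto V(x',x_n)$ and $x_n\mapsto W(x',x_n)$ extend to Lipschitz functions of $x_n\in[0,1]$ with norms $\le C/(1-s)$ (the $(1-s)^{-1}$ coming from $\int_{B_2}|y|^{2-n-2s}dy$); combined with the $C^\gamma$-dependence of $a_1$ on $x$, this gives $V,W\in C^\gamma(\overline{B_1^+})$ with the same bound, whence the gradient and Hessian pieces lie in $C^\gamma$ since $\nabla\eta\in C^{1,\gamma}$ and $D^2\eta\in C^\gamma$ are bounded. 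Adding the four pieces yields the claim. The main obstacle is exactly the cancellation of the $(x_n)^{1-s}$ term in $V$ (without it one only gets $C^{1-s}$, which is insufficient when $\gamma>1-s$): this needs either the explicit one-dimensional computation or a careful structural argument built on Lemma~\ref{lem1d} and Lemma~\ref{otherpowers}, handling convergence through incremental quotients as in Lemma~\ref{liouv-1+s-1D}(a).
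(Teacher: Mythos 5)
Your decomposition (symmetrize using the evenness of $a_1$, Taylor-expand $\eta$, and treat separately a pure-$(x_n)_+^s$ piece, a gradient piece, a Hessian piece, and a remainder) runs parallel to the paper's proof, which handles constant, linear and general $\eta$ in three cases; your treatment of the pure piece (reduction to the tail over $\R^n\setminus B_2$ via $(-\Delta)^s_\R(t_+)^s=0$ and Lemma \ref{lem1d}) and of the Hessian/remainder pieces is essentially the paper's Case 1 and Case 3. However, the crux of the lemma is exactly the gradient piece $V$, and there you have a genuine gap: you acknowledge that the cancellation of the would-be $(x_n)^{1-s}$ contribution ``needs either the explicit one-dimensional computation or a careful structural argument'' and you supply neither. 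Since the gradient piece is an operator of order $2s-1$ acting on $(x_n)_+^s$, without that cancellation one only gets $C^{\min(1-s,\gamma)}$ regularity up to $\{x_n=0\}$, which is insufficient whenever $\gamma>1-s$; this is the whole point of the lemma. In the paper this step (Case 2) is proved by showing that the full-space principal value $\textrm{PV}\int_{\R^n}(x_n+y_n)_+^{s-1}(b\cdot y)\,a_1(x,y/|y|)\,|y|^{-n-2s}\,dy$ vanishes, which after the polar decomposition reduces to the explicit one-dimensional identity \eqref{calcul-integral-1D}, namely $\textrm{PV}\int_\R(1+r)_+^{s-1}\,r\,|r|^{-1-2s}\,dr=0$; the remaining quantity $I_{1,2}$ is then bounded by $C/(1-s)$ through the tail integral over $\R^n\setminus B_2$, and this is what yields the Lipschitz-in-$x_n$ control you assert for $V$.

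Moreover, the mechanism you hint at is not the correct one: the angular mean-zero property $\int_{S^{n-1}}b_j(x,\cdot)\,d\theta=0$ plays no role, since after the directional reduction the angular factor (a weighted integral of $|\theta_n|^{2s-1}\theta_n\theta_j a_1$) is generically nonzero and the cancellation is purely radial; nor does $(-\Delta)^s_\R(t_+)^s=0$ alone give it --- one needs in addition either the explicit computation \eqref{calcul-integral-1D} or, structurally, the fact that $(t_+)^{1+s}$ is also $s$-harmonic on $(0,\infty)$ (in the spirit of Lemma \ref{liouv-1+s-1D}(a)), neither of which you establish. Two smaller inaccuracies: since $\eta$ is only $C^{2,\gamma}$, the bound $|\nabla_x E_\pm(x,y)|\le C|y|^{1+\gamma}$ is not available (one should instead use $|E_\pm(x,y)-E_\pm(x',y)|\le C|y|^2|x-x'|^\gamma$), and the claimed Lipschitz bounds for $V$ and $W$ with constant $C/(1-s)$ also require the quantitative tail estimates that are not carried out. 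So the outline is close in spirit to the paper, but the decisive step is missing.
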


\begin{proof}
\textbf{Case 1}: Assume $\eta\equiv1$.

Then, since $a_1$ is even, we have
\begin{equation}\label{cosa-util-53}
\int_{\R^n}((x_n)_+^s-(x_n+y_n)_+^s)\frac{a_1(x,y/|y|)}{|y|^{n+2s}}\,dy=c(x)(-\Delta)^s(x_+)^s=0.
\end{equation}
Therefore,
\[I_1(x)=\int_{\R^n\setminus B_2}((x_n)_+^s-(x_n+y_n)_+^s)\,\frac{a_1(x,y/|y|)}{|y|^{n+2s}}\,dy.\]
Now, using that $(x_n)_+^s$ is $C^\gamma$, we have
\[\left|(x_n^{(1)})_+^s-(x_n^{(1)}+y_n)_+^s-(x_n^{(2)})_+^s+(x_n^{(2)}+y_n)_+^s\right|\leq C|x_1-x_2|^\gamma|y|^{s-\gamma}\]
for any $x_1$ and $x_2$ in $B_1^+$.
Thus, using also that $a_1$ is $C^\gamma$ with respect to $x$, we find
\[\begin{split}
|I_1(x_1)-I_1(x_2)| \leq& \int_{\R^n\setminus B_2}C|x_1-x_2|^\gamma|y|^{s-\gamma}\frac{C}{|y|^{n+2s}}\,dy +\int_{\R^n\setminus B_2}C|y|^s\frac{C|x_1-x_2|^\gamma}{|y|^{n+2s}}\,dy\\
\leq &\,C|x_1-x_2|^\gamma.
\end{split}\]

\textbf{Case 2}: Assume that $\eta$ is a linear function, $\eta(x)=b\cdot x+c$.
Then,
\[\begin{split}
I_1(x)=&\textrm{PV}\int_{B_2}(x_n+y_n)_+^s(b\cdot y)\frac{a_1(x,y/|y|)}{|y|^{n+2s}}\,dy\\
&+(b\cdot x+c)\int_{\R^n\setminus B_2}((x_n)_+^s-(x_n+y_n)_+^s)\,\frac{a_1(x,y/|y|)}{|y|^{n+2s}}\,dy,\end{split}\]
where we have used \eqref{cosa-util-53}.

The second term is $C^\gamma$, as we already proved that in Case 1.
Hence, it remains to see that the first term is $C^\gamma$ also.

Since $a_1$ is $C^\gamma$ in $x$, it suffices to prove that
\[\left|\textrm{PV}\int_{B_2}\bigl[(x_n+y_n)_+^s-(x_n+h+y_n)_+^s\bigr](b\cdot y)\frac{a_1(x,y/|y|)}{|y|^{n+2s}}\,dy\right|\leq \frac{C}{1-s}|h|.\]
To prove this, we show next that the function
\[I_{1,2}(x):=\textrm{PV}\int_{B_2}(x_n+y_n)_+^{s-1}\,(b\cdot y)\frac{a_1(x,y/|y|)}{|y|^{n+2s}}\,dy\]
satisfies
\[\left|I_{1,2}(x)\right|\leq \frac{C}{1-s}.\]
Here, we denoted $(t)_+^{s-1}=(|t|^{s-2}t)_+$.

To bound $I_{1,2}(x)$, we first notice that
\[\textrm{PV}\int_{\R^n}(x_n+y_n)_+^{s-1}(b\cdot y)\frac{a_1(x,y/|y|)}{|y|^{n+2s}}\,dy=0.\]
Indeed, this follows from
\[\begin{split}
\textrm{PV}\int_{\R^n}(x_n+y_n)_+^{s-1}\,(b\cdot y)\, &\,\frac{a_1(x,y/|y|)}{|y|^{n+2s}}\,dy \\
&=\textrm{PV}\int_{S^{n-1}}|\theta_n|^{2s}|b\cdot\theta| a_1(x,\theta)d\theta\int_{-\infty}^{+\infty} (x_n+r)_+^{s-1}\frac{r\,dr}{|r|^{1+2s}}\\
&= c(x)(x_n)^{-s}\,\textrm{PV}\int_{-\infty}^{+\infty} (1+r)_+^{s-1}\frac{r\,dr}{|r|^{1+2s}},
\end{split}\]
and
\begin{equation}\label{calcul-integral-1D}\begin{split}
\textrm{PV}\int_{-\infty}^{+\infty} (1+r)_+^{s-1}\frac{r\,dr}{|r|^{1+2s}}&= \textrm{PV}\int_{0}^{+\infty} t^{s-1}\frac{t-1}{|t-1|^{1+2s}}\,dr\\
&= \lim_{\epsilon\rightarrow0}\left\{ -\int_0^{1-\epsilon} t^{s-1}\frac{dt}{(1-t)^{2s}}+\int_{1+\epsilon}^{\infty}t^{s-1}\frac{dt}{(t-1)^{2s}}\right\}\\
&= \lim_{\epsilon\rightarrow0}\left\{ -\int_0^{1-\epsilon} t^{s-1}\frac{dt}{(1-t)^{2s}}+\int_0^{\frac{1}{1+\epsilon}}z^{1-s}\frac{z^{2s-2}dz}{(1-z)^{2s}}\right\}\\
&= \lim_{\epsilon\rightarrow0}\int_{1-\epsilon}^{\frac{1}{1+\epsilon}}t^{s-1}\frac{dt}{(1-t)^{2s}}\\
&=0,
\end{split}\end{equation}

Thus,
\[I_{1,2}(x)=-\int_{\R^n\setminus B_2}(x_n+y_n)_+^{s-1}(b\cdot y)\frac{a_1(x,y/|y|)}{|y|^{n+2s}}\,dy,\]
and using that $x\in B_1$,
\[|I_{1,2}(x)|\leq \int_{\R^n\setminus B_2}|y_n|_+^{s-1}|y|\frac{C}{|y|^{n+2s}}\,dy=\frac{C}{1-s},\]
as desired.

\textbf{Case 3}: Let us do now the general case $\eta\in C^{2,\gamma}$.
We have
\[\begin{split}
I_1(x)=&\textrm{PV}\int_{B_2}(x_n+y_n)_+^s\bigl[\eta(x)-\eta(x+y)\bigr]\frac{a_1(x,y/|y|)}{|y|^{n+2s}}\,dy\\
&+\eta(x)\int_{\R^n\setminus B_2}((x_n)_+^s-(x_n+y_n)_+^s)\,\frac{a_1(x,y/|y|)}{|y|^{n+2s}}\,dy,\end{split}\]
where we have used \eqref{cosa-util-53}.

The second term is $C^\gamma$, as we already proved that in Case 1.
Hence, it remains to see that the first term is $C^\gamma$ also.

Let us denote $I_{1,3}(x)$ this first term, i.e.,
\[I_{1,3}(x)=\textrm{PV}\int_{B_2}\xi(x,y)\frac{a_1(x,y/|y|)}{|y|^{n+2s}}\,dy,\]
with
\[\xi(x,y)=(x_n+y_n)_+^s\bigl[\eta(x)-\eta(x+y)\bigr].\]
Using that $a_1$ is $C^\gamma$ with respect to $x$, and that
\begin{equation}\label{calculet-eta}
\begin{split}
|\xi(x,y)+\xi(x,-y)|\leq&\, (x_n+y_n)_+^s|2\eta(x)-\eta(x+y)-\eta(x-y)|+\\
&\,+|(x_n+y_n)_+^s-(x_n-y_n)_+^s|\cdot|\eta(x)-\eta(x-y)|\\
\leq & C|y|^{1+s}.
\end{split}\end{equation}
we find
\[\begin{split}|I_{1,3}(x)-I_{1,3}(x+he_i)|\leq &\left| \textrm{PV}\int_{B_2}\bigl[\xi(x,y)-\xi(x+he_i,y)\bigr]\frac{a_1(x,y/|y|)}{|y|^{n+2s}}\,dy\right|+\\
&+\int_{B_2}C|y|^{1+s}\frac{C|h|^\gamma}{|y|^{n+2s}}\,dy.\end{split}\]

We now claim that, for any $i=1,...,n$,
\begin{equation}\label{jronyaquejronya}
\left| \textrm{PV}\int_{B_2}\bigl[\xi(x,y)-\xi(x+he_i,y)\bigr]\frac{a_1(x,y/|y|)}{|y|^{n+2s}}\,dy\right|\leq  \frac{C}{1-s}|h|^\gamma.
\end{equation}
Indeed, since $\eta$ is $C^{2,\gamma}$, then
\[\|2\eta(\cdot)-\eta(\cdot+y)-\eta(\cdot-y)\|_{C^\gamma}\leq C|y|^2,\]
and hence a similar computation as in \eqref{calculet-eta} yields
\[\begin{split}
|\xi(x,y)+\xi(x,-y)-\xi(x+he_i,y)-\xi(x+he_i,-y)|\leq  C|y|^{1+s}|h|^\gamma,
\end{split}\]
and therefore \eqref{jronyaquejronya} follows.

Hence, we have showed that
\[|I_{1,3}(x)-I_{1,3}(x+he_i)|\leq \frac{C}{1-s}\,|h|^\gamma,\]
and thus the lemma is proved.
\end{proof}

\begin{lem}\label{lem-Lds-2}
Let $s_0\in (0,1)$, $s\in (s_0,1)$, and $\gamma\in (0,s]$.
Let $a_2(x,\theta)$ be a function in $L^\infty(\R^n\times S^{n-1})$ which is $C^\gamma$ in $x$ and which satisfies
\[a_2(x,-\theta)=-a_2(x,\theta)\quad\textrm{for all}\ \theta\in S^{n-1}.\]
Let $\eta\in C^{2,\gamma}_c(\R^n)$, and define
\[I_2(x):=\int_{B_2}((x_n)_+^s\eta(x)-(x_n+y_n)_+^s\eta(x+y))\frac{a_2(x,y/|y|)}{|y|^{n+2s-1}}\,dy.\]
Then, we have $I_2\in C^{\gamma}(\overline{B_1^+})$, and
\[\|I_2\|_{C^{\gamma}(B_1^+)}\leq \frac{C}{1-s},\]
where $C$ depends only on $n$, $s_0$, the $C^{2,\gamma}$ norm of $\eta$, and the $C^{\gamma}$ norm of $a_2$.
\end{lem}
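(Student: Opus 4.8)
The proof will parallel that of Lemma~\ref{lem-Lds-1}, with one essential new ingredient, namely the \emph{oddness} of $a_2$ in the $y$–variable. Note first that, since $s<1$, the integral defining $I_2$ is absolutely convergent (near the origin $\bigl|(x_n)_+^s\eta(x)-(x_n+y_n)_+^s\eta(x+y)\bigr|\le C\|\eta\|_{C^{2,\gamma}}|y|^{\min\{1,s\}}$, which against $|y|^{-(n+2s-1)}$ is integrable). The plan is, first, to symmetrize: writing $v:=(x_n)_+^s\eta$ and using $a_2(x,-\theta)=-a_2(x,\theta)$ under $y\mapsto-y$,
\[
I_2(x)=\frac12\int_{B_2}\bigl(v(x-y)-v(x+y)\bigr)\,\frac{a_2(x,y/|y|)}{|y|^{n+2s-1}}\,dy,
\]
so the difference $v(x)-v(x+y)$ is replaced by the antisymmetric difference $v(x-y)-v(x+y)$. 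Using $\bigl|v(x-y)-v(x+y)\bigr|\le C\|\eta\|_{C^{2,\gamma}}\,|y|^{s}$ everywhere, and $\le C\|\eta\|_{C^{2,\gamma}}\,x_n^{s-1}|y|$ when $|y|<x_n/2$, the integrand is dominated by $C|y|^{-(n-(1-s))}$, whose integral over $B_2$ is comparable to $(1-s)^{-1}$; this gives $\|I_2\|_{L^\infty(B_1^+)}\le C/(1-s)$ and accounts for the constant in the statement.

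Next I would reduce to the model case $\eta\equiv1$. Splitting
\[
v(x)-v(x+y)=(x_n+y_n)_+^s\bigl(\eta(x)-\eta(x+y)\bigr)+\eta(x)\bigl((x_n)_+^s-(x_n+y_n)_+^s\bigr),
\]
the second summand equals $\eta(x)$ times the $\eta\equiv1$ integrand, and since $\eta\in C^{2,\gamma}$ multiplication by $\eta$ preserves $C^\gamma(\overline{B_1^+})$; so this summand is controlled once the model case is done. The first summand carries the extra factor $\eta(x)-\eta(x+y)=O(|y|)$, whose $C^\gamma$–in–$x$ increment is $O\bigl(|x-x'|^\gamma|y|\bigr)$ by the $C^{1,\gamma}$ regularity of $\nabla\eta$, so it is one order less singular and can be estimated directly, giving a $C^\gamma$ bound with constant $C\|\eta\|_{C^{2,\gamma}}/(1-s)$. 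Thus everything reduces to proving
\[
\|I_2^{(0)}\|_{C^\gamma(B_1^+)}\le \frac{C}{1-s},\qquad I_2^{(0)}(x):=\int_{B_2}\bigl((x_n)_+^s-(x_n+y_n)_+^s\bigr)\,\frac{a_2(x,y/|y|)}{|y|^{n+2s-1}}\,dy,
\]
where $x$ enters only through $x_n$ in the bracket and through $a_2$. For the $C^\gamma$ seminorm of $I_2^{(0)}$ I would handle separately the $x$–dependence of $a_2$ (using $[a_2(\,\cdot\,,\theta)]_{C^\gamma}\le C$ and the pointwise bound $|(x_n)_+^s-(x_n+y_n)_+^s|\le C\min\{|y|^s,\ x_n^{s-1}|y|\}$, which yields $\le C(1-s)^{-1}|x-x'|^\gamma$) and the $x_n$–dependence of the bracket; for the latter one splits the $y$–integral at scale $|x_n-x_n'|$, using below that scale the bounds $O(|y|^s)$ and, on $\{|y|<x_n/2\}$, the improved $O(x_n^{s-1}|y|)$ coming from the symmetrized expansion $(x_n-y_n)_+^s-(x_n+y_n)_+^s=-2s\,x_n^{s-1}y_n+O(x_n^{s-3}|y|^3)$ (whose even–order terms in $y_n$ cancel), and above that scale the first–order Taylor expansion of $z\mapsto(z)_+^s$ off the singular points $z=\pm y_n$ together with the $C^s$ bound near them.

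The step I expect to be the main obstacle is precisely this $C^\gamma$ estimate for $I_2^{(0)}$ near the boundary $\{x_n=0\}$. There $(x_n)_+^s$ is only $C^s$, the bracket is no smoother in $x_n$, and the contribution of the region $|y|\sim x_n\sim|x-x'|$ is borderline; obtaining the claimed Hölder exponent forces one to exploit the oddness of $a_2$ — the vanishing of the ``even'' angular moments — so that only first–order angular moments $\int_{S^{n-1}}\theta_j\,a_2(x,\theta)\,d\theta$ survive in the leading part of $I_2^{(0)}$, which therefore inherits exactly the $x_n^{1-s}$ behaviour and the $(1-s)^{-1}$ constant rather than something worse. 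Keeping track that the only divergence one may afford is the integrable $|y|^{-(n-(1-s))}$ on $B_2$, and that every remainder is genuinely one order less singular, is where the computation is delicate; this is the nonlocal counterpart of the cancellation that renders $L(d^s)$ regular, which on a flat boundary degenerates to the identity $L((x_n)_+^s)=0$.
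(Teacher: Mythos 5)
Your symmetrization, the $L^\infty$ bound, the treatment of the $x$--dependence of $a_2$, and the reduction to $\eta\equiv 1$ are all sound (your direct estimate of the term $(x_n+y_n)_+^s\bigl(\eta(x)-\eta(x+y)\bigr)$, using that its $C^\gamma$-in-$x$ increment is $O(|x-x'|^\gamma|y|)$, is a legitimate variant of the paper's Case 2). The genuine gap is in the model term $I_2^{(0)}$ with $a_2$ frozen, in the normal direction, and precisely in the regime $\gamma\in(1-s,s]$ (nonempty for $s>1/2$). Your scheme — split the $y$-integral at scale $h=|x_n-x'_n|$, use the $O(|y|^s)$ bound near the singular set, the symmetrized Taylor expansion only on $\{|y|<x_n/2\}$, and first-order Taylor off the singularity farther out — produces individual pieces of size $h^{1-s}/(1-s)$ that do not cancel within your bookkeeping. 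For instance, with $x_n=0$, $x'_n=h$, the near-field piece containing $g(0,y_n)=-\tfrac12\,\mathrm{sgn}(y_n)|y_n|^s$ equals exactly $-\tfrac{(2h)^{1-s}}{2(1-s)}\int_{S^{n-1}}\mathrm{sgn}(\theta_n)|\theta_n|^s a_2(x,\theta)\,d\theta$, which is genuinely of order $h^{1-s}$ (that moment of an odd $a_2$ need not vanish), and the slab $\{|y_n|\lesssim h,\ 2h<|y|<2\}$ contributes another $O(h^{1-s})$. So the splitting caps out at exponent $1-s$, short of the claimed $\gamma\le s$. Worse, your guiding heuristic — that the leading part of $I_2^{(0)}$ ``inherits exactly the $x_n^{1-s}$ behaviour'' because only first-order angular moments survive — is inconsistent with the statement you are proving: a genuine $x_n^{1-s}$ term is not $C^\gamma$ for any $\gamma>1-s$.

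What actually closes the argument is an exact cancellation that goes beyond oddness of the angular moments. The paper shows that the frozen-coefficient model term is in fact \emph{Lipschitz} in $x_n$ up to $\{x_n=0\}$: its $x_n$-derivative is, up to a constant, $\textrm{PV}\int_{B_2}(x_n+y_n)_+^{s-1}\,a_2(x,y/|y|)\,|y|^{-(n+2s-1)}\,dy$, and the corresponding principal value over all of $\R^n$ vanishes identically, by homogeneity in $|y|$ together with the one-dimensional identity $\textrm{PV}\int_{\R}(1+r)_+^{s-1}\,r\,|r|^{-1-2s}\,dr=0$ (equation \eqref{calcul-integral-1D}); hence only the tail $\int_{\R^n\setminus B_2}$ survives, and it is bounded by $C/(1-s)$, giving \eqref{we-want2} and then the $C/(1-s)\,|h|$ bound \eqref{we-want}. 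In other words, the coefficient of the would-be $x_n^{1-s}$ term is zero, for the specific function $(x_n)_+^s$ and the specific odd homogeneous kernel, and this is the cancellation your proposal is missing. Unless you compute that coefficient and prove it vanishes (equivalently, prove the identity above or the vanishing of $\textrm{PV}\int_{\R^n}(x_n+y_n)_+^{s-1}a_2(x,y/|y|)|y|^{-(n+2s-1)}dy$), your argument cannot reach exponents $\gamma>1-s$, so the proof as proposed does not establish the lemma in its stated generality.
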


\begin{proof}
\textbf{Case 1}: Assume $\eta\equiv1$.

Since the function $(x_n)^s_+$ does not depend on the first $n-1$ variables, and $a_2$ is $C^\gamma$ with respect to $x$, then it is clear that
\[|I_2(x)-I_2(x+he_i)|\leq \int_{B_2}|y|^s\frac{C|h|^\gamma}{|y|^{n+2s-1}}\,dy\leq \frac{C}{1-s}\,|h|^\gamma\]
for $i=1,2,...,n-1$.

Moreover, we also have
\[\begin{split}
I_2(x)&-I_2(x+he_n)= \\& \int_{B_2}((x_n+h)_+^s-(x_n+h+y_n)_+^s)\frac{a_2(x,y/|y|)-a_2(x+he_n,y/|y|)}{|y|^{n+2s-1}}\,dy\\
&+\int_{B_2}\left\{(x_n)_+^s-(x_n+y_n)_+^s-(x_n+h)_+^s+(x_n+h+y_n)_+^s \right\}\frac{a_2(x,y/|y|)}{|y|^{n+2s-1}}\,dy.
\end{split}\]
As before, the first term is bounded by
\[\left|\int_{B_2}((x_n+h)_+^s-(x_n+h+y_n)_+^s)\frac{a_2(x,y/|y|)-a_2(x+he_n,y/|y|)}{|y|^{n+2s-1}}\,dy\right|\leq \frac{C}{1-s}\,|h|^\gamma.\]
Thus, it only remains to see that the second term is also bounded by $C|h|^\gamma$.

We will show that, in fact, we have
\begin{equation}\label{we-want}
\biggl|\int_{B_2}\left\{(x_n)_+^s-(x_n+y_n)_+^s-(x_n+h)_+^s+(x_n+h+y_n)_+^s \right\}\frac{a_2(x,y/|y|)}{|y|^{n+2s-1}}\,dy \biggr|\leq \frac{C}{1-s}\,|h|.
\end{equation}
Indeed, it is clear that \eqref{we-want} is equivalent to
\begin{equation}\label{we-want2}
\left|\int_{B_2}\left\{(x_n)_+^{s-1}-(x_n+y_n)_+^{s-1}\right\}\frac{a_2(x,y/|y|)}{|y|^{n+2s-1}}\,dy \right|\leq \frac{C}{1-s},
\end{equation}
where we denoted (abusing a little bit the notation) $(x_n)_+^{s-1}=(|x_n|^{s-2}x_n)_+$.

Let us define
\[\tilde I_2(x):=\int_{B_2}\left\{(x_n)_+^{s-1}-(x_n+y_n)_+^{s-1}\right\}\frac{a_2(x,y/|y|)}{|y|^{n+2s-1}}\,dy.\]
Notice that, since $a_2$ is odd, then
\[\tilde I_2=-\textrm{PV}\int_{B_2}(x_n+y_n)_+^{s-1}\frac{a_2(x,y/|y|)}{|y|^{n+2s-1}}\,dy.\]

We now claim that
\[\hat I_2(x):=\textrm{PV}\int_{\R^n}(x_n+y_n)_+^{s-1}\frac{a_2(x,y/|y|)}{|y|^{n+2s-1}}\,dy=0.\]
Indeed, we have
\[\begin{split}
\hat I_2(x)&=\textrm{PV}\int_{-\infty}^{+\infty}\int_{S^{n-1}}(x_n+r\theta_n)_+^{s-1}a_2(x,\theta)\,\frac{r}{|r|^{1+2s}}\,d\theta\,dr\\
&=\textrm{PV}\int_{-\infty}^{+\infty}\int_{S^{n-1}}(x_n+r)_+^{s-1}|\theta_n|^{2s-1}\theta_n a_2(x,\theta)\,\frac{r}{|r|^{1+2s}}\,d\theta\,dr\\
&=c(x)\textrm{PV}\int_{-\infty}^{+\infty}(x_n+r)_+^{s-1}\frac{r}{|r|^{1+2s}}\,dr\\
&= c(x)(x_n)^{-s} \textrm{PV}\int_{-\infty}^{+\infty}(1+r)_+^{s-1}\frac{r}{|r|^{1+2s}}\,dr,
\end{split}\]
and hence the claim follows from \eqref{calcul-integral-1D}.

Therefore, we have
\[\tilde I_2(x)=\int_{\R^n\setminus B_2}(x_n+y_n)_+^{s-1}\frac{a_2(x,y/|y|)}{|y|^{n+2s-1}}\,dy,\]
and then, using that $x\in B_1$,
\[\begin{split}
  |\tilde I_2(x)|=&C\int_{\R^n\setminus B_2}(x_n+y_n)_+^{s-1}\frac{dy}{|y|^{n+2s-1}}\leq
  C\int_{\R^n_+\setminus B_2^+}(y_n)_+^{s-1}\frac{dy}{|y|^{n+2s-1}}\leq \frac{C}{1-s}.
\end{split}\]
Hence, \eqref{we-want2} is proved, and the lemma follows.

\textbf{Case 2}: Assume now that $\eta$ is any $C^{2,\gamma}$ function.

Then, using the result in Case 1, it suffices to show that the function
\[I_{2,2}(x):=\int_{B_2}(x_n+y_n)_+^s\bigl[\eta(x)-\eta(x+y)\bigr]\frac{a_2(x,y/|y|)}{|y|^{n+2s-1}}\,dy\]
is $C^\gamma$.

For $i=1,...,n-1$ we have
\[\begin{split} |I_{2,2}(x)&-I_{2,2}(x+he_i)|\leq \\
\leq &\,\int_{B_2}(x_n+y_n)_+^s\bigl|\eta(x+h)-\eta(x+h+y)\bigr|\frac{|a_2(x,y/|y|)-a_2(x+h,y/|y|)|}{|y|^{n+2s-1}}\,dy\\
&+\left|\int_{B_2}(x_n+y_n)_+^s\bigl[\eta(x)-\eta(x+y)-\eta(x+h)+\eta(x+h+y)\bigr]\frac{a_2(x,y/|y|)}{|y|^{n+2s-1}}\,dy\right|.
\end{split}\]
Since $a_2$ is $C^\gamma$, then
\[\begin{split}
\int_{B_2}(x_n+y_n)_+^s&\bigl|\eta(x+h)-\eta(x+h+y)\bigr|\frac{|a_2(x,y/|y|)-a_2(x+h,y/|y|)|}{|y|^{n+2s-1}}\,dy\leq \\
\leq & \int_{B_2}C|y|\frac{C|h|^\gamma}{|y|^{n+2s-1}}\,dy\leq \frac{C}{1-s}\,|h|^\gamma.\end{split}\]
On the other hand, we have that
\[\begin{split}&\left|\int_{B_2}(x_n+y_n)_+^s\bigl[\eta(x)-\eta(x+y)-\eta(x+h)+\eta(x+h+y)\bigr]\frac{a_2(x,y/|y|)}{|y|^{n+2s-1}}\,dy\right|\leq \\
&\hspace{7cm}\leq  \frac{C}{1-s}\,|h|.\end{split}\]
Indeed, this is equivalent to
\[\left|\int_{B_2}(x_n+y_n)_+^s\bigl[\partial_{x_i}\eta(x)-\partial_{x_i}\eta(x+y)\bigr]\frac{a_2(x,y/|y|)}{|y|^{n+2s-1}}\,dy\right|\leq \frac{C}{1-s},\]
and this follows immediately from the fact that $\eta$ is $C^{2,\gamma}$.

For $i=n$, the reasoning is very similar, and we only have to bound the additional term
\[\begin{split}
\int_{B_2}|(x_n+y_n+h)_+^s-(x_n+y_n)_+^s|\cdot |\eta(x)&-\eta(x+y)|\,\frac{|a_2(x,y/|y|)|}{|y|^{n+2s-1}}\,dy\leq \\
&\leq \int_{B_2}|h|^s|y|\,\frac{dy}{|y|^{n+2s-1}}\leq \frac{C}{1-s}\,|h|^s.\end{split}\]
Thus, the lemma is proved.
\end{proof}

\begin{lem}\label{lem-Lds-3}
Let $s_0\in (0,1)$, $s\in (s_0,1)$, and $\gamma\in (0,s]$.
Let $J(x,y)$ be a function in $L^\infty(\R^n\times \R^n)$ which is $C^\gamma$ in $x$.
Let $\eta\in C^{2,\gamma}_c(\R^n)$, and define
\[I_3(x):=\int_{B_2}((x_n)_+^s\eta(x)-(x_n+y_n)_+^s\eta(x+y))\frac{J(x,y)}{|y|^{n+2s-1-\gamma}}\,dy.\]
Then, we have $I_3\in C^{\gamma}(\overline{B_1^+})$, and
\[\|I_3\|_{C^{\gamma}(B_1^+)}\leq \frac{C}{1-s},\]
where $C$ depends only on $n$, $s_0$, the $C^{2,\gamma}$ norm of $\eta$, and the $C^{\gamma}$ norm of $J$.
\end{lem}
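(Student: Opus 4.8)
The plan is to exploit that, since $\gamma\le s$, the kernel $|z|^{-(n+2s-1-\gamma)}$ has singularity of order $n+2s-1-\gamma<n+1$, strictly milder than the full order $n+2s$; consequently no cancellation in $J$ is needed (in contrast with Lemmas \ref{lem-Lds-1} and \ref{lem-Lds-2}), only the vanishing of $\phi(x)-\phi(x+y)$ as $y\to0$ together with the $C^{0,s}$ regularity of $\phi:=(x_n)_+^s\eta$. Write $g(x,y):=\phi(x)-\phi(x+y)$ and $m:=n+2s-1-\gamma$, and record first that on the ball $B_3$ (which contains all points $x,x+y$ with $x\in B_1^+$, $|y|\le2$) one has $\|\phi\|_{C^{0,s}(B_3)}\le C\|\eta\|_{C^{2,\gamma}}$.

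The technical core is a short chain of elementary pointwise bounds, valid for $x,x_1,x_2\in B_1^+$ and $|y|\le2$. First, $|g(x,y)|\le C|y|^s$: split $g(x,y)=[(x_n)_+^s-(x_n+y_n)_+^s]\eta(x)+(x_n+y_n)_+^s[\eta(x)-\eta(x+y)]$ and use $|a^s-b^s|\le|a-b|^s$ together with the Lipschitz bound for $\eta$ and $|y|\le2^{1-s}|y|^s$. Second, $|g(x_1,y)-g(x_2,y)|\le C|x_1-x_2|^s$, since both $|\phi(x_1)-\phi(x_2)|$ and $|\phi(x_1+y)-\phi(x_2+y)|$ are bounded by $[\phi]_{C^{0,s}(B_3)}|x_1-x_2|^s$. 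Combining these two bounds (the first applied to both $x_1$ and $x_2$) with the trivial inequality $\min(A,B)\le A^{1-\gamma/s}B^{\gamma/s}$, which holds because $0\le\gamma/s\le1$, applied with $A=|y|^s$ and $B=|x_1-x_2|^s$, gives the crucial interpolated bound
\[|g(x_1,y)-g(x_2,y)|\le C\,|y|^{s-\gamma}\,|x_1-x_2|^{\gamma},\]
which is exactly what makes the seminorm integral converge with a constant independent of $|x_1-x_2|$ and of the right homogeneity.

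With these estimates the remaining computations are routine. For the supremum bound, the first estimate gives $|I_3(x)|\le\|J\|_{\infty}\int_{B_2}C|y|^{s}|y|^{-m}\,dy=C\int_{0}^{2}r^{\gamma-s}\,dr\le C/(1-s)$, the integral converging since $\gamma>s-1$. For the seminorm, fix $x_1,x_2\in B_1^+$ and set $\rho=|x_1-x_2|$; if $\rho\ge1$ use the supremum bound, and otherwise split the integrand of $I_3(x_1)-I_3(x_2)$ as $[g(x_1,y)-g(x_2,y)]J(x_1,y)+g(x_2,y)[J(x_1,y)-J(x_2,y)]$. The second term is controlled by $|g(x_2,y)|\le C|y|^s$ and $|J(x_1,y)-J(x_2,y)|\le C\rho^\gamma$, giving $C\rho^{\gamma}\int_{B_2}|y|^{s-m}\,dy\le C(1-s)^{-1}\rho^{\gamma}$; the first term is controlled by the interpolated estimate, giving $C\rho^{\gamma}\int_{B_2}|y|^{s-\gamma-m}\,dy=C\rho^{\gamma}\int_{B_2}|y|^{1-s-n}\,dy\le C(1-s)^{-1}\rho^{\gamma}$, where the last integral converges merely because $s<1$. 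Summing, $\|I_3\|_{C^{\gamma}(B_1^+)}\le C/(1-s)$ with $C$ depending only on $n$, $\|\eta\|_{C^{2,\gamma}}$ and the $C^{\gamma}$-in-$x$ norm of $J$. The one step that needs genuine thought is isolating the interpolated bound: the cruder uniform estimate $|g(x_1,y)-g(x_2,y)|\le C\rho^{\gamma}$ on $\{|y|>\rho\}$ produces a spurious $\log(1/\rho)$ factor (or a $|2s-1-\gamma|^{-1}$ factor) precisely in the borderline regime $\gamma\approx2s-1$, and the interpolation removes this obstruction without any need for a dyadic decomposition or a splitting according to distance from $\{x_n=0\}$.
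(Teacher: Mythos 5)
Your proposal is correct and follows essentially the same route as the paper: the paper also reduces everything to the two bounds $|g(x,y)|\le C|y|^s$ and $|g(x_1,y)-g(x_2,y)|\le C|x_1-x_2|^s$ for $g(x,y)=(x_n)_+^s\eta(x)-(x_n+y_n)_+^s\eta(x+y)$, interpolates them (using $\gamma\le s$) to get $C|x_1-x_2|^\gamma|y|^{s-\gamma}$, and then splits the difference $I_3(x_1)-I_3(x_2)$ into the $g$-increment term and the $J$-increment term, exactly as you do. The only cosmetic difference is that the paper first treats $\eta\equiv1$ and then the general case, whereas you handle general $\eta$ in one pass; the estimates and the role of the interpolation in avoiding the borderline divergence are identical.
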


\begin{proof}
\textbf{Case 1}: Assume first $\eta\equiv1$.

Using \eqref{cosa-util-53} and that $J$ is $C^\gamma$ with respect to $x$, we have
\[\begin{split}
|I_3(x_1)-I_3(x_2)|&\leq \int_{B_2} C|x_1-x_2|^\gamma |y|^{s-\gamma}\frac{C}{|y|^{n+2s-1-\gamma}}\,dy+ \int_{B_2} C|y|^s\frac{C|x_1-x_2|^\gamma}{|y|^{n+2s-1-\gamma}}\,dy\\
&\leq  \frac{C}{1-s}\,|x_1-x_2|^\gamma+C|x_1-x_2|^\gamma,
\end{split}\]
and the result follows.

\textbf{Case 2}: Assume now that $\eta$ is any $C^{2,\gamma}$ function.

Then, one only needs to use that $g(x):=(x_n)_+^s \eta(x)$ is a $C^s$ function.
Indeed, one then have
\[|g(x_1)-g(x_1+y)-g(x_2)+g(x_2+y)|\leq C|x_1-x_2|^s\]
and
\[|g(x_1)-g(x_1+y)-g(x_2)+g(x_2+y)|\leq C|y|^s,\]
and interpolating these two inequalities,
\[|g(x_1)-g(x_1+y)-g(x_2)+g(x_2+y)|\leq C|x_1-x_2|^\gamma|y|^{s-\gamma}.\]
Using this, the proof is the same as in Case 1.
\end{proof}

Using the previous lemmas, we can now give the

\begin{proof}[Proof of Propositions \ref{Lds-is-Cs} and \ref{Lds-is-Cs-B}]
It follows immediately from Proposition \ref{prop-flatten} and Lemmas \ref{lem-Lds-1}, \ref{lem-Lds-2}, \ref{lem-Lds-3}.
\end{proof}

\begin{rem}
In case that both the domain $\Omega$ and the spectral measure $\mu$ are $C^\infty$, the result in Proposition \ref{Lds-is-Cs} is well known, and can be proved by Fourier transform methods; see \cite{Grubb}.
In this case, one has that $L(d^s)$ is $C^\infty(\overline\Omega)$.
\end{rem}

\section*{Appendix I: Proof of Lemma \ref{lemODE}}

In this appendix we give the

\begin{proof}[Proof of Lemma \ref{lemODE}]
Let us show first the statement (a).
Denote
\[a=1-2s.\]
We first note that the Caffarelli-Silvestre extension equation $\Delta u  + \frac{a}{y} \partial_y u = 0$ is written in polar coordinates $x=r\cos\theta$,  $y= r\sin \theta$, $r>0$, $\theta\in(0,\pi)$ as
\[ u_{rr} + \frac{1}{r} u_r + \frac{1}{r^2} u_{\theta\theta} + \frac{a}{r\sin \theta}\left(\sin\theta \,u_r + \cos\theta\, \frac{u_{\theta}}{r}\right)=0.\]
Note the homogeneity of the equation in the variable $r$.
If we seek for (bounded at $0$) solutions of the form $u= r^{s+\nu}\Theta_\nu(\theta)$, then it must be $\nu>-s$ and
\[ \Theta_\nu^{\prime\prime} + a{\rm \,cotg\,} \theta \,\Theta_\nu^\prime + (s+\nu)(s+\nu+a)\Theta_\nu =0.\]
If we want $u$ to satisfy  the boundary conditions
\[u(x,0) = 0\quad \mbox{for }x<0\quad \mbox{ and }\quad |y|^a \partial_y u(x,y) \to 0 \quad \mbox{as }y\to 0,\]
then  $\Theta_\nu$ must satisfy
\begin{equation}\label{bdryconditions}
\begin{cases}
\Theta_\nu(\theta) = \Theta_\nu(0) + o \bigl((\sin \theta)^{2s}\bigr) \to 0 \quad \mbox{as }\theta \searrow 0\\
\Theta_\nu(\pi)=0.
\end{cases}
\end{equation}
We have used that, for $x>0$
\[ \lim_{y\searrow 0} y^{a}\partial_y u(x,y) = 0 \quad \Rightarrow \quad u(x,y)= u(x,0)+ o(y^{2s}),\]
since $a=1-2s$.

To solve this ODE, consider
\[\Theta_\nu(\theta) = (\sin \theta)^s h(\cos \theta).\]
After some computations
and the change of variable $z= \cos\theta$ one obtains the following ODE for $h(z)$:
\[ (1-z^2)h^{\prime\prime}(z) -2zh^\prime(z) + \left(\nu+\nu^2-\frac{s^2}{1-z^2}\right) h(z)=0.\]
This is the so called ``associated Legendre differential equation''.
All solutions to this second order ODE solutions are given by
\[ h(z)= C_1 P_\nu^s(z)+ C_2Q_\nu^s(z), \]
where $P_\nu^s$ and $Q_\nu^s$ are the ``associated Legendre functions'' of first and second kind, respectively.

Translating \eqref{bdryconditions} to the function $h$, using that $\sin\theta \sim (1-\cos\theta)^{1/2}$ as $\theta\searrow 0$
and $\sin\theta \sim (1+\cos\theta)^{1/2}$ as $\theta\nearrow\pi$, we obtain
\begin{equation}\label{bdryconditionsh}
\begin{cases}
(1-z)^{s/2} h(z) = c + o\bigl((1-z)^s\bigr) \quad \mbox{as }z\nearrow 1\\
\lim_{z\searrow -1} (1+z)^{s/2}h(z)=0.
\end{cases}
\end{equation}

Let us prove that $P_\nu^s$ fulfill all these requirements only for $\nu=0,1,2,3,\dots$, while $Q_\nu^s$ have to be discarded.
To have a good description of the singularities of $P_\nu^s(z)$ at $z=\pm 1$ we use its expression as an hypergeometric
function
\[ P_\nu^s(z) =\frac{1}{\Gamma(1-s)} \frac{(1+z)^{s/2}}{(1-z)^{s/2}}\, \,_2F_1\left(-\nu, \nu+1; 1-s;\frac{1-z}{2}\right).\]
Using this and the definition of $\,_2F_1$ as a power series we obtain
\[ P_\nu^s(z) = \frac{1}{\Gamma(1-s)}\frac{2^{s/2}}{(1-z)^{s/2}}\left\{1 - \frac{\nu(\nu+1)}{1-s} \frac{1-z}{2} + o\bigl((1-z)^2\bigr)\right\}\quad \mbox{as }z\nearrow1. \]
Hence,  $(1-z)^{s/2}P_\nu^s(z) = c + O\bigl(1-z\bigr) = c+ o\bigl((1-z)^s\bigr)$ as desired.

For the analysis as $z\searrow -1$ we need  to use  Euler's transformation
\[ \,_2F_1(a,b;c;x) = (1-x)^{c-b-a} \,_2F_1(c-a, c-b; c; x),\]
obtaining
\[ P_\nu^s(z) = \frac{1}{\Gamma(1-s)} \frac{(1+z)^{s/2}}{2^{s/2}} \left(\frac{1+z}{2}\right)^{-s} \bigl\{ \,_2F_1(1-s-\nu, -s-\nu; 1-s; 1)  + o(1)\bigr\}\]
as $z\searrow-1$.
It follows that the zero boundary condition is satisfied if and only if
\[ \,_2F_1(1-s-\nu, -s-\nu; 1-s; 1) =\frac{\Gamma(1-s)\Gamma(s)}{\Gamma(-\nu)\Gamma(1+\nu)} =0.\]
This implies $\nu=0,1,2,3,\dots$, so that $\Gamma(-\nu)=\infty$.

With a similar analysis one easily finds that the functions $Q_\nu^s(x)$ do not satisfy \eqref{bdryconditionsh} for any $\nu\geq-s$.

The statement $(b)$ of the Lemma could be proved for example by using singular Sturm-Liouville theory after observing that the ODE
\[ \Theta_\nu^{\prime\prime} + a{\rm \,cotg\,} \theta \,\Theta_\nu^\prime - \lambda\Theta_\nu =0\]
can be written as
\[\bigl( |\sin \theta|^a\, \Theta_\nu^\prime\bigr)^\prime = \lambda  |\sin\theta|^a \Theta_\nu.\]
However, it is not necessary to do it because we have already computed the eigenfunctions to this ODE, and they are given by
\[\Theta_k(\theta)=(\sin \theta)^s P_k^s(\cos \theta),\]
where $P_\nu^s$ are the associated Legendre functions of first kind.
The functions $\{P_k^s(x)\}_{k\geq0}$ have been well studied, and they are known to be a complete orthogonal system in $L^2\bigl((0,1),dx\bigr)$; see \cite{Lebedev,WangGuo}.
Therefore, it immediately follows (after a change of variables) that $\{\Theta_k(\theta)\}_{k\geq0}$ are a complete orthogonal system in $L^2\bigl((0,\pi),(\sin\theta)^ad\theta\bigr)$.
Thus, the Lemma is proved.
\end{proof}

\section*{Appendix II: Interior regularity}

We give here the proof of the interior estimate in Lemma \ref{lem-interior}.

For it, we will need the following.

\begin{lem}\label{lem-liouv}
Let $\bar\alpha>0$ be the exponent given by Proposition \ref{thm1}.
Assume that $u\in C(\R^n)$ satisfies in the viscosity sense
\[
M^+\left\{u(\cdot+h)-u\right\}\ge 0\quad \textrm{and}\quad M^- \left\{u(\cdot+h)-u\right\}\le0 \quad \textrm{in}\  \R^n
\]
for all $h\in \R^n$.

Assume that for some $\beta\in(0,1)$ and $\alpha\in(0,\bar\alpha)$, $u$ satisfies
\begin{equation}\label{growthcontrol-1+sapp}
[u]_{C^\beta(B_R)}\le C R^\alpha \quad \mbox{for all } R\ge 1.
\end{equation}
Then,
\[u(x)= p\cdot x+b\]
for some $p\in \R^n$ and $b\in\R$.
\end{lem}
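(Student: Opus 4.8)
\emph{Approach.} The plan is to follow the scheme of the proof of Theorem~\ref{thm-liouv-1+s}: differentiate the equation by forming incremental quotients of $u$, observe that these quotients have \emph{subcritical} growth, apply a full-space Liouville-type result (via a blow-down and the interior estimate of Caffarelli--Silvestre) to conclude that each quotient is constant, and finally solve the resulting additive functional equation to see that $u$ is affine. \emph{Step 1 (incremental quotients).} Fix $h\in\R^n$ and set $v(x)=u(x+h)-u(x)$. By hypothesis $v\in C(\R^n)$ satisfies $M^+v\ge0$ and $M^-v\le0$ in all of $\R^n$ in the viscosity sense. From \eqref{growthcontrol-1+sapp}, if $x\in B_R$ then $x,x+h\in B_{R+|h|}$, so $|v(x)|\le[u]_{C^\beta(B_{R+|h|})}|h|^\beta\le C(R+|h|)^\alpha|h|^\beta$, and therefore
\[\|v\|_{L^\infty(B_R)}\le C_h\,R^\alpha\qquad\textrm{for all }R\ge1,\]
with $C_h$ independent of $R$. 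The key gain is that, although $u$ itself may grow like $R^{\alpha+\beta}$, the difference $v$ grows only like $R^\alpha$ with $\alpha<\bar\alpha<1$, hence subcritically.

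\emph{Step 2 (Liouville for $v$).} Next I would establish: any $v\in C(\R^n)$ with $M^+v\ge0$, $M^-v\le0$ in $\R^n$ and $\|v\|_{L^\infty(B_R)}\le CR^\alpha$ for all $R\ge1$ is constant. For $\rho\ge1$ set $v_\rho(x)=\rho^{-\alpha}v(\rho x)$. By the scaling properties of the extremal operators and of viscosity inequalities under dilations, $v_\rho$ again satisfies $M^+v_\rho\ge0$, $M^-v_\rho\le0$ in $\R^n$ and $\|v_\rho\|_{L^\infty(B_R)}\le CR^\alpha$ for all $R\ge1$, uniformly in $\rho$; since $\alpha<2s$ (automatic as $\bar\alpha$ is small) this gives $\|v_\rho\|_{L^\infty(B_1)}+\|v_\rho\|_{L^1(\R^n,\omega_s)}\le C$ independently of $\rho$. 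Because $\mathcal L_*\subset\mathcal L_0$, we have $M^-_{\mathcal L_0}v_\rho\le M^-v_\rho\le0$ and $M^+_{\mathcal L_0}v_\rho\ge M^+v_\rho\ge0$, so Theorem~\ref{Interior-Caff-Silv} applies with $C_0=0$ and yields $[v_\rho]_{C^{\alpha_0}(B_{1/2})}\le C$, where $\alpha_0$ is the interior Hölder exponent. Undoing the scaling, $[v]_{C^{\alpha_0}(B_{\rho/2})}\le C\rho^{\alpha-\alpha_0}$ for every $\rho\ge1$. Since $\bar\alpha$ is the exponent produced by Proposition~\ref{thm1}, which by its construction does not exceed the interior exponent $\alpha_0$, we have $\alpha<\bar\alpha\le\alpha_0$; letting $\rho\to\infty$ forces $[v]_{C^{\alpha_0}(\R^n)}=0$, i.e.\ $v$ is constant.

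\emph{Step 3 (functional equation).} Combining Steps 1 and 2: for every $h\in\R^n$ there is a constant $c(h)\in\R$ with $u(x+h)-u(x)=c(h)$ for all $x\in\R^n$. Taking $x=0$ gives $c(h)=u(h)-u(0)$, so $c$ is continuous; and
\[c(h_1+h_2)=u(h_1+h_2)-u(0)=\bigl(u(h_1+h_2)-u(h_2)\bigr)+\bigl(u(h_2)-u(0)\bigr)=c(h_1)+c(h_2),\]
so $c$ is additive. A continuous additive map $\R^n\to\R$ is linear, $c(h)=p\cdot h$ for some $p\in\R^n$, whence $u(x)=u(0)+c(x)=p\cdot x+b$ with $b=u(0)$, as claimed.

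\emph{Main obstacle.} The delicate part is Step~2. One must verify that the rescalings $v_\rho$ genuinely lie in the class to which Theorem~\ref{Interior-Caff-Silv} applies --- continuity on $\overline{B_1}$ and, above all, a uniform bound on the weighted norm $\|v_\rho\|_{L^1(\R^n,\omega_s)}$, which is precisely where subcriticality ($\alpha<2s$) enters --- and one must be sure that the interior Hölder exponent $\alpha_0$ is strictly larger than $\bar\alpha$, so that the blow-down actually makes the seminorm vanish rather than merely stay bounded. The scalings of $M^\pm$ and of viscosity inequalities, and the comparison $M^\mp_{\mathcal L_0}\lessgtr M^\mp$, are routine and cause no trouble.
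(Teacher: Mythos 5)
Your argument is correct, and its core — form increments, note the subcritical growth $R^\alpha$ with $\alpha<\bar\alpha$, dilate, apply the Caffarelli--Silvestre interior estimate, and let the scale go to infinity — is the same engine the paper uses. Where you diverge is in the endgame. The paper works with the quotients $v(x)=\bigl(u(\rho x+\rho h)-u(\rho x)\bigr)/\bigl(\rho^\alpha|\rho h|^\beta\bigr)$, gets a uniform $C^{\bar\alpha}$ bound on them from the interior estimate, and then invokes the standard incremental-quotient lemma (Bass, or Caffarelli--Cabr\'e Lemma 5.6) to conclude $[u]_{C^{\bar\alpha+\beta}(B_\rho)}\le C\rho^{\alpha-\bar\alpha}$, so that blowing down kills the $C^{\bar\alpha+\beta}$ seminorm and $u$ is affine (with the caveat that $\bar\alpha+\beta$ should not be an integer). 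You instead fix $h$, prove a genuine full-space Liouville theorem for the single difference $v=u(\cdot+h)-u$ (your Step 2 is the whole-space analogue of Proposition \ref{liouville-krylov}, proved the same way), and then recover affinity of $u$ from the continuous Cauchy functional equation $c(h_1+h_2)=c(h_1)+c(h_2)$. Your route avoids the incremental-quotient regularity lemma and the integer-exponent caveat, at the cost of the extra (but elementary) functional-equation step; it also mirrors more closely the structure of the paper's proof of Theorem \ref{thm-liouv-1+s}. Both proofs quietly use that $\bar\alpha$ does not exceed the interior H\"older exponent of Theorem \ref{Interior-Caff-Silv} — you state this explicitly, and it is indeed true by the construction of $\bar\alpha$ in Proposition \ref{thm1} — and your verification of the uniform weighted $L^1$ bound (using $\alpha<2s$) is exactly the point that makes the interior estimate applicable.
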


\begin{proof}
Given  $\rho\ge1$, let  $v(x) = \frac{u(\rho x+\rho h)-u(\rho x)}{ \rho^\alpha |\rho h|^\beta}$. By assumption we have
\[
M^+v \ge 0\quad \textrm{and}\quad M^- v \le0 \quad \textrm{in}\  B_1
\]
and
\[
\|v\|_{L^\infty(B_R)} \le C R^\alpha
\]
for all $R\ge 1$.

Hence it follows form the interior estimate in \cite{CS} (recall that $M^-_{\mathcal L_0}\le M^-\le  M^+\le M^+_{\mathcal L_0}$) that
\[ \|v\|_{C^{\bar \alpha}(B_1)}\le  C.\]
We use now the following well-known result: if all incremental quotients of order $\beta$ of $u$ are uniformly $C^\alpha$, then $u$ is $C^{\beta+\alpha}$ (unless $\beta+\alpha$ is an integer); see for instance \cite[Proposition 2.1]{Bass} or \cite[Lemma 5.6]{Caff-Cabre}.
This yields
\[ [u]_{C^{\bar\alpha+\beta}(B_\rho)} \le C\rho^{\alpha-\bar \alpha}.\]
Letting $\rho \to \infty$ we conclude that $[u]_{C^{\bar\alpha+\beta}(\R^n)}=0$, and the Lemma follows.
\end{proof}

We next show the following.

\begin{prop} \label{prop-interior}
Let $s_0\in(0,1)$, and let $\bar \alpha\in(0,s_0)$ be the constant given by Proposition \ref{thm1}.

Let $s\in(s_0,1)$, $\alpha\in(0,\bar\alpha)$, $\gamma\in(0,1)$, and $\beta\in(0,1)$ such that $\alpha+\beta\leq \gamma+2s$.
Assume in addition that $\alpha+\beta\neq 1$.

Let $w\in C^\beta(\R^n)$ be a solution of $\I(w,x)=0$ in $B_1$  where $\I$ is any fully nonlinear operator elliptic with respect to $\mathcal L_*(s)$ of the form
\[\I(w,x) = \inf_{b\in \mathcal B}\sup_{a\in\mathcal A} \bigl( L_{ab} w(x) + c_{ab}(x)\bigr).\]
Suppose that $L_{ab}$ are given by \eqref{L*1}-\eqref{L*2} and that
\begin{equation}
|\inf c_{ab}(x)|< \infty, \quad   [c_{ab}]_{\gamma; B_1}\leq 1.
\end{equation}

Then,
\begin{equation}\label{hola}
 \|w\|_{C^{\beta+\alpha}(B_{1/2})} \le C\|w\|_{C^\beta(\R^n)},
 \end{equation}
for some constant $C$ that depends only on $n$, $s_0$, ellipticity constants, $\alpha$, and $\beta$.
\end{prop}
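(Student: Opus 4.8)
The plan is to prove Proposition \ref{prop-interior} by a compactness/blow-up argument, following closely the scheme of Proposition \ref{prop_contr_1+s+alpha} but invoking the Liouville-type Lemma \ref{lem-liouv} in place of the boundary Liouville Theorem \ref{thm-liouv-1+s}. The first step is a reduction: it suffices to establish the ``pointwise at a point'' estimate
$$\sup_{0<r<1} r^{-\alpha}\,[\,w-P_r\,]_{C^\beta(B_r)} \le C\,\|w\|_{C^\beta(\R^n)},$$
where $P_r$ is the best $L^2(B_r)$-fit of $w$ by a polynomial of degree at most $\lfloor\alpha+\beta\rfloor$ (a constant if $\alpha+\beta<1$, an affine function if $\alpha+\beta>1$); the estimate \eqref{hola} then follows from this by a standard covering argument together with Lemma \ref{lemcontrarec}. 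Note that, in contrast to Proposition \ref{prop_contr_1+s+alpha}, here no recursive hypothesis of the type \eqref{recursive_estimate} is needed, because the required compactness will come directly from the interior $C^{\bar\alpha}$ estimate of Caffarelli--Silvestre applied to incremental quotients.

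To prove the displayed estimate I would argue by contradiction. If it fails, one gets sequences $w_k,\I_k,s_k,\gamma_k$ (with $s_k\in[s_0,1)$ and $\gamma_k+2s_k\ge\alpha+\beta$) satisfying the hypotheses with $\|w_k\|_{C^\beta(\R^n)}\le1$ and $\sup_{0<r<1} r^{-\alpha}[w_k-P_{k,r}]_{C^\beta(B_r)}\to\infty$. One then sets $\theta(r):=\sup_k\sup_{r<r'<1}(r')^{-\alpha}[w_k-P_{k,r'}]_{C^\beta(B_{r'})}$, which is finite, nonincreasing, and tends to $+\infty$ as $r\searrow0$ (using $[w_k]_{C^\beta(\R^n)}\le1$), picks $r'_m\searrow0$ and $k_m$ realizing at least $\tfrac12\theta(r'_m)$, and considers the blow-up $v_m(x):=\bigl(w_{k_m}(r'_mx)-P_{k_m,r'_m}(r'_mx)\bigr)\big/\bigl((r'_m)^{\alpha+\beta}\theta(r'_m)\bigr)$. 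Repeating the computations of Proposition \ref{prop_contr_1+s+alpha} one obtains: orthogonality $\int_{B_1}v_m\,P=0$ for all $P\in\mathcal P$; non-degeneracy $[v_m]_{C^\beta(B_1)}\ge\tfrac12$; the growth control $[v_m]_{C^\beta(B_R)}\le CR^\alpha$ for all $R\ge1$ --- the only nontrivial point being, when $\alpha+\beta>1$, the dyadic estimate $|p_{k_m,2^jr'_m}-p_{k_m,r'_m}|\le C\theta(r'_m)(2^jr'_m)^{\alpha+\beta-1}$ on the affine coefficients of $P_{k_m,r}$, summed as a geometric series; and therefore $\|v_m\|_{L^\infty(B_R)}\le CR^{\alpha+\beta}$.

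Next I would translate the fully nonlinear equation into inequalities for increments: since $\I_k$ is of Bellman--Isaacs form with $[c_{ab}]_{\gamma_k;B_1}\le1$, Lemma 5.8 in \cite{CS} gives $M^+\{w_{k_m}(\cdot+h)-w_{k_m}\}\ge-|x|^{\gamma_m}-|x+h|^{\gamma_m}$ and the mirror inequality for $M^-$, in the region where the equation holds. Rescaling to $v_m$ and using $\gamma_m+2s_m\ge\alpha+\beta$ together with $\theta(r'_m)\to\infty$, the right-hand sides become $\mp\varepsilon_m(K)\to0$ on each fixed ball $B_K$ for $m$ large. Since the $\beta$-increments of $v_m$ grow at most like $|h|^\beta R^\alpha$ with $\alpha<s_0\le s$, the interior estimate of Caffarelli--Silvestre (Theorem \ref{Interior-Caff-Silv}, via $M^-_{\mathcal L_0}\le M^-\le M^+\le M^+_{\mathcal L_0}$) bounds these increments uniformly in $C^{\bar\alpha}_{\rm loc}$; by the elementary fact that a function whose $\beta$-increments are uniformly $C^{\bar\alpha}$ is $C^{\beta+\bar\alpha}$ (as in the proof of Lemma \ref{lem-liouv}; shrink $\bar\alpha$ slightly if $\beta+\bar\alpha\in\mathbb Z$), the $v_m$ are uniformly bounded in $C^{\beta+\bar\alpha}_{\rm loc}(\R^n)$. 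Hence, up to a subsequence, $s_m\to s\in[s_0,1]$ and $v_m\to v$ in $C^\beta_{\rm loc}(\R^n)$ with $v\in C^{\beta+\bar\alpha}(\R^n)$; the increments converge locally uniformly and, by the growth control with exponent $\alpha<s$, also in $L^1(\R^n,\omega_{s-\epsilon})$ for small $\epsilon>0$, so by Lemma \ref{lemweak2} and the stability of viscosity solutions (Lemma 5 in \cite{CS2}) one may pass to the limit and conclude that $v$ satisfies $M^+\{v(\cdot+h)-v\}\ge0$ and $M^-\{v(\cdot+h)-v\}\le0$ in $\R^n$ for all $h\in\R^n$, together with $[v]_{C^\beta(B_R)}\le CR^\alpha$. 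By Lemma \ref{lem-liouv}, $v$ is affine; the orthogonality relations force $v$ to be $L^2(B_1)$-orthogonal to all affine and constant functions, so $v\equiv0$, contradicting $[v]_{C^\beta(B_1)}\ge\tfrac12$. This proves the displayed estimate, and \eqref{hola} follows.

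The step I expect to be the main obstacle is the compactness of the blow-up sequence, namely producing enough uniform regularity of the $v_m$ to pass the non-degeneracy $[v_m]_{C^\beta(B_1)}\ge\tfrac12$ to the limit; in this interior setting this is supplied, with no extra assumption, by the \cite{CS} interior estimates for the increments $v_m(\cdot+h)-v_m$. A second point requiring care is the bookkeeping of the best-fitting affine polynomials $P_{k,r}$ across dyadic scales in the regime $\alpha+\beta>1$, which enters the growth control on $[v_m]_{C^\beta(B_R)}$; this is routine but must be carried out carefully, exactly as in Proposition \ref{prop_contr_1+s+alpha}.
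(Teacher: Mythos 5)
Your blow-up scheme is essentially the paper's own (reduction to the pointwise estimate with best-fit polynomials, the function $\theta(r)$, the sequence $v_m$ with orthogonality, non-degeneracy and the growth controls \eqref{growthc1app}--\eqref{growthc0app}, passage to the limit equation via Lemma \ref{lemweak2} and stability, and the Liouville Lemma \ref{lem-liouv}), but at the compactness/non-degeneracy step you take a different route, and that is where there is a genuine gap. The approximate equation you get for the increments after rescaling is \eqref{wwwwwwapp}: $M^+_{\mathcal L_*(s_m)}\bigl(v_m(\cdot+h)-v_m\bigr)\ge-\varepsilon_m(K)$ in $B_K$, with an error $\varepsilon_m(K)$ that is \emph{independent of $h$} (even the sharper bookkeeping $|c_{ab}(\bar x+\bar h)-c_{ab}(\bar x)|\le|\bar h|^{\gamma_m}$ only gives an error of order $|h|^{\gamma_m}/\theta(r'_m)$, and $\gamma_m$ may well be smaller than $\beta$). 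Hence the Caffarelli--Silvestre interior estimate yields $\|v_m(\cdot+h)-v_m\|_{C^{\bar\alpha}(B_{K/2})}\le C(K)\bigl(|h|^\beta+\varepsilon_m\bigr)$, and after dividing by $|h|^\beta$ this is \emph{not} bounded uniformly as $h\to0$ for fixed $m$. So the hypotheses of the incremental-quotient lemma are not verified uniformly, the asserted uniform $C^{\beta+\bar\alpha}_{\rm loc}$ bound on $v_m$ does not follow, and therefore neither does convergence in $C^{\beta}_{\rm loc}$. This matters for your final contradiction: with only locally uniform convergence (which is all that \eqref{growthc1app}--\eqref{growthc0app} plus Arzel\`a--Ascoli give), the non-degeneracy $[v_m]_{C^\beta(B_1)}\ge 1/2$ does \emph{not} pass to the limit, since the seminorm may concentrate at vanishing scales; for instance oscillations of the type $\rho_m^\beta\phi(x/\rho_m)$ with $\rho_m\sim\varepsilon_m^{1/(\beta-\bar\alpha)}\to0$ are compatible with all the bounds you derived, have $[v_m]_{C^\beta(B_1)}\sim 1$, and converge to $0$ locally uniformly.

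The paper closes exactly this gap by a further renormalization rather than by upgrading compactness: from $[v_m]_{C^\beta(B_1)}\ge 1/2$ it picks $x_m$ and $h_m\neq0$ at which the $\beta$-normalized second difference is at least $1/4$, and replaces $v_m$ by $\tilde v_m(x)=\bigl(v_m(x_m+|h_m|x)-\tilde P_m(x)\bigr)/|h_m|^\beta$, with $\tilde P_m$ the best-fit polynomial at that scale so that the orthogonality and the growth controls are preserved. This converts the seminorm lower bound into the oscillation lower bound ${\rm osc}_{B_1}\tilde v_m\ge 1/8$ (see \eqref{nondeg35appbis}), which \emph{is} stable under locally uniform convergence; the rest of the argument then proceeds exactly as you wrote it, with no need for any uniform $C^{\beta+\bar\alpha}$ estimate. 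To make your proof correct, either insert this renormalization step, or supply an increment estimate whose right-hand side is genuinely of order $|h|^\beta$ uniformly in $m$ and $h$; as written, the final step ``$v\equiv0$ contradicts $[v_m]_{C^\beta(B_1)}\ge1/2$'' is not justified.
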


\begin{proof}
It suffices to prove the estimate
\begin{equation}\label{pointest}
\sup_{r>0} r^{-\alpha} [w-P_r]_{C^\beta; B_r}\le C \|w\|_{\beta;\R^n},
\end{equation}
where
\[ P_{r} := {\rm arg \, min}_{P\in \mathcal P} \int_{B_r} \bigl( w_k -P\bigr)^2 \,dx,\]
$\mathcal P$ begin the linear space of polynomials of degree at most $\lfloor \alpha+\beta\rfloor$ with real coefficients.
Using \eqref{pointest}, \eqref{hola} follows easily.

The proof of \eqref{pointest} is by contradiction.
If it didn't hold there would be sequences $w_k$, $\I_k$, $s_k$, and $\gamma_k$ satisfying
\begin{itemize}
\item $\|w_k\|_{\beta; \R^n} \le 1$:
\item $\I_k (w_k,x) = 0$ in $B_1$;
\item $\I_k(w_k,x) = \inf_{b\in \mathcal B_k}\sup_{a\in \mathcal A_k} \bigl( L_{ab} u_k(x) + c_{ab}(x)\bigr)$;
\item $\{L_{ab} \,: \, a\in \mathcal A_k, \ b\in\mathcal B_k \}\subset \mathcal L(s_k)$ with $s_k \in [s_0,1]$;
\item $|\inf c_{ab}(x)|< \infty$, $[c_{ab}]_{ \gamma; B_1} \le 1$ for all $a\in \mathcal A_k$  and $b\in \mathcal B_k$;
\item $\gamma_k+2s_k \ge \alpha+\beta$.
\end{itemize}
for which
\begin{equation}\label{supsupapp}
\sup_k \sup_{r>0} r^{-\alpha}\ [ w_k -P_{k,r}]_{\beta;B_r} = +\infty ,
\end{equation}
where
\[ P_{k,r} := {\rm arg \, min}_{P\in \mathcal P} \int_{B_r} \bigl( w_k(x) -P\bigr) \,dx.\]

To prove that this is impossible we proceed similarly as in the Proof of Propostion~\ref{prop_contr_1+s+alpha}.
We define
\[
 \theta(r) := \sup_k  \sup_{r'>r}  (r')^{-\alpha}\,\bigl[w_k -P_{k,r}\bigr]_{\beta;B_{r'}}.
\]
The function $\theta$ is monotone nonincreasing, and $\theta(r)<+\infty$ for $r>0$ since
$\|w_k\|_{\beta; \R^n}\le 1$.
In addition, by \eqref{supsupapp} we have  $\theta(r)\nearrow +\infty$ as $r\searrow0$ and there are sequences $r'_m \searrow 0$ and $k_m$ for which
\begin{equation}\label{nondeg2}
(r'_m)^{-\alpha}  \bigl[w_k -P_{k_m,r'_m}\bigr]_{\beta; B_{r'_m}} \ge \frac{1}{2}  \theta(r'_m).
\end{equation}

From now on in this proof we will use the notations
\[u_m= u_{k_m}, \ P_m = P_{k_m, r'_m},\ s_m= s_{k_m},\ \gamma_m=\gamma_{k_m}.\]

We consider the blow up sequence
\begin{equation}\label{eqvmapp}
 v_m(x) = \frac{w_{m}(r'_m x)-P_{m} (r'_m x)}{(r'_m)^{\alpha+\beta}\theta(r'_m)}.
 \end{equation}
As in the proof of Proposition \ref{prop_contr_1+s+alpha}, for all $m\ge 1$ we have
\begin{equation}\label{2app}
 \int_{B_1} v_m(x)  P(x) \,dx =0\quad \mbox{for all } P\in \mathcal P,
\end{equation}
\begin{equation}\label{nondeg35app}
[v_m]_{\beta;B_1}\ge 1/2.
\end{equation}
and
\begin{equation}\label{growthc1app}
[v_{m}]_{\beta;B_R} \le  CR^{\alpha}
\end{equation}
for all $R\ge 1$.

Furthermore, we also have
\begin{equation} \label{growthc0app}
\|v_{m}\|_{L^\infty(B_R)} \le CR^{\alpha+\beta}.
\end{equation}

We next show that, replacing $v_m$ by appropriate rescalings, we may assume that instead of \eqref{nondeg35app}, the following holds
\begin{equation}\label{nondeg35appbis}
{\rm osc}_{B_1} v_m \ge 1/8.
\end{equation}
Indeed, if \eqref{nondeg35app} holds then there are $x_m\in B_1$ and $h_m\in B_{1-|x_m|}$ with $|h_m|>0$ such that
\[ \frac{ \bigl| v_m(x_m+h_m) +v_m(x_m-h_m) -2v_m(x_m) \bigr| }{ |h_m|^\beta}  \ge 1/4. \]
and we can always consider, instead of $v_m$, the function
\[\tilde v_m (x) :=  \frac{v_m (x_m +|h_m| x) - \tilde P_m(x)}{|h_m|^\beta},\]
where $\tilde P_m \in \mathcal P$ is chosen so that \eqref{2app} is satisfied with $v_m$ replaced by $\tilde v_m$.

Note that $\tilde P_m$  is the polynomial that approximates better (in the $L^2$ sense) $v_m(x_m +\, \cdot\,)$ in $B_{|h_m|}(x_m)$ and since $v_m \in C^{\beta}$ with the control \eqref{growthc1app} we have
\[  \bigl | v_m(x_m+|h_m|x) - \tilde P_m(x)\bigr| \le C|h_m|^{\beta} |x|^{\beta} .\]
Therefore, we readily show that $\tilde v_m$ also satisfies \eqref{growthc1app} and \eqref{growthc0app} (with $v_m$ replaced by $\tilde v_m$).

In summary, the new sequence $\tilde v_m$ satisfies the same properties as $v_m$ and, in addition, \eqref{nondeg35appbis}, as desired.

Next we prove the following

\vspace{5pt}
\noindent {\em Claim. Up to subsequences we have $s_m \to s\in [s_0,1]$ and $v_m\to v$ locally uniformly in $\R^n$, where $w\in C(\R^n)$.
Moreover, the limiting function $v$ satisfies the assumptions of the Liouville-type Lemma \ref{lem-liouv}.}
\vspace{5pt}

Since $\beta>0$, it follows from \eqref{growthc1app}, \eqref{growthc0app} and the Arzel\`a-Ascoli theorem that a subsequence of $v_m$ converges locally uniformly in $\R^n$ to some $v\in C(\R^n)$.

Passing to the limit \eqref{growthc1app} we find that the assumption \eqref{growthcontrol-1+sapp} of Theorem \ref{lem-liouv} is satisfied by $v$.

Similarly as in Proposition \ref{prop_contr_1+s+alpha}, using  that $[c_{ab}]_{ \gamma_k; B_1} \le 1$ we show that
\[
\begin{split}
0
&= \inf_{b\in \mathcal B_k}\sup_{a\in \mathcal A_k} \bigl( L_{ab} u_k(\bar x) + c_{ab}(\bar x)\bigr)\\
&\ge \inf_{b\in \mathcal B_k}\sup_{a\in \mathcal A_k} \bigl( L_{ab} u_k(\bar x) + c_{ab}(0)\bigr) + |\bar x|^{\gamma_k}
\end{split}
\]
and
\[
\begin{split}
0
&= \inf_{b\in \mathcal B_k}\sup_{a\in \mathcal A_k} \bigl( L_{ab} u_k(\bar x+\bar h) + c_{ab}(\bar x+\bar h)\bigr)\\
&\le \inf_{b\in \mathcal B_k}\sup_{a\in \mathcal A_k} \bigl( L_{ab} u_k(\bar x+ \bar h) + c_{ab}(0)\bigr) + |\bar x+\bar h|^{\gamma_k}
\end{split}
\]
and thus
\[ -|\bar x|^{\gamma_m} -|\bar x+\bar h|^{\gamma_m} \le  M^+_{\mathcal L_*(s_m)}   \bigl( v_m ( \cdot + h) -  v_m \bigr) \quad \mbox{in }B_{1/2}.\]

Therefore, rescaling we obtain
\begin{equation}\label{wwwwwwapp}
-\frac{3K^{\gamma_m} (r'_m)^{2s_m+\gamma_m}}{\theta(r'_m) (r'_m)^{\alpha+\beta}} \le M^+_{\mathcal L_*(s_m)}   \bigl( v_m ( \cdot + h) -  v_m \bigr)\quad \mbox{in } B_K
\end{equation}
whenever $|h|<K$ and $r'_m < \frac{1}{2K}$.

On the other hand, since $v_m \to v$ locally uniformly in $\R^n$ (up to subsequences), then we have
\begin{equation}\label{locallyuniformlyapp}
\bigl(v_m(\cdot+h)-v_m\bigr) \rightarrow \bigr(v(\cdot+h)-v\bigr)  \quad \mbox{locally uniformly in  }  \R^n.
\end{equation}
Also, similarly as in Proposition \ref{prop_contr_1+s+alpha}, \eqref{growthc1app} and the dominated convergence theorem imply that
\begin{equation}\label{L1weightedconvapp}
\bigl(v_m(\cdot+h)-v_m\bigr) \rightarrow \bigr(v(\cdot+h)-v\bigr)  \quad \mbox{in } L^1\bigr(\R^n, \omega_{s_0} \big),
\end{equation}
since $|v_m(\cdot+h)-v_m|\le C(1+|x|)^\alpha\le C(1+|x|)^{s_0} \in L^1(\R^n,\omega_{s_0})$.

Thus, using \eqref{L1weightedconvapp} and \eqref{locallyuniformlyapp} we can pass to the limit in \eqref{wwwwwwapp}  (for each $K\geq1$) to obtain
\[0\le M^+_{\mathcal L_*(s)}\bigl\{ v(\cdot + h)- v\bigr\} \quad \mbox{in } \R^n.\]

Analogously, we will have that
\[0\ge M^-_{\mathcal L_*(s)}\bigl\{ v(\cdot + h)- v\bigr\} \quad \mbox{in } \ \R^n_+.\]

\vspace{5pt}

Hence, $w$ satisfies all the assumptions of Lemma \ref{lem-liouv}, and thus $v$ is an affine function.
On the other hand, passing \eqref{2app} to the limit we obtain that $v$ is orthogonal to every affine function and hence it must be $v\equiv 0$.
But then passing \eqref{nondeg35appbis} to the (uniform) limit we obtain a contradiction.
\end{proof}

Finally, we give the:

\begin{proof}[Proof of Lemma \ref{lem-interior}]
The result follows by rescaling from Proposition \ref{prop-interior}.
Indeed, let
\[\bar w(x) = r^{-\alpha-\beta-s}w(rx).\]
We have
\[\|\bar w\|_{L^\infty(\R^n)} <\infty \quad \mbox{and}\quad \|\bar w\|_{L^{\infty}(B_R)} \le C_0 R^{\alpha+\beta+s} \]
for all $R\ge 1$.

Since the spectral measures $\mu_{ab}$  satisfy  $\|\mu_{ab}\|_{C^\gamma(S^{n-1})}\leq \Lambda$, we have
\[
\left[  \frac{\mu_{ab}(y/|y|)}{ |y|^{n+2s}} \right]_{C^\gamma(B_{2R} \setminus B_R)} \le \frac{C\Lambda}{R^{n+2s+\gamma}}.
\]
Hence, if $\eta\in C^\infty_c(B_1(e_n))$ is such that $\eta \equiv 1$ on $\overline{B_{5/6}}$, it follows that
\[\tilde \I( \bar w\eta ,\bar x) = \inf_{b\in \mathcal B}\sup_{a\in\mathcal A} \bigl( L_{ab} w(\bar x) + \tilde c_{ab}(\bar x)\bigr) = 0 \quad \mbox{in }B_{4/6},\]
where
\[ \tilde c_{ab}(\bar x) = r^{2s} r^{-\alpha-\beta-s} c_{ab}(r\bar x)+ L_{ab} (1-\eta)\bar w.\]
It then easily follows that
\[
[\tilde c_{ab}]_{C^\gamma(B_{4/6}(e_n))}\le Cr^{\gamma+s-\alpha-\beta}+ C C_0\leq C(1+C_0).
\]
Therefore, Proposition \ref{prop-interior} yields
\[\|\bar w\eta\|_{C^{\alpha+\beta}(B_{1/2}(e_n))}\leq C\|\bar w\eta\|_{C^\beta(B_1(e_n))}.\]
Rescaling back to $w$, we find
\[\|w\|_{C^{\alpha+\beta}(B_{r/2}(re_n))}\leq Cr^{-\alpha}\|w\|_{\beta;B_r(re_n)}\leq Cr^{-\alpha}r^{\alpha+s}=Cr^s,\]
and thus the Lemma is proved.
\end{proof}

\end{document}